\definecolor{cite}{HTML}{11871E}
\definecolor{url}{HTML}{698996}
\definecolor{link}{HTML}{912F1B}
\tikzstyle{arrow} = [-{Straight Barb[scale=0.8]}, line width=0.2mm]
\Crefname{prop}{Proposition}{Propositions}
\Crefname{lem}{Lemma}{Lemmas}
\Crefname{cor}{Corollary}{Corollaries}
\Crefname{thm}{Theorem}{Theorems}
\Crefname{alphThm}{Theorem}{Theorems}
\Crefname{alphCor}{Corollary}{Corollaries}
\Crefname{defn}{Definition}{Definitions}
\Crefname{notation}{Notation}{Notations}
\Crefname{cons}{Construction}{Constructions}
\Crefname{rmk}{Remark}{Remarks}
\Crefname{obs}{Observation}{Observations}
\Crefname{trick}{Trick}{Tricks}
\Crefname{warning}{Warning}{Warnings}
\Crefname{conj}{Conjecture}{Conjectures}
\Crefname{assump}{Assumption}{Assumptions}
\Crefname{recollect}{Recollection}{Recollections}
\Crefname{terminology}{Terminology}{Terminologies}
\Crefname{hypothesis}{Hypothesis}{Hypotheses}
\Crefname{question}{Question}{Questions}
\Crefname{example}{Example}{Examples}
\Crefname{figure}{Figure}{Figures}
\newtheorem{thm}[subsubsection]{Theorem}
\newtheorem{prop}[subsubsection]{Proposition}
\newtheorem{lem}[subsubsection]{Lemma}
\newtheorem*{lem*}{Lemma}
\newtheorem{cor}[subsubsection]{Corollary}
\newtheorem{alphThm}{Theorem}
\newcommand{\neutralize}[1]{\expandafter\let\csname c@#1\endcsname\count@}
\newtheorem{alphCor}{Corollary}
\theoremstyle{definition}
\newtheorem{defn}[subsubsection]{Definition}
\newtheorem{cons}[subsubsection]{Construction}
\newtheorem{nota}[subsubsection]{Notation}
\newtheorem{recollect}[subsubsection]{Recollections}
\newtheorem{terminology}[subsubsection]{Terminology}
\newtheorem{hypothesis}[subsubsection]{Hypothesis}
\theoremstyle{remark}
\newtheorem{rmk}[subsubsection]{Remark}
\newtheorem{obs}[subsubsection]{Observation}
\newtheorem{example}[subsubsection]{Example}
\newtheorem{fact}[subsubsection]{Fact}
\newcommand{\baseCat}{{\mathcal{T}}}
\newcommand{\unstable}{\mathrm{un}}
\newcommand{\tkappa}{^{\underline{\kappa }}}
\newcommand{\tomega}{^{\underline{\omega}}}
\newcommand{\perfect}{^{\mathrm{perf}}}
\newcommand{\norm}{\mathrm{N}}
\newcommand{\calg}{\mathrm{CAlg}}
\newcommand{\spc}{\mathcal{S}}
\newcommand{\vop}{^{{\myuline{\mathrm{op}}}}}
\newcommand{\Tperfect}{^{{\mathrm{perf}}}}
\newcommand{\ind}{\mathrm{Ind}}
\newcommand{\cocartesianCategory}{\mathrm{coCart}}
\DeclareMathOperator{\cmonoid}{\mathrm{CMon}}
\newcommand{\canonical}{\mathrm{can}}
\DeclareMathOperator{\forget}{\mathrm{fgt}}
\DeclareMathOperator{\rfunc}{\mathrm{RFun}}
\DeclareMathOperator{\poset}{\mathrm{Pos}}
\newcommand{\loops}{\Omega^{\infty}}
\DeclareMathOperator{\lfunc}{\mathrm{LFun}}
\newcommand{\simplex}{\mathbf{\Delta}}
\newcommand{\eval}{\mathrm{ev}}
\newcommand{\constant}{\operatorname{const}}
\DeclareMathOperator{\tconstant}{\underline{\mathrm{const}}}
\newcommand{\cat}{\mathrm{Cat}}
\newcommand{\presentable}{\mathrm{Pr}}
\newcommand{\nmot}{\mathrm{NMot}}
\newcommand{\Texact}{^{{\mathrm{ex}}}}
\newcommand{\Tlexact}{^{{\mathrm{lex}}}}
\newcommand{\Trexact}{^{{\mathrm{rex}}}}
\newcommand{\Sdot}{\mathrm{S}}
\newcommand{\Qcons}{\mathrm{Q}}
\newcommand{\A}{\mathcal{A}}
\newcommand{\sC}{{\mathcal C}}
\newcommand{\D}{{\mathcal D}}
\newcommand{\decalage}{\mathrm{D\acute{e}c}}
\newcommand{\B}{\mathcal{B}}
\newcommand{\op}{^{\mathrm{op}}}
\DeclareMathOperator{\mapsp}{\mathrm{map}}
\newcommand{\sphere}{\mathbb{S}}
\newcommand{\M}{\mathcal{M}}
\newcommand{\Z}{\mathcal{Z}}
\newcommand{\sU}{{\mathcal U}}
\newcommand{\K}{\mathcal{K}}
\newcommand{\R}{\mathcal{R}}
\newcommand{\E}{\mathcal{E}}
\newcommand{\KTheory}{\mathrm{K}}
\DeclareMathOperator{\res}{\mathrm{Res}}
\DeclareMathOperator{\mackey}{\mathrm{Mack}}
\DeclareMathOperator{\presheaf}{\mathrm{PSh}}
\newcommand{\add}{^{\mathrm{add}}}
\newcommand{\exact}{^{\mathrm{ex}}}
\newcommand{\spectra}{\mathrm{Sp}}
\newcommand{\finite}{\mathrm{Fin}}
\DeclareMathOperator{\effBurn}{\mathrm{Span}}
\newcommand{\id}{\mathrm{id}}
\DeclareMathOperator{\map}{\mathrm{Map}}
\DeclareMathOperator{\fib}{\operatorname{fib}}
\DeclareMathOperator{\cofib}{\operatorname{cofib}}
\DeclareMathOperator{\func}{\mathrm{Fun}}
\newcommand{\unit}{\mathbb{1}}
\newcommand{\catinf}{\mathrm{Cat}}
\DeclareMathOperator{\twistedArrow}{\operatorname{TwAr}}
\newcommand{\orbit}{\mathcal{O}}
\DeclareMathOperator{\cofree}{\operatorname{\underline{Cofree}}}
\newcommand{\tcone}{^{\underline{\triangleleft}}}
\newcommand{\tcocone}{^{\underline{\triangleright}}}
\newcommand{\terminalTCat}{\underline{\ast}}
\newcommand{\fanpsi}{\Psi}
\newcommand{\pointwise}{^{\mathrm{pw}}}
\newcommand{\botPointwise}{_{\mathrm{pw}}}
\newcommand{\botPointwiseKappa}{_{\mathrm{pw},\kappa}}
\newcommand{\normed}{\mathrm{nm}}
\newcommand{\botNormedKappa}{_{\mathrm{nm},\kappa}}
\newcommand{\totimes}{^{\underline{\otimes}}}
\newcommand{\Tcore}{^{{\simeq}}}
\newcommand{\tstable}{\underline{\mathrm{st}}}
\newcommand{\proper}{\mathcal{P}}
\newcommand{\localisation}{{\lambda}}
\newcommand{\borel}{\mathrm{Bor}}
\newcommand{\target}{\mathrm{tgt}}
\newcommand{\trivial}{\mathrm{triv}}
\def\colim{\qopname\relax m{colim}}
\newcommand{\arrdisp}{0.33ex}
\newcommand{\arrdisplacementsp}{0.72ex}
\newcommand{\ardis}{\ar@<\arrdisp>}
\newcommand{\ardissp}{\ar@<\arrdisplacementsp>}
\newcommand{\myuline}[1]{%
  \uline{\phantom{#1}}%
  \llap{\contour{white}{#1}}%
}
\newcommand*{\saved@myuline}{}
\let\saved@myuline\myuline
\newcommand*{\mathuline}{%
  \mathpalette{\math@myuline\saved@myuline}%
}
\newcommand*{\math@myuline}[3]{%
  \mbox{#1{$#2#3\m@th$}}%
}
\renewcommand*{\myuline}{%
  \relax  
  \ifmmode
    \expandafter\mathuline
  \else
    \expandafter\saved@myuline
  \fi
}
\title{\LARGE Parametrised noncommutative motives and equivariant cubical descent in  algebraic K-theory}
\date{ \today}
\author{\textsc{Kaif Hilman}\thanks{University of Bonn\\kaif@math.uni-bonn.de}}
\begin{document}
\maketitle

\vspace{-6mm}

\begin{abstract}
    For an atomic orbital base category in the sense of \cite{parametrisedIntroduction}, we introduce the category of parametrised perfect--stable categories and use it to construct the parametrised version of noncommutative motives in which algebraic $\KTheory$-theory is corepresented. Furthermore, we initiate a rudimentary theory of parametrised cubes which could be of independent interest, generalising some of the elements in \cite{dottoExcision} beyond the equivariant case. Using this cubical theory, we show that in the equivariant case for finite 2--groups $G$, the parametrised noncommutative motives canonically refine to $G$--symmetric monoidal categories. Consequently, this endows the equivariant algebraic $\KTheory$-theory spectra for these groups with the structure of $\mathbb{E}_{\infty}$--ring spectra equipped with multiplicative norms in the sense of \cites{HHR}. Along the way, we will also provide a machine to manufacture $G$--symmetric monoidal categories from symmetric monoidal categories equipped with $G$--actions and elucidate how the aforementioned parametrised perfect--stable categories relate to Mackey functors valued in perfect--stable categories.
\end{abstract}

\tableofcontents

\section{Introduction}
Algebraic K-theory, as a functor $\KTheory : \cat\perfect\rightarrow \spectra$, is an additive spectral invariant on the $\infty$-category $\cat\perfect$ of small perfect stable $\infty$-categories by the work of \cite{BGT13} and moreover admits a lax symmetric monoidal refinement by \cite{BGTMult}. The methods of these papers were to construct a ``universal'' category of \textit{noncommutative motives} $\nmot$ through which the functor $\KTheory$ above factors. Using this lax symmetric monoidal structure, \cite{barwick2} has been able to show that equivariant algebraic K--theory for finite groups $G$  naturally admits the so--called \textit{Green functor} structure, in the sense of Mackey functor theory (i.e. $\mathbb{E}_{\infty}$--algebras in the Day convolution structure on $\spectra_G\coloneqq \mackey_G(\spectra)$).

On the other hand, there are more refined multiplicative structures in equivariant homotopy theory in the form of  the \textit{multiplicative norms} which were exploited in the celebrated resolution of the Kervaire invariant one problem in \cite{HHR}. The goal of this paper is to investigate the equivariant analogues of all the elements involved in the functor $\KTheory\colon\cat\perfect\rightarrow \nmot\rightarrow\spectra$ and to lay the groundwork in studying the equivariant multiplicative refinements of all these. It turns out that  the question of these  refinements is intimately related to that of \textit{cubical descent} in algebraic K--theory. Therefore, in order to handle these cubical matters in the equivariant setting, we will initiate the study of a theory on what we term as \textit{parametrised cubes}, which could be of independent interest. In particular, this serves also as the foundations for a theory of \textit{parametrised functor calculus} introduced in \cite{kaifSil}. Employing this rudimentary theory, our main result will then be that equivariant algebraic K--theory  can indeed be refined to admit multiplicative norms in the special case when $G$ is a group of order $2^n$ for any $n$.

\textbf{Convention:} This paper is written in the language of $\infty$--categories as developed in \cite{lurieHTT,lurieHA}. As such, in order not to encumber the exposition, by a ``category'' we will always mean an ``$\infty$--category''. Hence, for example, we will write $\cat$ for the $\infty$--category of small $\infty$--categories, usually written as $\cat_{\infty}$.

\subsection*{Motivations for equivariant multiplicative norms}

Let us first recall the notion of multiplicative norms and why it can be a useful piece of extra structure. Let $R\in\calg(\spectra_G^{\otimes})$ be an $\mathbb{E}_{\infty}$--ring in genuine $G$--spectra. In particular, this means that $R$ is equipped with the structure of a multiplication $\otimes\colon R\otimes R\rightarrow R$ which participates in many coherence diagrams. For a subgroup $H\leq G$, there is a functor $\norm^G_H\colon \spectra_H\rightarrow \spectra_G$ called the \textit{norm}, which intuitively is the multiplicative version of the usual additive induction functor $\ind^G_H\colon \spectra_H\rightarrow \spectra_G$ which satisfies a multiplicative version of the usual double--coset formula for $\ind^G_H$. We may then ask if we could endow $R$ with the structure of an ``equivariant multiplication'', i.e. a ring map
$\otimes^G_H\colon \norm^G_H\res^G_HR \longrightarrow R$ participating in appropriate coherence diagrams. These maps are the \textit{multiplicative norms} alluded to above. In the presence of all these maps together with coherences, we will say that $R$ has been enhanced to the structure of a $G$--$\mathbb{E}_{\infty}$--ring spectrum. 

The multiplicative norms provide a lot extra structure and constraints that can be productively exploited. Hill, Hopkins, and Ravenel   \cite{HHR} used these in an essential way to construct a bordism spectrum  to obstruct the existence of Kervaire invariant one elements for all but six of the infinitely many cases. For an example of the rigid demands the multiplicative norms put on a ring $G$--spectrum, see for instance \cite[Ex. 3.28]{irakliDerivedMackey} where it is argued that the so--called ``inflated''  $C_2$--spectral Mackey functor $H\mathbb{Z}$ \textit{cannot} admit such structures. Apart from that, \cite[Rmk. III.1.5]{nikolausScholze}  pointed out how the norms can be used to recover their  Tate diagonal. Yet another use of this structure is as a ``trick tool'' for a technical, but important, step in proving ``completion theorems'' such as the Atiyah--Segal completion theorem  and the Segal conjecture. See for example \cite[Lem. 7.4.17]{kaifThesis} for more details on this.

Unsurprisingly, these structures turn out to be rather tricky to formulate precisely and are yet trickier to construct. It was in large part to this end that the theory of \textit{parametrised higher categories} were introduced and studied by Barwick, Dotto, Glasman, Nardin, and Shah in \cite{parametrisedIntroduction,expose1Elements,shahThesis,shahPaperII,nardinShah}. We now recall the basic philosophy of this setup and explain what we mean by equivariant algebraic K--theory in this language.

\subsection*{Parametrised higher categories and equivariant algebraic K--theory}

The primary ingredient of the parametrised formalism aforementioned is a fixed small base category $\baseCat$, and the basic category of interest is $\cat_{\baseCat}\coloneqq \func(\baseCat
\op,\cat)$ whose objects are termed as $\baseCat$\textit{--categories}. In fact, for most of this article we will restrict to the case when $\baseCat$ is \textit{atomic orbital} (cf. \cref{DefinitionAtomicOrbital}), which is a class of categories isolated by \cite{expose1Elements} that are particularly suited to higher algebraic considerations such as parametrised semiadditivity and multiplicative structures. The most important example of an atomic orbital base category for us will be the case of $\baseCat$ being the orbit category $\orbit_G$ for a finite group $G$, in which case we will write $\cat_G\coloneqq \func(\orbit_G\op,\cat)$ where the objects are called $G$\textit{--categories}. 
With this notion, a theory of \textit{parametrised (co)limits} can be developed which affords precise meanings to many analogies between equivariant higher algebra and ordinary higher algebra. 

As hinted at above, one salient feature of this theory is that it supports the theory of $G$\textit{--symmetric monoidal categories} by which we may make sense of $G$--$\mathbb{E}_{\infty}$--algebras $\calg_G$, i.e. $\mathbb{E}_{\infty}$--algebras equipped with multiplicative norms. For concreteness, \cite[Thm. 2.3.9]{nardinShah} provides an equivalent description of the category of $G$--symmetric monoidal categories as $\mackey_G(\cat)$, i.e. $G$--Mackey functors (in the sense of \cite{barwick1})  valued in $\cat$. 

Notwithstanding this pleasant generality,  constructing examples of $G$--$\mathbb{E}_{\infty}$--algebras remains by and large a tricky one.  It is  in this context that we offer our first main result:

\begin{alphThm}[Precise and full statement in {\cref{monoidalBorelificationPrinciple}}]\label{alphaThm:monoidalBorelificationPrinciple}
Let  $\D^{\otimes}$ be a symmetric monoidal category with a $G$--action. From this, we may construct a $G$--symmetric monoidal category $\underline{\borel}(\D^{\otimes})$ with underlying $G$--category  $\underline{\borel}(\D)\coloneqq \{G/H\mapsto \D^{hH}\}$. The  norm map $\norm^G_H\colon \D^{hH}\rightarrow\D^{hG}$ is concretely described as follows: for $X \in \D^{hH}$ a $H$--object in $\underline{\borel}(\D)$, the $G$--object $\norm^G_HX \in \D^{hG}$ is given by $\bigotimes_{g\in G/H}gX$. Moreover, the category of $G$--$\mathbb{E}_{\infty}$--algebras in $\underline{\borel}(\D^{\otimes})$ is equivalent to $\calg(\D^{\otimes})^{hG}$, i.e. $\mathbb
{E}_{\infty}$--algebras in $\D^{\otimes}$ equipped with a $G$--action.\footnote{Since the appearance of our work, there are now also different proofs of this result, cf. for instance \cite{phil}.}
\end{alphThm}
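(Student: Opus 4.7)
The strategy is to exploit the Nardin--Shah equivalence $\cmonoid_G(\cat_G) \simeq \mackey_G(\cat)$ mentioned above, which reduces the task of enhancing $\underline{\borel}(\D)$ to a $G$--symmetric monoidal category to producing a Mackey functor $M_{\D}\colon \effBurn(\finite_G) \to \cat$ whose value at $G/H$ is the homotopy fixed point category $\D^{hH}$. Since $\underline{\borel}$ is, at the level of $G$--categories, already the right Kan extension along the fully faithful inclusion $BG \hookrightarrow \orbit_G$ picking out the free orbit $G/e$, the first half of the theorem (concerning the underlying $G$--category) will hold automatically once we produce the covariant norm structure; the main work is thus to produce the norms with all higher coherences and to identify them with the tensor--product formula.

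Concretely, I would construct $M_{\D}$ as the right Kan extension of $\D^{\otimes}\colon BG \to \cmonoid(\cat)$ along the natural inclusion $j\colon BG \hookrightarrow \effBurn(\finite_G)$ hitting the free orbit. Because $\cat$ admits all small limits, this extension exists pointwise; one then verifies that the result is genuinely semi--Mackey, i.e.\ sends disjoint unions to products, by a direct computation with the pointwise formula using that mapping objects out of $G/e$ in $\effBurn(\finite_G)$ split along disjoint unions in the target. Restricting $M_{\D}$ back along $\orbit_G\op \hookrightarrow \effBurn(\finite_G)$ then recovers $\underline{\borel}(\D)$ via the identification $\D^{hH} \simeq \lim_{(G/e \to G/H) \in (j \downarrow G/H)} \D$.

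To pin down the norms explicitly, I would apply the pointwise right Kan extension formula to the covariant leg of $G/H \leftarrow G/H \to G/G$ (and more generally to any surjection $G/K \twoheadrightarrow G/H$). Unwinding the resulting slice category presents $\norm^G_H(X)$ as the product indexed by $G/H$ of translated copies of $X$; since we are in the commutative monoid setting, this product coincides with the coproduct in $\calg(\D)$ by semi--additivity, and hence canonically with the tensor product $\bigotimes_{g \in G/H} gX$, as claimed. The double--coset formula and all higher coherences of the norm structure then follow formally from the universal property of right Kan extension.

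For the last assertion, one uses that restriction along $j$ is left adjoint to our $\underline{\borel}$ construction and that this adjunction lifts to the level of $G$--$\mathbb{E}_{\infty}$--algebras: computing $\calg_G$ of the Borel Mackey functor amounts to taking global sections over $\effBurn(\finite_G)$, which by adjunction reduces to sections over $BG$ of $\calg(\D^{\otimes})$, namely $\calg(\D^{\otimes})^{hG}$. The principal technical obstacle I anticipate lies in the second step: verifying with full coherence that the pointwise right Kan extension formula produces a functor satisfying the Segal/Mackey condition, and matching the extracted norm with the combinatorial tensor--product formula at the level of higher homotopies. This will require a careful analysis of the slice categories $(j \downarrow T)$ for general finite $G$--sets $T$, after which the remaining assertions should be essentially formal consequences of the universal property.
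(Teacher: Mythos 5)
Your overall intuition---that the Borel construction should be a right adjoint to restriction to the free orbit, and that the norms should come for free from semiadditivity of $\cmonoid(\cat)$ once one is in the Mackey picture---is sound, and it is close in spirit to what actually happens. But the central construction you propose does not work as stated: the right Kan extension of $\D^{\otimes}$ along $j\colon BG\to \effBurn(\finite_G)$ is \emph{not} the Borel Mackey functor. The functor $j$ is not fully faithful (the endomorphisms of $G/e$ in the span category are spans $G/e\leftarrow S\to G/e$ with $S$ an arbitrary finite free $G$--set, not just the group $G$), so the pointwise formula gives
$(\mathrm{Ran}_jF)(T)\simeq \func\big(\map_{\effBurn(\finite_G)}(T,G/e),\D\big)^{hG}$,
and this has the wrong values. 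Already for the trivial group one gets $(\mathrm{Ran}_jF)(\ast)\simeq \prod_{n\geq 0}\func(B\Sigma_n,\D)\not\simeq \D$, so not even $j^*\mathrm{Ran}_jF\simeq F$ holds; and since $\map_{\effBurn}(T_1\sqcup T_2,G/e)\simeq \map(T_1,G/e)\times\map(T_2,G/e)$ sits inside the cotensor rather than as a coproduct in the target, the Segal/product condition fails as well, and semiadditivity of $\cmonoid(\cat)$ does not rescue it. The step you flagged as the ``principal technical obstacle'' is therefore not a verification to be carried out but the place where the construction breaks. A repair along your lines would have to Kan extend not from $BG$ but from the span category of finite \emph{free} $G$--sets (the semiadditive envelope of $BG$, so that product--preserving functors out of it into the semiadditive target $\cmonoid(\cat)$ recover $\func(BG,\cmonoid(\cat))$), and then prove that right Kan extension along the fully faithful inclusion $\effBurn(\finite_G^{\mathrm{free}})\subseteq \effBurn(\finite_G)$ preserves the product condition and has value $\D^{hH}$ at $G/H$ --- a genuine cofinality analysis, not a formality.

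Two further points. First, your identification of the norm with $\bigotimes_{g\in G/H}gX$ is asserted via ``product $=$ coproduct $=$ tensor'' in $\calg(\D)$, but the tensor formula concerns objects of $\D$, not algebras; the correct mechanism (and the one the paper uses) is that $\cmonoid(\cat)$ is semiadditive, so the Borel $G$--category of $\cmonoid(\cat)$ is $G$--semiadditive, and the norm is the counit of the $\mathrm{coind}\simeq\mathrm{ind}$ adjunction, which on underlying categories is literally the $|G/H|$--fold multiplication of $\D^{\otimes}$; identifying that counit with the Mackey transfer requires an adjointability argument, not just the universal property of a Kan extension. Second, the final claim $\calg_G(\underline{\borel}(\D^{\otimes}))\simeq \calg(\D^{\otimes})^{hG}$ is not ``global sections over $\effBurn(\finite_G)$'': $G$--$\mathbb{E}_\infty$--algebras are operadic sections over $\underline{\finite}_{*\baseCat}$, and the paper has to work for this equivalence, using the parametrised symmetric monoidal envelope together with a cotensor computation showing $\func^{\underline{\otimes}}_G(\underline{\sC}^{\underline{\otimes}},b_*\D^{\otimes})\simeq \func^{\otimes}(b^*\underline{\sC}^{\underline{\otimes}},\D^{\otimes})^{hG}$. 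By contrast, the paper avoids the Burnside category altogether at the construction stage: it establishes the $G$--Bousfield (co)localisation $b^*\dashv b_*$ between $\underline{\cat}_G$ and $\underline{\borel}(\cat)$, checks it is compatible with $\underline{\cmonoid}_G(-)$, and then reads off the $G$--symmetric monoidal structure through the equivalence $\underline{\cmonoid}_G(\underline{\cat})\simeq \mackey_G(\cat)$.
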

\noindent We call the construction $\underline{\borel}(-)$ the \textit{Borelification} functor, and in the full version, we also show that \textit{any} $G$--symmetric monoidal category admits a natural $G$--symmetric monoidal functor to its Borelification. The proof of this will proceed by first placing everything at the appropriate categorical level  and later extracting the desired statements by a process of decategorification.\footnote{In hindsight, this was very much inspired by the philosophy of \cite{GGN} (which, in turn, was inspired by \cite[ $\S4.8.2$]{lurieHA}) in dealing with monoidal \textit{structures} via the \textit{properties} of categorical products.}

\vspace{1mm}

Next, we work towards explaining what we shall mean by equivariant (or more generally, $\baseCat$--parametrised) algebraic K--theory in this article. For a fixed finite group $G$ and a category with finite products $\sC$, \cite{barwick1} supplies us  the $\infty$--categorical version of $G$--Mackey functors valued in $\sC$ denoted $\mackey_G(\sC)$.  Since $\mackey_G(-)$ is functorial on product--preserving functors, we may apply it to the functor $\KTheory : \cat\perfect\rightarrow\spectra$ to obtain $\mackey_G(\KTheory)\colon \mackey_G(\cat\perfect)\longrightarrow \mackey_G(\spectra)=\spectra_G$.
The lax symmetric monoidal refinement of $\KTheory$ then induces a lax symmetric monoidal refinement of $\mackey_G(\KTheory)$, and this is  the content of the Green functor refinement of equivariant algebraic K--theory due to \cite{barwick2}. This construction has been used to great effect, for example, in \cite{CMMN2}, where they provided one of the key ingredients for the proof of the chromatic redshift conjecture for $\mathbb{E}_{\infty}$--rings.

Unfortunately, this $\mathbb{E}_{\infty}$--structure lacks the desired multiplicative norms.  Worse still, we do not even know if $\mackey_G(\cat\perfect)$ may be endowed with a reasonable $G$--symmetric monoidal structure with which to even begin to speak of $G$--$\mathbb{E}_{\infty}$--algebras. Therefore, as a first step, we provide a replacement for $\mackey_G(\cat\perfect)$ by introducing the $\baseCat$--category $\underline{\cat}\Tperfect_{\baseCat}$ of $\baseCat$--perfect--stable categories for an arbitrary atomic orbital $\baseCat$.  The benefit of this  is that, at least when $\baseCat=\orbit_G$ which is our chief case of interest, there is already a natural $\baseCat$--symmetric monoidal structure\footnote{At least under \cref{hypothesis:normed_spectra,hypothesis2:norm_finiteness} which are satisfied in the equivariant setting. See the end of the introduction for an explanation of this.}  on $\underline{\cat}\Tperfect_{\baseCat}$. As a crucial bridge between this notion and the version of equivariant K--theory in \cite{barwick2,CMMN2}, we provide  our next main result:

\begin{alphThm}[See {\cref{mainMackeyInclusion}}] \label{alphaThm:mainMackeyInclusion}\label{alphaThm:MackeyInclusionCreatesFibreCofibres}
We have a conservative $\baseCat$--faithful inclusion $\underline{\cat}\Tperfect_{\baseCat} \subset\underline{\mackey}_{\baseCat}(\cat\perfect)$. Moreover, this inclusion preserves and reflects parametrised (co)limits.
\end{alphThm}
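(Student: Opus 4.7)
The plan is to construct the inclusion explicitly and then verify its properties in sequence. For the construction, I would start from the observation that a $\baseCat$--perfect--stable category $\sC$ is in particular $\baseCat$--semiadditive, and that each of its fibres $\sC(B)$ is itself a perfect--stable category (a consequence of the $\baseCat$--perfect--stable condition, internal to the parametrised framework). Applying the underlying $\cat$--valued analogue of the Mackey correspondence of \cite[Thm.~2.3.9]{nardinShah}, the $\baseCat$--semiadditive structure canonically promotes the underlying functor $\baseCat\op \to \cat\perfect$ to a Mackey functor $\effBurn(\baseCat)\to \cat\perfect$. Parametrising this construction over each slice $\baseCat_{/B}$ then assembles into the desired $\baseCat$--functor $\underline{\cat}\Tperfect_{\baseCat}\to \underline{\mackey}_{\baseCat}(\cat\perfect)$.

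Next, $\baseCat$--faithfulness of this inclusion follows because the mapping $\baseCat$--categories on the two sides have the same underlying mapping spaces: a morphism of Mackey functors valued in $\cat\perfect$ is a natural transformation commuting with both restrictions and transfers, and under the semiadditive/Mackey correspondence the transfers are precisely the parametrised coproduct structure maps of a $\baseCat$--semiadditive category. Hence a $\baseCat$--exact functor automatically commutes with them, and conversely any Mackey natural transformation between the Mackey functors attached to $\baseCat$--perfect--stable categories is $\baseCat$--exact. Conservativity is then immediate from the fact that equivalences of Mackey functors and equivalences of $\baseCat$--categories are both detected on fibres, and the two fibrewise criteria coincide.

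The substantive part is preservation and reflection of parametrised (co)limits, and I expect the colimit case to be the main obstacle. For $\baseCat$--limits the argument is short: in both $\underline{\cat}\Tperfect_{\baseCat}$ and $\underline{\mackey}_{\baseCat}(\cat\perfect)$ parametrised limits are computed fibrewise inside $\cat\perfect$, so they are preserved and reflected tautologically. For $\baseCat$--colimits the difficulty is that $\cat\perfect$ is not closed under fibrewise colimits (idempotent completions have to be inserted). My plan here is to invoke the equivalence $\underline{\cat}\Tperfect_{\baseCat}\simeq \underline{\presentable}_{\baseCat,L,\mathrm{st},\omega}$ recalled just before the theorem, together with its Mackey--side analogue obtained by applying $\ind$ pointwise, in order to translate $\baseCat$--colimits into $\baseCat$--limits of the associated right--adjoint diagrams. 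Preservation then reduces back to the already--settled limit case, and reflection follows from preservation combined with the conservativity already proved.
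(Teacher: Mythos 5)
Your overall route---encode the transfers through $\baseCat$--semiadditivity, detect equivalences fibrewise, and handle colimits through $\underline{\cat}\Tperfect_{\baseCat}\simeq\underline{\presentable}_{\baseCat,L,\mathrm{st},\omega}$ and passage to right adjoints---is essentially the paper's (which builds the inclusion by applying $\underline{\cmonoid}_{\baseCat}(-)$ to the product--preserving faithful inclusion $\underline{\cat}\Tperfect_{\baseCat}\subset\cofree_{\baseCat}(\cat\perfect)$ and then invokes \cref{semiadditivisation}), but two steps have real gaps. First, your faithfulness argument overclaims: you assert that any Mackey natural transformation between objects in the image is automatically $\baseCat$--exact, i.e.\ that the inclusion is full. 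The Mackey datum records restrictions, transfers and their span coherences, but \emph{not} the adjunctions between $f_{\#}$ and $f^*$; a Mackey transformation supplies some commutation with $f_{\#}$, which does not force the canonical Beck--Chevalley map to be an equivalence. This is why the paper claims only faithfulness and why the remark following \cref{mainMackeyInclusion} points to an $(\infty,2)$--categorical refinement to pin down the image. Fullness is not needed, and faithfulness should instead be obtained as in the paper---apply $\underline{\cmonoid}_{\baseCat}$ to a faithful functor---or by observing that composing with the forgetful functor back to $\underline{\cat}_{\baseCat}$ recovers the faithful inclusion $\underline{\cat}\Tperfect_{\baseCat}\subset\underline{\cat}_{\baseCat}$; also note that preservation of indexed products is by construction rather than ``tautological,'' since indexed products are not a fibrewise notion.

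Second, the colimit step is not yet a proof. Translating colimits into limits of right--adjoint diagrams ``on the Mackey side'' presupposes that the pointwise right adjoints of a diagram of Mackey transformations reassemble into a diagram of Mackey functors, which is an adjointability statement in the spirit of \cref{prop:limitOfBeckChevalleyAdjunctions} that you never verify; moreover your ``already--settled limit case'' concerns limits in the two categories themselves, not limits of these auxiliary Ind--completed diagrams, so the reduction does not literally apply. The paper instead argues pointwise: colimits in $\mackey_{\baseCat}(\cat\perfect)$ are computed evaluationwise, each evaluation $\cat\Tperfect_{\baseCat}\rightarrow\cat\perfect$ preserves colimits because it admits a right adjoint (\cref{lem:catPerfEvaluationFunctor}), the evaluations are jointly conservative, and reflection then follows from conservativity plus preservation (\cref{lem:conservativePreservationImpliesReflection}), as you say. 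Your plan can be completed in this style---using the colimit formula of \cref{cor:catPerfPresentability} together with the fact that the relevant limits of right--adjoint diagrams are computed in $\underline{\widehat{\cat}}_{\baseCat}$, hence evaluationwise---but the comparison with the pointwise Mackey colimit must be carried out explicitly rather than quoted as a reduction to the limit case.
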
 
\noindent Intuitively, in the case of $\baseCat=\orbit_G$, the image of the faithful inclusion $\cat\Tperfect_G\subset \mackey_G(\cat\perfect)$ consists of those $\cat\perfect$--valued $G$--Mackey functors where the abstract transfer maps are both left and right adjoint to the restrictions. This not only allows us to \textit{define} $\baseCat$--parametrised algebraic K--theory as the composite
$\underline{\KTheory}_{\baseCat}\colon \cat\Tperfect_{\baseCat}\hookrightarrow \mackey_{\baseCat}(\cat\perfect)\xlongrightarrow{\mackey_{\baseCat}(\KTheory)} \mackey_{\baseCat}(\spectra)$, but
 also  to port many known concepts and results about $\cat\perfect$ to the setting of $\cat\Tperfect
_{\baseCat}$, the most important of which is the theory of \textit{split Karoubi sequences}. Recall that a {split Karoubi sequence in $\cat\perfect$} is a sequence of objects $\sC\hookrightarrow \D\twoheadrightarrow \E$ in $\cat\perfect$ which is both a cofibre and a fibre sequence, and where both functors admit both left and right adjoints. Thanks to \cref{alphaThm:MackeyInclusionCreatesFibreCofibres}, we can and will define a sequence in $\cat\Tperfect_{\baseCat}$ to be a split Karoubi sequence if it is so when viewed as a sequence in $\mackey_{\baseCat}(\cat\perfect)$ under the faithful inclusion, and we deduce various properties of this definition in \cref{subsection:splitKaroubiSequences} from the unparametrised theory.

\subsection*{Multiplicativity and cubical descent}
We now introduce the twin problem of endowing algebraic K--theory with equivariant multiplicative structures and that of equivariant cubical descent. First, given a symmetric monoidal structure $\sC^{\otimes}$ on $\sC$, recall from \cite[Def. 2.2.1.6, Prop. 2.2.1.9]{lurieHA} that we can enhance a Bousfield localisation $L \colon \sC \rightarrow \D$ to a symmetric monoidal functor by verifying that for any finite collection $\{f_i\colon x_i\rightarrow y_i\}_{i\in I}$ of  $L$--equivalences in $\sC$, the morphism $\otimes_if_i\colon \otimes_ix_i\rightarrow \otimes_iy_i$ is also an $L$--equivalence, in which case we  say that $L$ is \textit{compatible with the symmetric monoidal structure} $\sC^{\otimes}$. In fact, by \cite[Ex. 2.2.1.7]{lurieHA}, this condition can be drastically simplified just to checking that  if $f\colon x\rightarrow y$ is an $L$--equivalence, then so is $\id_z\otimes f\colon z\otimes x\rightarrow z\otimes y$. This is because the map $f_1\otimes f_2\colon x_1\otimes x_2\rightarrow y_1\otimes y_2$ can be factored as $(f_1\otimes \id_{y_2})\circ(\id_{x_1}\otimes f_2)$ where both maps are of the form specified in the simplified condition.

Next,  it was shown in \cite{BGT13} that algebraic K--theory may be factored as the composition
 $ \KTheory\colon \cat\perfect \xlongrightarrow{y}\presheaf(\cat\perfect) \xlongrightarrow{\mathcal{L}} \nmot \xlongrightarrow{\mapsp(\Z(\spectra^{\omega}),-)}\spectra$ where $y$ is the Yoneda embedding, $\Z\coloneqq \mathcal{L}y$, and $\nmot$ is the so--called stable category of \textit{noncommutative motives} obtained by stabilising the  localisation of $\presheaf(\cat\perfect)$ against the split Karoubi sequences, i.e. inverting the maps $y(\D)/y(\sC)\rightarrow y(\E)$ for split Karoubi sequences as above. We call maps in $\presheaf(\cat\perfect)$ that get inverted by $\mathcal{L}$ the \textit{motivic equivalences}. 
We then upgrade the functor K to a lax symmetric monoidal one by showing that $\mathcal{L}$ enhances to a symmetric monoidal functor using the Day convolution structure on $\presheaf(\cat\perfect)$. By Lurie's criterion above, it suffices to show that $y(\A)\otimes[y(\D)/y(\sC)\rightarrow y(\E)]\simeq [y(\A\otimes \D)/y(\A\otimes \sC)\rightarrow y(\A\otimes \E)]$. Given this, since applying $\A\otimes-$ preserves split Karoubi sequences, the right--hand side (and hence also the left--hand side) is a motivic equivalence, as required.

Coming back to the parametrised setup,  it turns out that one may also make sense of the notion of a $\baseCat$--category of $\baseCat$\textit{--noncommutative motives}  using $\underline{\cat}\Tperfect_{\baseCat}$. In fact, in \cref{subsection:noncommutativeMotives}, we construct \textit{two} variants $\underline{\nmot}\pointwise_{\baseCat}$ and $\underline{\nmot}^{\mathrm{nm}}_{\baseCat}$ of $\baseCat$--noncommutative motives called the pointwise and normed variants, respectively.\footnote{Where the normed variant requires \cref{hypothesis:normed_spectra} on $\baseCat$ which is satisfied by $\orbit_G$.} As suggested by the name, we prove in \cref{monoidalityOfTMotives} that the functor $\Z_{\mathrm{nm}}\colon \underline{\cat}\Tperfect_{\baseCat}\rightarrow\underline{\nmot}^{\mathrm{nm}}_{\baseCat}$ refines to a $\baseCat$--symmetric monoidal functor. On the other hand, $\underline{\nmot}^{\mathrm{pw}}_{\baseCat}$ is more related to K--theory since we have:

\begin{alphThm}[See {\cref{univPropStableMotives,MotivicCorepresentability}}]\label{alphaThm:universalPropertyMotives}
    For any ${\baseCat}$-presentable-stable category $\underline{\E}$, the precomposition
$\Z^*_{\mathrm{pw}} \colon \underline{\func}^L_{\baseCat}(\underline{\nmot}_{\baseCat}^{\mathrm{pw}}, \underline{\E}) \rightarrow \underline{\func}_{\baseCat}\add(\underline{\cat}\Tperfect_{\baseCat}, \underline{\E})$ is an equivalence, where $\underline{\func}_{\baseCat}\add(\underline{\cat}\Tperfect_{\baseCat}, \underline{\E})\subseteq \underline{\func}_{\baseCat}(\underline{\cat}\Tperfect_{\baseCat}, \underline{\E})$ denotes the full subcategory of additive functors, i.e. those that send split Karoubi sequences to fibre sequences. Moreover, we have the  factorisation 
$\underline{\KTheory}_{\baseCat}\colon \underline{\cat}\Tperfect_{\baseCat}\xlongrightarrow{\Z_{\mathrm{pw}}}\underline{\nmot}\pointwise_{\baseCat} \xlongrightarrow{\myuline{\mapsp}(\Z_{\mathrm{pw}}\myuline{\spectra},-)} \myuline{\spectra}_{\baseCat}$
of $\baseCat$--parametrised algebraic K--theory.
\end{alphThm}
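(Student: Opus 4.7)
The plan is to derive the universal property from the construction of $\underline{\nmot}_{\baseCat}^{\mathrm{pw}}$ and then obtain the factorisation by verifying the additivity hypothesis for $\underline{\KTheory}_{\baseCat}$.

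For the universal property, I would chain three universal properties in sequence. First, the universal property of the parametrised Yoneda embedding identifies $\baseCat$-colimit-preserving $\baseCat$-functors $\underline{\presheaf}_{\baseCat}(\underline{\cat}\Tperfect_{\baseCat}) \to \underline{\E}$ freely with arbitrary $\baseCat$-functors $\underline{\cat}\Tperfect_{\baseCat} \to \underline{\E}$. Second, $\underline{\nmot}_{\baseCat}^{\mathrm{pw}}$ is by construction the parametrised stabilisation of a Bousfield localisation of this presheaf category, inverting the motivic equivalences attached to split Verdier sequences $\sC \hookrightarrow \D \twoheadrightarrow \E$ in $\underline{\cat}\Tperfect_{\baseCat}$. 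Since $\underline{\E}$ is already $\baseCat$-stable, the universal properties of parametrised localisation and of parametrised stabilisation combine to carve out precisely those extended $\baseCat$-colimit-preserving functors which invert these equivalences. Unwinding via Yoneda extension, this happens iff, for every such split Verdier sequence, the cofibre $F(\D)/F(\sC) \to F(\E)$ is an equivalence in $\underline{\E}$, equivalently iff $F$ sends split Verdier sequences to fibre sequences in the stable target, which is exactly the additivity condition.

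For the factorisation, apply the universal property with $\underline{\E} = \myuline{\spectra}_{\baseCat}$. The composite defining $\underline{\KTheory}_{\baseCat}$ is additive: split Verdier sequences in $\underline{\cat}\Tperfect_{\baseCat}$ are, by the definition adopted after \cref{alphaThm:mainMackeyInclusion}, precisely those that become fibre--cofibre sequences in $\underline{\mackey}_{\baseCat}(\cat\perfect)$, and postcomposition with $\underline{\mackey}_{\baseCat}(\KTheory)$ then sends these to fibre sequences of parametrised spectra pointwise using classical nonequivariant additivity of $\KTheory$. The universal property thus produces a unique $\baseCat$-colimit-preserving $\widehat{\KTheory} \colon \underline{\nmot}_{\baseCat}^{\mathrm{pw}} \to \myuline{\spectra}_{\baseCat}$ with $\widehat{\KTheory} \circ \Z_{\mathrm{pw}} \simeq \underline{\KTheory}_{\baseCat}$. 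To identify $\widehat{\KTheory}$ with the corepresentable $\myuline{\mapsp}(\Z_{\mathrm{pw}}\myuline{\spectra}, -)$, I would check that both functors are $\baseCat$-colimit-preserving --- the corepresentable one because $\Z_{\mathrm{pw}}\myuline{\spectra}$ is $\baseCat$-compact, being the image under the left adjoint $\Z_{\mathrm{pw}}$ of a $\baseCat$-compact generator --- and that they agree on representables $\Z_{\mathrm{pw}}(\sC)$, where both compute $\underline{\KTheory}_{\baseCat}(\sC)$ by a parametrised analogue of the BGT identity $\KTheory(\sC) \simeq \mapsp_{\nmot}(\Z(\spectra^{\omega}), \Z(\sC))$.

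The main obstacle is ensuring that the parametrised versions of Bousfield localisation and stabilisation combine to yield the universal property with exactly the stated hypothesis (rather than some weaker or fibrewise variant), and that the identification of the corepresenting object $\Z_{\mathrm{pw}}\myuline{\spectra}$ goes through at the parametrised level; in particular, that the ``free on one object'' computation for parametrised noncommutative motives still recovers the relevant $\baseCat$-mapping spectrum as $\underline{\KTheory}_{\baseCat}$ on representables. The additivity verification and the formal chaining of universal properties should otherwise be a routine unwinding of the material set up earlier in the paper.
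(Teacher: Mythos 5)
The first half of your argument does match the paper's route: \cref{univPropStableMotives} is obtained there exactly by chaining the universal property of parametrised presheaves, the presentable Dwyer--Kan localisation at the split Verdier cofibres (\cref{parametrisedPresentableDwyerKanLocalisation}), and the smashing stabilisation (\cref{parametrisedSemiadditivisationSmashing}). The one step you elide is not purely set-theoretic hygiene: since $\underline{\cat}\Tperfect_{\baseCat}$ is large, the paper builds $\underline{\nmot}^{\mathrm{pw}}_{\baseCat}$ as a colimit over regular cardinals of motives on $\underline{\widetilde{\cat}}^{\myuline{\mathrm{perf}},\kappa}_{\baseCat}$, and to identify additive functors out of the big category with the limit of additive functors on the $\kappa$-pieces (\cref{universalPropertyUnstableBigMotives}) it needs \cref{splitVerdierSequencesAsFilteredColimits} -- every split Verdier sequence is a $\kappa$-filtered colimit of split Verdier sequences between $\kappa$-compact objects -- which in turn rests on the classification of split Verdier sequences and the adjunction of \cref{lem:CDHAdjunctionViaClassification}. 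Your outline would need this (or some substitute) to make the first half literally well-posed.

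The genuine gap is in the second half. You propose to identify the functor $\widehat{\KTheory}$ produced by the universal property with $\myuline{\mapsp}(\Z_{\mathrm{pw}}\myuline{\spectra}\tomega,-)$ by checking agreement on representables ``by a parametrised analogue of the BGT identity'' -- but that identity \emph{is} \cref{MotivicCorepresentability}, i.e.\ precisely the nontrivial content of the factorisation you are asked to prove, so invoking it is circular. The paper closes this by an actual computation: \cref{motivicSuspensions} shows that $\colim_{\bullet\in\mathbf{\Delta}\op}y_{\unstable}\underline{\Qcons}_{\bullet}(\underline{\sC})$ is already motivically local (using that $\underline{\func}\Texact(-,\underline{\sC})$ preserves split Verdier sequences and that the parametrised K-theory space is additive) and realises $\Sigma j_{\unstable}(\underline{\sC})$ (via the d\'ecalage simplicial split Verdier sequence); then, using $\underline{\func}\Texact(\underline{\D},\underline{\Qcons}_n\underline{\sC})\simeq\underline{\Qcons}_n\underline{\func}\Texact(\underline{\D},\underline{\sC})$ from \cref{QFunCommutation} and full faithfulness of $j_{\unstable}$, the mapping prespectrum $\myuline{\map}(j_{\unstable}\underline{\sC},\Sigma^nj_{\unstable}\underline{\D})$ is computed levelwise to be $\loops\Sigma^n\underline{\KTheory}\pointwise_{\baseCat}(\underline{\func}\Texact(\underline{\sC},\underline{\D}))$; the stated factorisation is then the special case $\underline{\sC}=\myuline{\spectra}\tomega$ via \cref{colimitGenerationSpectra}. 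Note also that this direct computation makes your detour through uniqueness and colimit-preservation of the corepresentable functor unnecessary, and that your compactness argument for $\Z_{\mathrm{pw}}\myuline{\spectra}\tomega$ (``image of a compact generator under the left adjoint $\Z_{\mathrm{pw}}$'') is not valid as stated: $\Z_{\mathrm{pw}}$ factors through the Yoneda embedding, which is not a left adjoint, and in any case left adjoints preserve compact objects only when their right adjoints preserve filtered colimits.
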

\noindent By the universal property from \cref{alphaThm:universalPropertyMotives}, we obtain a  comparison map $\Psi\colon \underline{\nmot}_{\baseCat}^{\mathrm{pw}}\rightarrow \underline{\nmot}_{\baseCat}^{\mathrm{nm}}$. This map is an equivalence if and only if the functor $\Z_{\mathrm{pw}}$ is compatible with the $\baseCat$--symmetric monoidal structures in a sense analogous to the unparametrised situation sketched above.\footnote{This criterion is given for example in \cite[Lem. 5.27]{quigleyShahParametrisedTate}; see also \cref{TDwyerKanSymmetricMonoidality,parametrisedPresentableDwyerKanLocalisation} where we provide a different proof.} If this happens, then  $\underline{\KTheory}_{\baseCat}$ refines to a $\baseCat$--lax symmetric monoidal functor. 

\vspace{1mm}

Unfortunately, this is where the breezy transferability from the nonequivariant setting to the equivariant one ends.  The core issue in this setting is the lack of a \textit{currying} manoeuvre for $G$--tensor products: nonequivariantly, we were able to simplify  the sufficient condition for symmetric monoidality of localisations by \textit{currying} and separating the problem into each tensor component. This led to a rather easy check to  enhance the motivic localisation $\Z$ multiplicatively. With $G$--tensor products, we have no such luxury since tensoring an object $G$--times and remembering the permutation $G$--equivariant structure inextricably links the tensor components. This forces us to deal with tensor powers of localisation equivalences head--on, leading naturally to the phenomenon of \textit{K--theoretic cubical descent}, as we now explain.

Given two split Karoubi sequences  $\{\sC_i\hookrightarrow \D_i\twoheadrightarrow \E_i\}_{i=0,1}$ in $\cat\perfect$, we would like to show that the tensored map 
$y(\D_1)/y(\sC_1)\otimes y(\D_2)/y(\sC_2)\longrightarrow y(\E_1)\otimes y(\E_2)$
is still a motivic equivalence. Since $y(-)$ was symmetric monoidal and $-\otimes-$ commutes with colimits in each variable, this map is equivalent to the composite
\begin{equation}\label{eqn:factoringPowersOfMotivicEquivalences}
    \begin{tikzcd}
        \frac{y(\D_1\otimes \D_2)}{y(\D_1\otimes \sC_2)\cup_{y(\sC_1\otimes \sC_2)}y(\sC_1\otimes \D_2)}\rar & \frac{y(\D_1\otimes \D_2)}{y(\D_1\otimes \sC_2\cup_{\sC_1\otimes \sC_2}\sC_1\otimes \D_2)}\rar & y(\E_1\otimes \E_2).
    \end{tikzcd}
\end{equation}
This breaks up the problem  into two parts: (a) if the induced map $\D_1\otimes \sC_2\cup_{\sC_1\otimes \sC_2}\sC_1\otimes \D_2\rightarrow \D_1\otimes \D_2$ is fully faithful, then by general nonsense,  $\D_1\otimes \sC_2\cup_{\sC_1\otimes \sC_2}\sC_1\otimes \D_2\hookrightarrow \D_1\otimes \D_2\twoheadrightarrow \E_1\otimes \E_2$ will again be a split Karoubi sequence. In this case,  the second map in  \cref{eqn:factoringPowersOfMotivicEquivalences} is a motivic equivalence; (b) if  the functor $\Z \colon \cat\perfect\rightarrow\nmot$ preserves pushouts of the form
    \begin{center}
        \begin{tikzcd}
            \sC_1\otimes \sC_2\rar[hook]\dar[hook] \ar[dr, phantom ,very near end, "\ulcorner"] & \sC_1\otimes \D_2\dar\\
            \D_1\otimes \sC_2 \rar & \D_1\otimes \sC_2\cup_{\sC_1\otimes \sC_2}\sC_1\otimes \D_2
        \end{tikzcd}
    \end{center}
    then the first map in \cref{eqn:factoringPowersOfMotivicEquivalences} will be seen to be a motivic equivalence.
Analogous questions involving cubes of higher dimensions can be formulated with more than two split Karoubi sequences.  As such, upon forgoing currying manoeuvres, endowing  multiplicative structures on K--theory  is strongly related  to  certain descent questions against special types of cubes as in problem (b). 
\vspace{1mm}

Motivated by these questions, we initiate in \cref{section:theoryOfParametrisedCubes} the study of \textit{parametrised cubes}.  More concretely, said cubes are parametrised categories obtained by taking finite indexed products of $\Delta^1$. The hypothesis that $\baseCat$ is atomic orbital will be used in an essential way in order to define the ``singletons'' in a parametrised cube. As far as we know, a cubical theory in this level of generality is new and could be of independent interest, cf. for example \cite{kaifSil} where they were used to develop a theory of parametrised Goodwillie calculus.

The guiding example   will be that of $C_2$\textit{--pushouts}, assuming that $G=C_2$ for simplicity. Namely, suppose in the setting of problem (b) above, instead of tensoring two different split Karoubi sequences, we took its $C_2$--norm instead. As in (b), this  induces a $C_2$--colimit diagram 
\begin{equation}\label{eqn:introC_2Pushouts}
    \begin{tikzcd}
        \norm^{C_2}_e\sC \ar[dr, very near end, "{\ulcorner}",phantom]\rar[hook]\dar[hook] & \sC\otimes \D \dar\\
        \D\otimes\sC \rar \ar[ur,no head, dashed]& \D\otimes\sC\underline{\cup}_{\norm^{C_2}_e\sC}\sC\otimes\D
    \end{tikzcd}
\end{equation}
in $\cat\Tperfect_{C_2}$ which we term as a $C_2$--pushout diagram. In \cref{eqn:introC_2Pushouts}, the top left and bottom right terms are $C_2$--objects in $\underline{\cat}\Tperfect_{C_2}$, but the other two terms are merely objects in $\cat\perfect$. The point is that the group $C_2$ acts on the \textit{entire} diagram  by swapping $\sC\otimes\D$ with $\D\otimes\sC$.   For example, in $C_2$--spaces, $C_2$--pushouts yield the \textit{sign suspension} $\Sigma^{\sigma}$ of $C_2$--spaces. 

The idea for the $C_2$--equivariant case now is to show that the square descent in K--theory induces descent with respect to $C_2$--colimits of the form \cref{eqn:introC_2Pushouts} by  re--expressing  the $C_2$--pushout diagram \cref{eqn:introC_2Pushouts} into an \textit{ordinary} pushout diagram of $C_2$--objects in $\cat\Tperfect_{C_2}$. We then show that additive functors satisfy descent with respect to such squares by standard methods, and so solve the $C_2$--analogue \cref{eqn:introC_2Pushouts} of problem (b) above. Together with this, a d\'{e}vissage--type argument via the solvability of $p$--groups (in the case $p=2$) then yields the following main theorem of this article, providing in the special case of $G$ being a 2--group the desired refinement of the Green functor  structure from \cite{barwick2} to $G$--$\mathbb{E}_{\infty}$--algebras.

\begin{alphThm}[See {\cref{mainTheorem:pointwiseEqualsNormedFor2-Groups,mainCorollary:K-theoryOf2-Groups}}]\label{alphaThm:pointwiseEqualsNormedFor2-Groups}
Let $G$ be a group with $|G|=2^n$ for some $n$. The comparison map $\fanpsi : \underline{\nmot}_G\pointwise \rightarrow \underline{\nmot}_G^{\mathrm{nm}}$ in this case is an equivalence. Consequently, $\underline{\KTheory}_G\colon \underline{\cat}\Tperfect_G\rightarrow \myuline{\spectra}_G$ canonically refines to a $G$--lax symmetric monoidal functor for such $G$ and so induces the functor $\underline{\KTheory}_G\colon \calg_G(\underline{\cat}\Tperfect_G)\rightarrow \calg_G(\myuline{\spectra}_G)$.
\end{alphThm}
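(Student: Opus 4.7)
The plan is to deduce the equivalence $\fanpsi$ from a compatibility statement between $\Z_{\mathrm{pw}}$ and the $G$-symmetric monoidal structure, then reduce that compatibility, by a solvability-based d\'{e}vissage, to a descent property for $C_2$-pushouts which is in turn reduced to ordinary cubical descent. First I would invoke the criterion (referenced around \cref{TDwyerKanSymmetricMonoidality}) that $\fanpsi$ is an equivalence if and only if the class of motivic equivalences in $\underline{\presheaf}_G(\underline{\cat}\Tperfect_G)$ forms a $G$-tensor ideal with respect to the Day $G$-symmetric monoidal structure. Since unparametrised tensor compatibility already follows by currying as in \cref{eqn:curryingTensorIdeal}, the new content is that the multiplicative norm functors $\norm^G_H\colon \underline{\nmot}_H\pointwise \to \underline{\nmot}_G\pointwise$ (built out of the norms on $\underline{\cat}\Tperfect$) preserve motivic equivalences for every $H\leq G$.

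The second step is the d\'{e}vissage. Given $|G|=2^n$, any subgroup inclusion $H\leq G$ factors through a chain $H=H_0\triangleleft H_1\triangleleft\cdots\triangleleft H_k=G$ with each successive quotient $H_{i+1}/H_i\cong C_2$, because $2$-groups are solvable. Using the composability $\norm^G_H\simeq \norm^G_{H_{k-1}}\circ\cdots\circ\norm^{H_1}_{H_0}$ (which holds on the nose at the level of $\underline{\cat}\Tperfect$ and hence descends to the motivic category), I would reduce to checking that each norm $\norm^{H_{i+1}}_{H_i}$ of index $2$ preserves motivic equivalences. Working fibrewise and exploiting that $\cat\Tperfect_{\baseCat}$ is natural in base change via \cref{alphaThm:mainMackeyInclusion}, such a relative index-$2$ norm is governed by the $C_2$-norm.

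Third, I would tackle the $C_2$-case. A motivic equivalence comes from a split Verdier sequence $\sC\hookrightarrow \D\twoheadrightarrow \E$, and the $C_2$-norm of the associated map fits into a $C_2$-pushout square of the shape \cref{eqn:introC_2Pushouts}. Applying the re-expression result from \cref{nota:specialNotationsForC_2}, this $C_2$-pushout is equivalent to an \emph{ordinary} pushout of $C_2$-objects in $\underline{\cat}\Tperfect_{C_2}$ built from the counit $\varepsilon\colon \ind^{C_2}_e\res^{C_2}_e\norm^{C_2}_e\sC\to\norm^{C_2}_e\sC$. I would then argue, in a manner analogous to parts (a) and (b) of the introductory discussion, that (a) the relevant full faithfulness holds so that the diagonal factor is a split Verdier sequence (and hence a motivic equivalence), and (b) $\Z_{\mathrm{pw}}$ preserves the residual pushout; since this residual pushout is an \emph{ordinary} (not indexed) pushout of perfect-stable categories, preservation by $\Z_{\mathrm{pw}}$ follows from unparametrised additivity applied fibrewise via \cref{alphaThm:MackeyInclusionCreatesFibreCofibres}.

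Finally, the consequence that $\underline{\KTheory}_G$ is $G$-lax symmetric monoidal follows formally: once $\fanpsi$ is known to be an equivalence, the $G$-symmetric monoidal structure on $\underline{\nmot}_G^{\mathrm{nm}}$ established in \cref{monoidalityOfTMotives} transports to $\underline{\nmot}_G\pointwise$, and corepresentation by the monoidal unit $\Z_{\mathrm{pw}}\myuline{\spectra}$ yields the claimed refinement and the induced functor on $\calg_G$. I expect the main obstacle to be the verification of the full faithfulness condition (a) above together with the precise matching of the re-expression result to the cube of interest, since this is where the cubical theory developed in \cref{section:theoryOfParametrisedCubes} does real work; the solvability d\'{e}vissage and the reduction to unparametrised additivity are comparatively formal once the $C_2$-case is in hand, which is precisely why the argument does not extend beyond $2$-groups without new input.
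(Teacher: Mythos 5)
Your overall architecture coincides with the paper's: reduce $\fanpsi$ being an equivalence to $\underline{\otimes}$-multiplicative closure of the pointwise motivic relations, use solvability of $2$-groups to factor $\norm^G_H$ into index-$2$ norms, and handle the index-$2$ case by rewriting the relevant $C_2$-pushout as an ordinary pushout (\cref{nota:specialNotationsForC_2}) and appealing to right-split Verdier pushouts plus additivity (\cref{sourceOfRightSplitVerdierPushouts,motivicSplitVerdierPushout}); your flagged "main obstacle" is indeed where the paper's \cref{technicalLeftKanExtensionThing} and the $s_!\simeq\mathrm{cofib}\circ s_*$ trick in \cref{arrowSpectraCase} do the work, after first shrinking the generators to a single Waldhausen map via \cref{rmk:specialFormOfR_pw}.

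However, there is a genuine gap in your step from the generators to the d\'{e}vissage. You assert that "a motivic equivalence comes from a split Verdier sequence," but the motivic equivalences are the $G$-strong saturation of the class generated by split Verdier quotient maps, not that class itself. This matters precisely because of your composability reduction: writing $\norm^G_H$ as a composite of index-$2$ norms, the output of the first norm applied to a generator is only known to lie in the saturation $\overline{\R}\botPointwise$, so each subsequent index-$2$ norm must be shown to preserve the \emph{entire} saturation, not merely to send split-Verdier generators into it. Since $\norm^{N_m}_{N_{m-1}}$ is neither exact nor colimit-preserving, this passage is not formal; the paper spends real effort on it (\cref{obs:saturationTensorIdeal}, \cref{saturationInduction}, \cref{lem:saturationTensorIdealInduction}), using $G$-distributivity and the cubical pushout-product formula \cref{pushoutProductExpansion} (which rests on \cref{normsOfCofibres} and \cref{prop:specialIndex2Identification}) to control how the norm interacts with the colimit- and pushout-closure axioms of the saturation. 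Your proposal never addresses this bootstrapping, so as written the argument only establishes that an index-$2$ norm sends generating split-Verdier equivalences to motivic equivalences, which is insufficient both for the composite norms in the d\'{e}vissage and for the multiplicative-closure criterion itself.
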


\noindent This in particular means that for such groups, the equivariant K--theory spectrum $\{\KTheory(\sphere_H)\}_{H\leq G}$ of the equivariant sphere spectrum canonically assembles to a normed $\mathbb{E}_{\infty}$--ring spectrum in $\spectra_G$. Lastly, a straightforward combination of \cref{alphaThm:monoidalBorelificationPrinciple,alphaThm:pointwiseEqualsNormedFor2-Groups} gives us the following:

\begin{alphCor}[See {\cref{penultimateGLaxSymmetricMonoidal}}]\label{alphaCor:SwanKTheory}
    Let $G$ be a group with $|G|=2^n$ for some $n$ and $\sC^{\otimes}$ be a small symmetric monoidal perfect--stable category  with a $G$--action. Then $\{\KTheory(\sC^{hH})\}_{H\leq G}$ naturally assemble to a spectral $G$--Mackey functor equipped with multiplicative norms. 
\end{alphCor}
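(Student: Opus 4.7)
The plan is to combine \cref{alphaThm:monoidalBorelificationPrinciple,alphaThm:pointwiseEqualsNormedFor2-Groups} in sequence. Starting from the datum $\sC^{\otimes}\in\func(BG,\calg((\cat\perfect)^{\otimes}))$, which via the inclusion $\cat\perfect\subset\cat$ may be regarded as an object of $\func(BG,\cmonoid(\cat))$, I would feed it into the Borelification machine of \cref{alphaThm:monoidalBorelificationPrinciple}. This yields a $G$-symmetric monoidal $G$-category $\underline{\borel}(\sC^{\otimes})$, i.e. an object of $\calg_G(\underline{\cat}_G)$, whose underlying $G$-category is the assignment $\{G/H\mapsto \sC^{hH}\}$.

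The first nontrivial step is to promote $\underline{\borel}(\sC^{\otimes})$ from an object of $\calg_G(\underline{\cat}_G)$ to one of $\calg_G(\underline{\cat}\Tperfect_G)$. For the underlying $G$-category, I would check that for each $H\leq G$, the homotopy fixed points $\sC^{hH}$ are again perfect-stable; this holds because $H$ is finite, the $H$-action is through exact functors, and $\cat\perfect$ admits finite limits along which such homotopy fixed points may be computed. The restriction and transfer maps supplied by Borelification are then automatically exact, being built from universal properties of the homotopy fixed points and the constituent exact functors. Finally, the concrete description of the norm $\norm^G_H\colon \sC^{hH}\rightarrow \sC^{hG}$ as $X\mapsto \bigotimes_{g\in G/H}gX$ given in \cref{alphaThm:monoidalBorelificationPrinciple} makes it transparent that these functors are exact in each variable, since the input tensor product on $\sC^{\otimes}$ is so by hypothesis. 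This is the only bit of verification in the argument with any real content, but it is immediate from the pointwise descriptions in \cref{alphaThm:monoidalBorelificationPrinciple}.

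The second step is then to apply the $G$-lax symmetric monoidal refinement of $\underline{\KTheory}_G$ provided by \cref{alphaThm:pointwiseEqualsNormedFor2-Groups}, which under the assumption $|G|=2^n$ induces a functor $\calg_G(\underline{\cat}\Tperfect_G)\rightarrow \calg_G(\myuline{\spectra}_G)$. Evaluating on $\underline{\borel}(\sC^{\otimes})$ produces the desired normed $\mathbb{E}_{\infty}$-ring spectrum in $\myuline{\spectra}_G$. To identify its underlying Mackey functor, I would unwind the definition of $\underline{\KTheory}_G$ as the composite $\underline{\cat}\Tperfect_G\hookrightarrow \underline{\mackey}_G(\cat\perfect)\xlongrightarrow{\mackey_G(\KTheory)}\myuline{\spectra}_G$ from \cref{alphaThm:mainMackeyInclusion}: each arrow evaluates pointwise, so the resulting Mackey functor assigns $\KTheory(\sC^{hH})$ to each $G/H$, as claimed.
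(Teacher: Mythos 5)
Your overall shape (Borelification via \cref{alphaThm:monoidalBorelificationPrinciple}, then the normed refinement of $\underline{\KTheory}_G$ from \cref{alphaThm:pointwiseEqualsNormedFor2-Groups}) is the intended one, but the middle step of your argument has a genuine gap. After Borelifying $\sC^{\otimes}$ you have a $G$--symmetric monoidal category $\underline{\borel}(\sC^{\otimes})$, i.e.\ an object of $\cmonoid_G(\underline{\cat})$, which is a commutative algebra for the \emph{cartesian} structure on $\underline{\cat}_G$. What you need in order to feed it into $\underline{\KTheory}_G\colon \calg_G(\underline{\cat}\Tperfect_G)\rightarrow\calg_G(\myuline{\spectra}_G)$ is a $G$--commutative algebra for Nardin's \emph{non-cartesian} tensor structure $(\underline{\cat}\Tperfect_G)\totimes$ coming from $\underline{\presentable}_{G,L,\mathrm{st},\omega}$. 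Passing from the former to the latter is not a property one verifies by observing that the fibres $\sC^{hH}$ are perfect--stable and that the fibrewise tensors and the norms $X\mapsto\bigotimes_{g\in G/H}gX$ are exact in each variable; it is extra coherent structure, and the recognition principle you are implicitly invoking (``a $G$--symmetric monoidal, fibrewise perfect--stable $G$--category with suitably exact/distributive tensors and norms is the same as an object of $\calg_G((\underline{\cat}\Tperfect_G)\totimes)$'') is the parametrised analogue of the identification of $\calg(\cat\perfect)$ with bi-exact symmetric monoidal perfect--stable categories. That statement is a theorem, not a tautology, it is nowhere proved or cited in the paper, and even granting it you would still have to verify genuine distributivity of the Borelified norms (e.g.\ their interaction with indexed biproducts), none of which is ``immediate from the pointwise descriptions''.

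The paper sidesteps exactly this issue by working one categorical level up. It Borelifies $\cat\perfect$ itself rather than $\sC$: by \cref{borelificationPrincipleForCatPerf} (an application of \cref{monoidalBorelificationPrinciple}~(2) to $\underline{\cat}\Tperfect_G$), the evaluation $\eval_{G/e}\colon(\underline{\cat}\Tperfect_G)\totimes\rightarrow\underline{\borel}((\cat\perfect)^{\otimes})$ is $G$--symmetric monoidal, so its $G$--fully faithful right adjoint $\underline{\borel}(\cat\perfect)\hookrightarrow\underline{\cat}\Tperfect_G$ is automatically $G$--\emph{lax} symmetric monoidal. By \cref{monoidalBorelificationPrinciple}~(3), $\calg_G\big(\underline{\borel}((\cat\perfect)^{\otimes})\big)\simeq\func(BG,\calg(\cat\perfect))$, so your datum $\sC^{\otimes}$ \emph{is} already a $G$--commutative algebra in the Borel $G$--category, and pushing it through the lax symmetric monoidal composite $\underline{\borel}(\cat\perfect)\hookrightarrow\underline{\cat}\Tperfect_G\rightarrow\myuline{\spectra}_G$ (the last arrow being $G$--lax symmetric monoidal by \cref{mainCorollary:K-theoryOf2-Groups}) produces the normed $\mathbb{E}_\infty$--structure on $\{\KTheory(\sC^{hH})\}_{H\leq G}$ with no recognition principle needed, and with naturality in $\sC^{\otimes}$ for free. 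Your final unwinding of the underlying Mackey functor is fine; it is the promotion step that has to be replaced by this categorified argument (or by an honest proof of the recognition statement you assumed).
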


The  equivariant K--theory spectra considered in \cref{alphaCor:SwanKTheory} represents an extremely interesting class of examples and are sometimes called Swan K--theory. Classically, its importance has been recognised as early as Swan's groundbreaking work \cite{swan} together with its axiomatisation and hermitian elaboration \cite{dress}. More recently, they have also been considered, for instance, in \cite[$\S8$]{barwick2} and \cite{merlingMalkiewich}. \\

\textbf{Relation to other work.} Most results in this paper are corrections and expansions of Chapters 2--4 of the author's PhD thesis \cite{kaifThesis}. Equivariant algebraic K-theory is not a new subject and much work has been done in this area, see for example \cites{guillou2021multiplicative, lenz2021gglobal, merling2016equivariant, schwede2019global}. A slightly over-simplified but helpful view is that there are two versions of higher algebraic K-theory: on the one hand, there is the \textit{group-completion} K\textit{-theory}, classically related to Quillen's +-construction, cf. \cites{GGN} for an $\infty$-categorical treatment; on the other hand, there is the Quillen/Segal/Waldhausen K{-theory} whose input is a small stable $\infty$-category,  corresponding to Quillen's Q-construction and Segal and Waldhausen's $\mathrm{S}_{\bullet}$-construction. All the literature cited above dealt with the equivariant enhancement of the group-completion K-theory. In this paper, we treat the latter version of K-theory, and is a further refinement of the multiplicative structures treated in \cites{barwick2,CMMN2} to include the multiplicative norms. 

Before our work, Elmanto and Haugseng \cite{elmantoHaugseng} have shown that for \textit{all} finite groups $G$, the equivariant algebraic K--theory \textit{space} enhances to one with norms, using as a key input the deep result on K--theoretic power operations in \cite{barwickMathewNikolausPolynomiality}. Subsequently, after the appearance of our article, the work of Cnossen, Haugseng, Lenz, and Linskens \cite{cnossennorms} on bispan functors in particular  upgraded Elmanto--Haugseng's result to spectral norms. 

Our methods, which are independent of \cite{barwickMathewNikolausPolynomiality}, are  distinct from theirs. Especially in light of recent breakthroughs in studying localising motives by Efimov, we think that the motivic universal property of equivariant K--theory is an interesting point in its own right. For instance, it  guarantees that the multiplicative norm structure on equivariant algebraic K--theory is unique and it can be used to construct a highly structure equivariant Dennis trace map. In joint work \cite{hilmanRamzi} with Maxime Ramzi building on the foundations and the general approach herein, we construct the category of equivariant \textit{localising} motives associated to nonconnective algebraic K--theory and show that these admit multiplicative norms for all finite groups $G$. We then also deduce from this the case of connective algebraic K--theory.

Finally,  in the intervening years since this work first appeared, we have noticed a potential gap in the parametrised categories literature as well as one oversight  in an earlier version of this work. Namely, on the one hand, we were unfortunately not able to satisfy ourself that the proof of  \cite[Prop. 3.27]{nardinThesis} guaranteeing that the parametrised category of parametrised genuine spectra is parametrised idempotent (and we do not know if it should hold in general); on the other hand, we have also glossed over the fact that there might be a problem in passing the multiplicative norms structure from the presentable setting to the perfect--stable setting since it could be possible  that   norms of a left adjoint functor preserving compact objects can fail to preserve compact objects. We have singled out the assumptions needed in light of these points in \cref{hypothesis:normed_spectra,hypothesis2:norm_finiteness} respectively. 

Interdependently with \cite{hilmanRamzi,brankoKaifNatalie}, we settle these issues in the  equivariant situation which is, in any case, the main goal of the present paper. In more detail, using the theory of parametrised cubes in \cref{section:theoryOfParametrisedCubes} which is orthogonal to and independent of \cite[Prop. 3.27]{nardinThesis}, we prove in \cite{brankoKaifNatalie} that the base category $\baseCat=\orbit_G$ does indeed satisfy the idempotence property of \cref{hypothesis:normed_spectra}. We also show in \textit{loc. cit.} that \cref{hypothesis2:norm_finiteness} is satisfied when $\baseCat=\orbit_G$ by invoking  the purely combinatorial statement of equivariant finiteness of equivariant cubes proved in \cite[App. C]{hilmanRamzi}. \\

\textbf{Outline of paper.} Since our K--theoretic goals will require a fair bit of parametrised machinery, we will take the opportunity in  \cref{section:aspectsOfParametrisedCategories} to contribute to what may be classified as general parametrised theory where we will in particular prove \cref{alphaThm:monoidalBorelificationPrinciple,alphaThm:mainMackeyInclusion}. Next, we introduce and develop the basics of the theory of parametrised cubes in \cref{section:theoryOfParametrisedCubes}. In the final \cref{section:twoVariantsK-theory}, we will apply all the theory above to construct the parametrised version of noncommutative motives and prove \cref{alphaThm:universalPropertyMotives,alphaThm:pointwiseEqualsNormedFor2-Groups,alphaCor:SwanKTheory}. Each section will be prefaced with a more detailed outline of its contents.\\

\textbf{Acknowledgements.} I am grateful to Jesper Grodal, Markus Land,  Emanuele Dotto, Maxime Ramzi,  Asaf Horev, and Sil Linskens for useful comments, sanity checks, and many hours of enlightening conversations. Special thanks are due to Marc Hoyois for catching a serious mistake in the first version, around which much of the work in this revised version is based, and to Greg Arone who first suggested that one might be able to re--express equivariant pushouts in terms of ordinary pushouts, which proved to be the decisive technique driving our main result. We thank also Sil Linskens for reading a draft of a version of the article as well as for the expositional improvements and minor corrections suggested. This article is based to a large extent on work done in the author's PhD thesis \cite{kaifThesis} which was supported by the Danish
National Research Foundation through the Copenhagen Centre for Geometry and Topology
(DNRF151) as well as by the Swedish Research Council (grant no. 2016-06596) through the research program “Higher algebraic structures in algebra, topology and geometry” held at Institut Mittag–Leffler, Sweden in the spring of 2022. Furthermore, substantial improvements and revision work  have  been carried out at the Max Planck Institute for Mathematics in Bonn, Germany and the University of Bonn, and I was also supported by the European Research Council (ERC) under Horizon Europe (GeoCats, grant No.~101042990 and BorSym, grant No.~101163408).

\section{Aspects of parametrised category theory}\label{section:aspectsOfParametrisedCategories}

This section pertains to supplying miscellaneous  results in parametrised higher category theory. We  recollect the foundations of the parametrised theory that we shall need in \cref{subsection:basicSetup} as well as take the opportunity to prove some basic categorical generalities in \cref{subsection:miscelleneousPreliminaries}; in \cref{subsection:symmetricMonoidalityLocalisations} (from which point on we will always assume that the base category is atomic orbital), we will prove various elements in the interaction between localisations and multiplicative structures; next, we will elucidate in \cref{subsection:BorelEquivariantTheory} the $G$--symmetric monoidal theory associated to so--called ``Borel'' $G$--categories, proving \cref{alphaThm:monoidalBorelificationPrinciple}; we then introduce and study in \cref{sec4:perfectStables} the ``internal'' notion of perfect--stable categories in the parametrised setting, proving \cref{alphaThm:mainMackeyInclusion}.

\subsection{Basic setup}\label{subsection:basicSetup}

We provide here an overview of the basic theory. The original sources for these are \cite{parametrisedIntroduction,expose1Elements,shahThesis,shahPaperII,nardinExposeIV,nardinThesis,nardinShah,kaifPresentable}. For a one--stop survey for many of the basic theory, see for instance \cite{kaifThesis}. Expert readers should feel free to skip this subsection.

\begin{defn}[$\baseCat$--categories]
    Let $\baseCat$ be a small category. The category of $\baseCat$\textit{--categories} $\cat_{\baseCat}$ is defined to be $\func(\baseCat\op,\cat)$. An object in $\cat_{\baseCat}$ will be indicated with the underline notation $\underline{\sC}$. Under Lurie's straightening--unstraightening equivalence $\func(\baseCat\op,\cat)\simeq \cocartesianCategory(\baseCat\op)$, we will denote by $\int\underline{\sC}\rightarrow\baseCat\op$ the cocartesian unstraightening of $\underline{\sC}\in\func(\baseCat\op,\cat)$. Morphisms in $\cat_{\baseCat}$ are called $\baseCat$\textit{--functors}.
\end{defn}

\begin{terminology}[Objects]
    By an \textit{object} in a parametrised category $\underline{\sC}$, we will mean a $\baseCat$--functor $\terminalTCat\rightarrow\underline{\sC}$ where $\terminalTCat\in\cat_{\baseCat}$ is the terminal $\baseCat$--category which is constant with value $\ast$. For a fixed $V\in\baseCat$, writing $v\colon \baseCat_{/V}\rightarrow \baseCat$ for the canonical functor, we may then define a $V$--object in $\underline{\sC}$ to be a $\baseCat$--functor $v_!\terminalTCat\rightarrow \underline{\sC}$. By adjunction, this is the same datum as a $\baseCat_{/V}$--functor $\terminalTCat\rightarrow v^*\underline{\sC}$. Hence, the datum of a $V$--object in $\underline{\sC}$ is the same datum as an object (in the sense of the first sentence above) in $v^*\underline{\sC}$.
\end{terminology}

\begin{nota}
    For a morphism $f\colon W \rightarrow V$ in $\baseCat$ and $\underline{\sC}\in\cat_{\baseCat}=\func(\baseCat\op,\cat)$, we will write $f^*\colon \sC_W\rightarrow \sC_V$ for the structure map encoded by $\underline{\sC}$.
\end{nota}

\begin{nota}
    We will write $(-)\vop\colon \cat_{\baseCat}\xrightarrow{\simeq}\cat_{\baseCat}$ for the self--equivalence induced by applying $\func(\baseCat\op,-)$ to the self--equivalence $(-)\op\colon \cat\xrightarrow{\simeq}\cat$.
\end{nota}

\begin{cons}[Cofree parametrisation, {\cite[Thm. 2.8]{nardinExposeIV}}]\label{cofreeParametrisations}
Let $\D$ be a category. Then there is a $\baseCat$-category $\cofree_{\baseCat}(\D)$ classified by the functor $\baseCat\op \rightarrow \catinf$ given by $V \mapsto \func((\baseCat_{/V})\op, \D)$ called the \textit{cofree $\baseCat$--category on $\D$}. This has the following universal property: if $\underline{\sC}\in\cat_{\baseCat}$, then there is a natural equivalence $\func_{\baseCat}(\underline{\sC}, \cofree_{\baseCat}(\D)) \simeq \func(\int\underline{\sC}, \D)$ of unparametrised categories, where $\func_{\baseCat}$ denotes the category of $\baseCat$--functors.
\end{cons}

For underlined objects, we will often omit the subscripts $\baseCat$ for readability.

\begin{example}[Spaces and categories]
    An important example is the $\baseCat$\textit{--category of $\baseCat$--spaces} $\underline{\spc}$, defined as $\cofree(\spc)$. There is then a parametrised mapping space $\myuline{\map}(-,-)$ functor landing in $\underline{\spc}$ associated to any $\baseCat$--category which induces a parametrised Yoneda embedding satisying the usual universal property of presheaves. We refer the reader to \cite[$\S10$]{shahThesis} for more details. Similarly, we define the $\baseCat$\textit{--category of $\baseCat$--categories} $\underline{\cat}$ as $\cofree(\cat)$.
\end{example}

\begin{terminology}
    A $\baseCat$--functor $\underline{\sC}\rightarrow \underline{\D}$ is said to be $\baseCat$\textit{--fully faithful} if it is so fibrewise. Via the notion of parametrised mapping spaces from the example above, this can also be formulated as saying the $\baseCat$--functor induces equivalences on the parametrised mapping spaces.
\end{terminology}

\begin{recollect}[Adjunctions]
    Since $\cat_{\baseCat}$ is naturally a 2--category, we also have a notion of $\baseCat$--adjunctions, which one can also describe concretely in the unstraightened perspective using the notion of relative adjunctions from \cite[$\S7.3.2$]{lurieHA}, cf. \cite[$\S8$]{shahThesis}. By \cite[Lem. 2.2.9]{kaifPresentable} for example, this may also be phrased more internally as the datum of a natural equivalence $\myuline{\map}_{\underline{\D}}(L-,-)\simeq \myuline{\map}_{\underline{\sC}}(-,R-)\colon \underline{\sC}\vop\times \underline{\D}\longrightarrow \underline{\spc}$.
\end{recollect}

\begin{nota}[Cotensors]\label{parametrisedCotensors}
There is an internal hom functor $\underline{\func}$ (also written as $\underline{\func}_{\baseCat}$ when we want to be explicit with the base category we are working over) equipped with natural equivalence $\func_{\baseCat}(-\times\underline{\sC},-)\simeq\func_{\baseCat}(-,\underline{\func}(\underline{\sC},-))$ for any $\underline{\sC}\in\cat_{\baseCat}$.  Write $p\colon \baseCat\op\rightarrow \ast$ for the unique functor and let $I$ be a small unparametrised category. Then the adjunction $-\times I : \cat \rightleftarrows \cat : \func(I,-)$ induces the adjunction
$(-\times I)_* : \func(\baseCat\op,\cat) \rightleftarrows \func(\baseCat\op, \cat) : \func(I,-)_*$.
Under the identification $\func(\baseCat\op,\cat)= \cat_{\baseCat}$, it is clear that $(-\times I)_*$ corresponds to the $\baseCat$-functor $-\times p^*I$, whose right adjoint  is $\underline{\func}(p^*I, -)$. Therefore $\underline{\func}(p^*I, -)\simeq \func(I,-)_*$ implements the \textit{fibrewise functor construction}. We will often write $\underline{\func}(I,-)$ for $\underline{\func}(p^*I, -)$. This satisfies the following properties whose proofs are immediate.
\begin{enumerate}
    \item $\underline{\cat}_{\baseCat}$ is cotensored over $\cat$ in the sense that for any $\baseCat$-categories $\underline{\sC},  \underline{\D}$ we have 
\[\underline{\func}(\underline{\sC},  \underline{\func}(I, \underline{\D})) \simeq \underline{\func}(I, \underline{\func}(\underline{\sC},  \underline{\D}))\]
\item $\underline{\func}(I, -)$ preserves $\baseCat$-adjunctions. This is straightforward to deduce from \cite[$\S8$]{shahThesis}.
\end{enumerate}
\end{nota}

\begin{obs}\label{obs:leftKanExtensionAsA2Adjunction}   Suppose $\baseCat$ has a final object and let $s\colon \ast\hookrightarrow \baseCat$ be      the inclusion of the final object, so that upon passing to the opposites, we have the adjunction $s\colon \ast\rightleftharpoons \baseCat\op : p$. Using that  $s_!\colon \cat\rightarrow\cat_{\baseCat}$ preserves products  since $s_!\simeq p^*$ is a right adjoint by virtue of the adjunction $s\dashv p$, we also have an equivalence $s^*\underline{\func}(s_!I,-)\simeq \func(I,s^*-)\colon \cat_{\baseCat}\rightarrow \cat$.
\end{obs}

\begin{recollect}[(Co)limits and indexed (co)products]\label{recollect:indexedCoproductsAndFibrewiseColimits}
    The notion of parametrised adjunctions and $\underline{\func}$ afford us the key concept of \textit{parametrised (co)limits}. That is, for any $\underline{I}, \underline{\sC}\in\cat_{\baseCat}$ and writing $\pi\colon \underline{I}\rightarrow\terminalTCat$ for the unique $\baseCat$--functor, the $\underline{I}$\textit{--shaped (co)limit in} $\underline{\sC}$ functor, if it exists, may be defined as the parametrised right (resp. left) adjoint $\pi_*$ (resp. $\pi_!$) to the restriction functor $\pi^* \colon \underline{\sC}\rightarrow\underline{\func}(\underline{I},\underline{\sC})$.  Be warned that these are \textit{not} given by fibrewise taking (co)limit, although this \textit{is} so when the indexing shape is a constant $\baseCat$--category (i.e. those of the form $p^*I$ as in \cref{parametrisedCotensors}). Furthermore, one can also develop the notion of parametrised Kan extensions etc., and we refer the reader to \cite[$\S\S9, 10$]{shahThesis} for more  details.

    A very important part of the general theory that we will use often in our arguments later is that every parametrised colimit may be decomposed into an unparametrised part and a truly parametrised part. To add precision to this, it would be helpful first to recall some standard terminologies: it is common in the literature to term as \textit{fibrewise (co)limits} those parametrised (co)limits which are indexed by a constant diagram (in the sense explained above). On the other hand, for any fixed $V\in\baseCat$, by basechanging from $\baseCat$ to $\baseCat_{/V}$ (i.e. by considering the functor $\cat_{\baseCat}\rightarrow \cat_{\baseCat_{/V}}$ induced by the canonical functor $\baseCat_{/V}\rightarrow\baseCat$), we may without loss of generality assume that $V$ was a final object in $\baseCat$. In this case, for any $U\in\baseCat$, writing  $f\colon U\rightarrow
     V$ in $\baseCat$ for the unique map, we also write $f\colon f_!f^*\underline{\ast}\rightarrow\terminalTCat$ for the unique map of $\baseCat$--categories (note the intentional abuse of the notation $f$). Now for any $\underline{\sC}\in\cat_{\baseCat}$, we shall term the left (resp. right) adjoint $f_! \:\mathrm{ (resp. } \:f_*\mathrm{)}\colon \underline{\func}(f_!f^*\terminalTCat,\underline{\sC})\simeq f_*f^*\underline{\sC}\rightarrow \underline{\sC}$ to the functor $f^*$, if it exists, as the \textit{$f$--indexed coproduct (resp. product)}. These (co)limits play a distinguished role in the parametrised theory: akin to the unparametrised setting where every colimit can be rewritten as a geometric realisation all of whose terms are coproducts, we know by \cite[$\S12$]{shahThesis} that any parametrised colimit may be rewritten as a fibrewise geometric realisation all of whose terms are indexed coproducts. The upshot of this is that we can often divide a proof into dealing with fibrewise (co)limits and indexed (co)products separately.
\end{recollect}

\begin{recollect}[Adjointed squares]
    We recall the notion of Beck--Chevalley transformations and adjointability from \cite[$\S7.3.1$]{lurieHTT}. Suppose we are given a commuting square
    \begin{center}
        \begin{tikzcd}
            \underline{\sC} \rar["\varphi"]\dar["L"'] & \widetilde{\underline{\sC}}\dar["\widetilde{L}"']\\
            \underline{\D} \rar["\psi"] & \widetilde{\underline{\D}}
        \end{tikzcd}
    \end{center}
    such that $L,\widetilde{L}$ admit $\baseCat$--right adjoints $R,\widetilde{R}$ respectively. We may then obtain a natural transformation $\varphi R\Rightarrow \widetilde{R}\psi$ via
    \[\varphi R \xlongrightarrow{\widetilde{\eta}_{\varphi R}} \widetilde{R}\widetilde{L}\varphi R\simeq \widetilde{R}\psi LR \xlongrightarrow{\widetilde{R}\psi\varepsilon} \widetilde{R}\psi\]
    This canonically constructed transformation is called the \textit{Beck--Chevalley transformation}. If this map is an equivalence (hence we get an equivalence $\varphi R\simeq \widetilde{R}\psi$), then we say that the original square is \textit{right adjointable}. Similarly, we may also define the notion of left adjointability.
\end{recollect}

\begin{prop}[Fibrewise criteria for $\baseCat$-adjunctions, {\cite[Cor. 2.2.7]{kaifPresentable}}]\label{criteriaForTAdjunctions}
Let $F : \underline{\sC} \rightarrow \underline{\D}$ be a $\baseCat$-functor. Then it admits a $\baseCat$-right adjoint if and only if it admits fibrewise right adjoints $G_V$ for all $V\in \baseCat$ and for all morphism $f : W \rightarrow V$ in $\baseCat$, the Beck--Chevalley square
\begin{center}
    \begin{tikzcd}
    \sC_W & \D_W \lar["G_W"']\\
    \sC_V \uar["f^*"]& \D_V\lar["G_V"]\uar["f^*"']
    \end{tikzcd}
\end{center}
commutes. Similarly for $\baseCat$-left adjoints.
\end{prop}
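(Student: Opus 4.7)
The approach is to pass to cocartesian unstraightenings and reformulate the statement in terms of Lurie's theory of relative adjunctions from \cite[\S7.3.2]{lurieHA}. Writing $\widetilde{F}\colon\int\underline{\sC}\to\int\underline{\D}$ for the map of cocartesian fibrations over $\baseCat\op$ corresponding to $F$, by the definition of $\baseCat$-adjunctions from \cite[\S8]{shahThesis} the existence of a $\baseCat$-right adjoint to $F$ amounts to the conjunction of two assertions: (i) $\widetilde{F}$ admits a relative right adjoint $\widetilde{G}\colon \int\underline{\D}\to\int\underline{\sC}$ over $\baseCat\op$, and (ii) this $\widetilde{G}$ preserves cocartesian edges (so that its straightening is a $\baseCat$-functor). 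By \cite[Prop.~7.3.2.6]{lurieHA}, (i) is equivalent to the existence of fibrewise right adjoints $G_V$ to each $F_V$, and in this case the fibres of $\widetilde{G}$ recover the $G_V$.

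It thus remains to identify (ii) with the Beck--Chevalley condition. Unfolding: a cocartesian lift in $\int\underline{\D}$ of $f^{\mathrm{op}}\colon V\to W$ in $\baseCat\op$ starting at $d\in\D_V$ is the structure morphism $d\to f^*d$, and its image under $\widetilde{G}$ is a morphism $G_Vd\to G_Wf^*d$ in $\int\underline{\sC}$ lying over $f^{\mathrm{op}}$. This image morphism is cocartesian precisely when the induced comparison $f^*G_Vd\to G_Wf^*d$ in the fibre $\sC_W$ is an equivalence, and this comparison is exactly the Beck--Chevalley transformation at $d$. Consequently $\widetilde{G}$ preserves cocartesian edges if and only if the Beck--Chevalley square commutes, completing the reduction in both directions simultaneously. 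The dual statement for left adjoints follows by the evident cartesian analogue.

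The main challenge circumvented by this strategy is the coherent assembly of the pointwise data into a $\baseCat$-natural functor: merely having fibrewise right adjoints together with pointwise Beck--Chevalley equivalences is not on its own sufficient to produce a morphism in $\func(\baseCat\op,\cat)$, but the cocartesian fibration perspective bundles this higher coherence into the internal combinatorics of $\int\underline{\D}\to\baseCat\op$ via Lurie's relative adjunction machinery, reducing the entire question to the one-dimensional combinatorial check that certain edges are cocartesian.
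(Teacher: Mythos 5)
Your argument is correct and is essentially the same route as the source the paper relies on: the paper gives no proof of this proposition, citing \cite[Cor.~2.2.7]{kaifPresentable}, and its own handling of the analogous assembly statement (the Beck--Chevalley limit lemma, via \cite[Prop.~7.3.2.11]{lurieHA}) proceeds by exactly your reduction to Lurie's relative adjunctions over $\baseCat\op$ plus the cocartesian-edge criterion. The one step you assert rather than verify --- that the fibre component of $\widetilde{G}$ applied to a cocartesian edge is precisely the Beck--Chevalley transformation --- is a routine check (both maps are adjoint under $F_W \dashv G_W$ to $f^*\varepsilon_d$ using that $\widetilde{F}$ preserves cocartesian edges), so nothing essential is missing.
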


\begin{prop}[(Co)limit preservation, {\cite[{Prop. 2.4.2}]{kaifPresentable}}]\label{characterisationStrongPreservations}
Let $\underline{\sC},\underline{\D}$ be $\baseCat$-cocomplete categories and $F : \underline{\sC} \rightarrow \underline{\D}$ a $\baseCat$-functor. Then $F$  preserves $\baseCat$-colimits if and only if it preserves colimits in each fibre and for all $f : W \rightarrow V$ in $\baseCat$, the Beck--Chevalley square 
\begin{center}
    \begin{tikzcd}
    \sC_W \dar["f_!"]\rar["F_W"] &  \D_W \dar["f_!"]\\
    \sC_V\rar["F_V"] & \D_V
    \end{tikzcd}
\end{center}
commutes. Similarly for $\baseCat$-limits.
\end{prop}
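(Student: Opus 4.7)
The proof naturally splits into two directions, and the engine driving both is the decomposition of arbitrary parametrised colimits into fibrewise geometric realisations of indexed coproducts recalled in \cref{recollect:indexedCoproductsAndFibrewiseColimits}. Granting this decomposition, preservation of $\baseCat$-colimits reduces to preservation of the two building blocks appearing in the statement, rendering the proposition essentially formal.

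For the forward direction, suppose $F$ preserves all $\baseCat$-colimits. Specialising the hypothesis to colimits indexed by the constant diagram $p^*I\to\terminalTCat$ for $I\in\cat$ yields preservation of fibrewise $I$-shaped colimits; unpacking this in the fibres recovers precisely the statement that each $F_V$ preserves ordinary $I$-shaped colimits. Specialising instead to the shape $f_!f^*\terminalTCat\to\terminalTCat$ attached to a morphism $f\colon W\to V$ in $\baseCat$ gives preservation of the $f$-indexed coproduct $f_!$, i.e.\ a natural equivalence $F_V\circ f_!\simeq f_!\circ F_W$, which is exactly the Beck--Chevalley square in the statement (cf.\ \cref{criteriaForTAdjunctions} for the dual perspective).

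For the reverse direction, let $\pi\colon\underline{I}\to\terminalTCat$ be arbitrary and let $X\in\underline{\func}(\underline{I},\underline{\sC})$. By the Bousfield--Kan style decomposition of \cite[$\S 12$]{shahThesis}, one may express $\pi_!X$ as a fibrewise geometric realisation $|B_\bullet(X)|$ of a simplicial object whose terms are built out of indexed coproducts of restrictions of $X$. Since $F$ preserves both fibrewise colimits and indexed coproducts by hypothesis, and since $\underline{\D}$ is $\baseCat$-cocomplete so that the analogous reassembly is available on the target side, we compute
\[F\pi_!X \simeq F|B_\bullet(X)| \simeq |F B_\bullet(X)| \simeq |B_\bullet(FX)| \simeq \pi_!FX,\]
giving preservation of the full parametrised colimit.

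The main obstacle is the black-box input from \cite[$\S 12$]{shahThesis}: writing down the precise simplicial formula for the Bousfield--Kan decomposition in the parametrised setting requires some care, particularly because the restriction functors along morphisms in $\baseCat$ intervene nontrivially in each simplicial degree. Once this decomposition is available, however, the argument above is entirely formal and runs exactly parallel to its unparametrised analogue.
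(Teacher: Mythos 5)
Your proof is correct and follows the standard route: both directions come down to the decomposition of arbitrary parametrised colimits into fibrewise (geometric realisations of) colimits and indexed coproducts recalled in \cref{recollect:indexedCoproductsAndFibrewiseColimits} (i.e.\ \cite[\S 12]{shahThesis}), with the forward direction obtained by specialising to constant shapes and to $f_!f^*\terminalTCat$, exactly as in the source this proposition is quoted from. Note that the present paper does not prove the statement at all — it is imported from \cite[Prop.~2.4.2]{kaifPresentable} — so the comparison is with that reference, whose argument yours matches in essence.
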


\begin{defn}[{\cite[Def. 4.1]{nardinExposeIV}}]\label{DefinitionAtomicOrbital}
Let $\baseCat$ be a small category. We say that it is \textit{atomic} if whenever we have $f : W\rightarrow V$ and $g : V \rightarrow W$ in ${\baseCat}$ such that $g\circ f$ is an equivalence, then $f$ and $g$ were already inverse equivalences. And we say that it is \textit{orbital} if the finite coproduct cocompletion $\finite_{\baseCat}$ admits finite pullbacks. Here, by finite coproduct cocompletion, we mean the full subcategory of the presheaf category $\func({\baseCat}\op,\spc)$ spanned by finite coproduct of representables. See \cref{recollect:finiteTSets} for more details on $\finite_{\baseCat}$.
\end{defn}

\begin{example}
    The orbit category $\orbit_G$ for a finite group $G$ is atomic orbital.
\end{example}

\begin{obs}
    It is straightforward to argue by unwinding the definitions that atomic orbitality ensures that for any map $f\colon U \rightarrow V$ in $\baseCat$, the orbital decomposition of the pullback $U\times_VU$ contains a copy of $U$. That is, we have the pullback diagram
    \begin{equation}\label{eqn:complementationSquare}
        \begin{tikzcd}
            U \coprod Z \rar["\id\sqcup c"]\dar["\id\sqcup \overline{c}"']\ar[dr, phantom, very near start, "\lrcorner"] & U\dar["f"]\\
            U\rar["f"] & V
        \end{tikzcd}
    \end{equation}    
    This is the key property enjoyed by atomic orbital categories that is crucial for algebraic considerations such as  Nardin's definition of $\baseCat$--semiadditivity (which we will recall shortly) as well as the theory of parametrised cubes which we introduce in \cref{section:theoryOfParametrisedCubes}.
\end{obs}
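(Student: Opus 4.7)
The plan is to unwind the orbitality and atomicity hypotheses together with the fact that representables are indecomposable. Orbitality guarantees that the pullback $U \times_V U$ exists in $\finite_{\baseCat}$, and being an object there, it decomposes canonically as a finite coproduct $\coprod_{i \in I} W_i$ of objects $W_i \in \baseCat$. The two projections $p_1, p_2 \colon U \times_V U \to U$ then restrict on each summand $W_i$ to morphisms $q_i^{(1)}, q_i^{(2)} \colon W_i \to U$ in $\baseCat$.

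Next, I would consider the diagonal $\Delta \colon U \to U \times_V U$ in $\finite_{\baseCat}$, characterised by $p_j \circ \Delta = \id_U$ for $j = 1, 2$. Since $U$ is representable, hence indecomposable in $\finite_{\baseCat}$, the map $\Delta$ must factor through a single summand, say $W_0$, via a morphism $d \colon U \to W_0$ in $\baseCat$. This precisely means $q_0^{(j)} \circ d = \id_U$ for both $j = 1, 2$.

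At this point atomicity does the decisive work: from $q_0^{(1)} \circ d = \id_U$ we conclude that $d$ and $q_0^{(1)}$ are mutually inverse equivalences, and symmetrically that $q_0^{(2)}$ is an equivalence with $q_0^{(2)} = d^{-1} = q_0^{(1)}$. Setting $Z \coloneqq \coprod_{i \neq 0} W_i$ and letting $c, \bar{c} \colon Z \to U$ be the restrictions of $p_1, p_2$ to this complementary part, we obtain exactly the square displayed in the statement.

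The only genuinely nontrivial step is the last one, where atomicity is used to upgrade a one-sided identity to a two-sided equivalence; every other move is formal, coming from orbitality (existence of the pullback and its coproduct decomposition) and the fact that representables are connected presheaves. I would expect no further obstacles once this bookkeeping is carried out.
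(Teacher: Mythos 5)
Your proof is correct and is exactly the "unwinding of definitions" the paper alludes to (the Observation is stated without proof): orbitality gives the pullback in $\finite_{\baseCat}$ with its coproduct decomposition, indecomposability of representables makes the diagonal land in a single summand, and atomicity upgrades the resulting one-sided inverse to an equivalence identifying that summand with $U$, with both projections restricting to the same identification. Nothing further is needed.
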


\begin{obs}\label{obs:topFixedPointsOfIndexedCoproducts}
   Suppose $\baseCat$ is atomic  and has a final object and let $s\colon \ast\hookrightarrow \baseCat$ be      the inclusion of the final object $T\in\baseCat$, and $w\colon W \rightarrow T$ be a map in $\baseCat$ which is not an equivalence. For an arbitrary $\underline{\sC}\in\cat_{\baseCat_{/W}}$, we claim that there is a natural equivalence
   $s^*w_!\underline{\sC}\simeq \emptyset\in\cat$. To see this, note by the usual pointwise right Kan extension formula that, for $\D\in\cat$, $s_*\D\in\cat_{\baseCat}$ has value $\D$ at $T\in\baseCat$ and $\ast$ everywhere else. Hence, since $w$ was not an equivalence, by atomicity we know that there is no morphism $T\rightarrow W$ and so pulling back along $w\colon \baseCat_{/W}\rightarrow \baseCat$ yields $w^*s_*\D\simeq \terminalTCat\in\cat_{\baseCat_{/W}}$ for any $\D\in\cat$. Therefore, we obtain
   $\map_{\cat}(s^*w_!\underline{\sC},\D)\simeq \map_{\cat_{\baseCat_{/W}}}(\underline{\sC},w^*s_*\D)\simeq \map_{\cat_{\baseCat_{/W}}}(\underline{\sC},\terminalTCat)\simeq \ast$,
   and so $s^*w_!\underline{\sC}$ satisfies the universal property of the initial object in $\cat$.
\end{obs}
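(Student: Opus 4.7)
The plan is to follow the indicated strategy: reduce the claim to computing a single mapping space via adjunctions, and then leverage atomicity to show that the relevant object is terminal. The central algebraic input will be that atomicity forbids a section of $w$.

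\textbf{Step 1: No morphism $T \to W$ exists.} I would first show that atomicity implies there is no morphism $t\colon T \to W$ in $\baseCat$. Indeed, if such a $t$ existed, then $w \circ t\colon T \to T$ would be a map into the terminal object and hence the identity. By atomicity applied to the pair $(t,w)$ (in the sense of \cref{DefinitionAtomicOrbital}), it would follow that both $t$ and $w$ are inverse equivalences, contradicting the hypothesis that $w$ is not an equivalence.

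\textbf{Step 2: Compute $w^*s_*\D$.} For an arbitrary $\D\in\cat$, the pointwise formula for the right Kan extension $s_*\D \in \cat_{\baseCat} = \func(\baseCat\op,\cat)$ yields value $\D$ at $T$ and value $\ast$ at every non-terminal object of $\baseCat$. Now the functor $w\colon \baseCat_{/W} \to \baseCat$ sends each object $(U, U\to W)$ to $U$. By Step 1, no object of $\baseCat_{/W}$ lies over $T\in\baseCat$, so $(w^*s_*\D)(U, U\to W) = (s_*\D)(U) = \ast$ for every object of $\baseCat_{/W}$. Hence $w^*s_*\D \simeq \terminalTCat$ in $\cat_{\baseCat_{/W}}$, canonically in $\D$.

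\textbf{Step 3: Conclude via adjunctions.} Finally, for any $\D \in \cat$, the two adjunctions $w_!\dashv w^*$ and $s^*\dashv s_*$ yield the chain
\[
\map_{\cat}\bigl(s^*w_!\underline{\sC},\D\bigr)\simeq \map_{\cat_{\baseCat}}\bigl(w_!\underline{\sC}, s_*\D\bigr)\simeq \map_{\cat_{\baseCat_{/W}}}\bigl(\underline{\sC}, w^*s_*\D\bigr)\simeq \map_{\cat_{\baseCat_{/W}}}(\underline{\sC},\terminalTCat)\simeq \ast,
\]
which says that $s^*w_!\underline{\sC}$ satisfies the universal property of the initial object $\emptyset \in \cat$, as desired.

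The only nontrivial point is Step 1, where atomicity is used; the rest is a purely formal manipulation of adjunctions together with the pointwise right Kan extension formula, so no real obstacle is expected.
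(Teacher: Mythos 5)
Your proof is correct and follows the same route as the paper's own argument: reduce to a mapping-space computation via the adjunctions $w_!\dashv w^*$ and $s^*\dashv s_*$, and use the pointwise formula for $s_*\D$ together with the atomicity-based nonexistence of a map $T\to W$ to identify $w^*s_*\D$ with the terminal $\baseCat_{/W}$-category. The only difference is that you spell out the atomicity step (that a section of $w$ would force $w$ to be an equivalence), which the paper asserts without detail.
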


\begin{terminology}[Left/right Beck--Chevalley conditions]\label{beckChevalleyMeaning}
Let $\baseCat$ be an orbital category and let $\underline{\sC}$ be a $\baseCat$-category that admits finite fibrewise coproducts (resp. products) such that for each $f : W \rightarrow V$ in $\baseCat$, the pullback $f^* : \sC_V \rightarrow \sC_W$ admits a left adjoint $f_!$ (resp. right adjoint $f_*$). We say that $\underline{\sC}$ satisfies the \textit{left Beck-Chevalley condition} (resp. \textit{right Beck-Chevalley condition}) if for every pair of edges $f: W\rightarrow V$ and $g : Y\rightarrow V$ in $\baseCat$, if we write the pullback (whose orbital decomposition exists by orbitality of $\baseCat$) as 
        \begin{center}
            \begin{tikzcd}
            \coprod_a R_a = Y\times_VW\dar[dr, phantom, very near start, "\scalebox{1.5}{$\lrcorner$}"] \rar["\coprod_af_a"]\dar["\coprod_ag_a"'] & Y \dar["g"]\\
            W \rar["f"] & V
            \end{tikzcd}
        \end{center}
        then the canonical map 
        $\coprod_{a}g_{a_!}f^*_{a} \Longrightarrow f^{*}g_{!}$ (resp. $f^*g_* \Longrightarrow \prod_{a}g_{a*}f^{*}_{a}$) is an equivalence.
\end{terminology}

We now recall the algebraic aspects of parametrised higher category theory in the presence of the atomic orbitality assumption on $\baseCat$. These were first introduced and studied in \cite{nardinExposeIV,nardinThesis}, and later revisited with further developments in \cite{nardinShah}.

\begin{recollect}[Finite $\baseCat$--sets]\label{recollect:finiteTSets}
    For every $V\in\baseCat$, we may define the category $\finite_{/V}\subseteq \presheaf(\baseCat_{/V})$ given by the finite coproduct cocompletion of $\baseCat_{/V}$. When no $V\in\baseCat$ is specified, we write $\finite_{\baseCat}$ for the finite coproduct cocompletion of $\baseCat$. By general category theory, we know that given a map $f\colon U \rightarrow V$, the left Kan extension $f_!\colon \presheaf(\baseCat_{/U})\rightarrow\presheaf(\baseCat_{/V})$ restricts to a functor $f_!\colon \finite_{/U}\rightarrow\finite_{/V}$. And in our setting, by the orbitality assumption, the right adjoint $f^*\colon \presheaf(\baseCat_{/V})\rightarrow\presheaf(\baseCat_{/U})$ also restricts to a right adjoint $f^*\colon\finite_{/V}\rightarrow
     \finite_{/U}$. These assemble to a $\baseCat$--category $\underline{\finite}_{\baseCat}$. Similarly, we may construct the pointed version $\underline{\finite}_{*\baseCat}$ whose fibre over $V\in\baseCat$ is given by $(\finite_{/V})_{[V=V]/}$. For details, see for example {\cite[Def. 2.1.1]{nardinShah}}
\end{recollect}

Write $\cat_*\subset \cat$ for the non--full subcategory of pointed categories and morphisms the functors which preserve these.

\begin{defn}[Pointedness]
    A $\baseCat$--category $\underline{\sC}\in\cat_{\baseCat}$ is said to be $\baseCat$\textit{--pointed} if it lies in the non--full subcategory $\func(\baseCat\op,\cat_*)\subset \func(\baseCat\op,\cat)=\cat_{\baseCat}$. That is, it is a $\baseCat$--category all of whose fibres are pointed and such that the structure maps preserve the zero objects.
\end{defn}

\begin{recollect}[Semiadditivity norm maps, {\cite[Cons. 5.2]{nardinExposeIV}}]
    Let $\underline{\sC}$ be $\baseCat$--pointed and have finite $\baseCat$-coproducts, and $\underline{\D}$ have finite $\baseCat$-products. Let $F\colon \underline{\sC}\rightarrow\underline{\D}$ be a $\baseCat$--functor and $f\colon U \rightarrow V$ be a map in $\underline{\finite}_{\baseCat}$. We would like now to construct a canonical transformation 
    \begin{equation}\label{eqn:semiadditivityNormMap}
        F\circ f_! \Rightarrow f_*\circ f^*F \colon f_*f^*\underline{\sC}\longrightarrow \underline{D}
    \end{equation}
    called the \textit{semiadditivity norm map}. By atomic orbitality,  the pullback square \cref{eqn:complementationSquare} gives us a natural equivalence $f^*f_!\simeq \id\sqcup \overline{c}_!c^*$ and so since $\underline{\sC}$ was $\baseCat$--pointed, we may postcompose this with the map $\pi\coloneqq \id\sqcup 0\colon \id\sqcup\overline{c}_!c^*\rightarrow \id\sqcup 0\simeq \id$. The map \cref{eqn:semiadditivityNormMap} is then defined as  
    \[Ff_! \xlongrightarrow{\eta_{F{f_!}}} f_*f^*Ff_!\simeq f_*Ff^*f_!\simeq f_*F(\id\sqcup \overline{c}_!c^*) \xlongrightarrow{f_*F\pi} f_*F.\]
    A little unwinding of definitions shows that, when $U\simeq V\sqcup V$ and $f\colon U \rightarrow V$ is the fold map, the construction above specialises to the usual unparametrised canonical comparison map $\coprod\rightarrow \prod$  required to be an equivalence in the definition of  semiadditive categories. 
\end{recollect}

\begin{defn}[Semiadditivity and stability, {\cite[Def. 5.3, Def. 7.1]{nardinExposeIV}}]
Let $\underline{\sC}$ be $\baseCat$--pointed and have finite $\baseCat$-coproducts, and $\underline{\D}$ have finite $\baseCat$-products. Let $F\colon \underline{\sC}\rightarrow\underline{\D}$ be a $\baseCat$--functor. We say that it is $\baseCat$\textit{-semiadditive} if for all $f\colon U \rightarrow V$ in $\underline{\finite}_{\baseCat}$, the semiadditivity norm map constructed above is an equivalence. We say that a pointed $\baseCat$-category $\underline{\sC}$ with finite $\baseCat$--(co)products is \textit{$\baseCat$-semiadditive} if the identity functor is $\baseCat$-semiadditive. A $\baseCat$--semiadditive category is said to be $\baseCat$\textit{--stable} if it is furthermore fibrewise stable.
\end{defn}

We write $\underline{\func}^{\times}_{\baseCat}(\underline{\sC},  \underline{\D})$  for the $\baseCat$-full subcategories of $\underline{\func}_{\baseCat}(\underline{\sC}, \underline{\D})$ consisting of the finite $\baseCat$--product--preserving functors. The following statements about commutative monoids have been stated by Nardin in \cite{nardinExposeIV}, but we have chosen to cite \cite{CLLSpans} since the proofs in the former reference are rather sketchy.

\begin{defn}[Commutative monoids, {\cite[Def. 7.1]{CLLSpans}}]\label{nota:TCommutativeMonoidsDefinition}
For $\underline{\sC}$ with finite $\baseCat$-limits we will denote \textit{$\baseCat$--commutative monoids} by $\underline{\cmonoid}_{\baseCat}(\underline{\sC}) := \underline{\func}^{{\times}}_{\baseCat}(\myuline{\effBurn}(\underline{\finite}),\underline{\sC})$.
\end{defn}

\begin{rmk}
    By \cite[Thm. D]{philMasters} (which was implicitly claimed in \cite[Thm. 2.3.9]{nardinShah} without proof), we see that evaluating at any $V\in\baseCat$, we get an equivalence
    $\underline{\cmonoid}_{\baseCat}(\cofree(\sC))_V\simeq \func^{\times}(\effBurn(\finite_{/V}),\sC) =: \mackey_{\baseCat_{/V}}(\sC)$.
\end{rmk}

\begin{example}
    A key instance of this construction is the $\baseCat$\textit{--category of $\baseCat$--spectra}, defined as $\myuline{\spectra}_{\baseCat}\coloneqq \underline{\cmonoid}_{\baseCat}(\cofree_{\baseCat}(\spectra))$. By the preceding remark, we see that $\myuline{\spectra}_{\baseCat}$ is fibrewise given by spectral Mackey functors  $\mackey_{\baseCat_{/V}}(\spectra)$. Equivalently, $\myuline{\spectra}_{\baseCat}$ may also  be written as $\myuline{\spectra}^{\mathrm{fib}}\underline{\cmonoid}_{\baseCat}(\underline{\spc})$ where $\myuline{\spectra}^{\mathrm{fib}}$ is the fibrewise stabilisation functor.
\end{example}

\begin{cons}[Forgetful functor, {\cite[Def. 7.1]{CLLSpans}}]\label{cons:forgetfulFunctor}
    For all $V\in\baseCat$, $\baseCat_{/V}$ has a terminal object. Hence so does $\finite_{\baseCat_{/V}}$. Now, $\myuline{\effBurn}(\underline{\finite})(V)\simeq \effBurn(\finite_{\baseCat_{/V}})$. Hence, by picking out the terminal object from $\finite_{\baseCat_{/V}}$ in $\effBurn(\finite_{\baseCat_{/V}})$ for all $V\in\baseCat$, we obtain a $\baseCat$--point $\underline{\ast}\rightarrow \myuline{\effBurn}(\underline{\finite})$. Precomposing along this map give us a transformation $\underline{\cmonoid}_{\baseCat}(-)\Rightarrow \id\simeq \underline{\func}(\terminalTCat,-)$ of endofunctors on $\cat_{\baseCat,\underline{\prod}}$ which we call the forgetful functor $\forget$. Here, $\cat_{\baseCat,\underline{\prod}}$ is the non--full subcategory of $\cat_{\baseCat}$ whose objects are $\baseCat$--categories with finite indexed products and morphisms which preserve these.
\end{cons}

\begin{thm}[$\baseCat$--semiadditivisation, {\cite[Thm. 7.4]{CLLSpans}}] \label{semiadditivisation}
Let $\underline{\sC}$ be a $\baseCat$-category with finite $\baseCat$-products. Then the forgetful functor $\underline{\cmonoid}_{\baseCat}(\underline{\sC}) \rightarrow \underline{\sC}$  is an equivalence if and only if $\underline{\sC}$ was $\baseCat$-semiadditive.
\end{thm}

\begin{nota}
    For $\underline{\sC}\in\cat_{\baseCat}$, we write $\underline{\presheaf}(\underline{\sC})$ and $\underline{\presheaf}^{\underline{\mathrm{st}}}(\underline{\sC})$ for the presheaf categories $\underline{\func}(\underline{\sC}\vop,\underline{\spc})$ and $\underline{\func}(\underline{\sC}\vop,\myuline{\spectra})$, respectively.
\end{nota}

Moreover, Nardin in his thesis \cite{nardinThesis} has also introduced the notion of $\baseCat$--symmetric monoidal structures, upon which one may extract $\baseCat$--commutative algebra objects. In the case when $\baseCat=\orbit_G$, such commutative algebra objects encode precisely the multiplicative norms of \cite{greenleesMayMU,HHR}. Much like  the unparametrised notion from \cite{lurieHA}, the notion of $\baseCat$--operads was defined in \cite{nardinThesis} as certain fibrations over $\underline{\finite}_{*\baseCat}$ and   $\baseCat$--symmetric monoidal categories are then the $\baseCat$--operads such that this fibration is $\baseCat$--cocartesian. The $\baseCat$--commutative algebras $\myuline{\calg}_{\baseCat}(\underline{\sC}\totimes)$ of a $\baseCat$--symmetric monoidal category $\underline{\sC}\totimes$ is then defined to be the $\baseCat$--category of $\underline{\finite}_{*\baseCat}$--sections of $\underline{\sC}\totimes$ which are maps of $\baseCat$--operads. We refer the reader to \cite[$\S2$]{nardinShah} for a more recent account of this theory.

\begin{nota}
    As in usual symmetric monoidal structures which in particular supply us with a ``multiplication''  $-\otimes-\colon \sC\times \sC\rightarrow \sC$, we also have an indexed version of such maps. In more detail, for each map $f\colon U\rightarrow V$ in $\finite_{/V}$, we also have an indexed multiplication map
    \[f_{\otimes}\colon f_*f^*\underline{\sC}\longrightarrow \underline{\sC}\] associated to a $\baseCat$--symmetric monoidal structure $\underline{\sC}\totimes$ on $\underline{\sC}$. Here we have implicitly basechanged the $\baseCat$--category $\underline{\sC}$ to a $\baseCat_{/V}$--category, which we also write $\underline{\sC}$. This will be a convenience  employed throughout the document to lighten our notational burdens. 
\end{nota}

\begin{recollect}[Distributivity]
    Another notion that will be important in our work is that of \textit{distributivity} which generalises  the idea of tensor products which are bicocontinuous into the parametrised setting. This was first defined by Nardin in his thesis \cite{nardinThesis}, but see also \cite[Def. 3.2.3]{nardinShah}  where the theory is further developed. A $\baseCat$--symmetric monoidal structure $\underline{\sC}\totimes$ on a $\baseCat$--category with all $\baseCat$--colimits is said to be $\baseCat$--\textit{distributive} if the following holds: for any map $f\colon U\rightarrow V$ in $\finite_{/V}$ and a $\baseCat_{/U}$--colimit diagram $\partial \colon \underline{K}\tcocone\rightarrow f^*\underline{\sC}$, the diagram
    \[f_{\otimes}\partial\colon (f_*\underline{K})\tcocone\xrightarrow{\canonical} f_*(\underline{K}\tcocone) \xlongrightarrow{f_*\partial} f_*f^*\underline{\sC} \xlongrightarrow{f_{\otimes}} \underline{\sC}\]
    is a $\baseCat_{/V}$--colimit diagram in $\underline{\sC}$.
\end{recollect}

Crucial to our work will be two results by Nardin--Shah  which we collect here as:

\begin{thm}[{\cite[Thm. 2.3.9, Prop. 2.8.7]{nardinShah}, \cite[Thm. D]{philMasters}}] \label{thm:nardinShahEnvelopes}
Let $\underline{\cat}_{\baseCat}^{\otimes}$ be the $\baseCat$--category of $\baseCat$--symmetric monoidal categories and $\baseCat$--symmetric monoidal functors. Let $\underline{\sC}\totimes\in\cat_{\baseCat}^{\otimes}$. Then (1) there are equivalences $\underline{\cat}^{\otimes}_{\baseCat}\simeq \underline{\cmonoid}_{\baseCat}(\underline{\cat})$ and $\cat_{\baseCat}^{\otimes}\simeq \mackey_{\baseCat}(\cat)$, and (2) there is a $\baseCat$--functor $\underline{\mathrm{Env}}$ from $\baseCat$--operads to $\baseCat$--symmetric monoidal categories which participates in a natural equivalence $\func\totimes_{\baseCat}(\underline{\mathrm{Env}}(\underline{\finite}_*),\underline{\sC}\totimes)\simeq \calg_{\baseCat}(\underline{\sC}\totimes)$, where $\func\totimes_{\baseCat}$ is the category of $\baseCat$--symmetric monoidal functors.
\end{thm}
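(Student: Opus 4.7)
The plan is to treat each part separately. Part (1) is essentially a parametrised Grothendieck straightening statement, while part (2) is the parametrised analogue of Lurie's symmetric monoidal envelope construction from \cite[\S 2.2.4]{lurieHA}.

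For the first equivalence in part (1), I would unfold the definition of a $\baseCat$--symmetric monoidal category as a $\baseCat$--cocartesian fibration $\underline{\sC}\totimes\rightarrow \underline{\finite}_{*\baseCat}$ satisfying the Segal conditions controlling inert morphisms. Applying $\baseCat$--straightening yields a $\baseCat$--functor $\underline{\finite}_{*\baseCat}\rightarrow \underline{\cat}$, and the Segal conditions translate precisely into the requirement that this functor preserve indexed products, equivalently, that the semiadditivity norm maps be equivalences. This is exactly the defining condition of $\underline{\cmonoid}_{\baseCat}(\underline{\cat})= \underline{\func}\semiadd_{\baseCat}(\underline{\finite}_{*\baseCat},\underline{\cat})$. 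The second equivalence $\cat_{\baseCat}^{\otimes}\simeq \mackey_{\baseCat}(\cat)$ then follows by taking global sections and combining with the equivalence between $\baseCat$--commutative monoids in $\cat$ and $\baseCat$--Mackey functors into $\cat$, which in turn rests on the description of $\myuline{\effBurn}(\underline{\finite}_{\baseCat})$ as a free $\baseCat$--semiadditive category on $\terminalTCat$.

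For part (2), the approach is to mimic Lurie's construction. Given a $\baseCat$--operad $\underline{\mathcal{O}}\rightarrow \underline{\finite}_{*\baseCat}$, one forms the parametrised envelope $\underline{\mathrm{Env}}(\underline{\mathcal{O}}) \coloneqq \underline{\mathcal{O}}\times_{\underline{\finite}_{*\baseCat}}\underline{\mathrm{Ar}}^{\mathrm{act}}(\underline{\finite}_{*\baseCat})$, the fibre product over the subcategory of active arrows, with projection to $\underline{\finite}_{*\baseCat}$ recording the target. The first task is to check that this is $\baseCat$--cocartesian over $\underline{\finite}_{*\baseCat}$ and satisfies the Segal conditions, so that it genuinely defines a $\baseCat$--symmetric monoidal category. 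Next, one builds a canonical map $\underline{\mathcal{O}}\hookrightarrow \underline{\mathrm{Env}}(\underline{\mathcal{O}})$ sending each object $X$ to the pair $(X,\id)$. The universal property then asserts that any $\baseCat$--operadic map $\underline{\mathcal{O}}\rightarrow \underline{\sC}\totimes$ extends uniquely along this inclusion via $\baseCat$--left Kan extension, using cocartesian lifts in $\underline{\sC}\totimes$ to define the extension on active arrows. Specialising to $\underline{\mathcal{O}}=\underline{\finite}_{*\baseCat}$ (the terminal $\baseCat$--operad) then identifies $\calg_{\baseCat}(\underline{\sC}\totimes)$ with $\func\totimes_{\baseCat}(\underline{\mathrm{Env}}(\underline{\finite}_*),\underline{\sC}\totimes)$.

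The main obstacle throughout is compatibility with basechange along morphisms in $\baseCat$. For part (1), one must verify that the Segal--to--semiadditivity dictionary is natural in restriction functors $f^*$ for $f\colon W\rightarrow V$ in $\baseCat$, which ultimately relies on atomic orbitality to ensure that pullbacks of finite $\baseCat$--sets decompose into orbits in the expected fashion. For part (2), the active/inert factorisation system in $\underline{\finite}_{*\baseCat}$ and the cocartesian lifting arguments must be formulated $\baseCat$--categorically rather than merely fibrewise; the parametrised Kan extension machinery of \cite{shahThesis,shahPaperII} becomes essential here. Since the result is cited from \cite{nardinShah}, I expect the actual proof to adapt Lurie's arguments from \cite[\S\S 2.1--2.2]{lurieHA} more or less systematically, with atomic orbitality intervening to guarantee that the required indexed (co)limits exist and compose coherently.
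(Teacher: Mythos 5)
The paper gives no proof of this statement at all: it is imported verbatim from Nardin--Shah (\cite[Thm.~2.3.9, Prop.~2.8.7]{nardinShah}), with the only in-text remark being that the Mackey-functor form of part (1) follows from \cite[Thm.~6.5]{nardinExposeIV} together with the cofree construction. Your sketch — the straightening/Segal-to-semiadditivity dictionary for (1) and the parametrised envelope $\underline{\mathcal{O}}\times_{\underline{\finite}_{*\baseCat}}\underline{\mathrm{Ar}}^{\mathrm{act}}(\underline{\finite}_{*\baseCat})$ with its Kan-extension universal property for (2) — is a faithful outline of how the cited source argues, so it is the right provenance and approach, just not something this paper itself carries out.
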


In particular, when $\baseCat=\orbit_G$, we see that the category $G$--symmetric monoidal categories is given by $\mackey_G(\cat)\simeq \cmonoid_G(\underline{\cat}_G)$, analogous to the fact that the category of symmetric monoidal categories may equivalently be described as $\cmonoid(\cat)$.

\begin{recollect}[Pointwise symmetric monoidal structures]\label{recollect:universalPropertyPointwiseMonoidal}
    Let $\underline{J}\in\cat_{\baseCat}$ and $\underline{\D}\totimes\in \underline{\cmonoid}_{\baseCat}(\underline{\cat})$. In the same way that one can equip the pointwise symmetric monoidal structure on the functor category $\func(I,\sC)$ for an arbitrary $I\in\cat$ and $\sC^{\otimes}\in\cmonoid(\cat)$, we can also construct a ${\baseCat}$--symmetric monoidal structure $\underline{\func}(\underline{J},\underline{\D}\totimes)$ on the ${\baseCat}$--functor category $\underline{\func}(\underline{J},\underline{\D})$ thanks to \cite[$\S3.3$]{nardinShah}. As in the unparametrised case, this construction enjoys the following cotensor universal property: for any $\underline{\sC}\totimes$, there is a natural equivalence
    \[\myuline{\map}_{\underline{\cmonoid}(\underline{\cat})}(\underline{\sC}\totimes,\underline{\func}(\underline{J},\underline{\D}\totimes))\simeq \myuline{\map}_{\underline{\cat}}(\underline{J},\underline{\func}\totimes(\underline{\sC}\totimes,\underline{\D}\totimes))\]
    where $\underline{\func}\totimes(\underline{\sC}\totimes,\underline{\D}\totimes)$ is the ${\baseCat}$--category of ${\baseCat}$--symmetric monoidal functors. We refer the reader to \cite[$\S2, \S3.3$]{nardinShah}, especially \cite[Thm. 3.3.3]{nardinShah}, for more details on this.
\end{recollect}

Next, we recall the notion of parametrised presentability.  This notion was first defined in \cite[$\S1.4$]{nardinThesis} and further developed in \cite{kaifPresentable}. A pleasant feature of the theory is that parametrised Ind--completions and parametrised accessibility are fibrewise notions (cf. \cite[$\S3.5, \S5.2$]{kaifPresentable}). For example, the $\kappa$--Ind--completion functor $\underline{\ind}_{\kappa}\colon \cat_{\baseCat}\rightarrow {\widehat{\cat}}_{\baseCat}$ is given just by applying $\func(\baseCat\op,-)$ to the usual functor $\ind_{\kappa}\colon \cat\rightarrow\widehat{\cat}$. Consequently, so are the notions of parametrised compactness and parametrised idempotent--completeness (cf. \cite[$\S5.1, \S5.3$]{kaifPresentable}). For instance, an object $X$ in $\underline{\sC}$ is said to be parametrised $\kappa$--compact if it is so fibrewise, and we showed in \cite[Prop. 5.1.4]{kaifPresentable} that this definition can equivalently be characterised by saying that the parametrised mapping space functor $\myuline{\map}_{\underline{\sC}}(X,-)$ preserves fibrewise (here fibrewise is in the sense of \cref{recollect:indexedCoproductsAndFibrewiseColimits}) $\kappa$--filtered colimits. Moreover, we have a ``Lurie--Simpson'' style characterisation of parametrised presentability:

\begin{thm}[Omnibus presentability, {\cite[Thm. 6.1.2, Prop. 6.3.3]{kaifPresentable}}]\label{simpsonTheorem}
Let $\baseCat$ be an orbital category and $\underline{\sC}$ a $\baseCat$-category. Then $\underline{\sC}$ is $\baseCat$-presentable if and only if $\underline{\sC}$ satisfies the left Beck-Chevalley condition (cf. \cref{beckChevalleyMeaning}) and there is a regular cardinal $\kappa$ such that the straightening $\underline{\sC} : \baseCat\op \longrightarrow \widehat{\cat}$ factors through  $\underline{\sC}\colon \baseCat\op \longrightarrow \presentable_{L, \kappa}$. Moreover, $\baseCat$--presentable categories are also $\baseCat$--complete.
\end{thm}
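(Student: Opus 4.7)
My approach will be to set up both directions by invoking the fibrewise decomposition of parametrised colimits from \cref{recollect:indexedCoproductsAndFibrewiseColimits} and the fibrewise criteria for $\baseCat$--adjunctions from \cref{criteriaForTAdjunctions,characterisationStrongPreservations}, treating fibrewise presentability (accessibility $+$ cocompleteness) and indexed coproducts (i.e.\ the existence of $f_!$) as two orthogonal inputs that are welded together by the Beck--Chevalley condition.

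For the ``only if'' direction, I would start by unwinding the definition of $\baseCat$--presentability as $\baseCat$--accessibility together with $\baseCat$--cocompleteness. Accessibility of $\underline{\sC}$ is a fibrewise notion (as recalled just before the statement, following \cite{kaifPresentable}), so each fibre $\sC_V$ is accessible. Fibrewise cocompleteness is immediate from $\baseCat$--cocompleteness by restricting to constant diagrams, so each $\sC_V$ is in fact presentable, and $\underline{\sC}$ lands in $\presentable_{L,\kappa}$ for some uniform $\kappa$ obtained by combining the fibrewise regular cardinals (using that the structure maps $f^*$ preserve $\kappa$--compact objects, which one gets by adjointability). For the left Beck--Chevalley condition, I would apply \cref{characterisationStrongPreservations} to the structure maps $f^*$ viewed as $\baseCat$--colimit--preserving functors: $\baseCat$--cocompleteness supplies the indexed coproducts $f_!\dashv f^*$ fibrewise, and $\baseCat$--functoriality of these adjoints forces exactly the Beck--Chevalley squares required.

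For the ``if'' direction, the key is to assemble $\baseCat$--colimits out of fibrewise colimits and indexed coproducts, as afforded by the parametrised Bousfield--Kan style decomposition recalled in \cref{recollect:indexedCoproductsAndFibrewiseColimits}. Fibrewise colimits exist because each fibre is presentable, and the structure maps $f^*$ preserve them because, being left adjoints by the Beck--Chevalley hypothesis, they preserve all colimits. The indexed coproducts $f_!$ are the left adjoints supplied directly by the hypothesis, and the Beck--Chevalley squares are exactly what \cref{criteriaForTAdjunctions} needs to promote these fibrewise left adjoints to a genuine $\baseCat$--adjunction $f_!\dashv f^*$. This gives $\baseCat$--cocompleteness. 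For $\baseCat$--accessibility one only needs fibrewise accessibility, which is part of the hypothesis $\underline{\sC}\colon \baseCat\op\to \presentable_{L,\kappa}$.

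Finally, for the ``moreover'' about $\baseCat$--completeness, fibrewise limits exist by fibrewise presentability (each $\sC_V$ is complete), and the structure maps $f^*$ preserve them because they are left adjoints. What remains is the existence of indexed products $f_*$ as $\baseCat$--right adjoints to $f^*$. Fibrewise, the right adjoint $f_*$ exists by the usual adjoint functor theorem since $f^*$ preserves all colimits between presentable categories; the main subtle point I expect is the verification of the right Beck--Chevalley square of \cref{criteriaForTAdjunctions} needed to upgrade these fibrewise right adjoints to a parametrised adjunction. I would handle this by a mate--calculus argument, passing the already--established left Beck--Chevalley square for $f_!$ through the double adjunction $f_!\dashv f^*\dashv f_*$ to deduce the dual condition for $f_*$; concretely, one checks the transformation $f^*g_*\to \prod g_{a*}f_a^*$ is an equivalence by testing against all $f_!X$ (which generate under colimits) and reducing to the known equivalence $\coprod g_{a!}f_a^*\xrightarrow{\simeq} f^*g_!$. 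This mate--calculus step is the only place where the argument is not a direct transposition of the unparametrised Lurie--Simpson theorem, and hence is where I expect the principal technical obstacle to lie.
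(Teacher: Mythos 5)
Note first that this paper does not actually prove \cref{simpsonTheorem}: the statement is imported verbatim from \cite{kaifPresentable} (Thm.\ 6.1.2 and Prop.\ 6.3.3), so there is no internal proof here to compare you against, and the proof in the cited reference establishes an omnibus equivalence of several characterisations (via parametrised Ind-completion, accessible localisations of parametrised presheaf categories, and the parametrised adjoint functor theorem) rather than the direct fibrewise assembly you propose. Your strategy is nonetheless viable in outline: fibrewise presentability plus fibrewise $f_!$'s welded together by the Beck--Chevalley squares, using \cref{criteriaForTAdjunctions} to promote fibrewise adjoints to parametrised ones, and a mate argument to get the dual (right) Beck--Chevalley condition needed for $\baseCat$--completeness. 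One caveat on the assembly step: \cref{recollect:indexedCoproductsAndFibrewiseColimits} as recalled only decomposes parametrised colimits that already exist; to conclude cocompleteness from fibrewise colimits and indexed coproducts you need the existence version of that statement from \cite{shahThesis}, and you should also fix which of the equivalent definitions of $\baseCat$--presentability you take as primary, since the theorem being proved is precisely such an equivalence.

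There are, however, concrete slips that need repair. First, you assert that the structure maps $f^*$ preserve fibrewise limits ``because they are left adjoints''; left adjoints preserve colimits, and the correct reason is that $f^*$ admits the left adjoint $f_!$ (guaranteed by the left Beck--Chevalley hypothesis of \cref{beckChevalleyMeaning}), hence is a right adjoint. Second, the reduction ``testing against all $f_!X$ (which generate under colimits)'' does not parse: the comparison map $f^*g_*Z\to\prod_a g_{a*}f_a^*Z$ lives in $\sC_W$, whereas $f_!X\in\sC_V$, and no generation argument is needed. The correct mate computation tests against an arbitrary $Y'\in\sC_W$, using $f_!\dashv f^*$ and $g^*\dashv g_*$ to identify $\map_{\sC_W}(Y',f^*g_*Z)\simeq\map_{\sC_Y}(g^*f_!Y',Z)$ and $\map_{\sC_W}(Y',\prod_a g_{a*}f_a^*Z)\simeq\prod_a\map_{\sC_{R_a}}(g_a^*Y',f_a^*Z)$, and then invokes the \emph{transposed} instance $\coprod_a f_{a!}g_a^*\xrightarrow{\simeq}g^*f_!$ of the left Beck--Chevalley condition (roles of $f$ and $g$ swapped), together with a check that the resulting equivalence is induced by the Beck--Chevalley transformation itself. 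Third, in the ``only if'' direction the claim that $f^*$ preserves $\kappa$--compacts ``by adjointability'' is under-justified: it requires the fibrewise right adjoints $f_*$ to exist (they do, by the unparametrised adjoint functor theorem, since $f^*$ preserves colimits between presentable fibres, so there is no circularity, but this must be said) and to be accessible, followed by a uniformisation of $\kappa$ over the small category $\baseCat$. None of these is fatal, but as written the completeness step and the compactness step are the places where your argument does not yet go through.
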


\begin{thm}[Parametrised adjoint functor theorem, {\cite[{Thm. 4.2.1}]{kaifPresentable}}]\label{parametrisedAdjointFunctorTheorem}
Let $F : \underline{\sC}\rightarrow \underline{\D}$ be a $\baseCat$-functor between $\baseCat$-presentable categories. If $F$  preserves $\baseCat$-colimits, then $F$ admits a $\baseCat$-right adjoint; If $F$  preserves $\baseCat$-limits and is $\baseCat$-accessible, then $F$ admits a $\baseCat$-left adjoint.
\end{thm}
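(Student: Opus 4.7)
The plan is to reduce both claims to the ordinary adjoint functor theorem between presentable $\infty$-categories applied fibrewise, and then package the resulting collection of fibrewise adjoints into a $\baseCat$-adjoint via the fibrewise criterion \cref{criteriaForTAdjunctions}. The omnibus presentability theorem \cref{simpsonTheorem} already supplies the raw material: each fibre $\sC_V$ and $\D_V$ is a presentable $\infty$-category, and each structure functor $f^* \colon \sC_V \to \sC_W$ admits an honest left adjoint $f_!$ (and, since $\baseCat$-presentable categories are also $\baseCat$-complete, an honest right adjoint $f_*$).

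For part (1), the fibrewise characterisation \cref{characterisationStrongPreservations} of $\baseCat$-colimit preservation gives that each $F_V \colon \sC_V \to \D_V$ preserves ordinary colimits and that the squares $f_! F_W \simeq F_V f_!$ commute for every $f \colon W \to V$ in $\baseCat$. The classical adjoint functor theorem for presentable $\infty$-categories then supplies each $F_V$ with a right adjoint $G_V$. Passing to mates along the adjunctions $f_! \dashv f^*$ and $F_V \dashv G_V$ converts the colimit-side equivalences into the Beck--Chevalley equivalences $f^* G_V \simeq G_W f^*$ that are exactly what \cref{criteriaForTAdjunctions} demands in order to glue the $G_V$ into a $\baseCat$-right adjoint of $F$.

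For part (2), the strategy is dual, with $\baseCat$-accessibility providing the extra input needed beyond preservation of limits. The limit analogue of \cref{characterisationStrongPreservations} gives $f_* F_W \simeq F_V f_*$ together with fibrewise limit preservation, and since $\baseCat$-accessibility is a fibrewise notion (as recalled just before \cref{simpsonTheorem}) each $F_V$ is individually accessible. The classical adjoint functor theorem then supplies each $F_V$ with an ordinary left adjoint $G_V$. Mating along $f^* \dashv f_*$ and $G_V \dashv F_V$ turns the limit-side equivalences into $f^* G_V \simeq G_W f^*$, and the left-adjoint variant of \cref{criteriaForTAdjunctions} assembles these into a $\baseCat$-left adjoint of $F$.

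The hard part is not any single step in isolation, since each ingredient is either classical or already packaged by the earlier recollections, but rather the coherent mate calculus: one needs that a \emph{natural} family of commuting BC squares indexed by $\baseCat$ gives rise to another such family after passing to adjoints on both sides. Point-set it is a triviality, but $\infty$-categorically this is essentially an $(\infty,2)$-categorical assertion about the functoriality of mates on pasting diagrams in $\presentable^L$, and it is presumably where the bulk of the genuine work in \cite{kaifPresentable} is concentrated.
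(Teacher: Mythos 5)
This theorem is quoted from \cite[Thm.~4.2.1]{kaifPresentable}; the present paper contains no proof of it, so there is nothing internal to compare against, but your argument is correct and is essentially the standard (and, as far as one can tell, the cited) proof: reduce fibrewise using \cref{simpsonTheorem}, apply the ordinary adjoint functor theorem in each fibre, and convert the colimit/limit Beck--Chevalley equivalences of \cref{characterisationStrongPreservations} into the adjoint-side Beck--Chevalley equivalences required by \cref{criteriaForTAdjunctions} by passing to mates. Your closing worry about a coherent $(\infty,2)$-categorical mate calculus is overstated: \cref{criteriaForTAdjunctions} only asks for the Beck--Chevalley square to commute edge-by-edge for each morphism of $\baseCat$ (a condition checkable in the homotopy $2$-category), and the homotopy-coherent assembly of the fibrewise adjoints is exactly what that criterion, via Lurie's relative adjunctions, already packages for you.
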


\begin{nota}
    For a fixed regular cardinal $\kappa$, we write $\underline{\presentable}_{\baseCat, L,\kappa}\subset \underline{\widehat{\cat}}$ for the non--full $\baseCat$--subcategory of $\kappa$--accessible parametrised presentable categories and morphisms the left adjoint $\baseCat$--functors which preserve $\kappa$--compact objects; we write $\underline{\presentable}_{\baseCat, R,\kappa\text{-filt}}\subset \underline{\widehat{\cat}}$ for the non--full subcategory of $\kappa$--accessible parametrised presentable categories and morphisms the right adjoint $\baseCat$--functors which preserve $\kappa$--filtered colimits. Furthermore, we also write $\underline{\cat}_{\baseCat}^{\underline{\mathrm{idem}}(\kappa)}\subset \underline{\cat}_{\baseCat}$ for the non--full $\baseCat$--subcategory of small parametrised-idempotent-complete $\baseCat$--categories which are parametrised-$\kappa$-cocomplete and morphisms the functors which  preserve $\kappa$-small parametrised colimits.
\end{nota}

\begin{prop}[{\cite[Thm. 4.5.3]{kaifPresentable}}]\label{TPresentableIdempotentCorrespondence}
Let $\kappa$ be a regular cardinal.  Then we have an equivalence of $\baseCat$--categories
$(-)^{\underline{\kappa }} : \underline{\presentable}_{\baseCat, L, \kappa} \rightleftarrows \underline{\cat}_{\baseCat}^{\underline{\mathrm{idem}}(\kappa)} : \underline{\ind}_{\kappa}$
where $(-)^{\underline{\kappa }}$ and $\underline{\ind}_{\kappa}$ denote taking fibrewise $\kappa$--compact objects and fibrewise $\kappa$--Ind--completion, respectively.
\end{prop}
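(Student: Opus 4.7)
The strategy is to bootstrap the proposition from Lurie's unparametrised equivalence $(-)^{\kappa} \colon \presentable_{L,\kappa} \rightleftarrows \cat^{\idem(\kappa)} \colon \ind_{\kappa}$, exploiting the fact highlighted in the excerpt that parametrised $\kappa$--Ind--completion and the formation of fibrewise $\kappa$--compact objects are both obtained pointwise by applying $\func(\baseCat\op, -)$ to their unparametrised counterparts. In particular, applying $\func(\baseCat\op, -)$ to the unparametrised equivalence immediately furnishes an equivalence of large $\baseCat$--categories $\cofree(\presentable_{L,\kappa}) \simeq \cofree(\cat^{\idem(\kappa)})$, and hence candidate $\baseCat$--functors in both directions between our non--full subcategories of interest.

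The core task is to verify that this fibrewise equivalence restricts to an equivalence between $\underline{\presentable}_{\baseCat, L, \kappa}$ and $\underline{\cat}_{\baseCat}^{\underline{\idem}(\kappa)}$. By the omnibus presentability theorem (\cref{simpsonTheorem}), the former is cut out by requiring that each restriction map $f^*$ admit a left adjoint $f_!$ and that these satisfy the left Beck--Chevalley condition for pullback squares in $\underline{\finite}_{\baseCat}$. On the other hand, the decomposition of parametrised colimits into fibrewise colimits and indexed coproducts recalled in \cref{recollect:indexedCoproductsAndFibrewiseColimits}, combined with \cref{characterisationStrongPreservations}, tells us that the latter is cut out by the analogous condition at the small level: fibrewise $\kappa$--small cocompleteness plus left adjoints to the pullback functors satisfying left Beck--Chevalley. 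So the defining extra data on both sides takes the same shape, and the verification reduces to showing the two Beck--Chevalley structures correspond under $\ind_{\kappa} \dashv (-)^{\kappa}$.

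The key technical observation is compatibility of this adjunction with the passage to left adjoints: for a morphism $f \colon W \to V$ in $\baseCat$, a left adjoint to $f^* \colon \sC_V \to \sC_W$ at the presentable level must preserve $\kappa$--compact objects because $f^*$ is itself $\kappa$--accessible (each $\sC_V$ lies in $\presentable_{L,\kappa}$ and the structure maps preserve $\kappa$--compacts by assumption), and so restricts to a left adjoint of $f^* \colon \sC_V^{\kappa} \to \sC_W^{\kappa}$ between the compact subcategories. Conversely, any left adjoint at the compact level $\ind_{\kappa}$--extends to a $\kappa$--compact--preserving left adjoint at the presentable level by the unparametrised correspondence. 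This matchup evidently transports Beck--Chevalley squares as well, since $\ind_{\kappa}$ and $(-)^{\kappa}$ both preserve commuting squares. Applying the unparametrised fact that left adjoints between $\kappa$--accessible presentable categories preserving $\kappa$--compacts correspond bijectively to $\kappa$--colimit--preserving functors between their compact subcategories, one checks that morphism classes on both sides also agree, and hence one gets the desired restricted equivalence.

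\textbf{Main obstacle.} The delicate point I anticipate is the $(2,1)$--categorical bookkeeping: while the objectwise correspondence is essentially immediate from the unparametrised Lurie equivalence plus \cref{simpsonTheorem}, one must check that the assignments $\ind_{\kappa}$ and $(-)^{\kappa}$ really define $\baseCat$--functors between the two non--full subcategories in question, i.e. that the morphism conditions (preservation of $\kappa$--compacts on one side, preservation of $\kappa$--small parametrised colimits on the other) are preserved, and that this is natural in the base variable $V \in \baseCat$. This should be formal from the naturality of the unparametrised $\ind_{\kappa} \dashv (-)^{\kappa}$ in its inputs together with \cref{criteriaForTAdjunctions,characterisationStrongPreservations}, but warrants care at the level of morphism categories rather than just objects.
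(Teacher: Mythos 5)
There is nothing internal to compare your argument against: the paper does not prove this proposition but imports it wholesale from the companion work, citing it as \cite[Thm.~4.5.3]{kaifPresentable}. Judged on its own terms, your sketch is the natural argument and is essentially sound: both $\underline{\presentable}_{\baseCat,L,\kappa}$ and $\underline{\cat}_{\baseCat}^{\underline{\mathrm{idem}}(\kappa)}$ sit as non--full subcategories of the corresponding cofree parametrisations, cut out on objects by the existence of left adjoints to the restriction functors together with the left Beck--Chevalley condition, and on morphisms by commutation with those adjoints; applying the unparametrised equivalence $(-)^{\kappa}\colon\presentable_{L,\kappa}\rightleftarrows \cat^{\mathrm{idem}(\kappa)}:\ind_{\kappa}$ pointwise and checking that these conditions match is exactly the right plan, and your identification of the compactness-preservation of $f_!$ as the crux is correct.

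Two of your justifications should be tightened, though neither is fatal. First, the parenthetical reason you give for ``$f_!$ preserves $\kappa$--compacts'' is off: that the structure maps $f^*$ preserve $\kappa$--compact objects is irrelevant to this point. The correct argument is that, by \cref{simpsonTheorem}, each $f^*$ is itself a left adjoint (a morphism of $\presentable_{L,\kappa}$), hence preserves all colimits and in particular $\kappa$--filtered ones; and a left adjoint preserves $\kappa$--compact objects precisely when its right adjoint preserves $\kappa$--filtered colimits, applied here to $f_!\dashv f^*$. Second, ``$\ind_{\kappa}$ and $(-)^{\kappa}$ preserve commuting squares'' does not by itself transport the Beck--Chevalley condition, which asserts invertibility of a specific mate transformation rather than mere commutativity. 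What you need --- and have in effect already set up --- is that the adjunctions $f_!\dashv f^*$ restrict to the $\kappa$--compact objects and $\ind_{\kappa}$--extend (compatibly with units and counits), so that the Beck--Chevalley transformation at one level is identified with the restriction, respectively the $\ind_{\kappa}$--extension, of the one at the other level; invertibility then passes downwards because restricting an equivalence is an equivalence, and upwards because all functors in sight preserve $\kappa$--filtered colimits and the transformation is an equivalence on compact generators. With these repairs, together with the morphism-level check via \cref{criteriaForTAdjunctions,characterisationStrongPreservations} and the naturality in the base that you yourself flag, the argument goes through.
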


The following object -- whose categorical properties we shall work out in \cref{subsec:SemiadditivityStability} -- will be one of the main players in this paper as it will be the domain of our parametrised  K--theory functor. 

\begin{nota}[$\baseCat$--perfect--stable categories]\label{nota:CatTPerfectStable}
We will use the notation $\underline{\cat}^{\myuline{\mathrm{perf}}(\kappa)}_{\baseCat} \coloneqq \underline{\cat}^{\tstable,\underline{\mathrm{idem}}(\kappa)}_{\baseCat} \subseteq \underline{\cat}_{\baseCat}^{\underline{\mathrm{idem}}(\kappa)}$ for the $\baseCat$\textit{--perfect-stable categories} which are parametrised $\kappa$--cocomplete, where the word perfect is standard terminology for being idempotent-complete. When $\kappa=\omega$, we will often use the abbreviation $\underline{\cat}\Tperfect_{\baseCat}\coloneqq \underline{\cat}^{\myuline{\mathrm{perf}}(\omega)}_{\baseCat}$.  By \cref{TPresentableIdempotentCorrespondence}, we have $\underline{\presentable}_{\baseCat, \mathrm{st}, L, \omega} \simeq \underline{\cat}\Tperfect_{\baseCat} $ implemented by taking fibrewise compact objects and fibrewise Ind--completion. Moreover, by \cref{characterisationStrongPreservations}, we see that the faithful inclusion $\underline{\cat}\Tperfect_{\baseCat}\subset \underline{\cat}_{\baseCat}$ factors through $\cofree(\cat\perfect)\subset \cofree(\cat)= \underline{\cat}_{\baseCat}$.
\end{nota}

\subsection{Miscellaneous preliminaries}\label{subsection:miscelleneousPreliminaries}

We consign to this subsection various elementary miscellany about parametrised categories that we shall need for main body of this article. 

\begin{recollect}\label{fact:cocartesianFibrationsAgree}
    An elegant definition of parametrised cocartesian fibrations has been given by Martini, and we refer the reader to \cite[Def. 3.1.1]{martiniCocartesian} for details. 
\end{recollect}

\noindent With this, we have a parametrised version of Lurie's straightening-unstraightening equivalence.

\begin{thm}[{\cite[Thm. 6.3.1]{martiniCocartesian}}]\label{thm:parametrisedStraighteningUnstraightening}
    Let $\underline{S}\in\cat_{\baseCat}$. There is a natural equivalence $\func_{\baseCat}(\underline{S},\underline{\cat}_{\baseCat})\simeq \cocartesianCategory_{\baseCat}(\underline{S})$.
\end{thm}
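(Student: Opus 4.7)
The plan is to stitch together three equivalences, decomposing the claimed identification into the composition of (i) the universal property of the cofree parametrisation defining $\underline{\cat}_{\baseCat}$, (ii) Lurie's classical straightening/unstraightening, and (iii) the identification of $\baseCat$-cocartesian fibrations over $\underline{S}$ with ordinary cocartesian fibrations over $\int\underline{S}$ already recorded as \cref{fact:cocartesianFibrationsAgree}.

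First I would treat the left-hand equivalence $\func_{\baseCat}(\underline{S},\underline{\cat}_{\baseCat})\simeq \cocartesianCategory(\int\underline{S})$. By definition $\underline{\cat}_{\baseCat}=\cofree_{\baseCat}(\cat)$, so by the universal property of the cofree parametrisation recalled in \cref{cofreeParametrisations}, we have a natural equivalence
\[\func_{\baseCat}(\underline{S}, \cofree_{\baseCat}(\cat))\simeq \func(\int\underline{S},\cat).\]
Composing with Lurie's straightening/unstraightening equivalence $\func(\int\underline{S},\cat)\simeq \cocartesianCategory(\int\underline{S})$ over the base $\int\underline{S}$ yields the first half of the statement. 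Both of these equivalences are natural in $\underline{S}$, since the cofree construction is functorial in its source and the straightening/unstraightening is natural in the indexing category.

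Next I would treat the right-hand equivalence $\cocartesianCategory(\int\underline{S})\simeq \cocartesianCategory_{\baseCat}(\underline{S})$. On objects this is exactly the content of the first sentence of \cref{fact:cocartesianFibrationsAgree}: a $\baseCat$-functor $F\colon \underline{\sC}\to\underline{S}$ is a $\baseCat$-cocartesian fibration if and only if $\int F$ is a cocartesian fibration over $\int\underline{S}$ (when post-composed with $\int\underline{S}\to\baseCat\op$, using that compositions of cocartesian fibrations are cocartesian). On morphisms, the observation gives the concrete description of $\int F$-cocartesian lifts: the lift of a morphism $f$ in $\int\underline{\D}$ over $c\colon V\to W$ in $\baseCat\op$ factors through a cocartesian lift of the base component and an $F_W$-cocartesian lift in the fibre. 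Using this, a map of cocartesian fibrations over $\int\underline{S}$ (which by definition preserves all cocartesian edges) automatically preserves both classes of distinguished edges, hence is precisely a map of $\baseCat$-cocartesian fibrations over $\underline{S}$ in the sense of \cite[Def. 7.1]{shahThesis}; conversely any $\baseCat$-map of $\baseCat$-cocartesian fibrations preserves both classes and so gives a map of cocartesian fibrations downstairs. This yields the desired equivalence of categories.

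The only real step of substance is the second equivalence, where one must verify that the notion of ``morphism'' on the two sides genuinely agrees; once one has the careful description of cocartesian lifts in \cref{fact:cocartesianFibrationsAgree}, this is essentially bookkeeping. The naturality in $\underline{S}$ of the entire composite is inherited from the naturality of each of the three constituent equivalences.
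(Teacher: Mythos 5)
Your treatment of the first equivalence is fine: you in effect reprove the cited input by combining the universal property of the cofree parametrisation (\cref{cofreeParametrisations}) with Lurie's straightening over $\int\underline{S}$, whereas the paper simply quotes \cite[Prop. 8.3]{expose1Elements}; the content is the same.

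For the second equivalence $\cocartesianCategory(\int\underline{S})\simeq\cocartesianCategory_{\baseCat}(\underline{S})$ there is a genuine gap. You check, via \cref{fact:cocartesianFibrationsAgree}, that the two sides have ``the same'' objects and ``the same'' morphisms, and then declare an equivalence of categories. In the $\infty$-categorical setting this does not suffice: an equivalence cannot be assembled from an object-and-morphism-level dictionary, since the two sides a priori live over different bases ($\int\underline{S}$ versus $\baseCat\op$) and no comparison functor has been constructed. The paper's proof is organised precisely around this point: it first establishes the equivalence of ambient slice categories $p\colon\cat_{/\int\underline{S}}\rightleftharpoons(\cat_{/\baseCat\op})_{/p}:\forget$, notes that $\cocartesianCategory(\int\underline{S})\subset\cat_{/\int\underline{S}}$ and $\cocartesianCategory_{\baseCat}(\underline{S})\subset(\cat_{\baseCat})_{/\underline{S}}\subset(\cat_{/\baseCat\op})_{/p}$ are non-full subcategories containing all equivalences, and then shows that $p$ and $\forget$ restrict to these subcategories. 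Because factoring through a subcategory is a property that can be verified on objects and morphisms, your object/morphism matching from \cref{fact:cocartesianFibrationsAgree} is exactly the input for that restriction step --- but only once the ambient functorial equivalence is in place, and the fact that both subcategories contain all equivalences is what guarantees the restricted functors are still mutually inverse. So the missing idea is the slice-category equivalence (or some other explicit comparison functor); with it your argument goes through, and without it the phrase ``this yields the desired equivalence of categories'' is not justified.
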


\begin{cons}\label{cons:cartesianStraightening}
    There is also a cartesian version of \cref{thm:parametrisedStraighteningUnstraightening}: by inspecting \cite[Def. 3.1.1]{martiniCocartesian}, a $\baseCat$--map $F\colon \underline{\sC}\rightarrow\underline{\D}$ is a $\baseCat$--cocartesian fibration if and only if $F\vop\colon \underline{\sC}\vop\rightarrow\underline{\D}\vop$ is a $\baseCat$--cartesian fibration, i.e. for a fixed $\underline{S}\in\cat_{\baseCat}$, the $\baseCat$--functor $(-)\vop\colon \cocartesianCategory_{\baseCat}(\underline{S})\rightarrow \mathrm{Cart}_{\baseCat}(\underline{S}\vop)$ is an equivalence. Hence, by the parametrised \textit{cartesian} straightening--unstraightening equivalence, we will mean the composite
    \[\func_{\baseCat}(\underline{S},\underline{\cat}_{\baseCat})\xrightarrow[\simeq]{(-)\vop}\func_{\baseCat}(\underline{S},\underline{\cat}_{\baseCat}) \simeq\cocartesianCategory_{\baseCat}(\underline{S}) \xrightarrow[\simeq]{(-)\vop}\mathrm{Cart}_{\baseCat}(\underline{S}\vop) \]
    where the middle equivalence is by \cref{thm:parametrisedStraighteningUnstraightening}.
\end{cons}

\begin{nota}\label{LFunRFunNotations}
We write $\underline{\rfunc}_{\baseCat}$ (resp. $\underline{\lfunc}_{\baseCat}$) for the $\baseCat$-full subcategories of $\underline{\func}_{\baseCat}$ consisting of $\baseCat$-right adjoint functors (resp. $\baseCat$-left adjoint functors); we write $\underline{\func}^R_{\baseCat}$ (resp.  $\underline{\func}^L_{\baseCat}$) for the $\baseCat$-full subcategories of $\underline{\func}_{\baseCat}$ consisting of strongly $\baseCat$-limit-preserving functors (resp. strongly $\baseCat$-colimit-preserving functors).
\end{nota}

\begin{prop}[{\cite[{Prop. 2.5.10}]{kaifPresentable}}]\label{HTT5.2.6.2}
Let $\underline{\sC},  \underline{\D}\in\cat_{\baseCat}$. Then $\underline{\lfunc}_{\baseCat}(\underline{\D}, \underline{\sC})\simeq \underline{\rfunc}_{\baseCat}(\underline{\sC}, \underline{\D})\vop$.
\end{prop}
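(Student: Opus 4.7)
The statement is the parametrised analogue of \cite[Prop. 5.2.6.2]{lurieHTT}, and the strategy is to combine the cotensor description of $\underline{\func}_{\baseCat}$ with the fibrewise criterion for $\baseCat$-adjunctions of \cref{criteriaForTAdjunctions}, so as to reduce to the unparametrised statement applied fibrewise, and then to glue the resulting equivalences naturally in $V \in \baseCat$. The upshot of this strategy is that essentially all the real work has already been done in the recollections.

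First, I would use \cref{parametrisedCotensors} to identify the $V$-th fibre of $\underline{\func}_{\baseCat}(\underline{\D}, \underline{\sC})$ with the functor category $\func_{\baseCat_{/V}}(\underline{\D}, \underline{\sC})$ of $\baseCat_{/V}$-functors after basechanging along $\baseCat_{/V} \to \baseCat$. Under this identification, \cref{criteriaForTAdjunctions} tells us that the non-full $\baseCat$-subcategories $\underline{\lfunc}_{\baseCat}(\underline{\D}, \underline{\sC})$ and $\underline{\rfunc}_{\baseCat}(\underline{\sC}, \underline{\D})$ have value at $V$ given exactly by the non-full subcategories $\lfunc_{\baseCat_{/V}}(\underline{\D}, \underline{\sC})$ and $\rfunc_{\baseCat_{/V}}(\underline{\sC}, \underline{\D})$ of $\baseCat_{/V}$-left and $\baseCat_{/V}$-right adjoints, respectively.

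The crux is then to construct, for each $V$, an equivalence
\[
\lfunc_{\baseCat_{/V}}(\underline{\D}, \underline{\sC}) \simeq \rfunc_{\baseCat_{/V}}(\underline{\sC}, \underline{\D})\op
\]
sending a $\baseCat_{/V}$-left adjoint $L$ to its $\baseCat_{/V}$-right adjoint $R$. The key input is the mapping-space characterisation of $\baseCat$-adjunctions recalled in the excerpt, which exhibits an adjunction as the symmetric datum of an equivalence $\myuline{\map}_{\underline{\sC}}(L-, -) \simeq \myuline{\map}_{\underline{\D}}(-, R-)$; the roles of $L$ and $R$ in this datum are interchangeable. On morphisms, the passage from $\alpha\colon L\Rightarrow L'$ to its mate $\beta\colon R'\Rightarrow R$ (obtained by pasting with the units and counits) realises the contravariant equivalence. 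Coherently, one may make this rigorous by encoding $\baseCat_{/V}$-adjunctions as $\baseCat_{/V}$-bifibrations over $\Delta^1$ as in the proof of \cite[Prop. 5.2.6.2]{lurieHTT}, where the symmetry in swapping cocartesian and cartesian lifts makes both equivalences manifest simultaneously.

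Finally, to assemble these fibrewise equivalences into a $\baseCat$-equivalence, I would verify naturality: for $f\colon W \to V$ in $\baseCat$, the restriction $f^*$ preserves $\baseCat$-adjunctions (since the mapping-space characterisation is itself natural in basechange) and intertwines with the mate correspondence, which suffices to glue. The main obstacle in the whole argument is the coherent functoriality of the mate correspondence at the $(\infty,1)$-categorical level, rather than a mere bijection on equivalence classes of adjoints; I expect this to be cleanest by working with the bifibration encoding, where the symmetry giving both the object-level and morphism-level equivalences is built into the definition.
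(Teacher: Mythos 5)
There is nothing to compare against inside the paper itself: \cref{HTT5.2.6.2} is imported without proof from \cite[Prop.~2.5.10]{kaifPresentable}, so your sketch can only be measured against the expected parametrised adaptation of \cite[Prop.~5.2.6.2]{lurieHTT}. Your reading of what must be shown is correct — the fibre over $V$ of either side is the full subcategory of $\baseCat_{/V}$-left (resp.\ right) adjoints, and the content is a coherent passage to adjoints — and your instinct that the mate correspondence cannot simply be written down objectwise and morphismwise, but should be mediated by a pairing/bifibration encoding of adjunctions, is the right one.

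The genuine gap is in the assembly step. An equivalence in $\cat_{\baseCat}=\func(\baseCat\op,\cat)$ is a map of functors $\baseCat\op\to\cat$, i.e.\ coherently natural data; building an equivalence separately over each $V$ and then ``verifying naturality'' square-by-square for each $f\colon W\to V$ does not produce such a map — this is exactly the coherence problem you flagged for mates, reappearing one categorical level up, and it cannot be discharged by a check. The fix is not to glue fibrewise equivalences at all but to run the pairing argument internally: organise the $\baseCat$-bicartesian fibrations over the constant $\baseCat$-category $\underline{\Delta}^1$ with fibres $\underline{\D}$ and $\underline{\sC}$ into a single $\baseCat$-category (note that genuine $\baseCat$-cartesianness, as opposed to fibrewise cartesianness, is precisely what encodes the Beck--Chevalley compatibility of the fibrewise right adjoints with restriction from \cref{criteriaForTAdjunctions} — the condition distinguishing a $\baseCat$-adjunction from a mere fibrewise one, which your sketch never matches up on the two sides), and then identify this $\baseCat$-category once and for all with $\underline{\lfunc}_{\baseCat}(\underline{\D},\underline{\sC})$ and with $\underline{\rfunc}_{\baseCat}(\underline{\sC},\underline{\D})\vop$ by $\baseCat$-functors, e.g.\ using the parametrised straightening of \cref{thm:parametrisedStraighteningUnstraightening} together with a parametrised version of Lurie's pairing argument. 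That identification is the substantive work, and your proposal defers it rather than carrying it out. The smaller slips (e.g.\ \cref{parametrisedCotensors} is about cotensoring over constant diagrams and does not compute the fibres of the internal hom, and $\underline{\lfunc},\underline{\rfunc}$ are $\baseCat$-full, not non-full, subcategories) are harmless by comparison.
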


Next, we now record several facts about Beck--Chevalley (i.e. adjointed) squares.

\begin{lem}\label{prop:adjointabilityPreservesAdjunction}
    Suppose we have $\baseCat$--adjunctions $L\colon\underline{\sC}\rightleftharpoons \underline{\D}: R$ and $\widetilde{L}\colon \widetilde{\underline{\sC}}\rightleftharpoons \widetilde{\underline{\D}} : \widetilde{R}$ with adjunction (co)units $\eta, \varepsilon$ and $\widetilde{\eta},\widetilde{{\varepsilon}}$ respectively, together with a right adjointable square
    \begin{center}
        \begin{tikzcd}
            \underline{\sC} \rar["\varphi"]\dar["L"'] & \widetilde{\underline{\sC}}\dar["\widetilde{L}"']\\
            \underline{\D} \rar["\psi"] \ar[u, bend right = 50, dashed ,"R"']& \widetilde{\underline{\D}}\ar[u, bend right = 50, dashed ,"\widetilde{R}"']
        \end{tikzcd}
    \end{center}
    For every $X\in \underline{\sC}$, we then have a natural identification of $\varphi\eta_X\colon \varphi X \rightarrow \varphi RLX$ with $\widetilde{\eta}_{\varphi X} \colon \varphi X \rightarrow \widetilde{R}\widetilde{L}\varphi X$ via the identification $\varphi RLX\simeq \widetilde{R}\widetilde{L}\varphi X$ coming from adjointability. Similarly, for $A\in \underline{\D}$, we have a natural identification of $\psi\varepsilon_A\colon \psi LRA\rightarrow\psi A$ with $\widetilde{\varepsilon}_{\psi A}\colon \widetilde{L}\widetilde{R}\psi A\rightarrow\psi A$.
\end{lem}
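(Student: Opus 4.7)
The plan is to unfold the Beck--Chevalley transformation evaluated at $LX$, then use naturality of $\widetilde{\eta}$ to move $\varphi\eta_X$ past it and collapse the resulting tail using a triangle identity. Concretely, plugging $LX$ into the recalled definition of the BC map $\varphi R \Rightarrow \widetilde{R}\psi$, the composite
\[ \varphi X \xlongrightarrow{\varphi\eta_X} \varphi RLX \xlongrightarrow{\mathrm{BC}_{LX}} \widetilde{R}\widetilde{L}\varphi X \]
unfolds as
\[ \varphi X \xlongrightarrow{\varphi\eta_X} \varphi RLX \xlongrightarrow{\widetilde{\eta}_{\varphi RLX}} \widetilde{R}\widetilde{L}\varphi RLX \;\simeq\; \widetilde{R}\psi LRLX \xlongrightarrow{\widetilde{R}\psi\,\varepsilon_{LX}} \widetilde{R}\psi LX \;\simeq\; \widetilde{R}\widetilde{L}\varphi X, \]
where both $\simeq$'s come from the commutativity $\widetilde{L}\varphi \simeq \psi L$ of the original square.

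Next, applying the naturality of $\widetilde{\eta}$ to the morphism $\varphi\eta_X$ rewrites the first two arrows as $\widetilde{R}\widetilde{L}(\varphi\eta_X)\circ \widetilde{\eta}_{\varphi X}$. Under the identification $\widetilde{L}\varphi\simeq \psi L$ the map $\widetilde{L}(\varphi\eta_X)$ transports to $\psi(L\eta_X)$, and so the tail of the composite becomes
\[ \widetilde{R}\psi LX \xlongrightarrow{\widetilde{R}\psi\,L\eta_X} \widetilde{R}\psi LRLX \xlongrightarrow{\widetilde{R}\psi\,\varepsilon_{LX}} \widetilde{R}\psi LX, \]
which by functoriality is $\widetilde{R}\psi(\varepsilon_{LX}\circ L\eta_X)$. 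By the triangle identity for the $\baseCat$--adjunction $L\dashv R$, the inner map $\varepsilon_{LX}\circ L\eta_X$ is the identity of $LX$, so the tail is the identity and the total composite equals $\widetilde{\eta}_{\varphi X}$, as desired. The assertion for the counit $\varepsilon_A$ follows by the analogous direct calculation with units and counits swapped, or more formally by applying the unit statement just proved to the $\baseCat$--adjunction $R\vop \dashv L\vop$ in the opposite square, which by \cref{HTT5.2.6.2} is again right adjointable (with the roles of the horizontal adjointabilities exchanged accordingly).

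The main subtlety is that all of this must be executed in the $\infty$--categorical setting: the relevant ``naturality'' and ``triangle identity'' are not equalities but specified homotopy-coherent data. To keep the argument rigorous I would translate the computation into the mapping-space formulation of $\baseCat$--adjunctions recalled earlier, where $\eta_X$ (resp.\ $\widetilde{\eta}_{\varphi X}$) is pinned down as the image of $\id_{LX}$ (resp.\ $\id_{\widetilde{L}\varphi X}$) under $\myuline{\map}_{\underline{\D}}(L-,-)\simeq \myuline{\map}_{\underline{\sC}}(-,R-)$, and the BC equivalence reads off on these mapping spaces. In this language, the chain above becomes a diagram of natural equivalences of mapping space $\baseCat$--functors in the single variable $X$, and the collapse via the triangle identity is literally the statement that the adjunction mate of $\id_{LX}$ is $\eta_X$. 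Modulo this coherence bookkeeping, the mathematical content is exactly the 1--categorical manipulation sketched above.
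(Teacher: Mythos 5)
Your proposal is correct and is essentially the paper's own argument: both unfold the Beck--Chevalley map at $LX$, slide $\varphi\eta_X$ past $\widetilde{\eta}$ by naturality, transport $\widetilde{L}(\varphi\eta_X)$ to $\psi(L\eta_X)$ via the commutation $\widetilde{L}\varphi\simeq\psi L$, and collapse the tail with the triangle identity $\varepsilon_{LX}\circ L\eta_X\simeq\id_{LX}$, treating the counit case analogously. The paper packages exactly this as a pasted commuting diagram (naturality square for $\widetilde{\eta}$, naturality square for $\sigma$, triangle identity), so your extra remarks on coherence bookkeeping and the $(-)\vop$ reduction are optional variants rather than a different route.
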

\begin{proof}
    Adjointability means the Beck--Chevalley transformation
    $\varphi R Y\xlongrightarrow{\widetilde{\eta}_{\varphi RY}} \widetilde{R}\widetilde{L}\varphi RY\simeq_{\widetilde{R}\sigma_R}\widetilde{R}\psi L RY \xlongrightarrow{\widetilde{R}\psi\varepsilon_{Y}}\widetilde{R}\psi Y$
    is an equivalence, where $\sigma$ is the datum of the commutation $\psi L\simeq \widetilde{L}\varphi$. Hence,  it would suffice to show that the diagram
    \begin{equation}\label{eqn:unitIdentificationCommutation}
        \begin{tikzcd}
            \varphi X \ar[rrr, "\widetilde{\eta}_{\varphi X}"]\dar["\varphi \eta_X"'] &&& \widetilde{R}\widetilde{L}\varphi X\dar["\widetilde{R}\sigma", "\simeq "']\\
            \varphi RLX \rar["\widetilde{\eta}_{\varphi RLX}"]& \widetilde{R}\widetilde{L}\varphi RLX \rar["\simeq"',"\widetilde{R}\sigma_{RLX}"] & \widetilde{R}\psi LRLX \rar["\widetilde{R}\psi\varepsilon_{LX}"]& \widetilde{R}\psi LX
        \end{tikzcd}
    \end{equation}
    commutes. To this end, just observe that the bottom left composite participates in the following commuting diagram (where the bottom composite is  an equivalence by adjointability)
    \begin{center}
        \begin{tikzcd}
            \varphi X\dar["\varphi \eta_X"'] \rar["\widetilde{\eta}_{\varphi X}"]&\widetilde{R}\widetilde{L}\varphi X\dar["\widetilde{R}\widetilde{L}\varphi \eta_X"']\rar["\widetilde{R}\sigma_X", "\simeq"']& \widetilde{R}\psi LX\dar["\widetilde{R}\psi L\eta_X"']\ar[dr,equal]\\
            \varphi RLX \rar["\widetilde{\eta}_{\varphi RLX}"]& \widetilde{R}\widetilde{L}\varphi RLX \rar["\simeq"',"\widetilde{R}\sigma_{RLX}"] & \widetilde{R}\psi LRLX \rar["\widetilde{R}\psi\varepsilon_{LX}"]& \widetilde{R}\psi LX
        \end{tikzcd}
    \end{center}
    yielding the desired commutation \cref{eqn:unitIdentificationCommutation}. The case of counits is similar, using instead the commuting diagram
    \begin{center}
        \begin{tikzcd}
            & \widetilde{L}\varphi RA \rar["\simeq"', "\sigma_{RA}"] & \psi LRA \rar["\psi\varepsilon_A"] &\psi A\\
            \widetilde{L}\varphi RA\rar["\widetilde{L}\widetilde{\eta}_{\varphi RA}"]\ar[ur,equal, shift left = 2] & \widetilde{L}\widetilde{R}\widetilde{L}\varphi RA \uar["\widetilde{\varepsilon}_{\widetilde{L}\varphi RA}"']\rar["\simeq"', "\widetilde{L}\widetilde{R}\sigma_{RA}"] & \widetilde{L}\widetilde{R}\psi LRA \uar["\widetilde{\varepsilon}_{\psi LRA}"']\rar["\widetilde{L}\widetilde{R}\psi\varepsilon_A"] & \widetilde{L}\widetilde{R}\psi A  \uar["\widetilde{\varepsilon}_{\psi A}"']
        \end{tikzcd}
    \end{center}
    where the bottom composite is the Beck--Chevalley equivalence.
\end{proof}

\begin{prop}\label{prop:limitOfBeckChevalleyAdjunctions}
    Fix a category $J$ and suppose we have two objects $\underline{\sC},\underline{\D}\in\func(J\op,\cat)$ together with a morphism $R \colon \underline{\D}\rightarrow\underline{\sC}$. Suppose moreover that the map $R$ is fibrewise a right adjoint and that for each morphism $f\colon i\rightarrow j$ in $J$, the Beck--Chevalley transformation
    \begin{center}
        \begin{tikzcd}
            \sC_i \rar["f^*"] & \sC_j\\
            \D_i \rar["f^*"']\uar["L_i"] \ar[ur,phantom,"\Leftarrow"]& \D_j\uar["L_j"']
        \end{tikzcd}
    \end{center}
    is an equivalence, where $L_i\dashv R_i, L_j\dashv R_j$. Then the left adjoints assemble to a morphism $L\colon \underline{\sC}\rightarrow\underline{\D}$ in $\func(J\op,\cat)$ which  induces an adjunction $\lim_{J\op}L \colon \lim_{J\op}\underline{\sC} \rightleftharpoons \lim_{J\op}\underline{\D} : \lim_{J\op}R$ in $\cat$.    
\end{prop}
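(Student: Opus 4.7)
The plan is to split the proof into two clean steps: first, promote the fibrewise left adjoints $L_i$ to a single morphism $L\colon \underline{\sC}\rightarrow \underline{\D}$ in $\func(J\op,\cat)$ forming a $J$--adjunction $L\dashv R$; then, apply the limit functor $\lim_{J\op}\colon \func(J\op,\cat)\rightarrow\cat$ to transport this adjunction down to $\cat$.

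For the first step, I would directly invoke the dual of \cref{criteriaForTAdjunctions}, applied with $\baseCat = J$ (note that the cited criterion is formulated without any orbitality hypothesis on the base category, so there is nothing to check). Its hypotheses are precisely that $R$ admits fibrewise left adjoints and that the Beck--Chevalley transformations $f^{*}L_i \Rightarrow L_j f^{*}$ are equivalences for every $f\colon i\rightarrow j$ in $J$, both of which are among our standing assumptions. This produces the desired morphism $L\colon \underline{\sC}\rightarrow \underline{\D}$ in $\func(J\op,\cat)$ together with the structure of a $J$--adjunction $L\dashv R$, i.e. unit and counit natural transformations $\eta\colon \id\Rightarrow RL$ and $\varepsilon\colon LR\Rightarrow \id$ in $\func(J\op,\cat)$ satisfying the triangle identities.

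For the second step, I would compute mapping spaces directly. Since limits in $\cat$ are created by the Yoneda embedding, we have $\map_{\lim\underline{\sC}}(X,Y)\simeq \lim_{j} \map_{\sC_j}(X_j,Y_j)$ for $X,Y\in\lim_{J\op}\underline{\sC}$, and analogously on the $\underline{\D}$ side. Since the functor $\lim L$ is computed fibrewise by the $L_j$ (by the construction of $L$ in step one), we obtain natural equivalences
\begin{align*}
\map_{\lim\underline{\D}}\bigl((\lim L)X, Y\bigr) &\simeq \lim_{j} \map_{\D_j}(L_j X_j, Y_j)\\
&\simeq \lim_{j} \map_{\sC_j}(X_j, R_j Y_j)\\
&\simeq \map_{\lim\underline{\sC}}\bigl(X,(\lim R) Y\bigr)
\end{align*}
natural in $X\in \lim\underline{\sC}$ and $Y\in \lim\underline{\D}$, where the middle equivalence uses the fibrewise adjunctions $L_j\dashv R_j$. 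This exhibits the sought adjunction $\lim L\dashv \lim R$ in $\cat$.

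There is no real obstacle here; the substantive content has already been offloaded to \cref{criteriaForTAdjunctions}, and the remainder of the argument is simply the observation that $\lim_{J\op}$ respects mapping spaces and hence adjunction data. Alternatively, one could package step two more conceptually by noting that $\lim_{J\op}$ is a $2$--functor between $(\infty,2)$--categories and thus automatically takes adjunctions to adjunctions, but the mapping space computation above makes this entirely concrete.
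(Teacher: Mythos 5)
Your argument is correct, and its first half is in substance the same as the paper's: the paper establishes the existence of $L$ by verifying the two conditions of Lurie's relative adjunction criterion \cite[Prop. 7.3.2.11]{lurieHA} for the unstraightened map $\int R$ over $J\op$, which is essentially an inline proof of (the dual of) \cref{criteriaForTAdjunctions} in this instance; invoking that criterion with base $J$, as you do, is legitimate since neither its statement nor the present proposition carries an orbitality hypothesis. Where you genuinely diverge is the second half. The paper identifies $\lim_{J\op}\underline{\sC}$ and $\lim_{J\op}\underline{\D}$ with the categories of cocartesian sections of the unstraightenings and observes that the relative adjunction $\int L\dashv \int R$ induces an adjunction on (cocartesian) sections, using that both functors preserve cocartesian edges and that the relative (co)units lie over identities in $J\op$. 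You instead compute mapping spaces in the limit. That works, but note that the middle equivalence in your chain is not justified by the fibrewise adjunctions $L_j\dashv R_j$ alone: you need the equivalences $\map_{\D_j}(L_jX_j,Y_j)\simeq\map_{\sC_j}(X_j,R_jY_j)$ to be natural in $j$ (as well as in $X,Y$), and this naturality is precisely what the coherent unit and counit of the $J$--adjunction produced in your first step supply — for instance, take the equivalence to be ``apply $R_j$ and precompose with $\eta_j$'', where $\eta$ is a morphism in $\func(J\op,\cat)$. With that understood, your alternative packaging — that $\lim_{J\op}$ is suitably (2--)functorial and hence carries the adjunction in $\func(J\op,\cat)$ to one in $\cat$ — is clean and amounts to a restatement of the paper's sections argument; the mapping-space computation buys concreteness, while the paper's route stays at the level of fibrations and sidesteps the naturality point entirely.
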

\begin{proof}
    For the first part, we will use Lurie's theory of relative adjunctions. Write $\int\underline{\sC},\int\underline{\D}\rightarrow J\op$ for the respective cocartesian unstraightening, so that we have a map of $J\op$--cocartesian fibrations $R\colon \int\underline{\D}\rightarrow\int\underline{\sC}$. There are two conditions in \cite[Prop. 7.3.2.11]{lurieHA} to check in order to obtain a left adjoint $L\colon \int\underline{\sC}\rightarrow\int\underline{\D}$ to $R$ relative to $J\op$. Condition (1) there  is immediate from our fibrewise adjunction hypothesis. A straightforward unwinding of condition (2) there states that we need to check the following: for every morphism $f\colon i\rightarrow j$, the map $f^*\xrightarrow{f^*\eta}f^*R_iL_i\simeq R_jf^*L_i$ adjoints to an equivalence $L_jf^*\xrightarrow{\simeq} f^*L_i$. But this adjointed map is precisely the Beck--Chevalley transformation, and so by hypothesis, is an equivalence. It is then an easy check to see that the relative left adjoint $L$ is automatically a map of $J\op$--cocartesian fibrations (see for example \cite[Prop. 2.2.5 (2)]{kaifPresentable}), whence a map $L\colon \underline{\sC}\rightarrow \underline{\D}$ in $\func(J\op,\cat)\simeq \mathrm{coCart}(J\op)$ as wanted. Finally, by the formula for limits in $\cat$ in terms of cocartesian sections of the cocartesian unstraightening (cf. the dual of \cite[Cor. 3.3.3.2]{lurieHTT} recorded  in \cite[Prop. I.36]{fabianWagner}), we obtain the adjunction 
    $\lim_{J\op}L\colon \lim_{J\op}\underline{\sC} \simeq \Gamma_{\mathrm{cocart}}(\int\underline{\sC}) \rightleftharpoons \Gamma_{\mathrm{cocart}}(\int\underline{\D}) \simeq \lim_{J\op}\underline{\D} : \lim_{J\op}R$ as claimed.
\end{proof}

\begin{fact}[Adjoints of equivariant functors]\label{fact:adjointOfEquivariantAdjunctions}
    It is a standard categorical fact that the $G$--equivariant structure of a functor admitting an adjoint induces a $G$--equivariant structure on the adjoint and on the adjunction. Since we have not been able to find it anywhere in the literature, we will record a proof of this here which we learnt from Maxime Ramzi. Let $L\colon \sC \rightarrow \D$ be a $G$--equivariant functor whose underlying functor admits a right adjoint. In particular, $L$ can be encoded as a morphism in $\func(BG,\cat)$ and so upon unstraightening, we have a map $L\colon \int\sC \rightarrow \int\D$ of cocartesian fibrations over $BG$. Now since $BG$ was an $\infty$--groupoid, we get by \cite[Lem. 3.1.6]{markusInfinityBook} that all cocartesian morphisms in $\int\sC$ and $\int\D$ are equivalences. In particular, any map between them over $BG$ is a map of cocartesian fibrations. Hence, by the dual of \cite[Prop. 7.3.2.11]{lurieHA}, the fibrewise right adjoint coming from the underlying adjunction induces a relative right adjoint $R\colon\int\D\rightarrow \int\sC$ which is automatically a map of cocartesian fibrations over $BG$ by the previous sentence. All in all, we have obtained a $G$--equivariant structure on the right adjoint as well as on the unit and counit maps, as wanted.  
\end{fact}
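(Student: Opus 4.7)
The plan is to deploy Lurie's theory of relative adjunctions from \cite[\S7.3.2]{lurieHA} applied to the cocartesian unstraightenings of $\sC$ and $\D$. Concretely, a $G$--equivariant functor $L\colon \sC\rightarrow\D$ is the same data as a morphism in $\func(BG,\cat)$, which by the usual straightening--unstraightening equivalence corresponds to a map $\int L\colon \int\sC \rightarrow \int\D$ of cocartesian fibrations over $BG$. The goal is to produce a relative right adjoint to $\int L$ over $BG$, check that it is itself a map of cocartesian fibrations, and then restraighten.

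For the existence of the relative right adjoint, I would invoke the dual of \cite[Prop. 7.3.2.11]{lurieHA}, which requires two conditions: (i) each fibrewise map $L_{\ast}\colon \sC\rightarrow\D$ admits a right adjoint, and (ii) a Beck--Chevalley condition for every edge in $BG$. Condition (i) is immediate from our hypothesis that the underlying functor admits a right adjoint. For condition (ii), the crucial input is that $BG$ is an $\infty$--groupoid, so by the cited \cite[Lem. 3.1.6]{markusInfinityBook}, every morphism in $\int\sC$ and $\int\D$ is automatically cocartesian; equivalently, all structure pushforwards $g^*$ associated to edges $g\in BG$ are equivalences. The Beck--Chevalley transformation thus becomes an equivalence trivially, so (ii) holds and we obtain a relative right adjoint $R\colon \int\D\rightarrow\int\sC$ over $BG$.

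Next I would verify that $R$ is a map of cocartesian fibrations. This too is essentially automatic: any functor over a groupoid preserves the collection of cocartesian edges (because these are precisely the equivalences by the aforementioned lemma), so $R$ is automatically a map of cocartesian fibrations. Straightening $R$ yields a morphism $\D\rightarrow\sC$ in $\func(BG,\cat)$, i.e. a $G$--equivariant structure on the right adjoint. Finally, the unit and counit of the relative adjunction, which by construction live over the identity of $BG$ (cf. the definition of relative adjunctions), straighten to natural transformations in $\func(BG,\cat)$, giving the desired $G$--equivariant enhancement of the adjunction data.

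The main step where one must be careful is the verification that the relative adjunction conditions of \cite[Prop. 7.3.2.11]{lurieHA} truly simplify as claimed; but this is essentially formal given that $BG$ is a groupoid, and the remaining content is bookkeeping between (co)cartesian fibrations and their straightenings. As such, I do not anticipate any serious technical obstacle, which is consistent with the status of the result as a ``standard categorical fact''.
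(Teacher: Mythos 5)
Your proposal is correct and follows essentially the same route as the paper: unstraighten over $BG$, use that $BG$ is an $\infty$--groupoid to render the hypotheses of (the dual of) \cite[Prop. 7.3.2.11]{lurieHA} automatic, note that the resulting relative right adjoint is automatically a map of cocartesian fibrations, and restraighten to obtain the equivariant structure on $R$ and on the (co)units. One small slip: it is not true that \emph{every} morphism of $\int\sC$ or $\int\D$ is cocartesian --- over a groupoid base the cocartesian edges are precisely the equivalences (this is what \cite[Lem. 3.1.6]{markusInfinityBook} gives, and also what you actually invoke when checking the Beck--Chevalley condition via the invertibility of the pushforwards and when verifying that $R$ preserves cocartesian edges) --- so the parenthetical ``equivalently'' claim should be dropped, but the argument itself is unaffected.
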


The next pair of results provide abstract colimit decomposition results that will be crucial to our cubical theory in \cref{section:theoryOfParametrisedCubes}.

\begin{lem}\label{lem:limitOfAdjoints}
    Suppose we are given a map 
    \begin{center}
        \begin{tikzcd}
            \underline{\sC}\rar["\gamma^*"]&\underline{\E}&\lar["\beta^*"']\underline{\D} \\
            \underline{\A} \rar[equal] \uar["p^*"] & \underline{\A}\uar["r^*"]& \underline{\A}\lar[equal]\uar["q^*"]
        \end{tikzcd}
    \end{center}
    of diagrams in $\func(\Lambda^2_0,\cat_{\baseCat})$,  where $\gamma^*,\beta^*,p^*,q^*,r^*$ admit right adjoints $\gamma_*,\beta_*,p_*,q_*,r_*$ respectively. If $\underline{\A}$ has pullbacks, then the functor $p^*\times_{r^*}q^*\colon \underline{\A}\rightarrow\underline{\sC}\times_{\underline{\E}}\underline{\D}$ has a right adjoint given by the functor $\varphi$  sending $(c,d)\in \underline{\sC}\times_{\underline{\E}}\underline{\D}$ to the left pullback square in $\underline{\A}$ in the following
    \begin{center}
        \begin{tikzcd}
            \varphi(c,d)\rar\dar \ar[dr, "\lrcorner", phantom, very near start]& q_*d\dar["q_*\eta^{\beta}_d"]\\
            p_*c\rar["p_*\eta^{\gamma}_c"] & r_*\gamma^*c \simeq r_*\beta^*d
        \end{tikzcd}
        \hspace{6mm}
        \begin{tikzcd}
            r_!\gamma^*c \simeq r_!\beta^*d\rar["q_!\varepsilon^{\beta}_d"]\dar["p_!\varepsilon^{\gamma}_c"'] \ar[dr, "\ulcorner", phantom, very near end]& q_!d\dar\\
            p_!c\rar & \psi(c,d)
        \end{tikzcd}
    \end{center}
    Similarly, if $\gamma^*,\beta^*,p^*,q^*,r^*$ admit instead left adjoints $\gamma_!,\beta_!,p_!,q_!,r_!$ respectively and $\underline{\A}$ admits pushouts, then the functor $p^*\times_{r^*}q^*$ admits a left adjoint given by the functor $\psi$ sending $(c,d)\in\underline{\sC}\times_{\underline{\E}}\underline{\D}$ to the right  pushout square in $\underline{\A}$ above.
\end{lem}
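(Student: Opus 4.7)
My plan is to establish the right-adjoint assertion by exhibiting the required mapping space equivalence; the left-adjoint assertion will then follow by applying the argument to the $(-)\vop$-dual setup, using \cref{cons:cartesianStraightening} and the fact that the hypotheses are self-dual.

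The first step is a setup: fix $a \in \underline{\A}$ and $(c,d) \in \underline{\sC}\times_{\underline{\E}}\underline{\D}$, by which I mean a pair $(c,d)$ together with an equivalence $\gamma^*c \simeq \beta^*d$ in $\underline{\E}$. Since $\cat_{\baseCat}=\func(\baseCat\op,\cat)$ is a functor category, limits in it are computed fibrewise, and so the fibres of the pullback $\underline{\sC}\times_{\underline{\E}}\underline{\D}$ are themselves pullbacks in $\cat$. Consequently the parametrised mapping space out of $(p^*a, q^*a)$ decomposes as the pullback
\[
\myuline{\map}_{\underline{\sC}\times_{\underline{\E}}\underline{\D}}\big((p^*a, q^*a),(c,d)\big) \; \simeq\; \myuline{\map}_{\underline{\sC}}(p^*a,c) \times_{\myuline{\map}_{\underline{\E}}(r^*a,\gamma^*c)} \myuline{\map}_{\underline{\D}}(q^*a,d),
\]
where we have used the identification $r^*a\simeq \gamma^*p^*a\simeq \beta^*q^*a$ coming from the $\Lambda^2_0$-diagram.

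Next, I would apply the hypothesized adjunctions $p^*\dashv p_*$, $q^*\dashv q_*$ and $r^*\dashv r_*$ to each factor. The key bookkeeping step is to identify the transition maps after adjunction. For the legs going into $\myuline{\map}_{\underline{\E}}(r^*a, \gamma^*c)$, one uses that a map $f\colon p^*a\to c$ goes to $\gamma^*f\colon r^*a\to \gamma^*c$, and a straightforward naturality argument (in two stages: first adjoint $\gamma^*f$ across $\gamma^*\dashv\gamma_*$, then across $p^*\dashv p_*$) shows that the corresponding map $a\to r_*\gamma^*c$ under $r^*\dashv r_*$ is precisely the composite $a\to p_*c \xrightarrow{p_*\eta^{\gamma}_c} p_*\gamma_*\gamma^*c \simeq r_*\gamma^*c$. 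An analogous statement holds on the other leg, giving the map $q_*\eta^{\beta}_d$. Hence the pullback becomes
\[
\myuline{\map}_{\underline{\A}}(a, p_*c) \times_{\myuline{\map}_{\underline{\A}}(a, r_*\gamma^*c)} \myuline{\map}_{\underline{\A}}(a, q_*d),
\]
and since $\myuline{\map}_{\underline{\A}}(a,-)$ preserves $\baseCat$-limits, this is naturally equivalent to $\myuline{\map}_{\underline{\A}}(a, \varphi(c,d))$ for the pullback $\varphi(c,d)$ described in the statement.

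All equivalences above are natural in $a$, so the $\baseCat$-Yoneda lemma (as recorded in the formulation of $\baseCat$-adjunctions via mapping spaces in \cite{kaifPresentable}) provides both the functoriality of $\varphi$ and the desired $\baseCat$-adjunction $p^*\times_{r^*}q^* \dashv \varphi$. The main point requiring care is the identification of the transition maps as $p_*\eta^{\gamma}_c$ and $q_*\eta^{\beta}_d$ in step two; this is essentially the naturality observation already encoded in \cref{prop:adjointabilityPreservesAdjunction}, invoked here for the trivially right-adjointable square coming from the identity $r^*\simeq \gamma^*p^*$ (and similarly $r^*\simeq \beta^*q^*$). For the left adjoint $\psi$, running the dual argument produces a pushout description with the transition maps given by counits $p_!\varepsilon^{\gamma}_c$ and $q_!\varepsilon^{\beta}_d$, exactly as stated.
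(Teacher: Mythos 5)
Your argument is correct in substance, but it takes a genuinely different route from the paper. The paper's proof is essentially a two-line assembly: it observes that $\varphi$ is visibly a $\baseCat$--functor (a composite of $p_*,q_*,r_*$ and the fibrewise-limit functor), quotes the \emph{unparametrised} pullback-of-categories adjoint formula from Horev--Yanovski (their Thm.\ 5.5) to see that $\varphi$ is fibrewise right adjoint to $p^*\times_{r^*}q^*$, and then assembles these fibrewise adjunctions into a $\baseCat$--adjunction via the fibrewise criterion \cref{criteriaForTAdjunctions}; the left-adjoint half is obtained by passing to opposites, exactly as you do. You instead re-prove the fibrewise input directly at the parametrised level: the mapping-space decomposition for a pullback of categories, adjunction across $p^*\dashv p_*$, $q^*\dashv q_*$, $r^*\dashv r_*$, the identification of the transition maps with $p_*\eta^{\gamma}_c$ and $q_*\eta^{\beta}_d$ (which is just the unit formula for a composite adjunction plus naturality of $\eta^{\gamma}$ --- your appeal to \cref{prop:adjointabilityPreservesAdjunction} is not really needed, though harmless), and limit-preservation of $\myuline{\map}_{\underline{\A}}(a,-)$. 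What the paper's route buys is brevity and the outsourcing of the only nontrivial computation to the literature; what yours buys is a self-contained proof that makes the role of the units, and hence the precise shape of the structure maps in the defining pullback for $\varphi$, completely explicit.

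One place where you should be slightly more careful is the final Yoneda step. The mapping-space characterisation of $\baseCat$--adjunctions you cite presupposes that both functors are already given as $\baseCat$--functors, whereas you are trying to extract the functoriality of $\varphi$ from objectwise representability. Either invoke the pointwise/representability criterion for adjoints (fibrewise, and then check that the representing objects are compatible with restriction --- which they are, because $\varphi(c,d)$ is built from the parametrised right adjoints $p_*,q_*,r_*$ and fibrewise pullbacks, all of which commute with the restriction functors, so the Beck--Chevalley condition of \cref{criteriaForTAdjunctions} holds), or simply note up front, as the paper does, that $\varphi$ is a $\baseCat$--functor and then verify the adjunction fibrewise by your computation. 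With that patch your argument is complete.
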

\begin{proof}
    The putative right adjoint $\varphi$ is clearly a $\baseCat$--functor, and is fibrewise a right adjoint by \cite[Thm. 5.5]{horevYanovski}. Hence, by \cref{criteriaForTAdjunctions}, we get that $\varphi$ is indeed the $\baseCat$--right adjoint of $p^*\times_{r^*}q^*$. The statement for the left adjoint follows by passing to the opposite categories.
\end{proof}

\begin{cor}\label{cor:(co)limitDecompositionAlongPushouts}
    Suppose we have the left pushout diagram in $\cat_{\baseCat}$ in the following
    \begin{center}
        \begin{tikzcd}
            \underline{\B} \rar\dar \ar[dr, phantom ,very near end, "\ulcorner"]& \underline{\D}\dar\\
            \underline{\sC} \rar & \underline{\proper}
        \end{tikzcd}
        \hspace{7mm}
        \begin{tikzcd}
            \underline{\colim}_{\underline{\B}}\partial \rar\dar \ar[dr, phantom ,very near end, "\ulcorner"]& \underline{\colim}_{\underline{\D}}\partial\dar\\
            \underline{\colim}_{\underline{\sC}}\partial\rar&\underline{\colim}_{\underline{\proper}}\partial.
        \end{tikzcd}
    \end{center}
    Let $\underline{\A}\in\cat_{\baseCat}$ have all $\baseCat$--colimits. Then for any $\partial\in \underline{\func}(\underline{\proper},\underline{\A})$, we have the pushout diagram in $\underline{\A}$ as in the right-hand square above,
    where we have suppressed the restriction functors. A similar statement holds for limits, with all colimits in sight replaced with limits.
\end{cor}
\begin{proof}
    This is an immediate consequence of \cref{lem:limitOfAdjoints}, using that $\underline{\func}(\underline{\proper},\underline{\A})\simeq \underline{\func}(\underline{\sC},\underline{\A})\times_{\underline{\func}(\underline{\B},\underline{\A})}\underline{\func}(\underline{\D},\underline{\A})$ by definition of $\underline{\proper}$ as a pushout.
\end{proof}

Next, recall the notion of  $\baseCat$--faithful functors from \cite[Def. 3.4.4]{kaifPresentable}, i.e. a $\baseCat$--functor which induces fibrewise inclusions of components on mapping spaces.

\begin{lem}\label{lem:faithful(co)limitCreation}
    Let $\underline{\sC}\subset \underline{\D}$ be a $\baseCat$--faithful inclusion where $\underline{\D}$ is admits limits of shape $\underline{J}$. Then $\underline{\sC}$ inherits $\underline{J}$--shaped limits from $\underline{\D}$ if the following conditions hold:
    \begin{enumerate}
        \item for any object  $\partial\in\underline{\func}(\underline{J},\underline{\sC})\subset\underline{\func}(\underline{J},\underline{\D})$, the limit of $\partial$ has the property of lying in $\underline{\sC}$ and the adjunction counit $\underline{\constant}_{\underline{J}}\underline{\lim}_{\underline{J}}\partial \rightarrow \partial$ lies in $\underline{\func}(\underline{J},\underline{\sC})$,
        \item for any $C\in\underline{\sC}$ equipped with a morphism $\underline{\constant}_{\underline{J}}C\rightarrow \partial$ in $\underline{\func}(\underline{J},\underline{\sC})$, the induced morphism $C\rightarrow \underline{\lim}_{\underline{J}}\partial$ lies in $\underline{\sC}\subset\underline{\D}$.
    \end{enumerate}
    A similar statement holds also for colimits by passing to the opposite categories.
\end{lem}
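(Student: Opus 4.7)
The plan is to construct the required $\baseCat$--limit adjunction $\underline{\constant}_{\underline{J}}\colon \underline{\sC}\rightleftharpoons \underline{\func}(\underline{J},\underline{\sC})\colon \underline{\lim}_{\underline{J}}^{\underline{\sC}}$ directly from its counterpart in $\underline{\D}$ and to verify it via the parametrised mapping space characterisation of $\baseCat$--adjunctions recalled just before \cref{parametrisedCotensors}. By hypothesis, we already have a $\baseCat$--adjunction $\underline{\constant}_{\underline{J}}\colon \underline{\D}\rightleftharpoons \underline{\func}(\underline{J},\underline{\D})\colon \underline{\lim}_{\underline{J}}$. Condition (1) supplies the putative right adjoint on $\underline{\sC}$: for $\partial\in\underline{\func}(\underline{J},\underline{\sC})$, the object $\underline{\lim}_{\underline{J}}\partial$ already lies in $\underline{\sC}$, and the counit $\varepsilon_{\partial}\colon \underline{\constant}_{\underline{J}}\underline{\lim}_{\underline{J}}\partial\rightarrow \partial$ already lives in $\underline{\func}(\underline{J},\underline{\sC})$, so we have a candidate counit datum.

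Next, fix $C\in\underline{\sC}$ and $\partial\in\underline{\func}(\underline{J},\underline{\sC})$. By $\baseCat$--faithfulness of the inclusions $\underline{\sC}\subset\underline{\D}$ and $\underline{\func}(\underline{J},\underline{\sC})\subset \underline{\func}(\underline{J},\underline{\D})$ (the latter being inherited fibrewise from the former), both squares
\[\myuline{\map}_{\underline{\sC}}(C, \underline{\lim}_{\underline{J}}\partial) \hookrightarrow \myuline{\map}_{\underline{\D}}(C, \underline{\lim}_{\underline{J}}\partial)\qquad \text{and}\qquad \myuline{\map}_{\underline{\func}(\underline{J},\underline{\sC})}(\underline{\constant}_{\underline{J}}C,\partial)\hookrightarrow \myuline{\map}_{\underline{\func}(\underline{J},\underline{\D})}(\underline{\constant}_{\underline{J}}C,\partial)\]
are fibrewise inclusions of path components. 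The $\baseCat$--adjunction in $\underline{\D}$ yields a natural equivalence of the two ambient parametrised mapping spaces. The key step will then be to show that this equivalence carries the left subspace of components onto the right subspace of components, and vice versa. Condition (1) handles one direction: given $f\colon C\rightarrow \underline{\lim}_{\underline{J}}\partial$ in $\underline{\sC}$, its adjoint cocone is the composite $\varepsilon_{\partial}\circ \underline{\constant}_{\underline{J}}(f)$, and this lies in $\underline{\func}(\underline{J},\underline{\sC})$ because $\underline{\constant}_{\underline{J}}$ preserves the subcategory and $\varepsilon_{\partial}\in\underline{\func}(\underline{J},\underline{\sC})$ by (1). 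Condition (2) is exactly the reverse implication, phrased at the level of path components of the mapping spaces involved.

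Combining the two directions, the adjunction equivalence in $\underline{\D}$ restricts to a natural equivalence $\myuline{\map}_{\underline{\func}(\underline{J},\underline{\sC})}(\underline{\constant}_{\underline{J}}(-),-)\simeq \myuline{\map}_{\underline{\sC}}(-,\underline{\lim}_{\underline{J}}(-))$ on the subobjects, which by the mapping--space characterisation of $\baseCat$--adjunctions is precisely what is required to produce the desired $\baseCat$--adjunction within $\underline{\sC}$. This both endows $\underline{\sC}$ with $\underline{J}$--shaped limits and, by construction, exhibits the inclusion $\underline{\sC}\subset\underline{\D}$ as preserving them, which is the precise meaning of $\underline{\sC}$ inheriting these limits from $\underline{\D}$.

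The only delicate point is the naturality of these pointwise identifications of components: one must check that the subspaces picked out fibrewise (at each $V\in\baseCat$, and on objects varying in $C$ and $\partial$) indeed assemble into $\baseCat$--functorial subobjects. This is automatic because $\baseCat$--faithfulness is itself formulated fibrewise and the two putative mapping space functors $\myuline{\map}_{\underline{\sC}}(-,\underline{\lim}_{\underline{J}}-)$ and $\myuline{\map}_{\underline{\func}(\underline{J},\underline{\sC})}(\underline{\constant}_{\underline{J}}-,-)$ are well--defined $\baseCat$--functors to $\underline{\spc}$, so the equivalence between them is built naturally in $C$ and $\partial$. The colimit statement follows by passing to the opposite categories.
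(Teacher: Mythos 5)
Your proposal is correct and follows essentially the same route as the paper: one restricts the adjunction equivalence $\myuline{\map}_{\underline{\D}}(-,\underline{\lim}_{\underline{J}}\partial)\simeq\myuline{\map}_{\underline{\func}(\underline{J},\underline{\D})}(\underline{\constant}_{\underline{J}}-,\partial)$ along the $\baseCat$--faithful inclusions, using condition (1) to see that the comparison map (induced by the counit, which lies in $\underline{\func}(\underline{J},\underline{\sC})$) is a well--defined inclusion of components, and condition (2) to supply $\pi_0$--surjectivity. The paper packages this as a commuting square of parametrised mapping spaces with vertical inclusions of subcomponents, which is exactly the content of your component--matching argument.
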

\begin{proof}
    Suppose we are given such an object $\partial\in\underline{\func}(\underline{J},\underline{\sC})$ satisfying (1) and (2). We need to argue that the natural map $\myuline{\map}_{\underline{\sC}}(-,\underline{\lim}_{\underline{J}}\partial)\rightarrow\myuline{\map}_{\underline{\func}(\underline{J},\underline{\sC})}(\underline{\constant}_{\underline{J}}-,\partial)$ induced by the morphism $\underline{\constant}_{\underline{J}}\underline{\lim}_{\underline{J}}\partial \rightarrow \partial$, which is in $\underline{\func}(\underline{J},\underline{\sC})$ by condition (1), is an equivalence. To this end, consider the commuting diagram
    \begin{center}
        \begin{tikzcd}
            \myuline{\map}_{\underline{\sC}}(-,\underline{\lim}_{\underline{J}}\partial)\dar[hook]\rar&\myuline{\map}_{\underline{\func}(\underline{J},\underline{\sC})}(\underline{\constant}_{\underline{J}}-,\partial)\dar[hook]\\
            \myuline{\map}_{\underline{\D}}(-,\underline{\lim}_{\underline{J}}\partial)\rar["\simeq"]&\myuline{\map}_{\underline{\func}(\underline{J},\underline{\D})}(\underline{\constant}_{\underline{J}}-,\partial)
        \end{tikzcd}
    \end{center}
     where the bottom horizontal is an equivalence by definition of $\underline{\lim}_{\underline{J}}\partial$. Since the vertical maps are inclusion of subcomponents, so is the top horizontal map. Hence, we are left to showing that the top horizontal map is $\pi_0$--surjective, which is precisely supplied by condition (2).
\end{proof}

We include the proof of the following standard observation for the reader's convenience as well as to establish what we mean by ``reflecting (co)limits''.

\begin{lem}\label{lem:conservativePreservationImpliesReflection}
    Let $\underline{\sC},\underline{\D}, \underline{I},\in\cat_{\baseCat}$ such that $\underline{\sC},\underline{\D}$ admits $\underline{I}$--shaped (co)limits. Let $F\colon \underline{\sC}\rightarrow \underline{\D}$ be a conservative functor preserving $\underline{I}$--shaped (co)limits. Then $F$ reflects $\underline{I}$--shaped (co)limits. 
\end{lem}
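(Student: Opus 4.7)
The plan is to run the usual unparametrised argument, with care taken to interpret ``reflects $\underline{I}$-shaped (co)limits'' in the parametrised sense. I will only write out the limit case explicitly, since the colimit case follows by passing to opposites via the self-equivalence $(-)\vop$ of $\cat_{\baseCat}$.

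First I would fix the terminology: by saying $F$ reflects $\underline{I}$-shaped limits, I mean that for any diagram $\partial \in \underline{\func}(\underline{I},\underline{\sC})$ equipped with a cone $\alpha \colon \underline{\constant}_{\underline{I}}C \to \partial$ in $\underline{\func}(\underline{I},\underline{\sC})$, if the cone $F\alpha \colon \underline{\constant}_{\underline{I}}FC \to F\partial$ exhibits $FC$ as $\underline{\lim}_{\underline{I}}(F\partial)$, then $\alpha$ exhibits $C$ as $\underline{\lim}_{\underline{I}}\partial$.

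The main step is to introduce the comparison with the actual limit which exists by hypothesis. Since $\underline{\sC}$ admits $\underline{I}$-shaped limits, we may form $\underline{\lim}_{\underline{I}}\partial \in \underline{\sC}$ together with the counit cone $\varepsilon \colon \underline{\constant}_{\underline{I}}\underline{\lim}_{\underline{I}}\partial \to \partial$, and $\alpha$ factors uniquely through $\varepsilon$ via a canonical comparison map $\chi \colon C \to \underline{\lim}_{\underline{I}}\partial$ in $\underline{\sC}$. It suffices to show that $\chi$ is an equivalence.

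To see this, apply $F$ and use that $F$ preserves $\underline{I}$-shaped limits, which gives a canonical equivalence $F(\underline{\lim}_{\underline{I}}\partial) \simeq \underline{\lim}_{\underline{I}}(F\partial)$ under which $F\varepsilon$ is identified with the counit cone of $F\partial$. Naturality of the factorisation then shows that $F\chi \colon FC \to F(\underline{\lim}_{\underline{I}}\partial) \simeq \underline{\lim}_{\underline{I}}(F\partial)$ is exactly the comparison map induced by the cone $F\alpha$. By hypothesis $F\alpha$ exhibits $FC$ as the limit, so this comparison map is an equivalence, i.e. $F\chi$ is an equivalence. Finally, since $F$ is conservative, $\chi$ itself is an equivalence, and composing the factorisation $\alpha \simeq \varepsilon \circ \underline{\constant}_{\underline{I}}\chi$ shows that $\alpha$ exhibits $C$ as $\underline{\lim}_{\underline{I}}\partial$ as desired. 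There is no real obstacle here; the only mild subtlety is that ``equivalence in $\underline{\sC}$'' is detected fibrewise on underlying objects, but conservativity of $F$ is formulated in exactly this fibrewise sense, so the two match up and the argument goes through verbatim.
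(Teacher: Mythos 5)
Your proof is correct and follows essentially the same route as the paper: form the comparison map from the cone point to the limit that exists by hypothesis, observe that $F$ sends it to an equivalence by preservation together with the assumed limit property of the image cone, and conclude by conservativity. The only difference is cosmetic phrasing (cone plus comparison map versus a coned diagram $\partial\colon \underline{I}\tcone\rightarrow\underline{\sC}$), so there is nothing to add.
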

\begin{proof}
    Without loss of generality we deal with the case of limits. Suppose we have a coned $\underline{I}$--shaped diagram $\partial\colon \underline{I}\tcone \rightarrow \underline{\sC}$ such that $F\partial\colon \underline{I}\tcone\rightarrow\underline{\D}$ is a limit diagram. This means that if we write $\infty$ for the cone point, then we have a map $\partial(\infty)\rightarrow \underline{\lim}_{\underline{I}}\partial$ in $\underline{\sC}$ such that $F\partial(\infty)\rightarrow F\underline{\lim}_{\underline{I}}\partial\simeq \underline{\lim}_{\underline{I}}F\partial$ is an equivalence in $\underline{\D}$. Since $F$ was conservative, we get that $\partial(\infty)\rightarrow \underline{\lim}_{\underline{I}}\partial$ was already an equivalence, as was to be shown.
\end{proof}

We will deduce the parametrised analogue of \cite[{Lem 1.7.ii}]{makkaiPitts} from the unparametrised version proven in \cite{nineAuthorsIV}. We will need some terminology for this.

\begin{terminology}
Let $\underline{\sC}$ be a ${\baseCat}$-cocomplete category and $S$ be a set of objects in $\underline{\sC}$. We say that it is \textit{jointly conservative} if $S$ induces a jointly conservative set of objects in each fibre of $\underline{\sC}$, i.e. for every $V \in {\baseCat}$ and writing $S_V$ for the set of objects of $\sC_V$ in the set $S$, the functor $\prod_{x\in S_V}\map_{\sC _V}(x,-) : \sC_V \rightarrow \prod_{x\in S_V}\spc$ is conservative. We say that it is a \textit{set of parametrised generators of $\underline{\sC}$} if the smallest ${\baseCat}$-cocomplete subcategory of $\underline{\sC}$ containing $S$ is $\underline{\sC}$ itself. That is, every parametrised object in $\underline{\sC}$ can be written as a parametrised colimit of objects in $S$. 
\end{terminology}

\begin{prop}[Parametrised Makkai-Pitts] \label{parametrisedMakkaiPitts}
Let $\kappa$ be a regular cardinal and $\underline{\sC}$ a ${\baseCat}$-cocomplete category. Let $S \subseteq \underline{\sC}$ be a jointly conservative set of parametrised-$\kappa$-compact objects. Then $S$ is a set of parametrised-$\kappa$-compact generators. In particular, $\underline{\sC}$ is parametrised-$\kappa$-compactly generated.
\end{prop}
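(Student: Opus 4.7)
The plan is to bootstrap from the unparametrised Makkai--Pitts theorem of \cite{nineAuthorsIV}, applied fibrewise, leveraging the fact that parametrised $\kappa$-compactness is a fibrewise notion as recalled in the excerpt just before \cref{simpsonTheorem}.

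First, I would apply the unparametrised Makkai--Pitts theorem to each fibre $\sC_V$: since $S_V \subseteq \sC_V$ is jointly conservative by hypothesis, and consists of $\kappa$-compact objects of $\sC_V$ (by fibrewiseness of parametrised $\kappa$-compactness), we obtain that $\sC_V$ is $\kappa$-compactly generated with $S_V$ a set of generators. In particular, every object of $\sC_V$ arises as a small (indeed $\kappa$-filtered) colimit of objects from $S_V \subseteq S$, computed in $\sC_V$ itself.

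Next, I would deduce parametrised generation. The key observation is that fibrewise colimits are a special case of parametrised colimits --- namely those indexed by constant $\baseCat$-diagrams (cf.\ \cref{recollect:indexedCoproductsAndFibrewiseColimits}). Hence the smallest $\baseCat$-cocomplete full subcategory $\underline{\mathcal{T}} \subseteq \underline{\sC}$ containing $S$ is automatically closed under fibrewise colimits; combined with the previous step, it contains every object of each fibre $\sC_V$. Therefore $\underline{\mathcal{T}} = \underline{\sC}$, which shows that $S$ is a set of parametrised-$\kappa$-compact generators.

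For the final clause, I would combine the fibrewise $\kappa$-compact generation established in the first step with \cref{simpsonTheorem}: since $\underline{\sC}$ is $\baseCat$-cocomplete it satisfies the left Beck--Chevalley condition, and since each fibre $\sC_V$ is (unparametrised) $\kappa$-compactly generated, the omnibus theorem yields that $\underline{\sC}$ is parametrised-$\kappa$-presentable with the $\kappa$-accessible straightening factoring through $\presentable_{L,\kappa}$. I do not anticipate any serious obstacle here: the whole argument runs through fibrewise characterisations, and the only slightly subtle point is recognising that fibrewise colimits are genuinely parametrised colimits, which is immediate from the definitions.
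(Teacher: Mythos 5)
Your proposal is correct and takes essentially the same route as the paper: the paper's proof likewise applies the unparametrised Makkai--Pitts result \cite[Prop 1.1.2]{nineAuthorsIV} fibrewise to the jointly conservative functor $\prod_{x\in S_V}\map_{\sC_V}(x,-)$ and concludes because fibrewise colimits are a special case of parametrised colimits. Your final paragraph invoking \cref{simpsonTheorem} is not needed (and is not in the paper), since ``parametrised-$\kappa$-compactly generated'' here just means generation by the set $S$ of parametrised-$\kappa$-compact objects, which your first two steps already establish.
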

\begin{proof}
We want to show that for every $V \in {\baseCat}$, any $\baseCat_{/V}$--object in $\underline{\sC}_{\underline{V}}$ is a $\baseCat_{/V}$--colimit of objects in $S$. By hypothesis, $\prod_{x \in S_V}\map_{\sC _V}(x, -) \colon \sC_V \rightarrow \prod_{x\in S_V}\spc$ is jointly conservative. Hence, by \cite[{Prop 1.1.2}]{nineAuthorsIV}, every object in $\sC_V$ is a $\kappa$-small colimit of objects in $S_V$.
\end{proof}

Next, we supply the expected anti--equivalence of presentable categories.

\begin{prop}\label{prop:antiequivalencePrLPrR}
    There is a canonical equivalence of $\baseCat$--categories $\underline{\presentable}_{L}\simeq \underline{\presentable}_R\vop$.
\end{prop}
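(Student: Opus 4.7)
The plan is to exhibit the desired equivalence as the classical ``pass to adjoints'' correspondence, implemented in the parametrised setting via the adjoint functor theorem \cref{parametrisedAdjointFunctorTheorem}. Both $\underline{\presentable}_L$ and $\underline{\presentable}_R\vop$ share the same objects (the $\baseCat$-presentable categories), so the task is to identify their $\baseCat$-mapping categories compatibly.

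The key input is \cref{HTT5.2.6.2}, which supplies a natural equivalence $\underline{\lfunc}_{\baseCat}(\underline{\D},\underline{\sC}) \simeq \underline{\rfunc}_{\baseCat}(\underline{\sC},\underline{\D})\vop$. Between $\baseCat$-presentable categories, \cref{parametrisedAdjointFunctorTheorem} identifies the morphisms in $\underline{\presentable}_L(\underline{\sC},\underline{\D})$ with $\baseCat$-left adjoints (since $\baseCat$-colimit-preserving functors between $\baseCat$-presentable categories automatically admit $\baseCat$-right adjoints), and the morphisms in $\underline{\presentable}_R(\underline{\D},\underline{\sC})$ with $\baseCat$-right adjoints (accessibility being automatic from the existence of a left adjoint, which itself exists by \cref{parametrisedAdjointFunctorTheorem} applied to a $\baseCat$-limit-preserving accessible functor). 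Taking cores (which is unaffected by $(-)\vop$) yields natural equivalences of mapping $\baseCat$-spaces $\myuline{\map}_{\underline{\presentable}_L}(\underline{\sC},\underline{\D}) \simeq \myuline{\map}_{\underline{\presentable}_R\vop}(\underline{\sC},\underline{\D})$, which is the pointwise content of the desired equivalence.

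To upgrade the pointwise statement to an equivalence of $\baseCat$-categories, I would introduce an auxiliary $\baseCat$-category $\underline{\presentable}_{\mathrm{adj}}$ whose objects are $\baseCat$-presentable categories and whose morphisms are $\baseCat$-adjunctions $L \dashv R$ between them, together with two canonical $\baseCat$-projections $\underline{\presentable}_{\mathrm{adj}} \to \underline{\presentable}_L$ and $\underline{\presentable}_{\mathrm{adj}} \to \underline{\presentable}_R\vop$ forgetting down to $L$ or $R$ respectively. The parametrised adjoint functor theorem ensures both projections are essentially surjective on morphisms, while the essential uniqueness of adjoints (i.e.\ contractibility of the space of adjoints to a given functor, when nonempty) ensures fully faithfulness; composing one with the inverse of the other then yields the desired equivalence. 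The main obstacle is constructing $\underline{\presentable}_{\mathrm{adj}}$ as a genuine $\baseCat$-category in a manner that is manifestly compatible with the projections; this can be done using Lurie's framework for relative adjunctions from \cite[$\S7.3.2$]{lurieHA} together with its parametrised enhancement from \cite[$\S8$]{shahThesis}, applied fibrewise over $\baseCat\op$ and then globalised by the straightening/unstraightening of \cref{thm:parametrisedStraighteningUnstraightening}.
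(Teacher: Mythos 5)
Your first two paragraphs only recover the fibrewise statement (for two fixed $\baseCat$--presentable categories, an identification of the relevant functor categories via \cref{HTT5.2.6.2} and hence of mapping spaces), and the entire difficulty of the proposition is concentrated in your third paragraph, which is not actually carried out. You posit a $\baseCat$--category $\underline{\presentable}_{\mathrm{adj}}$ whose morphisms are $\baseCat$--adjunctions, but constructing such an object --- in particular producing coherent composition of adjunction data and controlling its mapping spaces well enough to verify that both forgetful projections are equivalences --- is precisely the hard content, and the tools you cite do not obviously supply it: relative adjunctions (\cite[$\S7.3.2$]{lurieHA}, \cite[$\S8$]{shahThesis}) produce an adjoint to a single functor over a base, they do not assemble a category whose $1$--morphisms are adjunctions, and ``applied fibrewise and then globalised by straightening'' is a plan rather than a construction. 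Moreover, ``essentially surjective on morphisms plus fully faithful'' is not a usable criterion here unless you already know the mapping spaces of $\underline{\presentable}_{\mathrm{adj}}$, which is exactly the unconstructed datum. A minor further point: the appeal to \cref{parametrisedAdjointFunctorTheorem} is unnecessary, since $\underline{\presentable}_L$ and $\underline{\presentable}_R$ are defined with morphisms the left, respectively right, adjoint $\baseCat$--functors.

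The paper avoids any adjunction category by a Yoneda--style argument, which is also the cleanest way to repair your outline. For an arbitrary test object $\underline{S}$, the parametrised cocartesian straightening (\cref{thm:parametrisedStraighteningUnstraightening}) identifies the elements of $\pi_0\map(\underline{S},\underline{\widehat{\cat}}_{\baseCat})$ lying in $\pi_0\map(\underline{S},\underline{\presentable}_{L})$ with those $\baseCat$--cocartesian fibrations over $\underline{S}$ which are also $\baseCat$--cartesian and have $\baseCat$--presentable fibres, while the cartesian straightening (\cref{cons:cartesianStraightening}) identifies $\pi_0\map(\underline{S}\vop,\underline{\presentable}_{R})\cong\pi_0\map(\underline{S},\underline{\presentable}_{R}\vop)$ with exactly the same class of bicartesian fibrations. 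This gives a bijection of $\pi_0\map(\underline{S},-)$ natural in $\underline{S}$, which is upgraded to an equivalence of mapping spaces by testing against the suspensions $\Sigma^n\underline{S}$ (pushouts exist in the ambient category of huge $\baseCat$--categories), whence $\underline{\presentable}_{L}\simeq\underline{\presentable}_{R}\vop$. If you insist on your route, the honest way to build $\underline{\presentable}_{\mathrm{adj}}$ is again via such bicartesian fibrations, at which point you have essentially reproduced this argument.
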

\begin{proof}
    This is just the proof of \cite[Prop. 5.5.3.3]{lurieHTT} written in our setting, except that we do not need to invoke the adjoint functor \cref{parametrisedAdjointFunctorTheorem} since $\underline{\presentable}_L$ and $\underline{\presentable}_R$ were defined with morphisms being left and right adjoints respectively. Write $\widehat{\underline{\cat}}_{\baseCat}$ and $\underline{\mathrm{CAT}}_{\baseCat}$ for the $\baseCat$--categories of large and huge $\baseCat$--categories, respectively. In particular, we have that $\underline{\presentable}_L,\underline{\presentable}_R\subset\widehat{\underline{\cat}}_{\baseCat}\in \underline{\mathrm{CAT}}_{\baseCat}$. Now, fix a $\underline{S}\in\underline{\mathrm{CAT}}_{\baseCat}$. Under the cocartesian unstraightening equivalence of \cref{thm:parametrisedStraighteningUnstraightening}, it is easy to see that elements in $\pi_0\map_{{\mathrm{CAT}}_{\baseCat}}(\underline{S},\underline{\widehat{\cat}}_{\baseCat})$ that lie in $\pi_0\map_{{\mathrm{CAT}}_{\baseCat}}(\underline{S},\underline{\presentable}_{L})\subset \pi_0\map_{{\mathrm{CAT}}_{\baseCat}}(\underline{S},\underline{\widehat{\cat}}_{\baseCat})$ are precisely those $\baseCat$--cocartesian fibrations $\underline{\proper}\rightarrow \underline{S}$ which are also $\baseCat$--cartesian and whose parametrised fibres are  $\baseCat$--presentable. Similarly, under the cartesian unstraightening from \cref{cons:cartesianStraightening}, the subset $\pi_0\map_{{\mathrm{CAT}}_{\baseCat}}(\underline{S}\vop,\underline{\presentable}_{R})\subset \pi_0\map_{{\mathrm{CAT}}_{\baseCat}}(\underline{S}\vop,\underline{\widehat{\cat}}_{\baseCat})$ is precisely described as those $\baseCat$--cartesian fibrations $\underline{\E}\rightarrow\underline{S}$ which are also $\baseCat$--cocartesian and whose parametrised fibres are $\baseCat$--presentable. Therefore, we obtain bijections 
    $\pi_0\map_{{\mathrm{CAT}}_{\baseCat}}(\underline{S},\underline{\presentable}_{L})\cong \pi_0\map_{{\mathrm{CAT}}_{\baseCat}}(\underline{S}\vop,\underline{\presentable}_{R})\cong \pi_0\map_{{\mathrm{CAT}}_{\baseCat}}(\underline{S},\underline{\presentable}_{R}\vop)$ natural in $\underline{S}\in\underline{\mathrm{CAT}}_{\baseCat}$. Since $\underline{\mathrm{CAT}}_{\baseCat}$ admits pushouts, we may replace $\underline{S}$ with $\Sigma^n\underline{S}$ in the natural bijection of sets to upgrade it to an equivalence of mapping spaces, whence the equivalence $\underline{\presentable}_L\simeq \underline{\presentable}_R\vop$ as wanted.
\end{proof}

Recall \cref{LFunRFunNotations}. In \cite[$\S3.4$]{nardinThesis}, Nardin constructed a $\baseCat$--symmetric monoidal structure on $\underline{\presentable}_L$ generalising Lurie's tensor product for presentable categories with the tensor unit given by the $\baseCat$--category $\underline{\spc}_{\baseCat}$ of spaces. The following  was then stated as \cite[Ex. 3.26]{nardinThesis} without proof, and we have supplied a proof in \cite[Prop. 6.7.5]{kaifPresentable}. 

\begin{prop}[Formula for presentable $\baseCat$--tensors]\label{TTensorsOfPresentables} Let $\baseCat$ be an atomic orbital category, and let $\underline{\sC}, \underline{\D}$ be $\baseCat$--presentable categories. Then 
$\underline{\sC}\otimes \underline{\D} \simeq \underline{\func}_{\baseCat}^{R}(\underline{\sC}\vop,\underline{\D})$. 
\end{prop}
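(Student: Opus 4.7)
The plan is to show that $\underline{\func}^R_{\baseCat}(\underline{\sC}\vop,\underline{\D})$ enjoys the defining universal property of the tensor product $\underline{\sC}\otimes\underline{\D}$ in Nardin's $\baseCat$--symmetric monoidal structure on $\underline{\presentable}_{\baseCat,L}$. Namely, for any $\baseCat$--presentable category $\underline{\E}$, there is a natural equivalence $\underline{\func}^L_{\baseCat}(\underline{\sC}\otimes\underline{\D},\underline{\E})\simeq \underline{\func}^L_{\baseCat}(\underline{\sC},\underline{\func}^L_{\baseCat}(\underline{\D},\underline{\E}))$ encoding bilinearity, and I wish to replace $\underline{\sC}\otimes\underline{\D}$ on the left with $\underline{\func}^R_{\baseCat}(\underline{\sC}\vop,\underline{\D})$.

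First, I would verify that $\underline{\func}^R_{\baseCat}(\underline{\sC}\vop,\underline{\D})$ is $\baseCat$--presentable. For this, I would invoke the omnibus criterion of \cref{simpsonTheorem}: fibrewise, this is a presentable $\infty$--category by the analogous unparametrised fact from Lurie's HTT, while the required left Beck--Chevalley condition for the pullback functors can be transported from the Beck--Chevalley conditions already enjoyed by $\underline{\sC}$ and $\underline{\D}$ via the cotensor formalism of \cref{parametrisedCotensors}. The parametrised adjoint functor theorem \cref{parametrisedAdjointFunctorTheorem} then promotes fibrewise adjoints to genuine $\baseCat$--adjoints, completing the presentability verification.

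Next, I would construct a natural bilinear pairing $\underline{\sC}\times\underline{\D}\to \underline{\func}^R_{\baseCat}(\underline{\sC}\vop,\underline{\D})$, morally sending $(c,d)$ to the functor $\myuline{\map}_{\underline{\sC}}(-,c)\otimes d$, and verify that postcomposition with $\baseCat$--colimit-preserving functors to $\underline{\E}$ implements the sought equivalence. The crucial algebraic input is the antiequivalence $\underline{\presentable}_{L}\simeq \underline{\presentable}_{R}\vop$ of \cref{prop:antiequivalencePrLPrR} together with its functor-category incarnation $\underline{\lfunc}(-,-)\simeq \underline{\rfunc}(-,-)\vop$ of \cref{HTT5.2.6.2}, allowing one to freely convert between left and right adjoint $\baseCat$--functor categories. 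After this conversion, the identity becomes a Fubini--type exchange of nested mapping categories. The main obstacle I foresee is not conceptual but rather the bookkeeping: one must verify that the bilinear pairing above takes values in $\underline{\func}^R_{\baseCat}$ (rather than merely in some fibrewise variant), and that the induced correspondence between $\baseCat$--cocontinuous maps and bilinear maps respects the indexed coproduct structure recalled in \cref{recollect:indexedCoproductsAndFibrewiseColimits}. Once these compatibilities are in place, the formula reduces to Lurie's analogous unparametrised identity via the fibrewise description of parametrised functor categories.
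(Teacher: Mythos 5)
First, be aware that the paper contains no proof of this statement at all: it is Nardin's \cite[Ex.~3.26]{nardinThesis}, stated there without proof, and the article explicitly defers to the companion paper \cite[Prop.~6.7.5]{kaifPresentable} for the argument. So your sketch is not being measured against an in-text proof; judged on its own terms, it has two genuine gaps, both at the places where the real content would have to live.

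The first is the pairing in your step of constructing $\underline{\sC}\times\underline{\D}\to\underline{\func}^R_{\baseCat}(\underline{\sC}\vop,\underline{\D})$. The formula $(c,d)\mapsto\myuline{\map}_{\underline{\sC}}(-,c)\otimes d$ does not take values in limit-preserving functors, already nonparametrised: $-\otimes d\colon\spc\to\D$ preserves colimits, not limits, so composing it with $\map_{\sC}(-,c)$ destroys limit-preservation. Concretely, evaluating at the initial object of $\sC$ gives $\map_{\sC}(\emptyset,c)\otimes d\simeq\ast\otimes d\simeq d$, whereas any object of $\func^R(\sC\op,\D)$ must send it to the terminal object of $\D$; even for $\D=\spc$ the correct universal bilinear functor is $(c,K)\mapsto\map_{\sC}(-,c\otimes K)$, not $\map_{\sC}(-,c)\times K$. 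This is not the "bookkeeping" issue you flag (values in $\underline{\func}^R$ versus a fibrewise variant) but a failure of the formula itself, and repairing it essentially forces the standard route: present $\underline{\sC}$ and $\underline{\D}$ as accessible Bousfield localisations of parametrised presheaf categories, prove the identity there, and track it along localisations — which is exactly the work hidden inside your unexamined ``Fubini-type exchange''. The second gap is the closing claim that the identity ``reduces to Lurie's analogous unparametrised identity via the fibrewise description of parametrised functor categories''. Neither side is computed fibrewise from unparametrised data: the value of $\underline{\func}^R_{\baseCat}(\underline{\sC}\vop,\underline{\D})$ at $V$ is a category of $\baseCat_{/V}$-functors preserving parametrised limits (in particular indexed products), not $\func^R(\sC_V\op,\D_V)$, and the fibre of Nardin's tensor is not the Lurie tensor of the fibres — for $\baseCat=\orbit_G$ one has $(\myuline{\spectra}_G\otimes\myuline{\spectra}_G)_{G/G}\simeq\spectra_G$ by idempotence of $\myuline{\spectra}_G$, while $\spectra_G\otimes\spectra_G$ is strictly larger (its unit has endomorphism ring $A(G)\otimes A(G)$ rather than $A(G)$). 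The dualities \cref{prop:antiequivalencePrLPrR} and \cref{HTT5.2.6.2} you invoke are legitimate inputs, but they do not by themselves produce the equivalence; the genuinely parametrised presheaf/localisation analysis that carries the deferred proof is absent from the proposal.
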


    To state the next hypothesis, recall Nardin's definition \cite[Def. 3.9]{nardinThesis} of an idempotent object in $\underline{\presentable}_L$, i.e. an object $\underline{\A}\in\underline{\presentable}_L$ equipped with a map from the tensor unit $\underline{\spc}\rightarrow\underline{\A}$ is said to be $\baseCat$--\textit{idempotent} if for all maps $f\colon U \rightarrow V$ in $\finite_{/V}$, the map \[V^*\underline{\A}\simeq V^*\underline{\A}\otimes f_{\otimes}U^*\underline{\spc}\longrightarrow V^*\underline{\A}\otimes f_{\otimes}U^*\underline{\A}\] is an equivalence. In particular, such objects are idempotent objects in the usual sense of Lurie \cite[Def. 4.8.2.1]{lurieHA}. The relevance of this notion for us comes via \cite[Lem. 3.11]{nardinThesis}, which guarantees that the smashing localisation associated to such $\underline{\A}$ refines to a $\baseCat$--symmetric monoidal localisation.

\begin{hypothesis}\label{hypothesis:normed_spectra}
    As the next proposition will show, $\myuline{\spectra}$ is always an idempotent algebra in the nonparametrised sense, and in particular, it admits an algebra unit  $\baseCat$--colimit--preserving map $\underline{\spc}\rightarrow\myuline{\spectra}$. We say that $\baseCat$ satisfies the \textit{spectral idempotence hypothesis} if the map $\underline{\spc}\rightarrow\myuline{\spectra}$ is even a $\baseCat$--idempotent in the sense recalled above.

    The reason we need to make this hypothesis is that we have found a gap in the proof of \cite[Prop. 3.27]{nardinThesis} which is used to guarantee that $\myuline{\spectra}$ is a $\baseCat$--idempotent. As pointed out in the introduction, together with Branko Juran and Natalie Stewart in forthcoming work \cite{brankoKaifNatalie}, we have however been able to fix the proof in the case where $\baseCat$ is the orbit category $\orbit_G$ of a finite group $G$, and so this does not affect our main applications later in the paper. 
\end{hypothesis}

\begin{prop}\label{prop:presentableTensorFormula}
For $\underline{\sC}$ a $\baseCat$--presentable category, we have that $\myuline{\spectra}_{\baseCat}(\sC) \simeq \underline{\sC} \otimes\myuline{\spectra}_{\baseCat}$. In particular, $\myuline{\spectra}_{\baseCat}$ is always an idempotent in the nonparametrised sense.
\end{prop}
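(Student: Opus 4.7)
The plan is to exhibit both $\myuline{\spectra}_{\baseCat}(\underline{\sC})$ and $\underline{\sC}\otimes \myuline{\spectra}_{\baseCat}$ as solutions to the same universal problem in $\underline{\presentable}_{\baseCat, L}$, namely that of being the free $\baseCat$--stable $\baseCat$--presentable category on $\underline{\sC}$.

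First, I would invoke the fact, due to Nardin, that the $\baseCat$--symmetric monoidal structure on $\underline{\presentable}_{\baseCat,L}$ restricts to one on the full $\baseCat$--subcategory $\underline{\presentable}_{\baseCat,L,\mathrm{st}}$ of $\baseCat$--stable $\baseCat$--presentable categories, and that $\myuline{\spectra}_{\baseCat}$ is the unit thereof. This is the parametrised analogue of the fact that $\spectra$ is the unit of the symmetric monoidal structure on stable presentable categories. From this, the multiplication map $\myuline{\spectra}_{\baseCat}\otimes\myuline{\spectra}_{\baseCat}\rightarrow\myuline{\spectra}_{\baseCat}$ is an equivalence, whence $-\otimes\myuline{\spectra}_{\baseCat}$ is an idempotent (``smashing'') localisation functor $\underline{\presentable}_{\baseCat,L}\rightarrow\underline{\presentable}_{\baseCat,L}$ whose essential image lies in $\underline{\presentable}_{\baseCat,L,\mathrm{st}}$. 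In particular, $\underline{\sC}\otimes\myuline{\spectra}_{\baseCat}$ is $\baseCat$--stable presentable.

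Next, for any $\baseCat$--stable $\baseCat$--presentable $\underline{\D}$, I would chain together the tensor--hom adjunction  and unit property to obtain
\[ \myuline{\func}^L_{\baseCat}(\underline{\sC}\otimes\myuline{\spectra}_{\baseCat},\underline{\D})\simeq \myuline{\func}^L_{\baseCat}(\underline{\sC},\myuline{\func}^L_{\baseCat}(\myuline{\spectra}_{\baseCat},\underline{\D}))\simeq \myuline{\func}^L_{\baseCat}(\underline{\sC},\underline{\D}). \]
Since for such $\underline{\D}$ the canonical map $\underline{\D}\rightarrow\Gloops\underline{\D}$ is an equivalence, the universal property of $\baseCat$--stabilisation (which follows from Nardin's \cite[Thm. 7.4]{nardinExposeIV} combined with the parametrised adjoint functor \cref{parametrisedAdjointFunctorTheorem}) gives the matching identification
\[\myuline{\func}^L_{\baseCat}(\myuline{\spectra}_{\baseCat}(\underline{\sC}),\underline{\D})\simeq \myuline{\func}^L_{\baseCat}(\underline{\sC},\underline{\D}).\]
A comparison of these universal properties via the parametrised Yoneda lemma, restricted to the full subcategory $\underline{\presentable}_{\baseCat,L,\mathrm{st}}\subseteq \underline{\presentable}_{\baseCat,L}$ in which both objects live, then yields the desired equivalence $\myuline{\spectra}_{\baseCat}(\underline{\sC})\simeq \underline{\sC}\otimes \myuline{\spectra}_{\baseCat}$.

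The main obstacle is the first step: pinning down that $\myuline{\spectra}_{\baseCat}$ really is the tensor unit of $\underline{\presentable}_{\baseCat,L,\mathrm{st}}$ in Nardin's $\baseCat$--symmetric monoidal structure, together with the smashing nature of the associated localisation. If one prefers to sidestep direct reliance on this, an alternative is to present $\myuline{\spectra}_{\baseCat}$ as a sequential colimit in $\underline{\presentable}_{\baseCat,L}$ of suspension maps on $\underline{\spc}_{*\baseCat}$ -- obtained by applying the anti--equivalence of \cref{prop:antiequivalencePrLPrR} to the inverse limit expression for $\myuline{\spectra}_{\baseCat}$ in $\underline{\presentable}_{\baseCat,R}$ -- and then use that the $\baseCat$--tensor product preserves $\baseCat$--colimits variable--wise together with $\underline{\sC}\otimes\underline{\spc}_{*\baseCat}\simeq \underline{\sC}_*$ to reduce to the sequential colimit description of $\myuline{\spectra}_{\baseCat}(\underline{\sC})$.
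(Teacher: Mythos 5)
There is a genuine gap, and it sits exactly where you flag your ``main obstacle''. Your first route needs two facts: (i) $\underline{\sC}\otimes\myuline{\spectra}_{\baseCat}$ is $\baseCat$--stable, and (ii) $\myuline{\func}^L_{\baseCat}(\myuline{\spectra}_{\baseCat},\underline{\D})\simeq\underline{\D}$ for every $\baseCat$--stable presentable $\underline{\D}$ (equivalently, that the stable presentables are precisely the local objects of the smashing localisation $-\otimes\myuline{\spectra}_{\baseCat}$, with $\myuline{\spectra}_{\baseCat}$ as unit). Nardin's idempotence result gives you a smashing localisation, but it does not identify its local objects with the $\baseCat$--stable presentables; that identification is essentially equivalent to the special case $\underline{\D}\otimes\myuline{\spectra}_{\baseCat}\simeq\underline{\D}\simeq\myuline{\spectra}_{\baseCat}(\underline{\D})$ of the very statement you are proving, and indeed both in the unparametrised literature and in this paper it is \emph{deduced from} the tensor formula rather than used to prove it: \cref{parametrisedSemiadditivisationSmashing} comes after \cref{prop:presentableTensorFormula} and cites it for exactly this point. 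So as written, route one is circular within the paper's logical structure, and you supply no independent argument for (i) or (ii).

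The fallback you propose does not repair this, because its premise is false in the parametrised setting: $\myuline{\spectra}_{\baseCat}$ is \emph{not} the inverse limit in $\underline{\presentable}_{\baseCat,R}$ of fibrewise loops on $\underline{\spc}_{*\baseCat}$ (dually, not the sequential colimit of fibrewise suspensions in $\underline{\presentable}_{\baseCat,L}$). That sequential construction only performs the fibrewise stabilisation and misses the $\baseCat$--semiadditivisation/indexed suspensions; for $\baseCat=\orbit_G$ it produces spectrum--valued presheaves on the orbit category rather than genuine $G$--spectra, whereas $\myuline{\spectra}_{\baseCat}$ is by definition given fibrewise by spectral Mackey functors. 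This is precisely why the paper works with Nardin's formula $\myuline{\spectra}_{\baseCat}(\underline{\sC})\simeq\underline{\func}^{\mathrm{lin}}_{\baseCat}(\underline{\spc}^{\mathrm{fin}}_{*\baseCat},\underline{\sC})$ instead of a loop-tower description. The paper's actual proof is a short direct computation: it combines the tensor formula $\underline{\sC}\otimes\underline{\D}\simeq\underline{\func}^R_{\baseCat}(\underline{\sC}\vop,\underline{\D})$ of \cref{TTensorsOfPresentables} with Nardin's stabilisation formula, swaps the two functor categories, and uses $\underline{\sC}\otimes\underline{\spc}_{\baseCat}\simeq\underline{\sC}$; if you want a non-circular argument, that is the route to take.
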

\begin{proof}
Consider the sequence of equivalences 
\begin{equation*}
    \begin{split}
        \underline{\sC} \otimes \myuline{\spectra} &\simeq \underline{\func}^R(\underline{\sC}\vop , \myuline{\spectra})\simeq \underline{\func}^R(\underline{\sC}\vop , \myuline{\spectra}^{\fib}\underline{\cmonoid}(\underline{\spc}))\simeq \myuline{\spectra}^{\fib}\underline{\func}^R(\underline{\sC}\vop , \underline{\cmonoid}(\underline{\spc}))\\
        &\simeq \myuline{\spectra}^{\fib}\underline{\cmonoid}(\underline{\func}^R(\underline{\sC}\vop , \underline{\spc}))\simeq \myuline{\spectra}^{\fib}\underline{\cmonoid}(\underline{\sC}\otimes \underline{\spc})\simeq \myuline{\spectra}^{\fib}\underline{\cmonoid}(\underline{\sC}) \simeq \myuline{\spectra}(\underline{\sC})
    \end{split}
\end{equation*}
where the first equivalence is by \cref{TTensorsOfPresentables}. The third equivalence is because, by \cite[Def. 1.4.2.8, Prop. 1.4.2.21]{lurieHA}, $\myuline{\spectra}^{\fib}$ is given concretely by applying $\func^{\mathrm{exc}}_*(\spc_*^{\mathrm{fin}},-)$ fibrewise where $\func^{\mathrm{exc}}_*\subseteq \func$ is the full subcategory of reduced functors which sends pushout squares to pullback squares. Thus, by \cref{parametrisedCotensors}, both $\underline{\func}^R(\underline{\sC}\vop , \myuline{\spectra}^{\fib}\underline{\cmonoid}(\underline{\spc}))$ and $ \myuline{\spectra}^{\fib}\underline{\func}^R(\underline{\sC}\vop , \underline{\cmonoid}(\underline{\spc}))$  are full $\baseCat$--subcategories of $\underline{\func}(\underline{\sC}\vop \times \underline{\constant}(\spc_{*}^{\mathrm{fin}}), \underline{\cmonoid}(\underline{\spc}))$ satisfying the same requisite properties on the $\underline{\sC}\vop$ and $\underline{\constant}(\spc_{*}^{\mathrm{fin}})$ variables, and hence are equivalent. Lastly, the fourth equivalence is since $\underline{\func}^R(\underline{\sC}\vop,\underline{\func}^{\times}(\myuline{\effBurn}(\underline{\finite}),\underline{\spc}))\simeq \underline{\func}^{\times}(\myuline{\effBurn}(\underline{\finite}),\underline{\func}^{R}(\underline{\sC}\vop,\underline{\spc}))$.
\end{proof}

\begin{prop}[Parametrised stabilisation is smashing, ``{\cite[{Thm. 4.6}]{GGN}}'']\label{parametrisedSemiadditivisationSmashing}
The association $\underline{\sC}\mapsto \myuline{\spectra}_{\baseCat}(\underline{\sC})$ refines to a smashing localisation 
$\myuline{\spectra}_{\baseCat}\otimes - : \underline{\presentable}_{\baseCat, L} \longrightarrow \underline{\presentable}_{\baseCat, L}$ with essential image the $\baseCat$--full subcategory of $\baseCat$--presentable-stable categories $\underline{\presentable}_{\baseCat, \mathrm{st}, L}$. If $\baseCat$ satisfies \cref{hypothesis:normed_spectra}, then this functor even enhances to a $\baseCat$--symmetric monoidal functor.
\end{prop}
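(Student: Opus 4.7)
The plan is to follow the strategy of \cite[Thm. 4.6]{GGN}, realising parametrised stabilisation as smashing with the idempotent $\baseCat$--$\mathbb{E}_\infty$--algebra $\myuline{\spectra}_{\baseCat}$ inside Nardin's $\baseCat$--symmetric monoidal structure on $\underline{\presentable}_{\baseCat, L}$.

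First, by \cref{prop:presentableTensorFormula}, the stabilisation functor $\underline{\sC}\mapsto \myuline{\spectra}_{\baseCat}(\underline{\sC})$ is naturally equivalent to $\underline{\sC}\mapsto \underline{\sC}\otimes \myuline{\spectra}_{\baseCat}$, so it suffices to analyse the latter. Since the smash product of $\baseCat$--spectra equips $\myuline{\spectra}_{\baseCat}$ with a canonical $\baseCat$--$\mathbb{E}_\infty$--algebra structure in $\underline{\presentable}_{\baseCat, L}$ (with unit $\underline{\spc}_{\baseCat}\to \myuline{\spectra}_{\baseCat}$ implemented by the parametrised suspension spectrum functor, i.e. the stabilisation unit at $\underline{\spc}_{\baseCat}$), the functor $-\otimes \myuline{\spectra}_{\baseCat}$ refines to a $\baseCat$--symmetric monoidal endofunctor of $\underline{\presentable}_{\baseCat, L}$ for free.

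The crucial step is then to verify that $\myuline{\spectra}_{\baseCat}$ is \textit{idempotent}, in the sense that the multiplication map $\mu\colon \myuline{\spectra}_{\baseCat}\otimes \myuline{\spectra}_{\baseCat} \rightarrow \myuline{\spectra}_{\baseCat}$ is an equivalence. To see this, apply \cref{prop:presentableTensorFormula} with $\underline{\sC} = \myuline{\spectra}_{\baseCat}$ to identify the source with $\myuline{\spectra}_{\baseCat}(\myuline{\spectra}_{\baseCat})$; since $\myuline{\spectra}_{\baseCat}$ is already $\baseCat$--stable by \cite[Thm. 7.4]{nardinExposeIV}, its stabilisation unit is an equivalence, so $\mu$ is an equivalence. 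Equivalently, this shows that the unit $\underline{\spc}_{\baseCat}\to \myuline{\spectra}_{\baseCat}$ becomes an equivalence after tensoring with $\myuline{\spectra}_{\baseCat}$.

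Finally, one invokes the parametrised analogue of Lurie's characterisation of symmetric monoidal localisations via idempotent $\mathbb{E}_\infty$--algebras (c.f. \cite[$\S 4.8.2$]{lurieHA}) to conclude that $-\otimes \myuline{\spectra}_{\baseCat}$ is a $\baseCat$--symmetric monoidal localisation onto the $\baseCat$--full subcategory of local objects, i.e.\ those $\underline{\sC}\in\underline{\presentable}_{\baseCat, L}$ for which the map $\underline{\sC}\rightarrow \underline{\sC}\otimes \myuline{\spectra}_{\baseCat}\simeq \myuline{\spectra}_{\baseCat}(\underline{\sC})$ is an equivalence. Because this map is precisely the stabilisation unit, being local is equivalent to being $\baseCat$--stable, identifying the essential image with $\underline{\presentable}_{\baseCat, \mathrm{st}, L}$. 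The main obstacle will be bookkeeping the parametrised idempotent algebra machinery, but the arguments there are formal consequences of the symmetric monoidal structure on $\underline{\presentable}_{\baseCat, L}$ and transport with only notational changes.
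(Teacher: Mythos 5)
Your proposal is correct and follows essentially the same route as the paper: identify $\baseCat$--stabilisation with $-\otimes\myuline{\spectra}_{\baseCat}$ via \cref{prop:presentableTensorFormula} and then invoke the idempotence of $\myuline{\spectra}_{\baseCat}$ to conclude that the functor is a smashing $\baseCat$--symmetric monoidal localisation onto $\underline{\presentable}_{\baseCat,\mathrm{st},L}$. The only difference is that you re-derive the idempotence directly from the tensor formula together with the $\baseCat$--stability of $\myuline{\spectra}_{\baseCat}$, whereas the paper simply cites \cite[Cor.~3.28]{nardinThesis} for this input.
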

\begin{proof}
That $\myuline{\spectra}_{\baseCat}(-)\simeq \myuline{\spectra}_{\baseCat}\otimes(-)$ is the proposition above, which also gives the required essential image. That the functor is a $\baseCat$--symmetric monoidal localisation under \cref{hypothesis:normed_spectra} is  then immediate by \cite[Lem. 3.11]{nardinThesis}.
\end{proof}

\begin{cor} \label{presheafCommutesWithTensor}
For $f : U \rightarrow W$ a map in $\finite_{\baseCat}$ and $\underline{\sC} \in \cat_{\baseCat_{/U}}$, there is a natural equivalence $f_{\otimes}\underline{\presheaf}_{U}(\underline{\sC}) \simeq \underline{\presheaf}_{W}(f_*\underline{\sC})$. If $\baseCat$ satisfies \cref{hypothesis:normed_spectra}, then we also have $f_{\otimes}\underline{\presheaf}^{\underline{\mathrm{st}}}_{U}(\underline{\sC}) \simeq \underline{\presheaf}^{\underline{\mathrm{st}}}_{W}(f_*\underline{\sC})$.
\end{cor}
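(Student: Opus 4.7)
The plan is to identify the free $\baseCat$--cocompletion functor $\underline{\presheaf}_{(-)}\colon \underline{\cat}_{\baseCat} \to \underline{\presentable}_{\baseCat, L}$ as underlying a $\baseCat$--symmetric monoidal functor, where the source carries the Cartesian $\baseCat$--symmetric monoidal structure and the target carries Nardin's tensor product from \cite[Chapter 3]{nardinThesis}. Granted this, the corollary reduces to a matter of bookkeeping: for $f\colon U \to W$ in $\finite_{\baseCat}$, the indexed tensor structure map on $\underline{\cat}$ with its Cartesian structure coincides with the pushforward $f_*\colon \cat_{\baseCat_{/U}} \to \cat_{\baseCat_{/W}}$ (since the indexed Cartesian product is precisely the right adjoint to $f^*$), whereas on $\underline{\presentable}_L$ it is by definition the $f_{\otimes}$ of the statement. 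Since a $\baseCat$--symmetric monoidal functor intertwines these structure maps up to canonical equivalence, this immediately yields $\underline{\presheaf}_W(f_*\underline{\sC}) \simeq f_{\otimes}\underline{\presheaf}_U(\underline{\sC})$, which is the first equivalence.

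For the stable version, I would use \cref{prop:presentableTensorFormula} to rewrite $\underline{\presheaf}^{\underline{\mathrm{st}}}_{(-)}(\underline{\sC}) = \underline{\func}(\underline{\sC}\vop,\myuline{\spectra}_{\baseCat})$ as $\myuline{\spectra}_{\baseCat}(\underline{\presheaf}(\underline{\sC})) \simeq \underline{\presheaf}(\underline{\sC})\otimes \myuline{\spectra}_{\baseCat}$. By \cref{parametrisedSemiadditivisationSmashing}, smashing with $\myuline{\spectra}_{\baseCat}$ defines a $\baseCat$--symmetric monoidal localisation $\underline{\presentable}_L \to \underline{\presentable}_{L,\mathrm{st}}$, so the composite $\underline{\presheaf}^{\underline{\mathrm{st}}}_{(-)} \simeq \myuline{\spectra}_{\baseCat}\otimes \underline{\presheaf}_{(-)}$ is a composition of $\baseCat$--symmetric monoidal functors, and hence itself $\baseCat$--symmetric monoidal. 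Re-running the same bookkeeping as in the first paragraph then yields the stable equivalence.

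The main obstacle in this plan is the initial ingredient, namely the $\baseCat$--symmetric monoidality of $\underline{\presheaf}_{(-)}$. I would not attempt to prove this from scratch but would invoke Nardin's construction of the $\baseCat$--symmetric monoidal structure on $\underline{\presentable}_L$ in \cite{nardinThesis}, where the tensor product is set up precisely so that the free $\baseCat$--cocompletion functor is $\baseCat$--symmetric monoidal. This mirrors the unparametrised situation of \cite[$\S4.8.1$]{lurieHA}, where the symmetric monoidality of the Yoneda embedding $\presheaf\colon \cat^{\times} \to \presentable_L^{\otimes}$ is built into the very definition of the Lurie tensor product. An alternative, more hands-on route would be to test the desired equivalence against an arbitrary $\underline{\D}\in\underline{\presentable}_{\baseCat_{/W}, L}$ using the universal property of $\underline{\presheaf}$ as free $\baseCat$--cocompletion together with the adjunction $f^*\dashv f_*$ on $\underline{\cat}$, but this requires the same operadic input about the universal property of $f_{\otimes}$ as an indexed tensor, which is itself a repackaging of the $\baseCat$--symmetric monoidality assertion above.
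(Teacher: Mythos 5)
Your proposal is correct and, in substance, the same as the paper's proof: the paper likewise reduces the stable statement to the space-level one via the smashing localisation of \cref{parametrisedSemiadditivisationSmashing} (together with \cref{prop:presentableTensorFormula}), and then proves the space-level equivalence by exactly your ``hands-on'' fallback, namely testing against an arbitrary $\underline{\D}\in\underline{\presentable}_{\baseCat_{/W},L}$ and combining the universal property of the indexed tensor of presheaf categories from \cite[Prop. 3.19]{nardinThesis} with the universal property of parametrised presheaves from \cite[Thm. 11.5]{shahThesis}. Your primary packaging via the $\baseCat$--symmetric monoidality of $\underline{\presheaf}_{(-)}$ is, as you yourself observe, a repackaging of that same operadic input rather than a genuinely different argument.
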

\begin{proof}
Since $\myuline{\spectra}\otimes-$ is a smashing localisation from \cref{parametrisedSemiadditivisationSmashing}, it suffices just to prove the case of presheaves in spaces. Let $\underline{\D}\in\underline{\presentable}_{\baseCat_{/W}}$. By \cite[Prop. 3.19]{nardinThesis}, the restriction map $\underline{\func}^L_W\big(f_{\otimes}\underline{\presheaf}_U(\underline{\sC}), \underline{\D}\big) \longrightarrow \underline{\func}_W\big(f_*\underline{\sC}, \underline{\D}\big)$ is an equivalence. But then the target is naturally equivalent to $\underline{\func}^L_W\big(\underline{\presheaf}_W(f_*\underline{\sC}), \underline{\D}\big)$ by \cite[Thm. 11.5]{shahThesis} and so we are done.
\end{proof}

\begin{obs}[$\baseCat$--exactness on $\baseCat$--stables]\label{TRightExactsOnTStables}
Write $\underline{\func}\Tlexact,\underline{\func}\Trexact,\underline{\func}\Texact\subseteq \underline{\func}$ for the full subcategories of functors which preserve finite $\baseCat$--limits, finite $\baseCat$--colimits, and finite $\baseCat$--(co)limits, respectively.
If $\underline{\sC}, \underline{\D}$ are $\baseCat$--stable, then note that the two $\baseCat$--full subcategories $\underline{\func}_{\baseCat}\Tlexact(\underline{\sC}, \underline{\D}) \subseteq \underline{\func}_{\baseCat}(\underline{\sC}, \underline{\D})\supseteq  \underline{\func}_{\baseCat}\Trexact(\underline{\sC}, \underline{\D})$ agree. To wit, both imply that they are fibrewise right and left exact (since these are fibrewise stable after all); moreover, preserving finite $\baseCat$--coproducts and preserving finite $\baseCat$--products are equivalent since $\underline{\sC}, \underline{\D}$ were $\baseCat$--semiadditive. Hence in this case we have $\underline{\func}_{\baseCat}\Tlexact(\underline{\sC}, \underline{\D}) =\underline{\func}_{\baseCat}\Texact(\underline{\sC}, \underline{\D}) =\underline{\func}_{\baseCat}\Trexact(\underline{\sC}, \underline{\D})$.
\end{obs}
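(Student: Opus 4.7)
\textbf{Proof plan for \cref{TRightExactsOnTStables}.} The plan is to decompose the notion of preservation of finite parametrised (co)limits into two ingredients, namely fibrewise finite (co)limits and indexed finite (co)products, and argue that both are symmetric in the ``lex'' versus ``rex'' formulation under the standing assumptions.

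First I would invoke the decomposition of parametrised (co)limits recalled in \cref{recollect:indexedCoproductsAndFibrewiseColimits}: any parametrised (co)limit is canonically built from fibrewise (co)limits and indexed (co)products. Consequently, a $\baseCat$--functor $F\colon \underline{\sC}\to\underline{\D}$ preserves finite $\baseCat$--limits (resp.\ finite $\baseCat$--colimits) if and only if it preserves finite fibrewise limits together with $f$--indexed products for every map $f\colon U\to V$ in $\underline{\finite}_{\baseCat}$ (resp.\ finite fibrewise colimits together with $f$--indexed coproducts).

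Next, for the fibrewise part, I would note that since $\underline{\sC}$ and $\underline{\D}$ are $\baseCat$--stable they are in particular fibrewise stable, so each fibre $F_V\colon \sC_V\to\D_V$ is a functor between ordinary stable categories. In that setting, preservation of finite limits and preservation of finite colimits are equivalent conditions, both amounting to exactness (in the sense of preserving fibre sequences and the zero object). Hence the fibrewise preservation conditions coincide for the two variants.

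Finally, for the indexed part, I would use $\baseCat$--semiadditivity of both $\underline{\sC}$ and $\underline{\D}$: by \cref{semiadditivisation} the semiadditivity norm map identifies the $f$--indexed coproduct with the $f$--indexed product on each side. Under this identification, preservation of $f$--indexed products by $F$ is tautologically the same datum as preservation of $f$--indexed coproducts, provided one checks that $F$ respects the semiadditivity norm maps. But this latter check is automatic: the construction of the norm map uses only the $\baseCat$--pointed structure, indexed coproduct/product functors, and the adjunction (co)units, all of which are transported by $F$ once we already know $F$ preserves the relevant indexed (co)products. Combining the two halves via the decomposition gives the equality of $\baseCat$--full subcategories
\[
\underline{\func}_{\baseCat}\Tlexact(\underline{\sC},\underline{\D}) \;=\; \underline{\func}_{\baseCat}\Texact(\underline{\sC},\underline{\D}) \;=\; \underline{\func}_{\baseCat}\Trexact(\underline{\sC},\underline{\D}).
\]
The only mildly subtle point is the indexed step: one must be slightly careful to recognise that the equivalence between $f_!$ and $f_*$ supplied by $\baseCat$--semiadditivity is natural enough for a single functor $F$ to preserve both at once, but this is immediate from the construction of the semiadditivity norm map, so I do not expect a genuine obstacle here.
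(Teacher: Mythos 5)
Your argument is correct and is essentially the paper's own: reduce via the fibrewise/indexed decomposition, use fibrewise stability to identify the fibrewise conditions, and use $\baseCat$--semiadditivity (the equivalence $f_!\simeq f_*$) to identify preservation of indexed coproducts with preservation of indexed products. The extra care you flag about the norm maps is the same point the paper leaves implicit, so there is no substantive difference.
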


\begin{lem}[Internal hom object of $\baseCat$--perfects]\label{internalHomObjectOfTPerfects}
Let $\underline{\sC}, \underline{\D} \in \underline{\cat}\Tperfect_{\baseCat}$. Then the $\baseCat$--full subcategory $\underline{\func}_{\baseCat}\Texact(\underline{\sC}, \underline{\D}) \subseteq \underline{\func}_{\baseCat}(\underline{\sC}, \underline{\D})$ on the $\baseCat$--exact functors is also small $\baseCat$--idempotent-complete-stable, that is, $\underline{\func}_{\baseCat}\Texact(\underline{\sC}, \underline{\D})$ is again an object of $\underline{\cat}_{\baseCat}\Tperfect$.
\end{lem}
\begin{proof}
Smallness is clear. For $\baseCat$--stability, just note that $\underline{\func}_{\baseCat}\Texact(\underline{\sC}, \underline{\D}) \simeq \underline{\func}_{\baseCat}\Tlexact(\underline{\sC}, \underline{\D}) \simeq \underline{\func}_{\baseCat}\Tlexact(\underline{\sC},  \myuline{\spectra}^{\fib}\underline{\cmonoid}(\underline{\D}))\simeq \myuline{\spectra}^{\fib}\underline{\cmonoid}( \underline{\func}_{\baseCat}\Tlexact(\underline{\sC}, \underline{\D}))\simeq \myuline{\spectra}^{\fib}\underline{\cmonoid}( \underline{\func}_{\baseCat}\Texact(\underline{\sC}, \underline{\D}))$
where the first and last equivalences are by \cref{TRightExactsOnTStables}, the second is since $\underline{\D}$ is $\baseCat$--stable, and the third by the same proof as in \cref{prop:presentableTensorFormula}. Hence, $\underline{\func}\exact_{\baseCat}(\underline{\sC},\underline{\D})$ is $\baseCat$--stable. For $\baseCat$--idempotent-completeness, note that $\baseCat$--colimits of $\underline{\func}\Texact_{\baseCat}(\underline{\sC},  \underline{\D})\simeq \underline{\func}\Trexact_{\baseCat}(\underline{\sC}, \underline{\D})$ are computed in $\underline{\D}$, and since being $\baseCat$--idempotent-complete is just the condition of admitting certain fibrewise $\baseCat$--colimits, this point is clear too.
\end{proof}

\begin{prop}\label{colimitGenerationSpectra}
Let $\underline{\sC} \in \underline{\cat}^{\myuline{\mathrm{perf}}(\kappa)}_{\baseCat}$. Then $\underline{\func}_{\baseCat}^{\mathrm{ex}(\kappa)}(\myuline{\spectra}_{\baseCat}^{\kappa}, \underline{\sC}) \simeq \underline{\sC}$, where $\underline{\func}_{\baseCat}^{\mathrm{ex}(\kappa)}$ denotes the functors which preserves $\kappa$--finite (co)limits.
\end{prop}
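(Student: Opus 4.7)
The underlying idea is that $\myuline{\spectra}_{\baseCat}^{\kappa}$ is the free $\baseCat$-perfect-stable category on one generator (the sphere spectrum $\mathbb{S}$), and that evaluation at $\mathbb{S}$ implements the claimed equivalence. The strategy is to transport the question across the equivalence of \cref{TPresentableIdempotentCorrespondence} into the world of $\baseCat$-presentable-stable categories, where it will follow from the fact that $\myuline{\spectra}_{\baseCat}$ is the unit of the $\baseCat$-symmetric monoidal structure.

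First, because $\underline{\sC}$ is $\baseCat$-stable, the argument of \cref{TRightExactsOnTStables} adapts (replacing ``finite'' by ``$\kappa$-finite'') to identify $\underline{\func}_{\baseCat}^{\mathrm{ex}(\kappa)}(\myuline{\spectra}_{\baseCat}^{\kappa}, \underline{\sC})$ with $\underline{\func}_{\baseCat}^{\mathrm{rex}(\kappa)}(\myuline{\spectra}_{\baseCat}^{\kappa}, \underline{\sC})$, the $\baseCat$-full subcategory of functors preserving $\kappa$-small $\baseCat$-colimits. Since $\underline{\ind}_{\kappa}$ is an equivalence by \cref{TPresentableIdempotentCorrespondence}, it suffices to show that its application to both sides of the claimed equivalence yields the same result.

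Next, the universal property of parametrised $\kappa$-Ind-completion from \cite{kaifPresentable} together with the identification $\underline{\ind}_{\kappa} \myuline{\spectra}_{\baseCat}^{\kappa} \simeq \myuline{\spectra}_{\baseCat}$ turns the LHS into
\[\underline{\ind}_{\kappa} \underline{\func}_{\baseCat}^{\mathrm{rex}(\kappa)}(\myuline{\spectra}_{\baseCat}^{\kappa}, \underline{\sC}) \simeq \underline{\func}^L_{\baseCat, \kappa}(\myuline{\spectra}_{\baseCat}, \underline{\ind}_{\kappa} \underline{\sC}),\]
the $\baseCat$-category of $\baseCat$-left-adjoints preserving $\kappa$-compact objects.

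Finally, by \cref{parametrisedSemiadditivisationSmashing}, $\myuline{\spectra}_{\baseCat}$ is the $\otimes$-unit of $\underline{\presentable}_{\baseCat, \mathrm{st}, L}$, so evaluation at $\mathbb{S}$ gives $\underline{\func}^L_{\baseCat}(\myuline{\spectra}_{\baseCat}, \underline{\E}) \simeq \underline{\E}$ for any $\baseCat$-presentable-stable $\underline{\E}$. Imposing that the left adjoint preserves $\kappa$-compact objects cuts this down to $\underline{\E}^{\underline{\kappa}}$, since the image of the sphere must be $\kappa$-compact and, conversely, any $\kappa$-compact choice extends to a $\kappa$-compact-preserving left adjoint (its right adjoint preserves $\kappa$-filtered colimits because the sphere does). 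Applied to $\underline{\E} = \underline{\ind}_{\kappa} \underline{\sC}$, this produces $(\underline{\ind}_{\kappa} \underline{\sC})^{\underline{\kappa}} \simeq \underline{\sC}$ by \cref{TPresentableIdempotentCorrespondence}, finishing the argument. The main obstacle is pinning down, in the $\baseCat$-parametrised setting, the ``evaluation at the sphere'' equivalence together with its compatibility with $\kappa$-compactness; everything else is bookkeeping of the equivalences from \cite{kaifPresentable} and \cite{nardinExposeIV,nardinThesis}.
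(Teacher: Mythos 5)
Your proposal is correct and follows essentially the same route as the paper: identify $\mathrm{ex}(\kappa)$ with $\mathrm{rex}(\kappa)$ functors, pass through $\underline{\ind}_{\kappa}$ via \cref{TPresentableIdempotentCorrespondence} and the universal property of parametrised $\kappa$--Ind--completion to land in $\underline{\func}^{L,\kappa}_{\baseCat}(\myuline{\spectra}_{\baseCat},\underline{\ind}_{\kappa}\underline{\sC})$, and then use that $\myuline{\spectra}_{\baseCat}$ corepresents the identity on stable presentables together with the compatibility with $\kappa$--compacts. The only cosmetic difference is in the last step: the paper uses \cref{parametrisedSemiadditivisationSmashing} to replace $\myuline{\spectra}_{\baseCat}$ by $\underline{\spc}_{\baseCat}$ and then invokes the universal property of $\baseCat$--presheaves, whereas you invoke the unit property of $\myuline{\spectra}_{\baseCat}$ in $\underline{\presentable}_{\baseCat,\mathrm{st},L}$ directly — the same content, resting on the same smashing-localisation input.
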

\begin{proof}
Recall the equivalence $\underline{\cat}_{\baseCat}^{\myuline{\mathrm{perf}}(\kappa)} \simeq \underline{\presentable}_{\baseCat, \mathrm{st}, L,\kappa}$ from \cref{TPresentableIdempotentCorrespondence} so that $(\underline{\ind}_{\kappa}\underline{\sC})^{\kappa}\simeq \underline{\sC}$. Writing $\underline{\func}\tkappa \subseteq \underline{\func}$ for the $\baseCat$--full subcategory of  functors preserving parametrised $\kappa$--compact objects, consider $\underline{\func}_{\baseCat}^{\mathrm{ex}(\kappa)}(\myuline{\spectra}_{\baseCat}^{\kappa}, \underline{\sC})  \simeq \underline{\func}_{\baseCat}^{\mathrm{rex}(\kappa)}(\myuline{\spectra}_{\baseCat}^{\kappa}, (\underline{\ind}_{\kappa}\underline{\sC})^{\kappa}) \simeq \underline{\func}_{\baseCat}^{L, \kappa}(\myuline{\spectra}_{\baseCat}, \underline{\ind}_{\kappa}\underline{\sC})\simeq \underline{\func}_{\baseCat}^{L, \kappa}(\underline{\spc}_{\baseCat}, \underline{\ind}_{\kappa}\underline{\sC})\simeq (\underline{\ind}_{\kappa}\underline{\sC})^{\kappa} \simeq \underline{\sC}$
where the second equivalence is by \cite[Prop. 3.5.4]{kaifPresentable}; the third  is by \cref{parametrisedSemiadditivisationSmashing}; and the fourth  by the universal property of $\baseCat$--presheaves of \cite[Thm. 11.5]{shahThesis}.
\end{proof}

\subsection{Symmetric monoidality and localisations}\label{subsection:symmetricMonoidalityLocalisations}

\noindent \textbf{Convention:} From now on for the rest of the article, our base category $\baseCat$ will always be assumed to be atomic orbital.

\vspace{1mm}

The main aim of this subsection is to give a multiplicative enhancement of the presentable Dwyer--Kan localisations  from \cite[$\S6.3$]{kaifPresentable}.

\begin{terminology}
    Let $S$ be a collection of morphisms in a $\baseCat$--category $\underline{\sC}$. For any $U = U_1\sqcup\cdots\sqcup U_n \in \finite_{\baseCat}$ where $U_i\in\baseCat$, we will write $S_U$ for the collection of morphisms inside $S$ in $\sC_U\simeq \sC_{U_1}\times\cdots\times \sC_{U_n}$. We will say that $S$ is a $\baseCat$--\textit{collection} if for any morphism $f\colon U \rightarrow V$ in $\finite_{\baseCat}$ and any morphism $\varphi\colon A \rightarrow B$ in $S_V$ (in general, this is a tuple of morphisms as is clear from our definition of $S_V$ in the preceding sentence), the morphism $f^*\varphi\colon f^*A\rightarrow f^*B$ is a morphism in $S_U$.
\end{terminology} 

\begin{terminology}
    We say that a $\baseCat$--collection of morphisms $S$ in a $\baseCat$--symmetric monoidal category $\underline{\sC}\totimes$ is $\underline{\otimes}$\textit{--multiplicatively closed} if for any $V\in\baseCat$, any morphism $p\colon U\rightarrow V$ in $\finite_{\baseCat}$, and any morphism $\varphi\colon A \rightarrow B$ in $S_U$, the morphism $p_{\otimes}\varphi\colon p_{\otimes}A\rightarrow p_{\otimes}B$ in $\sC_V$ lies in $S_V$.
\end{terminology}

\begin{nota}\label{nota:dwyerKanAndBousfieldLocalisations}
Let $\underline{\sC}$ be a ${\baseCat}$-category and $S$ a $\baseCat$--collection of morphisms in $\underline{\sC}$. If there exists a ${\baseCat}$-category $S^{-1}\underline{\sC}$   equipped with a map $\mathrm{DK} : \underline{\sC} \rightarrow S^{-1}\underline{\sC}$ inducing the equivalence 
${\mathrm{DK}}^* : \underline{\func}_{\baseCat}(S^{-1}\underline{\sC},  \underline{\D}) \xrightarrow{\simeq} \underline{\func}_{\baseCat}^{S^{-1}}(\underline{\sC},  \underline{\D})$ for all ${\baseCat}$-categories $\underline{\D}$, where $\underline{\func}_{\baseCat}^{S^{-1}}(\underline{\sC}, \underline{\D}) \subseteq \underline{\func}_{\baseCat}(\underline{\sC},  \underline{\D})$ is the ${\baseCat}$-full subcategory of parametrised functors sending morphisms in $S$ to equivalences, then it must necessarily be unique, and this is then defined to be the \textit{${\baseCat}$--Dwyer--Kan localisation of} $\underline{\sC}$ \textit{with respect to} $S$. 

By ${\baseCat}$\textit{--Bousfield localisations}, we mean a ${\baseCat}$-adjunction $L : \underline{\sC} \rightleftarrows \underline{\D} : i$  where the ${\baseCat}$-right adjoint $i$ is ${\baseCat}$-fully faithful. Writing $Z$ for the morphisms in $\underline{\sC}$ that get sent to equivalences under $L$, we may then view $\underline{\D}$ as precisely the $\baseCat$--full subcategory of \textit{$Z$--local objects}, i.e. those $X\in\underline{\sC}$ such that for any morphism $\varphi\colon A \rightarrow B$ in $Z$, the induced map $\varphi^*\colon \myuline{\map}(B,X)\rightarrow\myuline{\map}(A,X)$ is an equivalence. In \cite[{Prop. 6.3.2}]{kaifPresentable} we showed that a ${\baseCat}$-Bousfield localisation $L\colon \underline{\sC}\rightarrow \underline{\D}$ is in particular a  ${\baseCat}$-Dwyer-Kan localisation with respect to this collection $Z$.
\end{nota}

We now record the following proposition, which has appeared also as \cite[Lem. 5.27]{quigleyShahParametrisedTate}. While in all likelihood our proof is perhaps only cosmetically distinct from theirs, we think that it is slightly simpler to verify without having to ``open the blackbox'' of parametrised operads. 

\begin{prop}[Dwyer-Kan symmetric monoidality]\label{TDwyerKanSymmetricMonoidality}
Let $\underline{\sC}\totimes$ be a $\baseCat$--symmetric monoidal category and $S$ a $\baseCat$--collection of morphisms in $\underline{\sC}$ which is $\underline{\otimes}$--multiplicatively closed. Then:
\begin{enumerate}
    \item the Dwyer--Kan localisation $S^{-1}\underline{\sC}$ attains a unique $\baseCat$--symmetric monoidal structure such that the canonical map $\mathrm{DK}  \colon \underline{\sC}\rightarrow S^{-1}\underline{\sC}$ enhances to a $\baseCat$--symmetric monoidal functor,
    \item For any $\baseCat$--symmetric monoidal category $\underline{\D}\totimes$, the induced functor $\mathrm{DK}^* : \myuline{\func}_{\baseCat}^{\underline{\otimes}}(S^{-1}\underline{\sC}^ {\underline{\otimes}}, \underline{\D}^{\underline{\otimes}}) \rightarrow \myuline{\func}_{\baseCat}^{\underline{\otimes}, S^{-1}}(\underline{\sC}^{\underline{\otimes}}, \underline{\D}^{\underline{\otimes}})$ is an equivalence.
\end{enumerate}
\end{prop}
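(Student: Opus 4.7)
The plan is to recast the problem through the equivalence $\cat_{\baseCat}^{\otimes} \simeq \mackey_{\baseCat}(\cat)$ from part (1) of \cref{thm:nardinShahEnvelopes}, under which a $\baseCat$--symmetric monoidal category is realised as a $\baseCat$--Mackey functor $M \colon \underline{\effBurn}(\underline{\finite}_{\baseCat}) \to \underline{\cat}$. The value of $M$ at $W \in \finite_{\baseCat}$ is $\sC_W \simeq \prod_i \sC_{W_i}$ under orbital decomposition, with structure maps along any span $U \xleftarrow{f} W \xrightarrow{p} V$ generated by the restriction $f^* \colon \sC_U \to \sC_W$ followed by the indexed tensor $p_{\otimes} \colon \sC_W \to \sC_V$. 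The two hypotheses then precisely say that every structure morphism of $M$ preserves the $\baseCat$--collection $S$: the contravariant restrictions do so by the $\baseCat$--collection assumption, and the covariant indexed tensors do so by $\underline{\otimes}$--multiplicative closure.

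Granting this, the first step is to descend $M$ to a Mackey functor $S^{-1}M$ whose value at each $W$ is the Dwyer--Kan localisation $S_W^{-1}\sC_W$, with structure maps induced from those of $M$ via the universal property of Dwyer--Kan localisation. That $S^{-1}M$ is still a Mackey functor -- and hence, via \cref{thm:nardinShahEnvelopes}(1), provides the $\baseCat$--symmetric monoidal structure required in (1) -- reduces to verifying the product-preservation property under orbital decompositions: for $W \simeq W_1 \sqcup W_2$, both $S_W^{-1}\sC_W$ and $S_{W_1}^{-1}\sC_{W_1} \times S_{W_2}^{-1}\sC_{W_2}$ satisfy the same universal property as targets for functors out of $\sC_{W_1} \times \sC_{W_2}$ inverting $S_{W_1}\sqcup S_{W_2}$. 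By construction $\mathrm{DK}$ then canonically refines to a $\baseCat$--symmetric monoidal functor, yielding (1).

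For part (2), a $\baseCat$--symmetric monoidal functor $\underline{\sC}^{\underline{\otimes}} \to \underline{\D}^{\underline{\otimes}}$ is, by \cref{thm:nardinShahEnvelopes}(1), precisely a morphism $M \to N$ of the associated Mackey functors. Because $M(W) \simeq \prod_i \sC_{W_i}$ with $S_W$ being the union of the $S_{W_i}$'s across factors, the hypothesis that the underlying $\baseCat$--functor inverts $S$ -- equivalently, that each $F_V \colon \sC_V \to \D_V$ inverts $S_V$ for $V \in \baseCat$ -- forces the component at every level $W \in \finite_{\baseCat}$ to invert $S_W$. The unparametrised Dwyer--Kan universal property then supplies a unique factorisation $S_W^{-1}\sC_W \to \D_W$ at each level, and these assemble into a morphism $S^{-1}M \to N$ of Mackey functors by uniqueness. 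The uniqueness of the $\baseCat$--symmetric monoidal structure in (1) then follows by applying (2) to $\id \colon S^{-1}\underline{\sC} \to S^{-1}\underline{\sC}$.

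The main technical obstacle is the coherent assembly of the levelwise Dwyer--Kan localisations into a genuine Mackey functor, rather than merely a pointwise family of categories with the right mapping spaces. I would address this by performing a single Dwyer--Kan localisation on the total space of an (effective Burnside or cocartesian) unstraightening of $M$, taken at the collection $\bigsqcup_W S_W$ of fibrewise morphisms, and then checking that the resulting localised total space remains a fibration of the appropriate type over $\underline{\effBurn}(\underline{\finite}_{\baseCat})$ -- a check which reduces to the already-observed fact that every structure map of $M$ preserves $S$.
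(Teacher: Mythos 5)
Your route is genuinely different from the paper's, though it starts from the same translation: both proofs use \cref{thm:nardinShahEnvelopes}(1) to regard a $\baseCat$--symmetric monoidal category as a Mackey-type object, and both observe that the two hypotheses say exactly that all structure maps (restrictions and indexed tensors) preserve $S$. Where you diverge is in how the levelwise Dwyer--Kan localisations are made coherent. The paper never assembles anything by hand: it uses Lurie's finite-product-preserving Bousfield localisation $I\colon \mathrm{WCat}\rightarrow\cat$, applies $\cofree_{\baseCat}$ (\cref{cofreeParametrisations}) and then $\underline{\cmonoid}_{\baseCat}(-)$ (\cref{cons:forgetfulFunctor}), so that the pair $(\underline{\sC},S)$ lifts to an object of $\underline{\cmonoid}_{\baseCat}(\mathrm{WCat})$ and the localised $\baseCat$--symmetric monoidal category is simply its image under the induced $\baseCat$--adjunction; coherence is free because localisation is being applied as a (product-preserving) functor, not fibrewise. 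Your proposal instead localises the unstraightened total space at the fibrewise morphisms. That strategy can be made to work, and it buys a more "geometric" picture of the localised Mackey functor, but the crucial step you defer -- that localising a (co)cartesian fibration at a fibrewise marking preserved by the transition functors again yields a fibration of the same type with the expected fibres -- is a nontrivial theorem (Hinich-style fibrewise localisation), not a routine check; it is precisely the content that the paper's WCat trick renders automatic.

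There is also a genuine gap in your part (2) as written. Producing, for each $W\in\finite_{\baseCat}$, a unique factorisation $S_W^{-1}\sC_W\rightarrow\D_W$ and then "assembling by uniqueness" only addresses objects (and at best equivalence classes of morphisms) of the functor category; it does not yield the asserted equivalence of the full parametrised functor categories $\myuline{\func}^{\underline{\otimes}}_{\baseCat}(S^{-1}\underline{\sC}^{\underline{\otimes}},\underline{\D}^{\underline{\otimes}})\rightarrow \myuline{\func}^{\underline{\otimes},S^{-1}}_{\baseCat}(\underline{\sC}^{\underline{\otimes}},\underline{\D}^{\underline{\otimes}})$, where all higher morphism data and the $\baseCat$--parametrisation must be matched. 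The paper gets this by cotensoring with an arbitrary $\underline{\E}\in\cat_{\baseCat}$ via the pointwise $\baseCat$--symmetric monoidal structure (\cref{recollect:universalPropertyPointwiseMonoidal}) and then invoking the adjunction at the level of $\underline{\cmonoid}_{\baseCat}(\mathrm{WCat})$. Your total-space localisation, if established together with its relative universal property over the base (maps of fibrations out of the localised total space agree with maps out of the original one inverting the marking, compatibly with basechange), would also deliver (2); but that is an additional argument you would have to supply, and it is not what your levelwise-uniqueness paragraph currently proves. The uniqueness claim in (1) deduced from (2) is fine once (2) is repaired.
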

\begin{proof}
We proceed by bootstrapping from the construction and proof of \cite[{Prop. 4.1.7.4}]{lurieHA}. Recall from \cite[{Cons. 4.1.7.1}]{lurieHA} that we have a category $\mathrm{WCat}$ whose objects are pairs $(\sC,  W)$ where $\sC$ is a category and $W$ is a collection of morphisms in $\sC$ stable under composition and contains all equivalences in $\sC$, and morphisms $f : (\sC,  W) \rightarrow (\sC', W')$ are functors $f : \sC \rightarrow \sC'$ such that $f(W) \subseteq W'$. By \cite[{Prop. 4.1.7.2}]{lurieHA} we have a Bousfield localisation 
\begin{equation}\label{WCatAdjunction}
    \begin{tikzcd}
        \mathrm{WCat} \rar[shift left = 1.5, "I"] & \mathrm{Cat} \lar[shift left = 1.5, hook]
    \end{tikzcd}
\end{equation}
where both functors preserve finite products and the functor $I$ sends $(\sC,  W)$ to the Dwyer-Kan localisation $W^{-1}\sC$. Applying \cref{cofreeParametrisations} to this adjunction we get the $\baseCat$--Bousfield localisation $\underline{I}_{\baseCat} : \cofree_{\baseCat}(\mathrm{WCat}) \rightleftarrows \cofree_{\baseCat}(\mathrm{Cat}) : \underline{\mathrm{incl}}_{\baseCat}$. Since both functors in \cref{WCatAdjunction} preserve finite products, the functor $\underline{I}_{\baseCat}$  preserves indexed products. Hence, applying the forgetful functor from \cref{cons:forgetfulFunctor}, we even have a commuting square of $\baseCat$--Bousfield localisations
\begin{equation}\label{WCatParametrisedAdjunction}
    \begin{tikzcd}
         \underline{\cmonoid}_{\baseCat}(\mathrm{WCat}) \rar[shift left = 1.5, "\underline{I}_T"] \dar["\forget"']&  \underline{\cmonoid}_{\baseCat}(\mathrm{Cat})\lar[shift left = 1.5, hook, "\underline{\mathrm{incl}}"]\dar["\forget"]\\
         \cofree_{\baseCat}(\mathrm{WCat})\rar[shift left = 1.5, "\underline{I}_T"]  & \cofree_{\baseCat}(\cat)\lar[shift left = 1.5, hook, "\underline{\mathrm{incl}}"]
    \end{tikzcd}
\end{equation}
It is straightforward to see that our hypotheses on the pair $(\underline{\sC},S)\in \cofree_{\baseCat}(\mathrm{WCat})$ ensures that it lifts to an object in $\underline{\cmonoid}_{\baseCat}(\mathrm{WCat})$. Furthermore, recall that $\baseCat$--symmetric monoidal categories are equivalently $\baseCat$--commutative monoids in $\underline{\mathrm{Cat}}_{\baseCat}$ by \cref{thm:nardinShahEnvelopes} (1).

Given these, part (1) may now be obtained exactly by the argument in \cite[Lem. 3.6]{GGN} (which also saw an immediate parametrised adaptation in \cite[Lem. 4.2.3]{kaifPresentable}). For part (2), we would like to argue that for any $\baseCat$--category $\underline{\E}$, the map 
$\mathrm{DK}^* : \myuline{\map}_{\cat_{\baseCat}}\big(\underline{\E},\myuline{\func}_{\baseCat}^{\underline{\otimes}}(S^{-1}\underline{\sC}^ {\underline{\otimes}}, \underline{\D}^{\underline{\otimes}})\big) \rightarrow \myuline{\map}_{\cat_{\baseCat}}\big(\underline{\E},\myuline{\func}_{\baseCat}^{\underline{\otimes}, S^{-1}}(\underline{\sC}^{\underline{\otimes}}, \underline{\D}^{\underline{\otimes}})\big)$
is an equivalence. But by the universal property of the pointwise $\baseCat$--symmetric monoidal structure from \cref{recollect:universalPropertyPointwiseMonoidal}, we may rewrite the domain and codomain respectively as $\myuline{\map}_{\baseCat}^{\underline{\otimes}}(S^{-1}\underline{\sC}^ {\underline{\otimes}}, \underline{\func}(\underline{\E},\underline{\D}^{\underline{\otimes}}))$ and $\myuline{\map}_{\baseCat}^{\underline{\otimes}, S^{-1}}(\underline{\sC}^{\underline{\otimes}}, \underline{\func}(\underline{\E},\underline{\D}^{\underline{\otimes}}))$. Under these identifications, the map displayed above is then an equivalence by the top $\baseCat$--adjunction in \cref{WCatParametrisedAdjunction}.
\end{proof}

\begin{rmk}\label{rmk:replaceCoproductClosureWithColimitClosure}
    For our multiplicative considerations shortly, the following notations and observations will be important. For a $\baseCat$--collection of morphisms $S$ in a $\baseCat$--presentable category $\underline{\sC}$, we write $S_{\underline{\amalg}}\supseteq S$ for the closure of $S$ under finite indexed coproducts of morphisms in $S$. Similarly, we denote by $S_{\underline{\colim}}\supseteq S_{\underline{\amalg}}\supseteq S$ the closure of $S$ under \textit{all } parametrised colimits valued in $S$. These are easily seen to be, again, $\baseCat$--collections of morphisms. We write $S_{\underline{\colim}}^{-1}\underline{\sC}, S_{\underline{\amalg}}^{-1}\underline{\sC}\subseteq \underline{\sC}$ for the $\baseCat$--full subcategories of $S_{\underline{\colim}}$-- and $S_{\underline{\amalg}}$--local objects in $\underline{\sC}$ respectively (this is consistent with \cref{nota:dwyerKanAndBousfieldLocalisations} by virtue of \cite[Thm. 6.3.7]{kaifPresentable}). Since $S_{\underline{\colim}}\supseteq S_{\underline{\amalg}}$, we have the inclusion $S_{\underline{\colim}}^{-1}\underline{\sC}\subseteq S_{\underline{\amalg}}^{-1}\underline{\sC}$. To see that this inclusion is an equivalence, let $X\in S^{-1}_{\underline{\amalg}}\underline{\sC}$ and let $\partial\colon \underline{J}\rightarrow \underline{\sC}^{\Delta^1}$ be a diagram taking values in $S$. We need to show that $X$ is local against the morphism $\underline{\colim}_{\underline{J}} \colon \underline{\colim}_{\underline{J}}\partial_0\rightarrow \underline{\colim}_{\underline{J}}\partial_1$ in $S_{\underline{\colim}}$. But this is clearly implied by the commutation $\myuline{\map}(\underline{\colim}_{\underline{J}}\partial_1,X)\simeq \underline{\lim}_{\underline{J}\vop}\myuline{\map}(\partial_1,X)$ and the analogous ones for $\underline{\colim}_{\underline{J}}\partial_0$ and $\underline{\colim}_{\underline{J}}\partial$. 
\end{rmk}

\begin{lem}\label{lem:coproductClosureNormClosed}
    Let $\underline{\sC}$ be a $\baseCat$--presentable category equipped with a $\baseCat$--distributive--symmetric--monoidal structure $\underline{\sC}\totimes$. Let $S$ be a $\baseCat$--collection of morphisms in $\underline{\sC}$ and $S_{\underline{\colim}}\supseteq S$ its closure from \cref{rmk:replaceCoproductClosureWithColimitClosure}. If $S$ is $\underline{\otimes}$--multiplicatively closed, then so is $S_{\underline{\colim}}$.
\end{lem}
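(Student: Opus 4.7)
The plan is to argue by transfinite induction on the iterative construction of $S_{\underline{\colim}}$. Writing $S^{(0)} := S$ and, at successor stages, $S^{(\alpha+1)} := S^{(\alpha)} \cup \{\underline{\colim}_{\underline{J}}\partial \mid \partial\colon\underline{J}\to\underline{\sC}^{\Delta^1} \text{ is a parametrised diagram valued in } S^{(\alpha)}\}$ together with unions at limits, the construction eventually stabilises to give $S_{\underline{\colim}}$. A routine parallel induction shows each $S^{(\alpha)}$ is itself a $\baseCat$-collection (using that restrictions $f^*$ commute with parametrised colimits). The inductive claim to establish, taken simultaneously over all maps $p\colon U\to V$ in $\finite_{\baseCat}$, is that $p_{\otimes}(S^{(\alpha)}_U) \subseteq S_{\underline{\colim},V}$. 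The base case is precisely the hypothesis that $S$ is $\underline{\otimes}$-multiplicatively closed, and the limit step is trivial.

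For the successor step, let $\varphi = \underline{\colim}_{\underline{J}}\partial \in S^{(\alpha+1)}_U$ with $\partial$ valued in $S^{(\alpha)}$. The key input will be the $\baseCat$-distributivity of $\underline{\sC}\totimes$, which says that the indexed multiplication $p_{\otimes}\colon p_*p^*\underline{\sC}\to\underline{\sC}$ preserves $\baseCat_{/V}$-colimits. Viewing $\varphi$ as a $V$-level object of $p_*p^*\underline{\sC}$ via the identification $(p_*p^*\underline{\sC})_V\simeq\sC_U$ coming from the $p^*\dashv p_*$ adjunction, and transporting $\partial$ into a $\baseCat_{/V}$-diagram $p_*\partial\colon p_*\underline{J}\to p_*p^*\underline{\sC}^{\Delta^1}$, colimit-preservation of $p_{\otimes}$ yields
$$p_{\otimes}\varphi \;\simeq\; \underline{\colim}_{p_*\underline{J}}\bigl(p_{\otimes}\circ p_*\partial\bigr)$$
in $\underline{\sC}^{\Delta^1}_V$. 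A direct unwinding of the formula for $p_*$ on parametrised diagrams via the adjunction $p^*\dashv p_*$ shows that the values of $p_{\otimes}\circ p_*\partial$ at the various basechanged levels of $p_*\underline{J}$ are of the form $p_{\otimes}(\partial(j))$ for the corresponding object $j$ of $\underline{J}$, which by the inductive hypothesis lie in $S_{\underline{\colim}}$. Since $S_{\underline{\colim}}$ is closed under parametrised colimits by construction, this forces $p_{\otimes}\varphi\in S_{\underline{\colim},V}$, completing the successor step.

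The main technical subtlety will be identifying the values of the new diagram $p_{\otimes}\circ p_*\partial$ with indexed tensors of values of $\partial$, which is also the reason to phrase the inductive claim uniformly over all basechanges of $p$: in the successor step the values of $\partial$ appear at various levels of $\baseCat_{/U}$ (corresponding to levels of $p_*\underline{J}$ above $\baseCat_{/V}$), and one needs the multiplicative-closure-up-to-$S_{\underline{\colim}}$ condition to hold at all of them simultaneously. Otherwise, the argument is entirely formal and amounts to a single invocation of $\baseCat$-distributivity combined with the tautological closure of $S_{\underline{\colim}}$ under parametrised colimits at each successor stage.
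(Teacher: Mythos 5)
Your central mechanism is exactly the paper's: apply $\baseCat$--distributivity (for the pointwise symmetric monoidal structure on $\underline{\sC}^{\Delta^1}$) to rewrite $p_{\otimes}\underline{\colim}_{\underline{J}}\partial$ as the colimit over $p_*\underline{J}$ of the composite $p_{\otimes}\circ p_*\partial$, and then identify the values of that composite with indexed tensors of (restrictions of) values of $\partial$. The paper's entire proof is a single instance of this.

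Where you diverge is the transfinite-induction wrapper, and this rests on a reading of the definition that does not match \cref{rmk:replaceCoproductClosureWithColimitClosure}. There, $S_{\underline{\colim}}$ is the \emph{one-step} closure: the morphisms of the form $\underline{\colim}_{\underline{J}}\partial$ for $\partial$ a diagram valued in $S$ itself (this is also how membership is used in the remark and in the paper's proof of the present lemma). Consequently your assertion that ``$S_{\underline{\colim}}$ is closed under parametrised colimits by construction'' is not available for the paper's $S_{\underline{\colim}}$ without a further argument about rewriting iterated parametrised colimits as a single one, and your tower need not stabilise at the paper's collection: your induction establishes multiplicative closedness of the (possibly larger) iterated closure, which is not literally the stated lemma. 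The repair is immediate, though, and collapses your argument onto the paper's: under the actual definition only the $\alpha=0$ step of your induction ever occurs, and at that step the values of $p_{\otimes}\circ p_*\partial$ lie in $S$ itself (using that $S$ is a $\baseCat$--collection and $\underline{\otimes}$--multiplicatively closed for all basechanges of $p$), so the colimit lies in $S_{\underline{\colim}}$ directly by definition, with no closure property of $S_{\underline{\colim}}$ needed. So the substance of your proof is the paper's, but as written the successor step leans on a closure claim that the paper's definition does not grant; either invoke the stronger fact that the values land in $S$, or be explicit that you are proving the statement for the iterated closure (which would in fact suffice for the application in \cref{parametrisedPresentableDwyerKanLocalisation}, but is not the collection named in the lemma).
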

\begin{proof}
    Let $p\colon W\rightarrow U$ be a morphism in $\finite_{\baseCat}$ and suppose we have a diagram of morphisms $\partial\colon \underline{J}\rightarrow p^*\underline{\sC}^{\Delta^1}$ in $S$. Our goal is to show that the morphism $p_{\otimes}\underline{\colim}_{\underline{J}}\partial$ is still contained in $S_{\underline{\colim}}$. By $\baseCat$--distributivity, we have the equivalence
    \[p_{\otimes}\underline{\colim}_{\underline{J}}\partial\simeq \underline{\colim}\big(p_*\underline{J}\xlongrightarrow{p_*\partial} p_*p^*\underline{\sC}^{\Delta^1}\xlongrightarrow{p_{\otimes}} \underline{\sC}^{\Delta^1}\big)\]
    where we have endowed $\underline{\sC}^{\Delta^1}$ with the pointwise $\baseCat$--symmetric monoidal structure from \cite[$\S3.3$]{nardinShah}, which is again $\baseCat$--distributive. Since $S$ is $\underline{\otimes}$--multiplicatively closed, the right hand side is in $S_{\underline{\colim}}$, and so $p_{\otimes}\underline{\colim}_{\underline{J}}\partial$ is too, as required.
\end{proof}

\begin{prop}\label{parametrisedPresentableDwyerKanLocalisation}
    Let $\underline{\sC}$ be a $\baseCat$--presentable category equipped with a $\baseCat$--distributive--symmetric--monoidal structure $\underline{\sC}\totimes$. Let $S$ be a $\baseCat$--collection of morphisms in $\underline{\sC}$.  
    \begin{enumerate}
        \item There is a $\baseCat$--presentable category $L_S\underline{\sC}$ participating in  a $\baseCat$--Bousfield localisation
        \begin{center}
            \begin{tikzcd}
                L\colon \underline{\sC} \rar[shift left = 1] & L_S\underline{\sC} \lar[shift left = 1, hook] : i
            \end{tikzcd}
        \end{center}
        satisfying the following universal property: for any other $\baseCat$--presentable category $\underline{\D}$, the map $L$ induces the equivalence
        $L^*\colon \underline{\func}^L(L_S\underline{\sC},\underline{\D}) \xlongrightarrow{\simeq} \underline{\func}^{L,S^{-1}}(\underline{\sC},\underline{\D})$.

        \item If $S$ was furthermore $\underline{\otimes}$--multiplicatively closed, then there is a canonical enhancement $L_S\underline{\sC}\totimes$ of $L_S\underline{\sC}$ to the structure of a $\baseCat$--symmetric monoidal category. This is uniquely characterised by the following universal property: for any $\baseCat$--cocomplete $\baseCat$--symmetric monoidal category $\underline{\D}\totimes$, this functor induces the equivalence  $L^*\colon \underline{\func}^{\underline{\otimes},L}(L_S\underline{\sC},\underline{\D}) \xlongrightarrow{\simeq} \underline{\func}^{\underline{\otimes},L,S^{-1}}(\underline{\sC},\underline{\D})$. 
    \end{enumerate}
\end{prop}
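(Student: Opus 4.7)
The plan is to derive both parts by combining the closure operation $S \rightsquigarrow S_{\underline{\colim}}$ of \cref{rmk:replaceCoproductClosureWithColimitClosure} with the Dwyer--Kan symmetric monoidality of \cref{TDwyerKanSymmetricMonoidality} and the parametrised presentability and adjoint--functor machinery. For part (1), I would define $L_S\underline{\sC} \subseteq \underline{\sC}$ as the $\baseCat$--full subcategory of $S$--local objects; by (the argument of) \cref{rmk:replaceCoproductClosureWithColimitClosure} applied to the chain $S \subseteq S_{\underline{\amalg}} \subseteq S_{\underline{\colim}}$, this coincides with $S_{\underline{\colim}}^{-1}\underline{\sC}$. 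This subcategory is closed under $\baseCat$--limits in $\underline{\sC}$ since $\myuline{\map}(-,X)$ sends $\baseCat$--colimits to $\baseCat$--limits; fibrewise, each $(L_S\underline{\sC})_V$ is the classical accessible reflective localisation of $\sC_V$ at the unparametrised set $S_V$, hence is presentable. A fibrewise application of \cref{simpsonTheorem}, together with the observation that each structure map $f^*$ preserves locality (because $S$ is a $\baseCat$--collection), then shows that $L_S\underline{\sC}$ is itself $\baseCat$--presentable. The inclusion $i$ preserves $\baseCat$--limits and is $\baseCat$--accessible, so \cref{parametrisedAdjointFunctorTheorem} supplies the $\baseCat$--left adjoint $L$. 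The universal property now follows from \cite[Prop.~6.3.2]{kaifPresentable}: any $\baseCat$--colimit--preserving $F\colon \underline{\sC} \to \underline{\D}$ inverting $S$ automatically inverts $S_{\underline{\colim}}$ (since a left adjoint commutes with parametrised colimits), so it factors uniquely through the associated Dwyer--Kan localisation $L$, and this factorisation remains $\baseCat$--cocontinuous.

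For part (2), the first step is to invoke \cref{lem:coproductClosureNormClosed} in order to upgrade the $\underline{\otimes}$--multiplicative closure of $S$ to that of $S_{\underline{\colim}}$. Then \cref{TDwyerKanSymmetricMonoidality} endows the $\baseCat$--Dwyer--Kan localisation $S_{\underline{\colim}}^{-1}\underline{\sC} = L_S\underline{\sC}$ with a unique $\baseCat$--symmetric monoidal structure for which $L$ is $\baseCat$--symmetric monoidal. For the stated universal property, given a $\baseCat$--cocomplete $\baseCat$--symmetric monoidal target $\underline{\D}\totimes$, I would combine the two universal properties in tandem: the underlying $\baseCat$--functor of a $\baseCat$--symmetric monoidal $\baseCat$--cocontinuous $F$ inverting $S$ factors uniquely as $\overline{F}\circ L$ by part (1), while the $\baseCat$--symmetric monoidal enhancement of $F$ factors through the $\baseCat$--symmetric monoidal $L$ by \cref{TDwyerKanSymmetricMonoidality}. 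Compatibility of these two factorisations, seen via the cotensor characterisation of pointwise $\baseCat$--symmetric monoidal structures in \cref{recollect:universalPropertyPointwiseMonoidal}, yields that $\overline{F}$ is simultaneously $\baseCat$--cocontinuous and $\baseCat$--symmetric monoidal.

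The principal technical obstacle I anticipate is the $\baseCat$--presentability and $\baseCat$--reflectivity assertion of part (1): gluing the fibrewise reflective localisations into a genuine $\baseCat$--reflective subcategory requires verifying the Beck--Chevalley condition of \cref{criteriaForTAdjunctions} for the fibrewise left adjoints $L_V$, which amounts to the compatibility $L_V f^* \simeq f^* L_W$ for each $f\colon V \to W$. This in turn reduces to showing that $f^*$ carries $S$--equivalences (and hence equivalences after reflection to $L_S\underline{\sC}$) to the same, which is precisely the $\baseCat$--collection hypothesis on $S$. Once this point is settled, the remaining assertions in both parts are a formal recombination of results already established in the present section and in \cite{kaifPresentable}.
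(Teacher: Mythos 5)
Your part (2) is, in substance, the paper's own argument: pass from $S$ to $S_{\underline{\colim}}$, use \cref{lem:coproductClosureNormClosed} to see that the closure is still $\underline{\otimes}$--multiplicatively closed, get the monoidal structure and monoidal localisation functor from \cref{TDwyerKanSymmetricMonoidality}(1), and obtain the universal property by combining part (1) with \cref{TDwyerKanSymmetricMonoidality}(2); your detour through the cotensor characterisation of pointwise structures is more roundabout than necessary but not wrong.

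For part (1) the paper simply sets $L_S\underline{\sC}\coloneqq S^{-1}_{\underline{\amalg}}\underline{\sC}$ and cites \cite[Thm.~6.3.7]{kaifPresentable}, whereas you re-derive that result; your architecture (fibrewise reflections, Beck--Chevalley via \cref{criteriaForTAdjunctions}, then \cref{simpsonTheorem} and \cref{parametrisedAdjointFunctorTheorem}) is the right one, but three steps are stated too loosely. First, the fibre $(L_S\underline{\sC})_V$ is \emph{not} the localisation of $\sC_V$ at the set $S_V$ alone: for the local objects to form a $\baseCat$--full subcategory closed under restriction, an $X\in\sC_V$ must be local against the indexed coproducts $f_!\varphi$ for every $f\colon U\to V$ and $\varphi\in S_U$ (via $\map_{\sC_U}(B,f^*X)\simeq\map_{\sC_V}(f_!B,X)$), i.e.\ against $(S_{\underline{\amalg}})_V$; this is exactly why the paper (and the cited theorem) localises at $S_{\underline{\amalg}}$ rather than at $S$, and \cref{rmk:replaceCoproductClosureWithColimitClosure} only identifies $S_{\underline{\amalg}}$--locals with $S_{\underline{\colim}}$--locals. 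Second, the Beck--Chevalley compatibility $L_Uf^*\simeq f^*L_V$ is not ``precisely the $\baseCat$--collection hypothesis'': one must check that $f^*$ carries the fibrewise localising class (the strong saturation of $(S_{\underline{\amalg}})_V$) into that of $(S_{\underline{\amalg}})_U$, which uses cocontinuity of $f^*$ together with the base-change decomposition of $f^*g_!$ as in \cref{beckChevalleyMeaning}; so orbitality and the coproduct closure enter, not merely restriction-stability of $S$. Third, for the universal property it does not suffice that a cocontinuous $F$ inverting $S$ inverts $S_{\underline{\colim}}$: the Dwyer--Kan property of a Bousfield localisation (\cite[Prop.~6.3.2]{kaifPresentable}) is relative to \emph{all} $L$--equivalences, i.e.\ the full $\baseCat$--strong saturation $\overline{S}$, so you should instead note that the class of morphisms inverted by $F$ is strongly saturated and contains $S$, hence contains $\overline{S}$; one also needs a word on why the induced functor out of $L_S\underline{\sC}$ is again $\baseCat$--cocontinuous. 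All three points are precisely what \cite[Thm.~6.3.7]{kaifPresentable} packages; once you either cite it (as the paper does) or patch these steps, your argument goes through.
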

\begin{proof}
    Part (1) is an immediate consequence of \cite[Thm. 6.3.7]{kaifPresentable}, using $L_S\underline{\sC}\coloneqq S^{-1}_{\underline{\amalg}}\underline{\sC}$. For part (2), we recall that $S_{\underline{\colim}}^{-1}\underline{\sC}\simeq S^{-1}_{\underline{\amalg}}\underline{\sC}$ from \cref{rmk:replaceCoproductClosureWithColimitClosure}. Now, \cref{lem:coproductClosureNormClosed} ensures that $S_{\underline{\colim}}$ is $\underline{\otimes}$--multiplicatively closed, and so by \cref{TDwyerKanSymmetricMonoidality} (1) we obtain the first sentence of (2). The final sentence is now an immediate combination of the universal property of part (1) and \cref{TDwyerKanSymmetricMonoidality} (2).
\end{proof}

\subsection{Borel equivariant theory}\label{subsection:BorelEquivariantTheory}
In this subsection, we specialise for the moment to the case of $\baseCat=\orbit_G$. We write $b\colon BG\hookrightarrow \orbit_G$ for the fully faithful inclusion of the free transitive $G$--set $G/e$. This subsection pertains to working out some far--reaching consequences of constructions associated to this one map just from adjunction considerations. The philosophy here is to seriously reckon with the fact that the process of forgetting ``genuine'' $G$--structures to ``Borel'' $G$--structures is one that penetrates and appears on multiple categorical levels. In this sense, the ``categorification--decategorification'' approach we present here is very much in line with the so--called ``metacosm--macrocosm--microcosm'' trichotomy of \cite{StratifiedNoncommGeometry}. Our end point will be the full version of \cref{alphaThm:monoidalBorelificationPrinciple} in the form of \cref{monoidalBorelificationPrinciple} and our starting point is the  Bousfield localisation
\begin{equation}\label{eqn:LargeCatBorelificationLocalisation}
            \begin{tikzcd}
                b^* \colon \widehat{\cat}_G=\func(\orbit_G\op,\widehat{\cat}) \rar[shift left = 1] & \func(BG,\widehat{\cat}) : b_*\lar[shift left =1 , hook] 
            \end{tikzcd}
\end{equation}

The $G$--category of small $G$--categories $\underline{\cat}_G$ is then an object of $\widehat{\cat}_G$. Due to its special role throughout, we denote $b_*b^*\underline{\cat}_G\in\widehat{\cat}_G$ by $\underline{\borel}(\cat)$ for \textit{Borel $G$--categories}. By the right Kan extension formula, we see that under the embedding $b_*$ from \cref{eqn:LargeCatBorelificationLocalisation}, $\underline{\borel}(\cat)$ is the $G$--category whose value at $G/H$ is given by $\cat^{hH}\simeq \func(BH,\cat)$. Importantly, the adjunction unit $\underline{\cat}_G\rightarrow \underline{\borel}(\cat)$ can be checked easily to be given precisely \textit{again} by the map $b^*$. By right Kan extension, the $G$--functor $b^*\colon \underline{\cat}_G\rightarrow \underline{\borel}(\cat)$ admits fibrewise fully faithful right adjoints $b_*$. Over every fibre, objects in $\underline{\borel}(\cat)$ may then be viewed via $b_*$ precisely as those objects in $\underline{\cat}$ which are \textit{Borel local}, i.e. those $\underline{\sC}\in\underline{\cat}$ with the property that if $\varphi \colon \underline{I}\rightarrow\underline{J}$ is a map in $\underline{\cat}$ such that $b^*\varphi$ is an equivalence (such maps are also called Borel equivalences), then $\varphi^*\colon {\map}_{\underline{\cat}}(\underline{J}, \underline{\sC})\rightarrow {\map}_{\underline{\cat}}(\underline{I}, \underline{\sC})$ is an equivalence. Similarly, $b^*$ also admits fibrewise fully faithful left adjoints $b_!$ by left Kan extension. Under this inclusion, $\underline{\borel}(\cat)$ may be viewed as the \textit{Borel colocal} objects, i.e. those $\underline{\sC}\in\underline{\cat}$ such that $\varphi_*\colon {\map}_{\underline{\cat}}(\underline{\sC},\underline{I})\rightarrow {\map}_{\underline{\cat}}(\underline{\sC},\underline{J})$ is an equivalence for any Borel equivalence $\varphi\colon \underline{I}\rightarrow\underline{J}$. The first observation to be made is that these can be assembled to a $G$--Bousfield (co)localisation by the following:

\begin{prop}\label{prop:masterBorelificationLocalisation}
    The $G$--functor $b^*$ participates in the $G$--Bousfield (co)localisation \begin{equation*}
            \begin{tikzcd}
                \underline{\cat}_G \ar[rr,"b^*" description] & &\underline{\borel}(\cat). \ar[ll, shift left =2 , hook, "b_*"] \ar[ll,shift right = 2, hook, "b_!"' ] 
            \end{tikzcd}
    \end{equation*}
\end{prop}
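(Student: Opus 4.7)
The plan is to invoke the fibrewise criterion \cref{criteriaForTAdjunctions}, applied twice, to produce both a $G$-right adjoint $b_*$ and a $G$-left adjoint $b_!$ to $b^*$. The preceding paragraph already establishes that the requisite fibrewise Kan extensions exist and are fibrewise fully faithful, so the work lies in verifying the Beck--Chevalley conditions, which I expect to be the main obstacle (though ultimately a mild one).

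For each morphism $f\colon G/K \to G/H$ in $\orbit_G$ (without loss of generality arising from a subgroup inclusion $K \hookrightarrow H$), the square of restriction functors
\begin{center}
\begin{tikzcd}
\cat_H \rar["f^*"] \dar["b^*"'] & \cat_K \dar["b^*"]\\
\cat^{hH} \rar["f^*"] & \cat^{hK}
\end{tikzcd}
\end{center}
strictly commutes, both composites being restriction along $BK\op \hookrightarrow \orbit_H\op$ factored in the two available ways. The key observation is then that $f^*\colon \cat_H \to \cat_K$ preserves Borel-local objects (i.e.\ the essential image of the fully faithful $b_*$). By the pointwise formula for right Kan extension, Borel-local $H$-categories $\underline{D}$ are precisely those satisfying $\underline{D}(H/L) \simeq \underline{D}(H/e)^{hL}$ for all $L \leq H$, and this characterisation is transparently stable under restriction along $\orbit_K\op\hookrightarrow\orbit_H\op$. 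The Beck--Chevalley equivalence $b_* f^* \simeq f^* b_*$ then follows formally: from the commuting square we obtain $b^* f^* b_* \simeq f^* b^* b_* \simeq f^*$, and applying $b_*$ yields $b_* f^* \simeq b_* b^*(f^* b_*) \simeq f^* b_*$, where the last step uses that $f^* b_*$ is Borel-local by the preservation just established. A dual argument handles $b_!$, using the characterisation $\underline{D}(H/L) \simeq \underline{D}(H/e)_{hL}$ of its essential image, which again is stable under restriction.

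With both Beck--Chevalley conditions verified, \cref{criteriaForTAdjunctions} assembles the fibrewise adjoints into the desired $G$-adjoints $b_!\dashv b^*\dashv b_*$. Since fibrewise fully faithfulness is checked levelwise, it propagates to $G$-fully faithfulness of $b_!$ and $b_*$, yielding the $G$-Bousfield (co)localisation.
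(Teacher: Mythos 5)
Your treatment of the right adjoint $b_*$ is correct, and it is essentially the paper's own argument in a slightly different dress: the paper likewise reduces everything to \cref{criteriaForTAdjunctions} and to the claim that restriction preserves Borel--local objects, but it proves that claim by noting that induction preserves Borel equivalences (via $\res^G_e\coprod_{G/H}\simeq\coprod_{g\in G/H}\res^H_e$) and then adjoining over, rather than via your pointwise fixed--point characterisation $\underline{D}(H/L)\simeq \underline{D}(H/e)^{hL}$ of the essential image of $b_*$. Either route is fine for that half, and your formal deduction of the Beck--Chevalley equivalence from locality of $f^*b_*\underline{D}$ is the right one.

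The $b_!$ half, however, rests on a false statement: the essential image of $b_!$ is \emph{not} characterised by $\underline{D}(H/L)\simeq \underline{D}(H/e)_{hL}$. Since we work in $\func(\orbit_G\op,\cat)$, the left Kan extension along $BG\op\hookrightarrow\orbit_G\op$ is computed at $G/L$ as a colimit over the comma category of maps $G/e\to G/L$ in $\orbit_G\op$, i.e.\ of $G$--maps $G/L\to G/e$; for $L\neq e$ there are no such maps, so $(b_!\D)(G/L)$ is the \emph{initial} object of $\cat$, not $\D_{hL}$. (Concretely, $b_!(\ast)$ has empty value at every $G/L$ with $L\neq e$, whereas $\ast_{hL}\simeq BL$.) Thus $b_!$ is ``extension by the empty category'', and the homotopy--orbit formula you quote would describe the left adjoint in the covariant setting, not here. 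Fortunately the statement you actually need --- that restriction preserves the essential image of $b_!$ --- remains true and is again immediate once that image is correctly described as those $\underline{D}$ with $\underline{D}(H/L)\simeq\emptyset$ for all nontrivial $L\leq H$; alternatively, argue as the paper does, dually to the $b_*$ case, using that coinduction preserves Borel equivalences (since $\res^G_e\prod_{G/H}\simeq\prod_{g\in G/H}\res^H_e$) and that Borel--colocal objects are detected by mapping out against Borel equivalences. So your overall strategy stands, but this step needs correcting as written.
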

\begin{proof}
    We would like to use \cref{criteriaForTAdjunctions} and its dual to say that the fibrewise fully faithful right/left adjoints assemble to a $G$--functor. For the case of $b_*$, without loss of generality, we show that for every $H\leq G$, the adjointed square
    \begin{center}
        \begin{tikzcd}
            \cat_G \dar["\res^G_H"']\ar[dr,phantom, "\Rightarrow"]& \func(BG,\cat)\dar["\res^G_H"]\lar["b_*"', hook]\\
            \cat_H & \func(BH,\cat)\lar["b_*"', hook]
        \end{tikzcd}
    \end{center}
    commutes. To this end, we show the following: if $\underline{\sC}\in\cat_G$ is Borel local, then $\res^G_H\underline{\sC}\in\cat_H$ is also Borel local. So let $\varphi \colon \underline{I}\rightarrow\underline{J}$ be a Borel equivalence in $\cat_H$. Since the functor $\forget\colon \func(BH,\cat)\rightarrow\cat$ is conservative, this is the same as requiring that $\res^H_e\varphi$ is an equivalence. Because of this, and because $\res^G_e\coprod_{G/H}\simeq \coprod_{g\in G/H}\res^H_e$, we see that $\coprod_{G/H}\varphi\colon \coprod_{G/H}\underline{I}\rightarrow\coprod_{G/H}\underline{J}$ is still a Borel equivalence. Hence, by the computation
    \[\map_{\cat_H}(\underline{J},\res^G_H\underline{\sC})\simeq \map_{\cat_G}(\coprod_{G/H}\underline{J},\underline{\sC})\xrightarrow[\simeq]{\varphi^*}\map_{\cat_G}(\coprod_{G/H}\underline{I},\underline{\sC})\simeq\map_{\cat_H}(\underline{I},\res^G_H\underline{\sC})  \]
    we see that $\res^G_H\underline{\sC}$ is still Borel local, as claimed. The case of $b_!$ is similar, using now instead that $\res^G_e\prod_{G/H}\simeq \prod_{g\in G/H}\res^H_e$, so that $\prod_{G/H}\varphi$ is still a Borel equivalence.
\end{proof}

From now on,  the word \textit{Borelification} we will mean either the functor $b^*$ or $b_*b^*$ in all its various incarnations.

\begin{obs}\label{obs:borelificationAsUniqueLift}
    A point that we will be using several times to prove properties relating to $b^*$ is that it factorises the functor $\res^G_e\colon \cat_G\rightarrow \cat$ uniquely as $\cat_G\xrightarrow{b^*}\func(BG,\cat)\xrightarrow{\forget}\cat$ (with trivial $G$--equivariance everywhere).
\end{obs}

\begin{prop}\label{prop:omnibusBorelBasics}
    Let $\underline{J}\in\cat_G$, $\underline{\sC}\in\underline{\cat}_G$ and $\D\in\underline{\borel}(\cat)$. 
    \begin{enumerate}
        \item There is a natural equivalence $b^*\underline{\func}(\underline{J},\underline{\sC})\simeq \func(b^*\underline{J},b^*\underline{\sC})$ of objects in $\underline{\borel}(\cat)$,
        \item the adjunction unit $\underline{\func}(\underline{J},b_*\D)\rightarrow b_*b^*\underline{\func}(\underline{J},b_*\D)$ is an equivalence,

        \item if $\D$ has finite (co)products, then $b_*\D\in\cat_G$ is a $G$--category with finite indexed (co)products. In the case of admitting products, we furthermore have a natural equivalence $\underline{\cmonoid}_G(b_*\D)\simeq b_*\cmonoid(\D)$,
        \item if $\D\in\cat$ is moreover semiadditive, then $b_*\D\in{\cat}_G$ is a $G$--semiadditive $G$--category.
    \end{enumerate}
\end{prop}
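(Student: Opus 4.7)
The plan for (1) is to verify the claimed equivalence via Yoneda, testing against an arbitrary $\underline{\E}\in\func(BG,\cat)\simeq\underline{\borel}(\cat)$. Using the adjunction $b_!\dashv b^*$ on the left-hand side and the internal-hom adjunction on both sides, the task reduces to a projection formula $b_!(\underline{\E}\times b^*\underline{J})\simeq b_!\underline{\E}\times \underline{J}$ in $\cat_G$. Since $b\colon BG\hookrightarrow \orbit_G$ is fully faithful, the pointwise left Kan extension formula shows that $b_!(-)(V)$ is concentrated at $V\simeq G/e$, yielding the empty category (the initial object of $\cat$) elsewhere. Since products in $\cat$ preserve initial objects in each variable, both sides agree pointwise, yielding the projection formula.

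Part (2) combines part (1) with the fully faithfulness of $b_*$ (so $b^*b_*\simeq\id$) to give $b^*\underline{\func}(\underline{J},b_*\D)\simeq\func(b^*\underline{J},\D)$ and hence $b_*b^*\underline{\func}(\underline{J},b_*\D)\simeq b_*\func(b^*\underline{J},\D)$. To check that the unit itself is an equivalence, I would verify via Yoneda: for $\underline{\sC}\in\cat_G$, chaining the internal-hom adjunction, the adjunction $b^*\dashv b_*$, and that $b^*$ preserves products (being a right adjoint to $b_!$) yields $\map(\underline{\sC},\underline{\func}(\underline{J},b_*\D))\simeq\map(b^*\underline{\sC}\times b^*\underline{J},\D)\simeq\map(\underline{\sC},b_*\func(b^*\underline{J},\D))$, as required.

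For part (3), the existence of finite indexed (co)products on $b_*\D$ follows by verifying the criterion of \cref{criteriaForTAdjunctions}: the fibres $b_*\D(G/H)\simeq\func(BH,\D)$ admit the necessary adjoints to the restriction functors along $BH\to BK$ via (left or right) Kan extensions, using that $\D$ has finite (co)products; the Beck--Chevalley squares then follow from the standard naturality of Kan extensions along composable maps. For the monoid identification, I would apply part (2) with $\underline{J}=\underline{\finite}_{*G}$, then identify $b^*\underline{\finite}_{*G}\simeq \finite_*$ with trivial $G$-action using $\orbit_G/(G/e)\simeq BG$ and the fact that finite $G$-sets over $G/e$ are free, hence canonically indexed by ordinary finite sets. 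Finally I would match the $G$-semiadditivity conditions on $\underline{\finite}_{*G}\to b_*\D$ with ordinary semiadditivity conditions on $\finite_*\to\D$ equipped with $G$-action, by observing that norm maps in $b_*\D$ reduce fibrewise to (co)product maps in $\D$.

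Part (4) follows immediately from part (3) and \cref{semiadditivisation}: since $\D$ is semiadditive, $\cmonoid(\D)\simeq\D$, so $\underline{\cmonoid}_G(b_*\D)\simeq b_*\cmonoid(\D)\simeq b_*\D$ compatibly with the forgetful functors, whence $b_*\D$ is $G$-semiadditive by \cref{semiadditivisation}. The hardest step is likely the compatibility of the semiadditivity conditions in the monoid identification of (3), since $G$-semiadditivity concerns all indexed norms from $\underline{\finite}_G$ while ordinary semiadditivity only concerns those from $\finite$; the point is that after Borelification these collapse into one another, and making this precise requires some care.
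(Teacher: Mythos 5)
Parts (1) and (2) of your proposal are fine. For (1) you argue via the projection formula $b_!(\underline{\E}\times b^*\underline{J})\simeq b_!\underline{\E}\times\underline{J}$, using that $b_!$ is concentrated over $G/e$ (there are no $G$-maps $G/H\to G/e$ for $H\neq e$, so the pointwise left Kan extension is empty there); this is a valid route, different from the paper, which instead observes that both $b^*\underline{\func}(\underline{J},-)$ and $\func(b^*\underline{J},b^*-)$ are lifts of the same functor $\func(\res^G_e\underline{J},\res^G_e-)$ and invokes the uniqueness of lifts (\cref{obs:borelificationAsUniqueLift}); just make sure you start from the canonical comparison map (adjoint to $b^*$ applied to evaluation) so that Yoneda verifies an actual natural map. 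Your (2) is essentially the paper's argument (Borel locality via $b^*$ preserving products). In (3), your justification of the Beck--Chevalley squares is off: ``naturality of Kan extensions along composable maps'' only gives $p_*q_*\simeq(pq)_*$, which is not what \cref{criteriaForTAdjunctions} asks for. The squares in question compare coinduction with restriction along an \emph{independent} map, and their commutation is a base-change statement along the double coset decomposition $G/K\times G/H\cong\coprod_{g\in K\backslash G/H}G/(K^g\cap H)$, i.e.\ along pullback squares of groupoids; this is exactly what the paper verifies (together with \cref{fact:adjointOfEquivariantAdjunctions} to keep the adjoints equivariant). The claim is true, but the stated reason would not produce it.

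The genuine gap is in the second half of (3) and hence in your (4). Matching the full subcategories under $\underline{\func}(\underline{\finite}_{*G},b_*\D)\simeq b_*\func(\finite_*,\D)$ requires showing, for every $K\leq G$, that a $K$-equivariant functor $\finite_*\to\D$ with semiadditive underlying functor corresponds to a \emph{fully} $K$-semiadditive functor $\underline{\finite}_{*K}\to\res^G_Kb_*\D$ -- i.e.\ that all indexed norm maps, not just the fold maps, become equivalences. You flag this (``requires some care'') but do not prove it, and since your (4) is deduced from this identification via \cref{semiadditivisation}, the one substantive point of the whole proposition is left open; note that your auxiliary remark that ``norm maps in $b_*\D$ reduce fibrewise to (co)product maps in $\D$'' is itself essentially the content of (4), so as organized the argument is close to circular. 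The paper avoids this matching altogether: it identifies $b^*\underline{\cmonoid}_G(b_*\D)\simeq\cmonoid(\D)$ by the lift-uniqueness trick (only the fibre at $G/e$ matters, where the slice category is trivial), proves Borel locality of $\underline{\cmonoid}_G(b_*\D)$ by commuting $\underline{\func}(\underline{J},-)$ past $\underline{\func}\semiadd(\underline{\finite}_*,-)$ and using (1) and (2), and then proves (4) \emph{directly}: the indexed (co)products of $b_*\D$ are the homotopy fixed points of the adjoints of the equivariant restriction $\D\to\prod_{G/H}\res^G_H\D$, so the parametrised norm map forgets to the ordinary norm map for a set of size $|G/H|$, which is an equivalence since $\D$ is semiadditive and equivalences in $\D^{hK}$ are detected underlying. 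If you want to keep your architecture, you must actually carry out the fibrewise identification of norm maps; otherwise it is simpler to prove (4) directly as above and deduce the monoid statement in (3) by the locality argument.
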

\begin{proof}
For part (1), note that we have both commuting squares
\begin{center}
        \begin{tikzcd}
            \cat_G\rar["\res^G_e"] \dar["{\underline{\func}(\underline{J},-)}"']& \cat\dar["{\func(\res^G_e\underline{J},-)}"] &\func(BG,\cat)\ar[rr,"{\func(b^*\underline{J},-)}"] \dar["\forget"]& &\func(BG,\cat)\dar["\forget"]\\
            \cat_G\rar["\res^G_e"] & \cat  & \cat\ar[rr,"{\func(\res^G_e\underline{J},-)}"] && \cat
        \end{tikzcd}
    \end{center}
    Here, all categories and functors in the left square are endowed with the trivial $G$--equivariant structure. Hence, by \cref{obs:borelificationAsUniqueLift}, the bottom left composite of the left square lifts to the functor $b^*\underline{\func}(\underline{J},-)\colon \cat_G\rightarrow\func(BG,\cat)$. But the same observation together with the right commuting square yields that the unique lift to $\func(BG,\cat)$ of the top right composite of the left square is given by $\func(b^*\underline{J},b^*-)\colon \cat_G\rightarrow \func(BG,\cat)$. This gives part (1).

    For part (2), we just need to show that $\underline{\func}(\underline{J},b_*\D)$ is Borel local, and this  is an immediate consequence of the definition and unwinding adjunctions, using also the fact that $b^*$ preserves products (for example because it has a left adjoint $b_!$). 
    
    For point (3), we deal with the case of products since that of coproducts will be dual. Let $H\leq G$ and $w\colon G/H\rightarrow G/G$ be the unique equivariant map. This then induces a $G$--equivariant map $w^*\colon \D \simeq \func(G/G,\D)\rightarrow \func(G/H,\D)\simeq \prod_{G/H}\res^G_H\D$. Since $\D$ has finite products and since $w^*$ was $G$--equivariant, this map admits a right adjoint $w_*$ which furthermore can be endowed with a canonical $G$--equivariant structure with $G$--equivariant (co)units by \cref{fact:adjointOfEquivariantAdjunctions}. In particular, this induces, upon applying $(-)^{hG}$, the adjunction
    \begin{center}
        \begin{tikzcd}
            (b_*\D)^G = \D^{hG} \rar[shift left = 1, "w^*"] &(\prod_{G/H}\res^G_H\D)^{hG}\simeq \D^{hH} = (w_*w^*b_*\D)^G \lar[shift left = 1, "w_*"]
        \end{tikzcd}
    \end{center}
    To see that this can be assembled to a $G$--adjunction $w^*\colon b_*\D \rightleftharpoons w_*w^*b_*\D : w_*$, we need to show by \cref{criteriaForTAdjunctions} that $w_*$ commutes with restrictions under the appropriate Beck--Chevalley transformation. The key to this is the double coset decomposition of finite $G$--sets. More precisely, let $K\leq G$ and $f\colon G/K\rightarrow G/G$. Then by the double coset decomposition we have the left pullback diagram of finite $G$--sets
    \begin{center}\small
        \begin{tikzcd}
            \coprod_{g\in K\backslash G/H}G/K^g\cap H \rar\dar \ar[dr, phantom,very near start, "\lrcorner"]& G/H\dar["w"] &\prod_{g\in K\backslash G/H}\prod_{G/K^g\cap H}\res^G_{G/K^g\cap H}\D \dar["w_*"']\ar[dr, phantom, "\Leftarrow", yshift = 2, xshift = -2] & \prod_{G/H}\res^G_H\D\dar["w_*"]\lar["f^*"']\\
            G/K\rar["f"] & G/G & \prod_{G/K}\res^G_K\D  & \D\lar["f^*"']
        \end{tikzcd}
    \end{center}\normalsize
    Applying $\func(-,\D)$, we obtain the adjointed square on the right of $G$--equivariant maps and transformation, again by \cref{fact:adjointOfEquivariantAdjunctions}. This commutes by \cite[Lem. 6.1.6.3]{lurieHA}. Finally, applying $(-)^{hG}$ to the right square gives us the commutation of the Beck--Chevalley square in the aforementioned criterion, as required.
    
    For the second part of (3), since $\res^G_e\underline{\cmonoid}_G(b_*\D)\simeq \cmonoid(\D)\in\cat$, we see by the same argument as for part (1)  that $b^*\underline{\cmonoid}_G(b_*\D)\simeq \cmonoid(\D)\in\func(BG,\cat)$. Hence, we need only to argue that $\underline{\cmonoid}_G(b_*\D)$ is Borel local. So let $\varphi \colon \underline{I}\rightarrow\underline{J}$ be a Borel equivalence. Recall from \cref{nota:TCommutativeMonoidsDefinition} that $\underline{\cmonoid}_G(b_*\D)\coloneqq \underline{\func}^{\times}(\myuline{\effBurn}(\underline{\finite}),b_*\D)$. Now, consider
    \[\underline{\func}(\underline{J},\underline{\func}^{\times}(\myuline{\effBurn}(\underline{\finite}),b_*\D))\simeq\underline{\func}^{\times}(\myuline{\effBurn}(\underline{\finite}),\underline{\func}(\underline{J},b_*\D))\simeq\underline{\func}^{\times}(\myuline{\effBurn}(\underline{\finite}),b_*\func(b^*\underline{J},\D))\] to conclude, where the second equivalence is by points (1) and (2).

    Lastly, point (4) is simply because, as explained in the proof of part (3), the indexed (co)product for $b_*\D$ are induced by the left and right adjoint of the $G$--equivariant restriction functor $\D \rightarrow \prod_{G/H}\res^G_H\D$. Hence, forgetting the $G$--equivariant structure on the $G$--semiadditivity norm map \cref{eqn:semiadditivityNormMap} gives the usual semiadditivity norm map associated to the set of size $|G/H|$, which is an equivalence by hypothesis.
\end{proof}

\begin{prop}\label{monoidalBorelificationInTheLarge}
Applying the transformation $\forget\colon \underline{\cmonoid}_G(-)\rightarrow \id(-)$ from \cref{cons:forgetfulFunctor} to the $b^*\dashv b_*$ $G$--Bousfield localisation of \cref{prop:masterBorelificationLocalisation} yields the commuting square of $G$--Bousfield localisations (i.e. both the $(b^*,\forget)$ and $(b_*,\forget)$ squares commute)
\begin{equation}\label{eqn:borelificationForgetSquare}
    \begin{tikzcd}
        \underline{\cmonoid}_G(\underline{\cat}) \rar[shift left = 1, "b^*"] \dar["\forget"']& \underline{\borel}(\cmonoid(\cat))\lar[shift left =1, "b_*",hook]\dar["\forget"]\\
        \underline{{\cat}}_G \rar[shift left =1, "b^*"] &\underline{\borel}(\cat).\lar[shift left =1, "b_*",hook]
    \end{tikzcd}
\end{equation}
\end{prop}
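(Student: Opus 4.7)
The strategy is a direct application of the 2-functoriality of $\underline{\cmonoid}_G(-) \simeq \underline{\func}_G^{\mathrm{sadd}}(\underline{\finite}_*,-)\colon \cat_{G,\underline{\prod}} \to \cat_{G,\underline{\prod}}$ to the bottom $G$--Bousfield (co)localisation of \cref{prop:masterBorelificationLocalisation}. First I would record two prerequisites. By \cref{prop:omnibusBorelBasics}(3), the top-right vertex satisfies $\underline{\borel}(\cmonoid(\cat))\simeq \underline{\cmonoid}_G(\underline{\borel}(\cat))$, which is exactly the output of $\underline{\cmonoid}_G(-)$ applied to the right-hand vertex of the bottom adjunction, and part (3) also ensures that $\underline{\borel}(\cat)$ has all indexed products. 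Moreover both $b^*$ and $b_*$ preserve indexed products: $b_*$ as a right adjoint, and $b^*$ because it carries a further left adjoint $b_!$ by \cref{prop:masterBorelificationLocalisation}, so it also preserves limits. Hence they are morphisms in $\cat_{G,\underline{\prod}}$ to which $\underline{\cmonoid}_G(-)$ may be functorially applied.

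Second, I would apply the functor $\underline{\cmonoid}_G(-)$ to the whole $G$--adjunction $b^*\dashv b_*$. Because $\underline{\cmonoid}_G(-)$ is a functor of $\infty$--categories valued in $\cat_{G,\underline{\prod}}$ (and both adjoints live in the source), this produces a $G$--adjunction
\[
\underline{\cmonoid}_G(b^*)\colon \underline{\cmonoid}_G(\underline{\cat}_G)\rightleftharpoons \underline{\cmonoid}_G(\underline{\borel}(\cat))\simeq \underline{\borel}(\cmonoid(\cat)) : \underline{\cmonoid}_G(b_*).
\]
The same 2--functoriality sends the counit equivalence $b^*b_*\xrightarrow{\simeq}\id$ encoding fully faithfulness of the bottom $b_*$ to a counit equivalence for the top adjunction, so $\underline{\cmonoid}_G(b_*)$ is fully faithful. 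This identifies the top row with a $G$--Bousfield localisation, as required.

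Finally, the commutativity of the $(b^*,\forget)$ and $(b_*,\forget)$ squares in \cref{eqn:borelificationForgetSquare} is precisely the naturality of the transformation $\forget\colon \underline{\cmonoid}_G(-)\Rightarrow \id$ from \cref{cons:forgetfulFunctor} applied to the morphisms $b^*$ and $b_*$ of $\cat_{G,\underline{\prod}}$, i.e.\ the commuting squares
\[
\forget_{\underline{\borel}(\cat)}\circ \underline{\cmonoid}_G(b^*) \simeq b^* \circ \forget_{\underline{\cat}_G},\qquad \forget_{\underline{\cat}_G}\circ \underline{\cmonoid}_G(b_*)\simeq b_*\circ \forget_{\underline{\borel}(\cat)}.
\]
There is really no hard step here once the setup is in place; the only substantive inputs are the identification $\underline{\borel}(\cmonoid(\cat))\simeq \underline{\cmonoid}_G(\underline{\borel}(\cat))$ from \cref{prop:omnibusBorelBasics}(3) and the observation that $b^*$ preserves indexed products, which together allow $\underline{\cmonoid}_G(-)$ to digest the entire bottom $G$--Bousfield localisation rather than merely one of its adjoints.
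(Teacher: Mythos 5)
Your proposal is correct and follows essentially the same route as the paper: both arguments use the triple adjunction $b_!\dashv b^*\dashv b_*$ to see that $b^*$ and $b_*$ preserve finite indexed products, then apply the functoriality of $\underline{\cmonoid}_G(-)$ (with $\forget$ given by precomposition, so naturality yields the commuting squares) and invoke \cref{prop:omnibusBorelBasics}~(3) to identify $\underline{\cmonoid}_G(\underline{\borel}(\cat))\simeq \underline{\borel}(\cmonoid(\cat))$. The only difference is expository: you spell out explicitly that the counit equivalence is preserved, which the paper leaves implicit in the word ``functorial.''
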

\begin{proof}
By $b_!\dashv b^*\dashv b_*$ from \cref{prop:masterBorelificationLocalisation}, both $b^*$ and $b_*$ preserve finite indexed products. Since $\underline{\cmonoid}_G(-)$ is functorial on finite indexed product--preserving functors and since the $\forget$ transformation is implemented by precomposition from \cref{cons:forgetfulFunctor}, we immediately obtain a commuting square of $G$--Bousfield localisations. All that is left to argue is that $\underline{\cmonoid}_G(\underline{\borel}(\cat))\simeq \underline{\borel}(\cmonoid(\cat))$: this is precisely supplied by \cref{prop:omnibusBorelBasics} (3).
\end{proof}

\begin{rmk}
    Recall that by definition, $\mackey_G(\cat)\coloneqq \func^{\times}(\effBurn(G),\cat)$. Hence, we have $\mackey_G(\cat)\simeq \mackey_G(\cmonoid(\cat))$ since $\effBurn(G)$ is semiadditive by \cite[Prop. 4.3, Ex. B]{barwick1}. Evaluating the top horizontal $G$--adjunction from \cref{eqn:borelificationForgetSquare} at $G/G$ and using that $\cmonoid_G(\underline{\cat})\simeq \mackey_G(\cat)$ from \cref{thm:nardinShahEnvelopes} yields the Bousfield localisation
        \begin{equation*}
            \begin{tikzcd}
                b^* \colon \mackey_G(\cat) \simeq \mackey_G(\cmonoid(\cat)) \rar[shift left = 1] & \func(BG,\cmonoid(\cat)) : b_*\lar[shift left =1 , hook] 
            \end{tikzcd}
        \end{equation*}
     This then recovers \cite[Cons. 8.1]{barwick2}, and one may indeed view \cref{monoidalBorelificationInTheLarge} as a parametrised enhancement of the cited result.
\end{rmk}

\begin{obs}\label{obs:concreteDescriptionOfBorelNorms}
    Suppose we are given an object $\D^{\otimes}\in\func(BG,\cmonoid(\cat))$. Then by \cref{thm:nardinShahEnvelopes} (1), we know that $b_*\D^{\otimes} \in \cmonoid_G(\underline{\cat})\simeq \mackey_G(\cat)$ may now be viewed as a $G$--symmetric monoidal category. We explain now a concrete description of the multiplicative norm functors on $b_*\D^{\otimes}$. First of all, because $\cmonoid(\cat)$ is semiadditive, we get that $\underline{\borel}(\cmonoid(\cat))$ is $G$--semiadditive by \cref{prop:omnibusBorelBasics} (4). In particular,  for any $G$--object $\D^{\otimes}\in\underline{\borel}(\cmonoid(\cat))$ and for any $H\leq G$, we have the counit $\prod_{G/H}\res^G_H\D^{\otimes}\xlongrightarrow{\varepsilon_{\D}} \D^{\otimes}$
    from the adjunction $\prod_{G/H}\simeq \coprod_{G/H} \colon \func(BH,\cmonoid(\cat))\rightleftharpoons \func(BG,\cmonoid(\cat)) : \res^G_H$. As indicated in the proof of \cref{prop:omnibusBorelBasics} (4), upon forgetting the $G$--actions, the adjunction counit is given by the multiplication map $\otimes\colon \prod_{|G/H|}\D \rightarrow\D$. Now note that an object in $\prod_{G/H}\res^G_H\D$ admits an explicit description given by a tuple $(X_g)_{g\in G/H}$ and so an object in $(\prod_{G/H}\res^G_H\D)^{hG}\simeq \D^{hH}$ can be described also as the tuple $(gX)_{g\in G/H}$ where $X\in\D^{hH}$. Since $\varepsilon_{\D}$ is $G$--equivariant, we then get the map
    \[(\prod_{G/H}\res^G_H\D)^{hG}\simeq \D^{hH}\xlongrightarrow{\varepsilon_{\D}^{hG}} \D^{hG}\quad::\quad X\mapsto \bigotimes_{g\in G/H}gX\] 
    
    Now, by virtue of the commuting square of $G$--Bousfield localisations from \cref{monoidalBorelificationInTheLarge}, we get from \cref{prop:adjointabilityPreservesAdjunction} an identification of $b_*\varepsilon_{\D}$ with the counit $\prod_{G/H}\res^G_Hb_*\D^{\otimes}\xlongrightarrow{\varepsilon_{b_*\D}} b_*\D^{\otimes}$ in $\underline{\cmonoid}_G(\underline{\cat})$.    
    Since this adjunction counit \textit{is} the map encoding the multiplicative norm of an object in ${\cmonoid}_G(\underline{\cat})\simeq \cat_G^{\otimes}$, all in all, we see that the $G$--symmetric monoidal category $b_*\D^{\otimes}$ has $G/H$--norms given by the formula $X\mapsto \bigotimes_{g\in G/H}gX$.
\end{obs}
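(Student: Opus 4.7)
The proposal is to split the verification into two stages: first extract an explicit formula for the adjunction counit in the Borel world $\underline{\borel}(\cmonoid(\cat))$, then transport it across the Bousfield localisation square of \cref{monoidalBorelificationInTheLarge} to identify it with the norm on $b_*\D^{\otimes}$.

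For the first stage, I would apply \cref{prop:omnibusBorelBasics} (4) to $\cmonoid(\cat)$, which is ordinarily semiadditive (biproducts are given by direct products of commutative monoids), to deduce that $\underline{\borel}(\cmonoid(\cat))$ is $G$-semiadditive. Writing $w\colon G/H \to G/G$ for the canonical map, this supplies the counit $\varepsilon_{\D}\colon \prod_{G/H}\res^G_H\D^{\otimes} \to \D^{\otimes}$, with $\prod_{G/H}\simeq \coprod_{G/H}$ by semiadditivity. The construction of these indexed products in the proof of \cref{prop:omnibusBorelBasics} (3) goes via the adjoint to the $G$-equivariant functor $w^*\colon \D \to \prod_{G/H}\res^G_H\D$, so on underlying categories (forgetting $G$-actions) the counit is simply the $|G/H|$-fold tensor product $\prod_{|G/H|}\D \to \D$. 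Using the standard identification $(\prod_{G/H}\res^G_H\D)^{hG}\simeq \D^{hH}$, sending $X\in \D^{hH}$ to the tuple $(gX)_{g\in G/H}$, taking $G$-fixed points of $\varepsilon_{\D}$ then yields the map $\D^{hH}\to \D^{hG}$, $X\mapsto \bigotimes_{g\in G/H} gX$, as claimed.

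For the second stage, I invoke \cref{prop:adjointabilityPreservesAdjunction}. The relevant commuting square of $G$-Bousfield localisations from \cref{monoidalBorelificationInTheLarge} is adjointable because $\forget$ commutes with both of the parametrised adjoints of $b^*$ on each side. The cited lemma then identifies $b_*\varepsilon_{\D}$, via the equivalence $b^*b_*\simeq \id$ on the Borel side, with the counit in $\underline{\cmonoid}_G(\underline{\cat})$ of the corresponding indexed-coproduct $\coprod_{G/H}\dashv \res^G_H$ adjunction attached to $b_*\D^{\otimes}$. Finally, under the equivalence $\underline{\cmonoid}_G(\underline{\cat}) \simeq \underline{\cat}_G^{\otimes}$ from \cref{thm:nardinShahEnvelopes} (1), the counit of this indexed-coproduct adjunction is by construction of the $G$-commutative monoid structure the indexed tensor map $w_{\otimes}$, namely the norm $\norm^G_H$; combining this identification with the explicit formula from the first stage gives the stated description of the norms.

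No serious obstacle is anticipated; the only delicate point is checking that the norm functor encoded by the $G$-commutative monoid structure on $b_*\D^{\otimes}$ really coincides with the adjunction counit arising from $G$-semiadditivity, which is the internal reformulation of indexed tensors as counits in the $G$-semiadditive case as developed in \cite{nardinShah}. Everything else is a matter of carefully tracing identifications across the equivalences $b^*\dashv b_*$, $\forget$, and $\underline{\cmonoid}_G(\underline{\cat})\simeq \underline{\cat}_G^{\otimes}$.
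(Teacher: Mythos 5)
Your proposal is correct and follows essentially the same route as the paper: $G$--semiadditivity of $\underline{\borel}(\cmonoid(\cat))$ via \cref{prop:omnibusBorelBasics} (4), the identification of the counit with the $|G/H|$--fold tensor after forgetting $G$--actions and passing to fixed points, and then \cref{prop:adjointabilityPreservesAdjunction} applied to the localisation square of \cref{monoidalBorelificationInTheLarge} to identify $b_*\varepsilon_{\D}$ with the counit encoding the norm in $\underline{\cmonoid}_G(\underline{\cat})\simeq \underline{\cat}_G^{\otimes}$. The "delicate point" you flag (that the norm of a $G$--commutative monoid in $\underline{\cat}$ is the indexed-product adjunction counit) is treated in the paper exactly as you propose, by appeal to the Nardin--Shah identification, so there is no gap.
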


Because pointwise symmetric monoidal structures are merely cotensors instead of being internal hom objects, the next two lemmas will require a proof separate from, but very much in the spirit of, that of \cref{prop:omnibusBorelBasics}.
\begin{lem}\label{lem:borelificationOfPointwiseMonoidal}
    Let $\underline{J}\in\cat_G$ and $\underline{\D}\totimes\in \underline{\cmonoid}_G(\underline{\cat})$. Then we have an equivalence $b^*\underline{\func}(\underline{J},\underline{\D}\totimes)\simeq \func(b^*\underline{J},b^*\underline{\D}\totimes)$ in $\func(BG,\cmonoid(\cat))$. 
\end{lem}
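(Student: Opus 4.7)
The plan is to bootstrap from part (1) of Proposition \ref{prop:omnibusBorelBasics}, lifting the underlying equivalence $b^*\underline{\func}(\underline{J},\underline{\D})\simeq \func(b^*\underline{J},b^*\underline{\D})$ through the functor $\cmonoid(-)$. The key observation is that, under the equivalence $\func(BG,\cmonoid(\cat))\simeq \cmonoid(\func(BG,\cat))$ (which holds because $\cmonoid$ is defined as a limit and hence commutes with $\func(BG,-)$), both sides of the asserted equivalence can be exhibited as the image of $\underline{\D}\totimes$ under product-preserving functors $\underline{\cat}_G \to \func(BG,\cat)$ which we already know to agree on underlying $G$-categories.

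More precisely, by the cotensor universal property recalled in \ref{recollect:universalPropertyPointwiseMonoidal}, the pointwise monoidal structure $\underline{\func}(\underline{J},-)\colon \underline{\cmonoid}_G(\underline{\cat})\to \underline{\cmonoid}_G(\underline{\cat})$ is the application of $\underline{\cmonoid}_G$ to the product-preserving functor $\underline{\func}(\underline{J},-)\colon \underline{\cat}_G\to \underline{\cat}_G$ (this is product-preserving since it is a $G$-right adjoint to $\underline{J}\times-$). Postcomposing with $b^*$, which is product-preserving by virtue of having both left and right adjoints in Proposition \ref{prop:masterBorelificationLocalisation}, the LHS is identified as the value at $\underline{\D}\totimes$ of the composite functor $\cmonoid(b^*\circ\underline{\func}(\underline{J},-))$ into $\cmonoid(\func(BG,\cat))$. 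Similarly, since $\func(b^*\underline{J},-)\colon \func(BG,\cat)\to \func(BG,\cat)$ preserves products, the RHS is identified as the value at $\underline{\D}\totimes$ of $\cmonoid(\func(b^*\underline{J},-)\circ b^*)$.

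With this reformulation at hand, the proof is immediate: by Proposition \ref{prop:omnibusBorelBasics} (1), there is a natural equivalence $b^*\underline{\func}(\underline{J},-)\simeq \func(b^*\underline{J},b^*-)$ of product-preserving functors $\underline{\cat}_G\to \func(BG,\cat)$, and applying $\cmonoid(-)$ transports this to the desired equivalence in $\func(BG,\cmonoid(\cat))$. The only point requiring attention is the first sentence of the previous paragraph, namely that the pointwise $\baseCat$-symmetric monoidal structure genuinely coincides with what one obtains by applying $\cmonoid$ to the internal hom — this is a direct consequence of the universal property, since both constructions are characterized by the same mapping-space formula.
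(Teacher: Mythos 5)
Your overall architecture is fine and most of the ingredients you invoke are genuinely available in the paper: $\cmonoid(-)$ commutes with $\func(BG,-)$; $\underline{\cmonoid}_G(-)$ is functorial on $G$-functors preserving finite indexed products (this is used in \cref{monoidalBorelificationInTheLarge}); $b^*$ and $\underline{\func}(\underline{J},-)$ preserve indexed products; the equivalence of \cref{prop:omnibusBorelBasics}(1) is indeed an equivalence of functors, so it can be whiskered; and $\underline{\cmonoid}_G(\underline{\borel}(\cat))\simeq\underline{\borel}(\cmonoid(\cat))$ is \cref{prop:omnibusBorelBasics}(3). The problem is the step you yourself flag and then dismiss in one line: the claim that the pointwise $G$-symmetric monoidal structure $\underline{\func}(\underline{J},\underline{\D}\totimes)$ of Nardin--Shah is the same as the object obtained by applying $\cmonoid_G$ of the product-preserving functor $\underline{\func}(\underline{J},-)$ to the $\cmonoid$-diagram of $\underline{\D}\totimes$ (and likewise for $\func(b^*\underline{J},-)$ on the Borel side). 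Your justification --- ``both constructions are characterized by the same mapping-space formula'' --- is circular as stated: the cotensor formula of \cref{recollect:universalPropertyPointwiseMonoidal} is established (via Nardin--Shah) only for their pointwise construction, and nothing in the paper, nor in your proposal, verifies that the levelwise construction satisfies that formula. Checking it is a genuine argument: one has to curry through the diagram category $\underline{\func}(\underline{\finite}_{*\baseCat},\underline{\cat})$, identify maps out of a constant diagram with maps into a parametrised limit, and then identify the resulting $G$-category of maps of $\cmonoid$-diagrams with $\underline{\func}\totimes(\underline{\sC}\totimes,\underline{\D}\totimes)$ under the operadic-versus-$\cmonoid$ comparison of \cref{thm:nardinShahEnvelopes}(1). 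That verification is of roughly the same depth as the lemma you are trying to prove, so as written the proposal has a genuine gap rather than a complete argument.

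For comparison, the paper avoids this issue entirely: it only uses that the pointwise structure is compatible with restriction to the fibre over $G/e$ (Shah's Prop.\ 9.7 together with a Nardin--Shah Prop.\ 3.2.2-style argument), so that both $b^*\underline{\func}(\underline{J},-)$ and $\func(b^*\underline{J},b^*-)$ lift the same functor $\func(\res^G_e\underline{J},\res^G_e-)$ with trivial $G$-equivariance, and then concludes by the uniqueness-of-lifts trick of \cref{obs:borelificationAsUniqueLift} applied at the level of $\cmonoid(\cat)$-valued functors, using the cotensoring of $\func(BG,\cmonoid(\cat))$ over $\func(BG,\cat)$ and the localisation of \cref{prop:masterBorelificationLocalisation}. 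If you want to keep your route, you must actually prove the ``pointwise $=$ levelwise'' identification (for both the parametrised and the Borel side); otherwise the restriction-compatibility-plus-unique-lift argument is the shorter path.
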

\begin{proof}
    By \cite[Prop. 9.7]{shahThesis}, we see that the pairing construction of the pointwise $G$--symmetric monoidal structure from \cite[$\S3.3$]{nardinShah}  is compatible with restrictions (for an example argument, see the proof of \cite[Prop. 3.2.2]{nardinShah}). Hence, we have the left commuting square 
    \begin{center}
        \begin{tikzcd}
            {\cmonoid}_G(\underline{\cat})\rar["\res^G_e"] \dar["{\underline{\func}(\underline{J},-)}"']& \cmonoid(\cat)\dar["{\func(\res^G_e\underline{J},-)}"] &\cmonoid(\cat)^{BG}\ar[rr,"{\func(b^*\underline{J},-)}"] \dar["\forget"]& &\cmonoid(\cat)^{BG}\dar["\forget"]\\
            {\cmonoid}_G(\underline{\cat})\rar["\res^G_e"] & \cmonoid(\cat)  & \cmonoid(\cat)\ar[rr,"{\func(\res^G_e\underline{J},-)}"] && \cmonoid(\cat)
        \end{tikzcd}
    \end{center}
    Since the category $\func(BG,\cmonoid(\cat))$ is also cotensored over $\func(BG,\cat)$ under the pointwise symmetric monoidal structure, we also get the right commuting square above.
    Now all maps in sight in the left commuting square are moreover $G$--equivariant with the trivial $G$--equivariant structures on all the categories. Hence, the bottom composite of the left square lifts uniquely to the map $b^*\underline{\func}(\underline{J},-)\colon {\cmonoid}_G(\underline{\cat})\rightarrow\func(BG,\cmonoid(\cat))$ whereas the top composite of the left square lifts uniquely to a map $\func(b^*\underline{J},b^*-)\colon {\cmonoid}_G(\underline{\cat})\rightarrow \func(BG,\cmonoid(\cat))$ (using also the existence of the right commuting square). Hence, all in all, we have obtained an equivalence of functors $b^*\underline{\func}(\underline{J},-)\simeq \func(b^*\underline{J},b^*-)$ as required.
\end{proof}

Recall our notation of $s\colon\ast \hookrightarrow \orbit_G$ being the inclusion of  $G/G$ from \cref{obs:leftKanExtensionAsA2Adjunction}.

\begin{lem}\label{lem:borelificationCommutesWithPointwiseMonoidal}
    Let $\D^{\otimes}\in\func(BG,\cmonoid(\cat))$ and $I\in\cat$. Then there is a natural map $\underline{\func}(s_!I,b_*\D^{\otimes})\rightarrow b_*\func(I,\D^{\otimes})$ in ${\cmonoid}_G(\underline{\cat})$ which is an equivalence.
\end{lem}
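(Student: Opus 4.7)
The plan is to realise the desired map as the unit of the $b^*\dashv b_*$ adjunction at the $G$--symmetric monoidal level from \cref{monoidalBorelificationInTheLarge}, composed with a chain of canonical equivalences, and then to verify that $\underline{\func}(s_!I, b_*\D^{\otimes})$ is already Borel local so that the unit is an equivalence. This parallels the template of \cref{prop:omnibusBorelBasics}(2) and \cref{lem:borelificationOfPointwiseMonoidal}.

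To construct the map, take the unit of the top $G$--Bousfield localisation in \cref{monoidalBorelificationInTheLarge},
\[\underline{\func}(s_!I, b_*\D^{\otimes}) \longrightarrow b_*b^*\underline{\func}(s_!I, b_*\D^{\otimes}),\]
and identify the target in $\func(BG,\cmonoid(\cat))$ as
\[b^*\underline{\func}(s_!I, b_*\D^{\otimes}) \simeq \func(b^*s_!I,\, b^*b_*\D^{\otimes}) \simeq \func(I, \D^{\otimes}).\]
Here the first equivalence is \cref{lem:borelificationOfPointwiseMonoidal}; the equivalence $b^*b_*\D^{\otimes}\simeq \D^{\otimes}$ comes from the full faithfulness of $b_*$; and $b^*s_!I \simeq I$ with trivial $G$--action follows from the identification $s_!\simeq p^*$ recorded in \cref{obs:leftKanExtensionAsA2Adjunction}, which exhibits $s_!I$ as the constant $G$--category at $I$, whose pullback along $b\colon BG\hookrightarrow\orbit_G\op$ is still the constant $BG$--category at $I$. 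Applying $b_*$ to this chain and composing with the unit yields the desired comparison map in $\cmonoid_G(\underline{\cat})$.

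To conclude that this map is an equivalence, it suffices to show the unit itself is, equivalently that $\underline{\func}(s_!I, b_*\D^{\otimes})$ is Borel local, i.e. lies in the essential image of the fully faithful $b_*$. By the commuting square in \cref{monoidalBorelificationInTheLarge} and the conservativity of the forgetful functor $\cmonoid_G(\underline{\cat})\to \underline{\cat}_G$ (which holds because a morphism of commutative monoids is an equivalence iff it is so on underlying), Borel locality in $\cmonoid_G(\underline{\cat})$ may be checked after forgetting to $\underline{\cat}_G$. The underlying $G$--category of $\underline{\func}(s_!I, b_*\D^{\otimes})$ is $\underline{\func}(s_!I, b_*\D)$ (for $\D$ the underlying $G$--object of $\D^{\otimes}$), which is Borel local by \cref{prop:omnibusBorelBasics}(2). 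The main subtlety is ensuring the identifications above take place in $\func(BG,\cmonoid(\cat))$ rather than merely in $\func(BG,\cat)$, so that the pointwise $\cmonoid$--structures are respected; this is however formal, since \cref{lem:borelificationOfPointwiseMonoidal} is already at the symmetric monoidal level, and the identification $b^*s_!I\simeq I$ induces the symmetric monoidal equivalence $\func(b^*s_!I,\D^{\otimes})\simeq\func(I,\D^{\otimes})$ via functoriality of the pointwise structure in the first argument.
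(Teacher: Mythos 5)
Your construction of the comparison map coincides with the paper's: adjointing over the equivalence $b^*\underline{\func}(s_!I,b_*\D^{\otimes})\simeq \func(I,\D^{\otimes})$ obtained from \cref{lem:borelificationOfPointwiseMonoidal}, $b^*b_*\simeq\id$ and $b^*s_!\simeq\trivial_G$ is exactly the unit followed by $b_*$ of that equivalence, so the two maps agree. Where you diverge is in verifying that the map is an equivalence. The paper checks it after applying $(-)^H$ for all $H\leq G$ (using conservativity of $\cmonoid_G(\underline{\cat})\rightarrow\cat_G\rightarrow\prod_H\cat$) and identifies the resulting map, via \cref{obs:leftKanExtensionAsA2Adjunction}, with the limit-exchange equivalence $\func(I,\D^{hG})\xrightarrow{\simeq}\func(I,\D)^{hG}$. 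You instead reduce Borel locality of $\underline{\func}(s_!I,b_*\D^{\otimes})$ in $\cmonoid_G(\underline{\cat})$ to Borel locality of its underlying $G$--category, using conservativity of $\forget$ together with the fact that both the $(b^*,\forget)$ and $(b_*,\forget)$ squares of \cref{monoidalBorelificationInTheLarge} commute, and then quote \cref{prop:omnibusBorelBasics}(2). This is correct: the commutation of both squares is precisely adjointability, so \cref{prop:adjointabilityPreservesAdjunction} identifies $\forget$ of the top unit with the bottom unit, and conservativity finishes the job; you would do well to cite that lemma explicitly rather than leaving the unit identification implicit, and to note that the underlying $G$--category of the pointwise structure is the functor $G$--category (as recorded in \cref{recollect:universalPropertyPointwiseMonoidal}) with $\forget b_*\D^{\otimes}\simeq b_*\D$. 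What your route buys is uniformity: the monoidal statement is formally pushed down to the already-established unstructured locality statement, with no fixed-point computation; what the paper's route buys is an explicit description of the map on fixed points as the canonical exchange $\func(I,\D^{hG})\simeq\func(I,\D)^{hG}$, which is mildly more informative but requires the extra (easy) check that the map really is that canonical one.
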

\begin{proof}
    Since $b^*s_!\simeq\trivial_G\colon \cat\rightarrow\func(BG,\cat)$ and $b^*b_*\simeq \id$, by \cref{lem:borelificationOfPointwiseMonoidal} we have an equivalence $b^*\underline{\func}(s_!I,b_*\D^{\otimes})\simeq \func(I,\D^{\otimes})$. Adjointing this over we obtain the claimed map $\underline{\func}(s_!I,b_*\D^{\otimes})\rightarrow b_*\func(I,\D^{\otimes})$ in ${\cmonoid}_G(\underline{\cat})$. Since the composite ${\cmonoid}_G(\underline{\cat})\rightarrow\cat_G\xrightarrow{\prod_{H\leq G}(-)^H}\prod_{H\leq G}\cat$ is conservative, it is enough to show that the said map is an equivalence upon passing to fixed points for all subgroups of $G$. In order to keep notations to a minimum, we may just show it for $G$--fixed points since the other fixed points can be dealt with similarly after restriction. In this case, we need to show that the map 
    $s^*\underline{\func}(s_!I,b_*\D)\longrightarrow s^*b_*\func(I,\D)$ in $\cat$
    is an equivalence. The target is given by $s^*b_*\func(I,\D)\simeq \func(I,\D)^{hG}$, whereas the source is, via \cref{obs:leftKanExtensionAsA2Adjunction}, given by $s^*\underline{\func}(s_!I,b_*\D)\simeq \func(I,s^*b_*\D)\simeq \func(I,\D^{hG})$, and it can be checked easily that the map is also the canonical one implementing the limit exchange equivalence $\func(I,\D^{hG})\xrightarrow{\simeq} \func(I,\D)^{hG}$. This concludes the proof.
\end{proof}

\begin{prop}\label{prop:symmetricMonoidalFunctorsToBorelifications}
    Let $\underline{\sC}\totimes\in{\cmonoid}_G(\underline{\cat})$ and $\D^{\otimes}\in\func(BG,\cmonoid(\cat))$. There is a natural equivalence of categories $\func\totimes_G(\underline{\sC}\totimes, b_*\D^{\otimes}) \simeq \func^{\otimes}(b^*\underline{\sC}\totimes, \D^{\otimes})^{hG}$.
\end{prop}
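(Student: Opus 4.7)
By Yoneda in $\cat$, it suffices to exhibit a natural equivalence $\map_{\cat}(K, \func\totimes_G(\underline{\sC}\totimes, b_*\D^{\otimes})) \simeq \map_{\cat}(K, \func^{\otimes}(b^*\underline{\sC}\totimes, \D^{\otimes})^{hG})$ for every $K \in \cat$. The plan is to chain together a sequence of adjunction and cotensor identities, all of which have been set up earlier in this subsection.

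Starting from the left hand side, I would first apply the universal property of $s_! \colon \cat \to \cat_G$ together with the cotensor universal property of the pointwise $G$--symmetric monoidal structure from \cref{recollect:universalPropertyPointwiseMonoidal} to identify the mapping space with $\map_{\cmonoid_G(\underline{\cat})}(\underline{\sC}\totimes, \underline{\func}(s_!K, b_*\D^{\otimes}))$. The crucial step is then to invoke \cref{lem:borelificationCommutesWithPointwiseMonoidal}, which commutes the pointwise cotensor past the Borelification right adjoint as $\underline{\func}(s_!K, b_*\D^{\otimes}) \simeq b_*\func(K, \D^{\otimes})$ in $\cmonoid_G(\underline{\cat})$.

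After this, the $G$--Bousfield adjunction $b^* \dashv b_*$ from \cref{monoidalBorelificationInTheLarge} transfers the mapping space to $\map_{\func(BG, \cmonoid(\cat))}(b^*\underline{\sC}\totimes, \func(K, \D^{\otimes}))$, which by definition of mapping spaces in functor categories out of $BG$ is $\map_{\cmonoid(\cat)}(b^*\underline{\sC}\totimes, \func(K, \D^{\otimes}))^{hG}$. The classical (unparametrised) cotensor universal property in $\cmonoid(\cat)$ then rewrites the inner mapping space as $\map_{\cat}(K, \func^{\otimes}(b^*\underline{\sC}\totimes, \D^{\otimes}))$, and commuting $(-)^{hG}$ past $\map_{\cat}(K, -)$, both being limits of spaces, produces the desired right hand side.

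The only genuinely nontrivial ingredient in this chain is \cref{lem:borelificationCommutesWithPointwiseMonoidal}, which encodes the compatibility between Borelification and the pointwise $G$--symmetric monoidal structure; all remaining steps are standard bookkeeping with adjunctions and cotensor universal properties. It is worth noting that the naturality of the chain of equivalences in $K$ is automatic, as every identification used is itself natural.
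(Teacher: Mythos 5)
Your proposal is correct and follows essentially the same route as the paper's proof: the same chain of identifications via the $s_!\dashv s^*$ adjunction, the cotensor universal property of the pointwise $G$--symmetric monoidal structure, \cref{lem:borelificationCommutesWithPointwiseMonoidal}, the $b^*\dashv b_*$ adjunction, and the computation of mapping spaces in $\func(BG,\cmonoid(\cat))$ as homotopy fixed points. You also correctly identify \cref{lem:borelificationCommutesWithPointwiseMonoidal} as the only substantive input, exactly as in the paper.
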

\begin{proof}
    Fix $I \in\cat$. Consider the following sequence of natural equivalences
    \begin{equation*}
        \begin{split}
            \map_{\cat}\big(I,s^*\underline{\func}\totimes(\underline{\sC}\totimes,b_*\D^{\otimes})\big) & \simeq \map_{\cat_G}\big(s_!I,\underline{\func}\totimes(\underline{\sC}\totimes,b_*\D^{\otimes})\big)\\
            &\simeq \map_{{\cmonoid}_G(\underline{\cat})}\big(\underline{\sC}\totimes, \underline{\func}(s_!I, b_*\D^{\otimes})\big)\\
            &\simeq \map_{{\cmonoid}_G(\underline{\cat})}\big(\underline{\sC}\totimes, b_*\func(I,\D^{\otimes})\big)\\
            &\simeq \map_{\cmonoid(\cat)^{BG}}\big(b^*\underline{\sC}\totimes, \func(I,\D^{\otimes})\big)\\
            &\simeq \map_{\cmonoid(\cat)}\big(b^*\underline{\sC}\totimes, \func(I,\D^{\otimes})\big)^{hG}\\
            &\simeq \map_{\cat}\big(I,\func^{\otimes}(b^*\underline{\sC}\totimes, \D^{\otimes})^{hG}\big)
        \end{split}
    \end{equation*}
    where the second and sixth equivalences are by the universal property from \cref{recollect:universalPropertyPointwiseMonoidal}, the third by \cref{lem:borelificationCommutesWithPointwiseMonoidal}, and the fifth by the mapping space formula in $\cmonoid(\cat)^{BG}$. Since by definition, $s^*\underline{\func}\totimes(\underline{\sC}\totimes,b_*\D^{\otimes})= \func\totimes_G(\underline{\sC}\totimes, b_*\D^{\otimes})$, we obtain the desired equivalence.
\end{proof}

We now distil all that we have done in this subsection into the following principle which establishes an abstract but very important link between $G$--categories and their underlying category with $G$--action. We will use the notation $\underline{\borel}$ for the functor $b_*\colon \underline{\borel}(\cat) \hookrightarrow \underline{\cat}_G$ for intuitive appeal.  We thank Asaf Horev for discussions leading to it, especially in teaching us the trick of using symmetric monoidal envelopes.

\begin{thm}[Monoidal Borelification principle]\label{monoidalBorelificationPrinciple}\label{asafCalgMonad}
Let $\underline{\sC}\totimes \in {\cmonoid}_G(\underline{\cat})\simeq \mackey_G(\cat)$ be a $G$--symmetric monoidal category and $\D^{\otimes}\in \func(BG,\cmonoid({\cat}))$ be a symmetric monoidal category with a $G$--action. Then:
\begin{enumerate}
    \item The $G$--category $\underline{\borel}(\D)\in\cat_G$ canonically refines to a $G$--symmetric monoidal category $\underline{\borel}(\D^{\otimes})\in{\cmonoid}_G(\underline{\cat})$. The multiplicative norm map $\norm^G_H\colon \D^{hH}\rightarrow\D^{hG}$ can be concretely described as follows: for $X \in \D^{hH}$ a $H$--object in $\underline{\borel}(\D)$, the $G$--object $\norm^G_HX \in \D^{hG}$ is given by $\bigotimes_{g\in G/H}gX$,
    \item Writing $\sC_e\in\func(BG,\cat)$ for the value of $\underline{\sC}\in\cat_G$ at $G/e$, the adjunction unit $\underline{\sC}\rightarrow \underline{\borel}(\sC_e)$ of \cref{prop:masterBorelificationLocalisation} canonically refines to a $G$--symmetric monoidal functor $\underline{\sC}\totimes \rightarrow \underline{\borel}(\sC_e^{\otimes})$.
    \item There is a natural equivalence $\calg_G\big(\underline{\borel}(\D^{\otimes})\big)\simeq \calg(\D^{\otimes})^{hG}$.
\end{enumerate}
\end{thm}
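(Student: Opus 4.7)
The plan is to leverage the commuting square of $G$--Bousfield localisations from \cref{monoidalBorelificationInTheLarge} together with the equivalences $\underline{\cmonoid}_G(\underline{\cat})\simeq \mackey_G(\cat)\simeq \cat_G^{\otimes}$ supplied by \cref{thm:nardinShahEnvelopes} (1). For part (1), an object $\D^{\otimes}\in\func(BG,\cmonoid(\cat))$ is, under the equivalence $\func(BG,\cmonoid(\cat))\simeq (\underline{\borel}(\cmonoid(\cat)))_{G/G}$, just an object in the top right corner of the square \cref{eqn:borelificationForgetSquare}. Applying the top right adjoint $b_*$ yields $b_*\D^{\otimes}\in\underline{\cmonoid}_G(\underline{\cat})$, and this is precisely the desired $G$--symmetric monoidal refinement $\underline{\borel}(\D^{\otimes})$; that its underlying $G$--category recovers $\underline{\borel}(\D)=b_*\D$ is the commutation of the $b_*$--$\forget$ square. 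The concrete formula for the norms $\norm^G_H X \simeq \bigotimes_{g\in G/H}gX$ has already been worked out in \cref{obs:concreteDescriptionOfBorelNorms}.

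For part (2), the adjunction unit $\eta\colon \underline{\sC}\totimes\rightarrow b_*b^*\underline{\sC}\totimes$ for the top adjunction of \cref{eqn:borelificationForgetSquare} is by construction a morphism in $\underline{\cmonoid}_G(\underline{\cat})\simeq \cat_G^{\otimes}$, and so automatically encodes a $G$--symmetric monoidal functor. To identify its underlying $G$--functor with the adjunction unit from \cref{prop:masterBorelificationLocalisation}, I invoke \cref{prop:adjointabilityPreservesAdjunction} applied to the $(b^*,\forget)$--adjointable square of \cref{eqn:borelificationForgetSquare}: this gives precisely that $\forget(\eta_{\underline{\sC}\totimes})$ is canonically identified with the Bousfield localisation unit of \cref{prop:masterBorelificationLocalisation} evaluated at the underlying $G$--category $\underline{\sC}=\forget(\underline{\sC}\totimes)$.

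Part (3) proceeds by a chain of universal properties. By \cref{thm:nardinShahEnvelopes} (2) applied to the $G$--operad $\underline{\finite}_{*G}$, there is an equivalence $\calg_G\big(\underline{\borel}(\D^{\otimes})\big)\simeq \func\totimes_G\big(\underline{\mathrm{Env}}(\underline{\finite}_*),\,b_*\D^{\otimes}\big)$. Applying \cref{prop:symmetricMonoidalFunctorsToBorelifications} with $\underline{\sC}\totimes=\underline{\mathrm{Env}}(\underline{\finite}_*)$ converts this into $\func^{\otimes}\big(b^*\underline{\mathrm{Env}}(\underline{\finite}_*),\,\D^{\otimes}\big)^{hG}$. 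The remaining task is to identify $b^*\underline{\mathrm{Env}}(\underline{\finite}_*)$ with the unparametrised envelope $\mathrm{Env}(\finite_*)$, viewed as a symmetric monoidal category with trivial $G$--action. This follows from the fact that $b^*\underline{\finite}_*\simeq \finite_*$ (the fibre of $\underline{\finite}_{*G}$ over $G/e$ being pointed finite $e$--sets, i.e.\ pointed finite sets) together with the compatibility of envelopes with base change along $b\colon BG\hookrightarrow\orbit_G$, which comes from the envelope being defined via a left Kan extension along the inclusion of the inert morphisms. Granting this identification, the unparametrised envelope universal property $\func^{\otimes}(\mathrm{Env}(\finite_*),\D^{\otimes})\simeq \calg(\D^{\otimes})$ promotes the displayed equivalence to $\calg(\D^{\otimes})^{hG}$, as desired.

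The main obstacle I anticipate is the identification $b^*\underline{\mathrm{Env}}(\underline{\finite}_*)\simeq \mathrm{Env}(\finite_*)$, since this requires opening up the definition of parametrised symmetric monoidal envelopes from \cite{nardinShah}. However, because $b\colon BG\hookrightarrow \orbit_G$ is the inclusion of an orbit and not merely an arbitrary functor, the restriction along $b$ should interact cleanly with the envelope formula, and no essentially new input is needed beyond \cref{prop:omnibusBorelBasics} and \cref{lem:borelificationOfPointwiseMonoidal} which already record the relevant compatibilities of $b^*$ with the pointwise $G$--symmetric monoidal structure.
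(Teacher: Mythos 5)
Your proposal matches the paper's proof essentially step for step: part (1) via the commuting $(b_*,\forget)$--square of \cref{monoidalBorelificationInTheLarge} together with \cref{obs:concreteDescriptionOfBorelNorms}, part (2) via the commuting square of $G$--Bousfield localisations, and part (3) via the same chain $\calg_G\big(\underline{\borel}(\D^{\otimes})\big)\simeq \func\totimes_G\big(\underline{\mathrm{Env}}(\underline{\finite}_*),\underline{\borel}(\D^{\otimes})\big)\simeq \func^{\otimes}\big(\mathrm{Env}(\finite_*),\D^{\otimes}\big)^{hG}\simeq \calg(\D^{\otimes})^{hG}$ using \cref{thm:nardinShahEnvelopes} (2) and \cref{prop:symmetricMonoidalFunctorsToBorelifications}. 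The identification $b^*\underline{\mathrm{Env}}(\underline{\finite}_*)\simeq \mathrm{Env}(\finite_*)$ that you flag as the main obstacle is also implicit in the paper's appeal to \cref{prop:symmetricMonoidalFunctorsToBorelifications}, so your extra care there only makes the same argument more explicit.
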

\begin{proof}
    Part (1) is by the fact that the $(b_*,\forget)$--square in \cref{monoidalBorelificationInTheLarge} commutes, and the description of the norm is by \cref{obs:concreteDescriptionOfBorelNorms}. Part (2) is by the fact that we have a commuting square of $G$--Bousfield localisations from \cref{monoidalBorelificationInTheLarge}. Finally, note that
\begin{equation*}
    \begin{split}
        \calg_G(\underline{\borel}(\D^{\otimes})) & \simeq \func\totimes_G(\underline{\mathrm{Env}}(\underline{\finite}_*), \underline{\borel}(\D^{\otimes}))\simeq \func^{\otimes}(\mathrm{Env}(\finite_*), \D^{\otimes})^{hG}\simeq \calg(\D^{\otimes})^{hG}
    \end{split}
\end{equation*}
which gives part (3), where the first and last equivalences are by \cref{thm:nardinShahEnvelopes} (2) and the second equivalence is by \cref{prop:symmetricMonoidalFunctorsToBorelifications}.
\end{proof}


\begin{rmk}
    The structure of $G$--commutative algebra objects and morphisms thereof are often tricky to construct. Part (3) of the theorem guarantees us, however, that at least when the $G$--category involved is Borel, such algebras and their morphisms are nothing but algebras and morphisms in the underlying category equipped with a $G$--action. By further applying suitable $G$--lax symmetric monoidal functors on these Borel $G$--commutative algebras, we may construct out of them many interesting non--Borel examples of $G$--commutative algebras.  We refer the reader to \cref{borelificationPrincipleForCatPerf} for an illustration of this strategy.
\end{rmk}

\begin{rmk}
    While we have not pursued it here so as not to obfuscate the general exposition and since we will not be needing it for our purposes, we believe that the notion of Borel objects and its attendant monoidality theory above can be  developed more generally for any atomic orbital base category $\baseCat$ equipped with a full subcategory $b\colon \B\hookrightarrow \baseCat$ which is a \textit{sieve}, i.e. if we are given a morphism $C\rightarrow X$ in $\baseCat$ where $X\in\B$, then $C\in\B$ too. The main point is that this will allow the proof of \cref{prop:masterBorelificationLocalisation} to go through, from which much else should follow via careful analyses of all the notions involved.
\end{rmk}

\begin{rmk}
    The theorem above is a slight expansion and strengthening of \cite[Thm. 3.3.4, Prop. 3.3.6]{kaifThesis} from the author's thesis. Since then, an article \cite{lucyYang} of Lucy Yang's has appeared that gave a concrete description of $C_p$--$\mathbb{E}_{\infty}$--algebras which in particular also yields \cref{monoidalBorelificationPrinciple} (3) in the special of $G= C_p$. Another approach to proving the theorem has also since appeared in \cite{phil}. 
\end{rmk}

\subsection{Perfect--stable categories and Mackey functors}\label{subsec4.1:MackeyInclusion}\label{section:generationSplitKaroubiSequences}
\label{subsec:SemiadditivityStability}\label{sec4:perfectStables}

In this subsection, we will work out some basic categorical properties of the $\baseCat$--category $\underline{\cat}\Tperfect_{\baseCat}$ of $\baseCat${--perfect--stable categories}. We show that $\underline{\cat}\Tperfect_{\baseCat}$ is $\baseCat$--semiadditive--presentable, cf. \cref{cor:catPerfPresentability}. Moreover, we will show in \cref{mainMackeyInclusion} how this internally defined $\baseCat$--category relates to $\baseCat$--Mackey functors valued in $\cat\perfect$. This  will  allow us to transport results about split Karoubi sequences on $\cat\perfect$ to our setting as well as relate our notion of $\baseCat$--parametrised algebraic K--theory with, for instance, the one considered in \cite{barwick2,CMMN2}.

To begin, let us first record some formalities on $\baseCat$--semiadditivity. 

\begin{lem}\label{cor:indexedProductAdjunctionUnitIdentification}
    Suppose $\underline{\sC}$ is $\baseCat$--pointed with finite indexed (co)products. Let $f\colon U \rightarrow V$ be in $\underline{\finite}_{/V}$ and $Y\in \underline{\sC}$. We then have an identification $f_*f^*\eta_Y\simeq \eta_{f_*f^*Y}$ of maps $f_*f^*\eta_Y\simeq \eta_{f_*f^*Y}\colon f_*f^*Y \longrightarrow f_*f^*f_*f^*Y$.
\end{lem}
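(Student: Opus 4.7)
The plan is to prove this identification via the naturality of the unit $\eta$ of the $\baseCat$--adjunction $f^* \dashv f_*$, combined with the rigidity afforded by the atomic orbital structure of $\baseCat$. The key observation to start with is that $\eta \colon \id_{\underline{\sC}} \to f_*f^*$ is a natural transformation of $\baseCat$--functors, so its naturality square at the morphism $\eta_Y \colon Y \to f_*f^*Y$ produces a commuting square
\[
\begin{tikzcd}
Y \ar[r, "\eta_Y"] \ar[d, "\eta_Y"'] & f_*f^*Y \ar[d, "\eta_{f_*f^*Y}"] \\
f_*f^*Y \ar[r, "f_*f^*\eta_Y"'] & f_*f^*f_*f^*Y
\end{tikzcd}
\]
exhibiting a first level of agreement between the two candidate maps after precomposition with $\eta_Y$.

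Next, I would appeal to the Yoneda perspective, testing both morphisms under the adjoint equivalence $\myuline{\map}_{\underline{\sC}}(W, f_*f^*X) \simeq \myuline{\map}_{f^*\underline{\sC}}(f^*W, f^*X)$ with $W = f_*f^*Y$ and $X = f_*f^*Y$. Under this correspondence, $\eta_{f_*f^*Y}$ is characterised as the map adjoint to $\id_{f^*f_*f^*Y}$, while $f_*f^*\eta_Y$, by naturality of the counit $\varepsilon$, is adjoint to $f^*\eta_Y \circ \varepsilon_{f^*Y}$. The task thus reduces to verifying that $f^*\eta_Y \circ \varepsilon_{f^*Y} \simeq \id_{f^*f_*f^*Y}$, which is where I would invoke the atomic orbital complementation square \cref{eqn:complementationSquare}: by Beck--Chevalley, this gives the decomposition $f^*f_* \simeq \id \times \overline{c}_*c^*$, under which $\varepsilon$ is the projection onto the first factor and $f^*\eta_Y$ (by the triangle identity) is a canonical section, so that the composite $f^*\eta_Y \circ \varepsilon_{f^*Y}$ is identified with the relevant diagonal-plus-zero structure.

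The main obstacle, and the step requiring some care, is to promote the above fibrewise analysis to a genuine $\infty$--categorical identification of $\baseCat$--natural morphisms, rather than just an agreement after precomposition. Here the hypothesis that $\underline{\sC}$ is $\baseCat$--pointed with finite indexed (co)products is essential: the pointed structure collapses the ``off-diagonal'' summand $\overline{c}_*c^*$ in a controlled way, and the indexed (co)products give the coherent data needed to make the comparison manifest. In particular, the naturality square above should combine with the adjoint reformulation to yield the desired identification functorially, using that the parametrised unit is determined by its adjoint datum under $f^*$ in the presence of this decomposition.
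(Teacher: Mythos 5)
Your adjoint-transposition computation is correct as far as it goes: the transpose of $\eta_{f_*f^*Y}$ is the identity of $f^*f_*f^*Y$, and the transpose of $f_*f^*\eta_Y$ is $f^*\eta_Y\circ\varepsilon_{f^*Y}$. But the statement you then set out to prove, namely $f^*\eta_Y\circ\varepsilon_{f^*Y}\simeq\id_{f^*f_*f^*Y}$, is false whenever $f$ is not an equivalence: the triangle identity gives $\varepsilon_{f^*Y}\circ f^*\eta_Y\simeq\id_{f^*Y}$, so the composite you are considering is an idempotent on $f^*f_*f^*Y\simeq f^*Y\times\overline{c}_*c^*f^*Y$ which can only be the identity if $\varepsilon_{f^*Y}$ is itself an equivalence, i.e. if the complementary summand $\overline{c}_*c^*f^*Y$ vanishes. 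Concretely, for $G=C_2$, $f\colon C_2/e\to C_2/C_2$ and $\underline{\sC}=\myuline{\spectra}_{C_2}$, the composite is $(y_1,y_2)\mapsto(y_1,gy_1)$, not the identity. Relatedly, $f^*\eta_Y$ is not ``identity on the diagonal factor and zero on the complement'': its second component is the nontrivial unit/conjugation-type map $f^*Y\to\overline{c}_*c^*f^*Y$, and the pointedness of $\underline{\sC}$ does not collapse it (and even the map ``include the first factor after projecting'' would fail to be the identity). So the final step of your plan cannot be carried out, and no amount of coherence upgrading in your last paragraph can repair it, because the underlying $1$-categorical assertion already fails.

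What this reveals is a misreading of what the lemma asserts. Its content, as the paper's proof makes precise, is the identification supplied by \cref{prop:adjointabilityPreservesAdjunction}: the two maps agree \emph{through the canonical Beck--Chevalley equivalence} $f_*f^*(f_*f^*Y)\simeq f_*f^*f_*f^*Y$ attached to the right adjointable square whose horizontals are $f_*f^*$ and whose verticals are the adjunction $f^*\dashv f_*$ between $\underline{\sC}$ and $f_*f^*\underline{\sC}$; adjointability holds because both the restriction $f^*$ and the indexed product $f_*$ preserve indexed products. That self-equivalence of the common target is not the identity automorphism, which is exactly why your transposition produces a non-invertible idempotent on one side and the identity on the other: by discarding the canonical identification you have replaced the lemma by a strictly stronger, false statement. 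The correct route is the paper's: exhibit the adjointable square and quote \cref{prop:adjointabilityPreservesAdjunction}; your naturality square (agreement after precomposition with $\eta_Y$) and your decomposition via \cref{eqn:complementationSquare} are consistent with this but do not substitute for it.
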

\begin{proof}
    This is an immediate consequence of \cref{prop:adjointabilityPreservesAdjunction}, using the adjointable square
    \begin{center}
        \begin{tikzcd}
            \underline{\sC} \rar["f_*f^*"]\dar["f^*"'] & \underline{\sC}\dar["f^*"']\\
            f_*f^*\underline{\sC} \rar["f_*f^*"] \ar[u, bend right = 50, dashed ,"f_*"']& f_*f^*\underline{\sC}\ar[u, bend right = 50, dashed ,"f_*"']
        \end{tikzcd}
    \end{center}
    coming from the fact that both the restriction functor $f^*\colon \underline{\sC}\rightarrow f_*f^*\underline{\sC}$ and indexed product functor $f_*\colon f_*f^*\underline{\sC}\rightarrow \underline{\sC}$ preserve indexed products.
\end{proof}

\begin{obs}\label{obs:retractionInAtomicOrbitals}
    Suppose $\underline{\sC}$ is $\baseCat$--pointed with finite indexed products. Let $f\colon U \rightarrow V$ be in $\underline{\finite}_{/V}$ and let $X\in f^*\underline{\sC}$. Observe that $X$ is a retract of $f^*f_*X$ since, by the pullback decomposition from \cref{eqn:complementationSquare}, we have $f^*f_*X\simeq X\times \overline{c}_*c^*X$. Hence,  using the map $\ast\rightarrow \overline{c}_*c^*X\rightarrow\ast$, we can get a retraction $X\simeq X\times \ast\longrightarrow f^*f_*X\simeq X\times \overline{c}_*c^*X\xlongrightarrow{\pi}  X\simeq X\times\ast$.
\end{obs}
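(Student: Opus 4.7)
The plan is to produce an explicit retraction $X \to f^*f_*X \to X$ by exploiting two ingredients: the orbital decomposition of the self-pullback $U \times_V U$, and the $\baseCat$-pointedness of $\underline{\sC}$. First I would unpack the pullback square \eqref{eqn:complementationSquare}: by atomic orbitality of $\baseCat$, we have $U \times_V U \simeq U \sqcup Z$ with the two projections expressible as $\id \sqcup c$ and $\id \sqcup \overline{c}$ for appropriate maps $c, \overline{c}\colon Z \to U$. Applying the right Beck--Chevalley condition (which is automatic in the setting of \cref{beckChevalleyMeaning} since $f^*$ is a structure map of the $\baseCat$-category and $f_*$ exists by hypothesis) to this square, I obtain the natural equivalence
\[f^*f_*X \simeq X \times \overline{c}_*c^*X.\]

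Next I would invoke $\baseCat$-pointedness to factor the required retraction through the zero object. Writing $0 \simeq \ast$ for the zero object of the relevant fibre of $\underline{\sC}$ and using the canonical maps $\ast \to \overline{c}_*c^*X$ and $\overline{c}_*c^*X \to \ast$ (both of which exist by the pointedness assumption and by the universal property of terminal/initial objects), I obtain the composite
\[X \simeq X \times \ast \xlongrightarrow{\id \times (\ast \to \overline{c}_*c^*X)} X \times \overline{c}_*c^*X \simeq f^*f_*X \xlongrightarrow{\id \times (\overline{c}_*c^*X \to \ast)} X \times \ast \simeq X.\]
By the functoriality of the product, this composite is canonically equivalent to $\id \times \id_\ast$, i.e.\ the identity on $X$, which exhibits the desired retraction.

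The entire content of the observation is really encoded in the Beck--Chevalley equivalence $f^*f_*X \simeq X \times \overline{c}_*c^*X$, so the main (and essentially only) step worth checking is this decomposition; the retraction itself is then a formal consequence of $\baseCat$-pointedness. I would expect no obstacle beyond carefully tracking the orbital decomposition provided by atomic orbitality.
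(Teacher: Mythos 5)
Your proposal is correct and follows exactly the argument the paper itself packs into the observation: the decomposition $f^*f_*X \simeq X \times \overline{c}_*c^*X$ comes from applying the (automatic, since $f_*$ is a parametrised adjoint in the sense of \cref{criteriaForTAdjunctions}) Beck--Chevalley commutation to the self-pullback square \cref{eqn:complementationSquare}, and the retraction is then the formal consequence of $\baseCat$-pointedness via $\ast \to \overline{c}_*c^*X \to \ast$. No gaps; this matches the paper's reasoning.
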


\begin{cor}\label{cor:normMapBeingAnEquivalence}
    Let $\underline{\sC}$ be a $\baseCat$--pointed category with finite indexed products. Let $f\colon U \rightarrow V$ be in $\underline{\finite}_{/V}$ and let $X\in f^*\underline{\sC}$. The composite $f_*X \xlongrightarrow{\eta_{f_*X}}f_*f^*f_*X \xlongrightarrow{f_*\pi} f_*X$ is an equivalence.
\end{cor}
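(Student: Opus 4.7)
The plan is to show the composite equals the identity on $f_*X$, by identifying the map $\pi$ with the counit $\varepsilon_X\colon f^*f_*X\to X$ of the $\baseCat$--adjunction $f^*\dashv f_*$ and invoking the triangle identity $f_*\varepsilon_X\circ\eta_{f_*X}=\id_{f_*X}$.

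To perform the identification $\pi\simeq\varepsilon_X$, I would unwind the right Beck--Chevalley equivalence $f^*f_*X\simeq X\times\overline{c}_*c^*X$ supplied by the atomic-orbital pullback square \cref{eqn:complementationSquare} and already used in \cref{obs:retractionInAtomicOrbitals}. The $X$-factor in this decomposition originates from the diagonal summand $U$ of $U\sqcup Z = U\times_V U$, on which the counit acts as the identity; the counit $\varepsilon_X$ therefore corresponds under the equivalence to the first projection onto $X$ (and annihilates the complementary factor $\overline{c}_*c^*X$). In a pointed category, this first projection $X\times\overline{c}_*c^*X\to X$ is nothing but $\id\times 0$ through the canonical maps $\overline{c}_*c^*X\to 0\to\overline{c}_*c^*X$, which is precisely the definition of $\pi$ in \cref{obs:retractionInAtomicOrbitals}. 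Once $\pi\simeq\varepsilon_X$ is in hand, the composite becomes $f_*\varepsilon_X\circ\eta_{f_*X}$, and the triangle identity collapses this to $\id_{f_*X}$, which is trivially an equivalence.

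The principal obstacle in this approach is the identification $\pi\simeq\varepsilon_X$: while intuitively clear -- at each point $u\in U$, the counit projects the indexed product $f_*X$ onto its $u$-th factor, which is exactly the $X$-summand in the Beck--Chevalley decomposition -- verifying it formally requires chasing the right Beck--Chevalley transformation through the summand decomposition $U\times_V U = U\sqcup Z$ afforded by atomic orbitality. An alternative route is to apply $f_*$ to the retraction $X\xrightarrow{\iota} f^*f_*X\xrightarrow{\pi} X$ from \cref{obs:retractionInAtomicOrbitals} to obtain that $f_*\pi$ is already split surjective via $f_*\iota$, and then use \cref{cor:indexedProductAdjunctionUnitIdentification} to compare $\eta_{f_*X}$ with $f_*\iota$; however, since $\eta_{f_*X}$ and $f_*\iota$ are genuinely distinct sections of $f_*\pi$, one is still forced back to the triangle identity to conclude, so this does not bypass the Beck--Chevalley unwinding.
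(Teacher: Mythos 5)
Your proposal is correct, and it takes a genuinely different (and somewhat more direct) route than the paper. Your key move is to identify $\pi$ with the counit $\varepsilon_X$ of $f^*\dashv f_*$ — which is indeed true, since the component of the right Beck--Chevalley transformation $f^*f_*\Rightarrow \id\times\overline{c}_*c^*$ over the diagonal summand $U\subseteq U\times_VU$ of \cref{eqn:complementationSquare} is built from $\eta^{\id}\simeq\id$ and $\varepsilon^f$, hence is $\varepsilon^f$ itself — and then the second triangle identity $f_*\varepsilon\circ\eta_{f_*}\simeq\id_{f_*}$ shows the composite is the \emph{identity} on $f_*X$, for every $X$ at once. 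The paper argues differently: it first uses the retraction of \cref{obs:retractionInAtomicOrbitals} (together with naturality of the composite in $X$, and the fact that retracts of equivalences are equivalences) to reduce to the case $X=f^*Y$; it then invokes \cref{cor:indexedProductAdjunctionUnitIdentification} to replace $\eta_{f_*f^*Y}$ by $f_*f^*\eta_Y$, and concludes by checking that $f^*\eta_Y$ restricts to the identity on the $f^*Y$-component of $f^*f_*f^*Y$. Note that this last check is essentially the \emph{first} triangle identity $\varepsilon_{f^*}\circ f^*\eta\simeq\id_{f^*}$ once one has your identification of the projection with the counit, so both proofs ultimately rest on the same unwinding of the diagonal Beck--Chevalley component; yours front-loads that computation into the statement $\pi\simeq\varepsilon_X$ and thereby dispenses with the retract reduction and the auxiliary unit-identification lemma, while the paper avoids making the $\pi\simeq\varepsilon$ identification explicit at the cost of those extra reduction steps. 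Your closing remark is also accurate: merely splitting $f_*\pi$ by applying $f_*$ to the retraction does not suffice, since one must control which section $\eta_{f_*X}$ is, which is exactly what the triangle identity (or the paper's reduction) supplies.
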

\begin{proof}
    Since retractions of equivalences are equivalences, by \cref{obs:retractionInAtomicOrbitals},  it suffices to show it when $X=f^*Y$ for some $Y\in\underline{\sC}$. In this case, this composite is identified with $f_*f^*Y \xlongrightarrow{f_*f^*\eta_{Y}} f_*f^*f_*f^*Y \xlongrightarrow{f_*\pi}f_*f^*Y$    by \cref{cor:indexedProductAdjunctionUnitIdentification}. By  unwinding  adjunctions and via the decomposition from \cref{eqn:complementationSquare},  $f^*\eta_Y\colon f^*Y\rightarrow f^*f_*f^*Y\simeq f^*Y\times \overline{c}_*c^*f^*Y$ is the identity map on the $f^*Y$ component in $f^*f_*f^*Y$. Therefore,  the composite is indeed an equivalence.
\end{proof}


With these generalities explained, we now begin our categorical study of $\underline{\cat}\Tperfect$ in earnest.

\begin{prop} \label{SemiadditivityOfPresentableStables}
The $\baseCat$--categories $\underline{\presentable}_{\baseCat, \mathrm{st}, L,\kappa}$ and $\underline{\presentable}_{\baseCat, L,\kappa}$ are $\baseCat$--semiadditive, where the $\baseCat$--products are created in $\underline{\widehat{\cat}}_{\baseCat}$. In particular, we have that $\underline{\cat}\Tperfect$ is $\baseCat$--semiadditive and the faithful inclusion $\underline{\cat}\Tperfect\subset \underline{\cat}$ is closed under finite $\baseCat$--products.
\end{prop}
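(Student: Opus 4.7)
The plan is to first establish the result for $\underline{\presentable}_{\baseCat, L, \kappa}$ by leveraging the classical semiadditivity of $\presentable_{L, \kappa}$ --- where direct sums simultaneously implement finite products and coproducts --- and then deduce the remaining claims. The crucial observation is that the structure which makes $\presentable_{L, \kappa}$ semiadditive propagates cleanly through the atomic orbital structure on $\baseCat$, since the indexed (co)products needed on the parametrised side end up being implemented by the \emph{same} categorical product in $\underline{\widehat{\cat}}_{\baseCat}$.

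Concretely, first I would verify that $\underline{\presentable}_{\baseCat, L, \kappa}$ is $\baseCat$-pointed (the zero category is simultaneously initial and terminal in each fibre) and admits finite $\baseCat$-products created in $\underline{\widehat{\cat}}_{\baseCat}$. For the latter, given $f \colon U \to V$ in $\underline{\finite}_{\baseCat}$, the indexed product $f_*$ on $\underline{\widehat{\cat}}$ is implemented by iterated categorical products over the fibres of $U \to V$, and a finite product of $\kappa$-accessible presentable categories is again such with pointwise $\kappa$-compact objects. The Beck--Chevalley conditions required by \cref{criteriaForTAdjunctions} are automatic since restrictions in $\underline{\presentable}_L$ are themselves right adjoints in $\underline{\widehat{\cat}}$. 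For indexed coproducts, the fibrewise semiadditivity of $\presentable_{L, \kappa}$ ensures that the functor left adjoint to $f^*$ is implemented by the same categorical product, and the dual Beck--Chevalley conditions follow analogously.

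The central step is then to verify that the semiadditivity norm map of \cref{eqn:semiadditivityNormMap} is an equivalence, applied here to the identity functor on $\underline{\presentable}_{\baseCat, L, \kappa}$. Because $f_!$ and $f_*$ are both implemented by the same categorical product, the norm map $f_! \to f_*$ becomes, after this identification, the canonical self-map of $f_*$ obtained by composing the unit with the projection associated to the atomic orbital decomposition of $f^*f_*$: that is, precisely the composite $f_* \xrightarrow{\eta} f_*f^*f_* \xrightarrow{f_*\pi} f_*$ shown to be an equivalence in \cref{cor:normMapBeingAnEquivalence}. The main obstacle here is mostly bookkeeping --- one must trace the construction of \cref{eqn:semiadditivityNormMap} carefully and identify the coproduct--product comparison with this self-map under the semiadditive identification --- but no genuinely new input beyond fibrewise semiadditivity and atomic orbitality is required.

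Finally, for $\underline{\presentable}_{\baseCat, \mathrm{st}, L, \kappa}$, a finite categorical product of presentable stable categories is again presentable stable, so this $\baseCat$-subcategory is closed under finite $\baseCat$-products inside $\underline{\presentable}_{\baseCat, L, \kappa}$ and inherits $\baseCat$-semiadditivity. For $\underline{\cat}\Tperfect$, the equivalence $\underline{\cat}\Tperfect \simeq \underline{\presentable}_{\baseCat, \mathrm{st}, L, \omega}$ from \cref{TPresentableIdempotentCorrespondence} transports the $\baseCat$-semiadditivity; closure of the faithful inclusion $\underline{\cat}\Tperfect \subset \underline{\cat}$ under finite $\baseCat$-products follows from the fact that a finite categorical product of small perfect stable categories is again small perfect stable, so the indexed products computed in $\underline{\widehat{\cat}}$ descend to $\underline{\cat}$ and agree with those inherited via the $\baseCat$-presentable equivalence.
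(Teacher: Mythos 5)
There is a genuine gap at the heart of your argument, namely the sentence asserting that ``the fibrewise semiadditivity of $\presentable_{L,\kappa}$ ensures that the functor left adjoint to $f^*$ is implemented by the same categorical product.'' For a general $f\colon W\rightarrow V$ in $\underline{\finite}_{\baseCat}$ (not a fold map), the indexed product $f_*$ is a genuinely parametrised construction relating \emph{different} fibres, and the claim that it is simultaneously a left adjoint to $f^*$ is precisely the nontrivial content of the proposition; it does not follow from ordinary semiadditivity of $\presentable_{L,\kappa}$ nor from fibrewise semiadditivity of the parametrised category. Indeed, fibrewise semiadditivity (even fibrewise stability) of a $\baseCat$--category never by itself forces indexed products to agree with indexed coproducts --- Borel-type parametrised categories such as $b_*b^*\myuline{\spectra}$ are fibrewise stable, both adjoints to restriction exist, and yet they differ --- so an appeal of this kind is exactly the inference that fails in general. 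The paper's proof supplies the missing input through the anti-equivalence $\underline{\presentable}_{L,\kappa}\simeq \underline{\presentable}_{R,\kappa\text{-filt}}\vop$ of \cref{prop:antiequivalencePrLPrR} combined with the identification of indexed products on the $\presentable_R$ side (\cite[Prop.~6.6.2]{kaifPresentable}): a left adjoint out of $f_*\underline{\D}$ is a right adjoint into it, which by the $\presentable_R$--adjunction corresponds to a right adjoint out of $f^*(-)$, i.e.\ a left adjoint into $f^*(-)$, exhibiting $f_*\dashv f^*$. Nothing in your proposal plays the role of this duality.

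The gap propagates into your final step as well: to identify the semiadditivity norm map with the composite $f_*\xrightarrow{\eta}f_*f^*f_*\xrightarrow{f_*\pi}f_*$ of \cref{cor:normMapBeingAnEquivalence}, one must know how the unit of the new adjunction $f_*\dashv f^*$ interacts with the (co)unit of $f^*\dashv f_*$ and with the Beck--Chevalley decompositions $f^*f_!\simeq \id\sqcup\overline{c}_!c^*$ and $f^*f_*\simeq\id\times\overline{c}_*c^*$; the paper does this by tracing the duality (showing $\widetilde{\eta}\dashv\varepsilon$, $\widetilde{\varepsilon}\dashv\eta$, and that the two Beck--Chevalley maps are mutually inverse, so that $\id\sqcup\emptyset$ is carried to $\id\times\emptyset$). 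Since your proposal has not constructed the adjunction $f_*\dashv f^*$ in any concrete way, this ``bookkeeping'' has no data to run on. The remaining parts (ordinary biproducts via creation from $\func(\baseCat\op,\presentable_{L,\kappa})$, closure of the stable subcategory under $\baseCat$--products, and transporting the statement to $\underline{\cat}\Tperfect$ via \cref{TPresentableIdempotentCorrespondence}) are fine and agree with the paper.
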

\begin{proof}
We only show that $\underline{\presentable}_{\baseCat, L,\kappa}$ is $\baseCat$--semiadditive. This would then imply that the $\baseCat$--full subcategory $\underline{\presentable}_{\baseCat, \mathrm{st}, L,\kappa}$ is too, since $\baseCat$--presentable-stables are closed under $\baseCat$--products.

First, we show that $\underline{\presentable}_{\baseCat,L,\kappa}$ is ordinary semiadditive. By \cref{simpsonTheorem},  $\presentable_{\baseCat,L,\kappa}$ is the non--full subcategory of $\func(\baseCat\op,\presentable_{L,\kappa})$ consisting of those objects for which all the restriction functors have left adjoints and satisfy the left Beck--Chevalley condition, and the morphisms are those which satsify the left Beck--Chevalley condition. Since $\presentable_{L,\kappa}$ is semiadditive (cf. for instance \cite[Ex. 4.3.11]{hopkinsLurie}), it suffices now to argue that $\presentable_{\baseCat,L,\kappa}\subset \func(\baseCat\op,\presentable_{L,\kappa})$ creates finite (co)products by applying \cref{lem:faithful(co)limitCreation}. Let $\underline{\sC}_1,\underline{\sC}_2\in \presentable_{\baseCat,L,\kappa}$. The product and coproduct $\underline{\sC}_1\times\underline{\sC}_2$ clearly still satisfies Beck--Chevalley, and the projection maps $\underline{\sC}_1\times\underline{\sC}_2\rightarrow\underline{\sC}_i$ and inclusion maps $\underline{\sC}_i\rightarrow \underline{\sC}_1\times\underline{\sC}_2$ (defined using that $\underline{\sC}_i$ had initial objects) also clearly satisfies Beck--Chevalley. These give condition (1) of the lemma, and to see condition (2), suppose we are given $h_i\colon \underline{\D}\rightarrow \underline{\sC}_i$ and $f_i\colon \underline{\sC}_i\rightarrow \underline{\E}$
all satisfying Beck--Chevalley. Then it is similarly easy to see that the maps $h_1\times h_2\colon \underline{\D}\rightarrow\underline{\sC}_1\times\underline{\sC}_2$ and $f_1\sqcup f_2\colon\underline{\sC}_1\times\underline{\sC}_2\rightarrow\underline{\E}$ also satisfy Beck--Chevalley, whence condition (2) as wanted. 

Next, we show that there is a canonical adjunction datum witnessing that $f_*\dashv f^*$ for every $f\colon W\rightarrow V$ in $\finite_{\baseCat}$. For this, simply observe the natural equivalences
\small\[\myuline{\map}_{\underline{\presentable}_{L,\kappa}}(f_*\underline{\D}, \underline{\sC}) \simeq \myuline{\map}_{\underline{\presentable}_{R,\kappa\text{-filt}}}(\underline{\sC},  f_*\underline{\D}) \simeq \myuline{\map}_{f_*f^*\underline{\presentable}_{R,\kappa\text{-filt}}}(f^*\underline{\sC}, \underline{\D}) \simeq \myuline{\map}_{f_*f^*\underline{\presentable}_{L,\kappa}}(\underline{\D}, f^*\underline{\sC})\] \normalsize where the first and last equivalences is by \cref{prop:antiequivalencePrLPrR}, and the middle equivalence is by \cite[Prop. 6.6.2]{kaifPresentable}. Now write $\eta,\varepsilon$ as the adjunction (co)unit for $f^*\dashv f_*$ in $\underline{\presentable}_{R,\kappa\text{-filt}}$ and $\widetilde{\eta},\widetilde{\varepsilon}$ for the $f_*\dashv f^*$ (co)unit in $\underline{\presentable}_{L,\kappa}$. Tracing through the identifications above, we obtain that $\widetilde{\eta}\dashv \varepsilon$ and $\widetilde{{\varepsilon}}\dashv \eta$. Under these notations, the Beck--Chevalley equivalences $f^*f_*\simeq \id \sqcup \overline{c}_*c^*$ and $f^*f_*\simeq \id \times \overline{c}_*c^*$ are then implemented by
\[\widetilde{\mathrm{BC}}\colon \id\sqcup \overline{c}_*c^* \xlongrightarrow{(\id\sqcup \overline{c}_*c^*)\widetilde{\eta}^{f}}(\id\sqcup \overline{c}_*c^*)f^*f_*\simeq (\id\sqcup\overline{c}_*\overline{c}^*)f^*f_* \xlongrightarrow{\widetilde{\varepsilon}^{\overline{c}}_{f^*f_*}} f^*f_*\]
\[\mathrm{BC}\colon f^*f_*\xlongrightarrow{\eta^{\overline{c}}_{f^*f_*}}(\id\times\overline{c}_*\overline{c}^*)f^*f_*\simeq (\id\times\overline{c}_*{c}^*)f^*f_*\xlongrightarrow{(\id\times \overline{c}_*c^*)\varepsilon^{f}}\id\times \overline{c}_*c^*.\]
Now since $\widetilde{\mathrm{BC}}$ was an equivalence, an inverse is given by the right adjoint, which may in turn be seen easily to be given by $\mathrm{BC}$. Therefore, the composite $\mathrm{BC}\circ\widetilde{\mathrm{BC}}\colon \id\sqcup \overline{c}_*c^* \rightarrow \id\times \overline{c}_*c^*$ \textit{is} the semiadditive equivalence $\sqcup\simeq \times$. In particular, we have the equivalence of maps $(\id\sqcup \emptyset)\simeq (\id\times\emptyset)\circ\mathrm{BC}\circ\widetilde{\mathrm{BC}} \colon \id\sqcup \overline{c}_*c^* \longrightarrow \id$.

Finally, to see that $\underline{\presentable}_{\baseCat, L,\kappa}$ is $\baseCat$--semiadditive via $f_*\dashv f^*$, we need to show that the composite
$f_* \xlongrightarrow{\eta_{f_*}} f_*f^*f_* \xleftarrow[\simeq]{f_*\widetilde{\mathrm{BC}}} f_*(\id\sqcup\overline{c}_*c^*) \xlongrightarrow{f_*(\id\sqcup \emptyset)}f_*$
is an equivalence. By the equivalence $(\id\sqcup \emptyset)\simeq (\id\times\emptyset)\circ\mathrm{BC}\circ\widetilde{\mathrm{BC}} $, this composite is equivalent to the one of the form in \cref{cor:normMapBeingAnEquivalence}, which is an equivalence. This completes the proof that $\underline{\presentable}_{L,\kappa}$ is $\baseCat$--semiadditive. The statement about $\underline{\cat}\Tperfect$ follows  from \cref{nota:CatTPerfectStable}.
\end{proof}

\begin{lem}\label{lem:limitClosureOfPr_R}
    The inclusion $\underline{\presentable}_{\baseCat,R, \mathrm{st},\kappa\text{-filt}}\subset \underline{\widehat{\cat}}$ is closed under arbitrary parametrised limits. In particular, $\underline{\presentable}_{\baseCat,L, \mathrm{st},\kappa}$ is $\baseCat$--cocomplete.
\end{lem}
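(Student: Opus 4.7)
The plan is to reduce the statement to checking closure separately under fibrewise limits and indexed products, by appealing to the dual of the colimit decomposition recalled in \cref{recollect:indexedCoproductsAndFibrewiseColimits}: every parametrised limit in $\underline{\widehat{\cat}}$ may be assembled from fibrewise limits together with indexed products $f_*$ for maps $f\colon U\to V$ in $\baseCat$. The key organising tool for both pieces will be the dual of the Lurie--Simpson omnibus \cref{simpsonTheorem}, identifying $\underline{\presentable}_{\baseCat,R,\mathrm{st},\kappa\text{-filt}}$ inside $\underline{\widehat{\cat}}$ as those objects whose straightening factors fibrewise through the stable variant of $\presentable_{R,\kappa\text{-filt}}$ and whose structure maps satisfy the right Beck--Chevalley condition of \cref{beckChevalleyMeaning}.

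For fibrewise limits, since $\underline{\widehat{\cat}}\simeq\func(\baseCat\op,\widehat{\cat})$ computes such limits pointwise, each fibre of the putative limit is a limit in $\widehat{\cat}$ of presentable-stable categories along right adjoint functors preserving $\kappa$-filtered colimits. By Lurie's classical limit theorem \cite[Thm. 5.5.3.18]{lurieHTT}, together with the standard observation that stability and $\kappa$-filtered colimit preservation are limit-stable conditions on functors, the resulting fibre is again presentable-stable of the required form. The right Beck--Chevalley condition on the structure maps of the limit is inherited automatically, since limits of right-adjointable squares are right-adjointable.

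For closure under $f_*$ with $f\colon U\to V$ in $\baseCat$, unwinding definitions shows that the value $(f_*\underline{\sC})(W\to V)$ may be computed as a limit over the orbital decomposition of the pullback $W\times_V U$ of the corresponding values of $\underline{\sC}$; since $\baseCat$ is orbital, this decomposition is finite and the limit is thus a finite product of objects in $\presentable_{R,\mathrm{st},\kappa\text{-filt}}$, which is again presentable-stable by the same classical result. The right Beck--Chevalley conditions on the structure maps of $f_*\underline{\sC}$ follow by a direct check combining the Beck--Chevalley condition for $\underline{\sC}$ itself with the orbital decomposition of the relevant pullback squares in $\baseCat_{/V}$. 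This completes closure under arbitrary parametrised limits.

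The ``in particular'' statement then follows from the stable-$\kappa$-variant of \cref{prop:antiequivalencePrLPrR}, giving an anti-equivalence $\underline{\presentable}_{\baseCat,L,\mathrm{st},\kappa}\simeq\underline{\presentable}_{\baseCat,R,\mathrm{st},\kappa\text{-filt}}\vop$ of $\baseCat$-categories. Under this anti-equivalence, arbitrary parametrised limits in the right-hand side correspond to arbitrary parametrised colimits in the left-hand side, whence $\underline{\presentable}_{\baseCat,L,\mathrm{st},\kappa}$ is $\baseCat$-cocomplete as claimed. The main technical obstacle will be bookkeeping the Beck--Chevalley conditions at each step, but each verification is formal in that it reduces to adjointability being preserved under pointwise limits and under the orbital decomposition of pullbacks in $\baseCat$.
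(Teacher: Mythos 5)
Your overall skeleton matches the paper's (decompose parametrised limits into fibrewise limits plus finite indexed products, compute fibrewise limits pointwise using the classical closure of $\presentable_{R}$ under limits, and deduce the ``in particular'' clause from \cref{prop:antiequivalencePrLPrR}), but the central membership check rests on a criterion that is neither stated in the paper nor correct as you use it. The objects of $\underline{\presentable}_{\baseCat,R,\mathrm{st},\kappa\text{-filt}}$ are by definition the ($\kappa$-accessible) $\baseCat$-presentable(-stable) categories — the subcategory differs from $\underline{\presentable}_{\baseCat,L,\mathrm{st},\kappa}$ only in its morphisms — and by \cref{simpsonTheorem} membership is detected by the \emph{left} Beck--Chevalley condition of \cref{beckChevalleyMeaning} (existence of left adjoints $f_!$ to the restriction functors satisfying base change) together with fibrewise presentability. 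Your ``dual Simpson'' criterion (fibrewise in $\presentable_{R,\kappa\text{-filt}}$ plus the \emph{right} Beck--Chevalley condition) is unproved, and the left condition does not follow formally from the right one; so your verification that ``limits of right-adjointable squares are right-adjointable'' does not show that the limit object is $\baseCat$-presentable. What is actually needed — and what the paper does — is to assemble left adjoints to the restriction functors of the limit and check the left base-change condition, which is exactly the content of \cref{prop:limitOfBeckChevalleyAdjunctions} applied using that the maps in the diagram are fibrewise right adjoints with commuting Beck--Chevalley squares. The same issue recurs in your indexed-product step, where the right Beck--Chevalley conditions you propose to check ``directly'' are again not the membership conditions.

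A second omission: the inclusion $\underline{\presentable}_{\baseCat,R,\mathrm{st},\kappa\text{-filt}}\subset\underline{\widehat{\cat}}$ is non-full, so closure under limits is not merely a condition on objects. One must also verify that the limit cone consists of morphisms of the subcategory and that the comparison map induced by any cone lying in the subcategory is again a morphism of the subcategory — conditions (1) and (2) of \cref{lem:faithful(co)limitCreation}, around which the paper's proof is organised; your proposal does not address this at all. Finally, note that the paper obtains closure under finite indexed products not by your pointwise right-Kan-extension computation but more slickly from the $\baseCat$-semiadditivity of \cref{SemiadditivityOfPresentableStables} (where indexed products are created in $\underline{\widehat{\cat}}$) transported through the anti-equivalence \cref{prop:antiequivalencePrLPrR}; your direct route could likely be made to work, but only after supplying the left Beck--Chevalley and non-fullness verifications above. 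Your handling of the ``in particular'' statement agrees with the paper.
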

\begin{proof}
    By the equivalence $\underline{\presentable}_{L, \mathrm{st},\kappa}\simeq \underline{\presentable}_{R, \mathrm{st},\kappa\text{-filt}}\vop$ from \cref{prop:antiequivalencePrLPrR} and the $\baseCat$--semiadditivity from \cref{SemiadditivityOfPresentableStables}, we know that $\underline{\presentable}_{R, \mathrm{st},\kappa\text{-filt}}\subset \underline{\widehat{\cat}}$ is closed under finite indexed products. Hence, since arbitrary parametrised limits can be decomposed into arbitrary fibrewise limit and finite indexed products, we are left to argue in the case of arbitrary fibrewise limits. This can in turn be split up into showing the case of arbitrary products and pullbacks. We will only treat the case of pullbacks since that of arbitrary products is simpler. 

    We would like to apply \cref{lem:faithful(co)limitCreation}. Since fibrewise limits in ${\widehat{\cat}_{\baseCat}}=\func(\baseCat\op,\widehat{\cat})$ are computed pointwise and since $\presentable_{L, \mathrm{st},\kappa},\presentable_{R, \mathrm{st},\kappa\text{-filt}}\subset \widehat{\cat}$ are closed under limits,  $\widehat{\cat}_{\baseCat}$--pullbacks $\underline{\sC}_1\times_{\underline{\sC}_3}\underline{\sC}_2$ of objects in ${\presentable}_{R, \mathrm{st},\kappa\text{-filt}}$ are still in $\func(\baseCat\op,\presentable_{L, \mathrm{st},\kappa})$. Using \cref{prop:limitOfBeckChevalleyAdjunctions},  we can easily check that $\underline{\sC}_1\times_{\underline{\sC}_3}\underline{\sC}_2$ still satisfies the left Beck--Chevalley conditions, and so by \cref{simpsonTheorem}, $\underline{\sC}_1\times_{\underline{\sC}_3}\underline{\sC}_2$ is an object in $\presentable_{\baseCat,R, \mathrm{st},\kappa\text{-filt}}$. Moreover, the projection maps $\underline{\sC}_1\times_{\underline{\sC}_3}\underline{\sC}_2\rightarrow\underline{\sC}_i$ for $i\in\{1,2,3\}$ are also easily seen to preserve parametrised limits and $\kappa$--filtered colimits,
    whence condition (1) of \cref{lem:faithful(co)limitCreation}. Since these projection maps preserves said (co)limits, this means that such (co)limits are computed pointwise in $\underline{\sC}_1\times_{\underline{\sC}_3}\underline{\sC}_2$ and hence  condition (2) of the lemma is easily seen to be satisfied also. The last statement is then an immediate consequence of the first statement and the equivalence $\underline{\presentable}_{L,\kappa}\simeq \underline{\presentable}_{R,\kappa\text{-filt}}\vop$.
\end{proof}

\begin{prop}\label{prop:limitClosureOfCatPerfInCategories}
    The faithful inclusion $\underline{\cat}\Tperfect\subset\underline{\cat}$ is closed under arbitrary parametrised limits.
\end{prop}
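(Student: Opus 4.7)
The plan is to verify the two conditions of \cref{lem:faithful(co)limitCreation} for the $\baseCat$--faithful inclusion $\underline{\cat}\Tperfect \subset \underline{\cat}$, after reducing via the dual of the Bousfield--Kan style decomposition from \cref{recollect:indexedCoproductsAndFibrewiseColimits} to the two cases of fibrewise limits and finite indexed products. The case of finite indexed products is already handled by \cref{SemiadditivityOfPresentableStables}, which gives $\baseCat$--semiadditivity of $\underline{\cat}\Tperfect$ with the $\baseCat$--products created in $\underline{\widehat{\cat}}$, so only the case of fibrewise limits remains.

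For fibrewise limits, the approach is to use that the inclusion factors as $\underline{\cat}\Tperfect \subset \cofree(\cat\perfect) \subset \cofree(\cat) = \underline{\cat}$, so fibrewise limits in $\underline{\cat}$ are computed pointwise in $\cat$. Given a diagram $\partial \colon I \to \underline{\cat}\Tperfect$ with $I$ an (unparametrised) indexing category, each fibre of $\underline{\lim}_I \partial$ is then an ordinary $\cat$--limit of a diagram in $\cat\perfect$ along exact morphisms. Invoking the standard unparametrised fact that $\cat\perfect \subset \cat$ is closed under arbitrary limits --- stability and idempotent--completeness are both limit--closed, and exactness of the limit projections follows from the universal property --- I would conclude that every fibre of $\underline{\lim}_I \partial$ lies in $\cat\perfect$. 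To upgrade this from $\cofree(\cat\perfect)$ to $\underline{\cat}\Tperfect$, the restriction functors of $\underline{\lim}_I \partial$ must admit compatible left adjoints. This is precisely the content of \cref{prop:limitOfBeckChevalleyAdjunctions}: the morphisms of $\partial$ preserve indexed coproducts by hypothesis, supplying the Beck--Chevalley data needed to assemble the fibrewise adjunctions $f_! \dashv f^*$ on each $\partial_i$ into a $\baseCat$--adjunction on $\underline{\lim}_I \partial$, and the projection maps to each $\partial_i$ automatically preserve the resulting indexed coproducts, verifying condition (1) of \cref{lem:faithful(co)limitCreation}. Condition (2) goes analogously: given $\underline{C} \in \underline{\cat}\Tperfect$ with a cone $\underline{\constant}_I \underline{C} \to \partial$ in $\underline{\cat}\Tperfect$, the induced morphism $\underline{C} \to \underline{\lim}_I \partial$ is fibrewise exact by the universal property of $\cat$--limits in $\cat\perfect$, and preserves indexed coproducts again by \cref{prop:limitOfBeckChevalleyAdjunctions}.

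The main anticipated obstacle is the careful bookkeeping needed to check the hypotheses of \cref{prop:limitOfBeckChevalleyAdjunctions} in our setting: for each morphism $f \colon W \to V$ in $\baseCat$, the pointwise diagram $i \mapsto f^* \colon (\partial_i)_V \to (\partial_i)_W$ of restriction functors must be seen to come equipped with the Beck--Chevalley data furnished by the indexed--coproduct preservation built into morphisms of $\underline{\cat}\Tperfect$. Once this is verified, stringing the above ingredients together yields the statement.
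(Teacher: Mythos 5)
Your overall route is the paper's: decompose parametrised limits into finite indexed products (disposed of by \cref{SemiadditivityOfPresentableStables}) and fibrewise limits, compute the latter pointwise, invoke the unparametrised closure of $\cat\perfect\subset\cat$, and then use \cref{lem:faithful(co)limitCreation} together with the Beck--Chevalley assembly of \cref{prop:limitOfBeckChevalleyAdjunctions} to upgrade the pointwise limit to an object of $\underline{\cat}\Tperfect$ with $\baseCat$--exact projections and induced maps. However, there is a gap in your verification that $\underline{\lim}_I\partial$ actually lies in $\underline{\cat}\Tperfect$. Membership there is not just the existence of compatible left adjoints $f_!$ to the restriction functors: it requires $\baseCat$--semiadditivity, i.e.\ that the canonical norm map $f_!\Rightarrow f_*$ (constructed from the (co)units of the indexed (co)product adjunctions) is an equivalence on the limit. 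Your argument stops at assembling the adjunctions $f_!\dashv f^*$ and never addresses the norm map, which is precisely the step the paper spends most of its effort on: there one observes that taking the limit along $\baseCat$--exact functors preserves the indexed (co)product adjunctions (via the Beck--Chevalley condition of \cref{prop:limitOfBeckChevalleyAdjunctions}), zero objects and ordinary finite biproducts, so that applying $\lim_{a\in I}$ to the fibrewise norm equivalences produces the norm equivalence for the limit. Equivalently, one can argue that the assembled adjunction (co)units restrict along the (jointly conservative) projections to the fibrewise (co)units, so the limit's norm map is detected as an equivalence fibrewise; either way, this check must be made explicit, since having $f_!$ and $f_*$ separately does not imply they agree.

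A smaller point: your verification of condition (2) of \cref{lem:faithful(co)limitCreation} by citing \cref{prop:limitOfBeckChevalleyAdjunctions} "again" is loose. What is needed is that the induced functor $\underline{C}\rightarrow\underline{\lim}_I\partial$ commutes with the indexed (co)products, and the clean way to see this (as in the paper) is that the relevant commuting squares hold levelwise for each $\partial_i$ by hypothesis and pass to the limit because the projections create the indexed (co)products pointwise. Both issues are fillable with the tools you already set up, but as written the proof is incomplete at the semiadditivity step.
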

\begin{proof}
    Since parametrised limits can be decomposed into fibrewise limits and arbitrary indexed products by \cite[$\S12$]{shahThesis}, by \cref{SemiadditivityOfPresentableStables} we are left to show that the inclusion is closed under fibrewise limits. Since fibrewise limits are computed fibrewise by the dual of \cite[Cor. 5.9]{shahThesis},  we need to show that the  inclusion $\cat\Tperfect_{\baseCat}\subset \cat_{\baseCat}$ is closed under arbitrary limits.

    To do so, we first show that limits of $\baseCat$--perfect stable categories along $\baseCat$--exact functors are again $\baseCat$--perfect stable. Let $\underline{\sC}\colon I\rightarrow \cat\Tperfect_{\baseCat}$ be a diagram. Because limits in $\cat_{\baseCat}=\func(\baseCat\op,\cat)$ are computed pointwise, $\lim_{a\in I}\underline{\sC}_a$ is fibrewise perfect stable. We next show that $\lim_{a\in I}\underline{\sC}_a$ is $\baseCat$--semiadditive. Without loss of generality, assume $\baseCat$ has a final object $T$ and let $f\colon V\rightarrow T$ be a map in $\baseCat$. Since each $\underline{\sC}_a$ is $\baseCat$--semiadditive, the semiadditivity norm transformation $f_!\Rightarrow f_*$ of functors $f_*f^*\underline{\sC}_a \rightarrow \underline{\sC}_a $
    is an equivalence for all $a\in I$. But since fibrewise limits along $\baseCat$--exact functors preserve the indexed (co)product adjunctions ($\baseCat$--exact functors satisfy the Beck--Chevalley condition in \cref{prop:limitOfBeckChevalleyAdjunctions}), zero objects, and ordinary finite biproducts (the zero object and ordinary finite biproducts in $\lim_{a\in I}\underline{\sC}_a$ are given by the ones in each $\underline{\sC}_a$), applying $\lim_{a\in I}$  yields the semiadditivity norm map equivalence $f_!\Rightarrow f_*$ of functors $f_*f^*\lim_{a\in I}\underline{\sC}_a \rightarrow \lim_{a\in I}\underline{\sC}_a$
    whence the $\baseCat$--semiadditivity of $\lim_{a\in I}\underline{\sC}_a$ as claimed. 

    To see that this has the correct universal property in $\cat\Tperfect_{\baseCat}$, we  check the two conditions in \cref{lem:faithful(co)limitCreation}. For condition (2), we need to argue that if we have a transformation of $I$--shaped diagrams $\varphi\colon \constant_I\underline{\D}\rightarrow \underline{\sC}$ in $\cat\Tperfect_{\baseCat}$, then the map $\varphi\colon \underline{\D}\rightarrow\lim_{a\in I}\underline{\sC}_a$ induced by the universal property in $\cat_{\baseCat}$ is already $\baseCat$--exact. Since this statement holds in the unparametrised setting, we are left with showing that the induced functor preserves finite $\baseCat$--biproducts. To see this, letting $f\colon V\rightarrow T$ as in the preceding paragraph, we need to show that the left square in 
    \begin{center}
        \begin{tikzcd}
            f_*f^*\underline{\D}\rar["f_*"]\dar["f_*f^*\varphi"'] & \underline{\D}\dar["\varphi"] && f_*f^*\underline{\D}\rar["f_*"]\dar["f_*f^*\varphi"'] & \underline{\D}\dar["\varphi"] \\
            f_*f^*\lim_{a\in I}\underline{\sC}_a \rar["f_*"] & \lim_{a\in I}\underline{\sC}_a && f_*f^*\underline{\sC}_a \rar["f_*"] & \underline{\sC}_a
        \end{tikzcd}
    \end{center}
    commutes. This is  because for each $a\in I$, the right square commutes by assumption that everything in sight is $\baseCat$--exact, from which we can conclude that the left square also commutes by applying $\lim_{a\in I}$ to the bottom horizontal map in the right square. Finally, for condition (1), by the previous paragraph, we are left to argue that the adjunction counit is $\baseCat$--exact. Since arbitrary limits can be decomposed in terms of arbitrary products and pullbacks and the case of products is simple to see, we will only argue in the case of pullbacks. For this, we need to argue that the projection maps $\underline{\sC}_1\times_{\underline{\sC}_3}\underline{\sC}_2\rightarrow \underline{\sC}_i$ preserve finite indexed products for $i\in\{1,2,3\}$. But this is clear since $\baseCat$--biproducts in $\underline{\sC}_1\times_{\underline{\sC}_3}\underline{\sC}_2$ are created pointwise.
\end{proof}

Next, we mimic the techniques of \cite[{$\S1.1$}]{nineAuthorsIV} to prove:

\begin{prop}\label{compactGenerationOfCatPerf}
The set $\big\{\myuline{\spectra}\tomega, \:\underline{\func}(\Delta^1, \myuline{\spectra}\tomega)\big\}$ consists of $\omega$--compact objects and is jointly  conservative on $\underline{\cat}_{\baseCat}\Tperfect$. Thus, $\underline{\cat}_{\baseCat}\Tperfect$ is $\kappa$-compactly generated for all regular cardinals $\kappa$.
\end{prop}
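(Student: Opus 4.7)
The plan is to invoke the parametrised Makkai--Pitts criterion \cref{parametrisedMakkaiPitts}: once the two listed objects are shown to be parametrised $\omega$--compact and jointly conservative on $\underline{\cat}\Tperfect_{\baseCat}$, they form a set of $\omega$--compact generators, and hence of $\kappa$--compact generators for every regular cardinal $\kappa$ since $\omega$--compactness implies $\kappa$--compactness for $\kappa \geq \omega$. Structurally this will mirror the unparametrised argument of \cite[$\S1.1$]{nineAuthorsIV}, and the task is to execute it in the parametrised framework.

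For compactness, the crucial point is that parametrised $\omega$--compactness is a fibrewise notion (as recalled before \cref{simpsonTheorem}) and that fibrewise filtered colimits in $\underline{\cat}\Tperfect_{\baseCat}$ are created levelwise in $\underline{\cat}_{\baseCat}$, following the argument from the proof of \cref{SemiadditivityOfPresentableStables}. For $\myuline{\spectra}\tomega$, \cref{colimitGenerationSpectra} immediately identifies $\myuline{\map}_{\underline{\cat}\Tperfect_{\baseCat}}(\myuline{\spectra}\tomega,-) \simeq (-)^{\simeq}$ upon taking cores, and cores commute with filtered colimits of $\infty$--categories by a standard fact. For $\underline{\func}(\Delta^1,\myuline{\spectra}\tomega)$, the plan is to establish the analogous identification $\myuline{\map}(\underline{\func}(\Delta^1,\myuline{\spectra}\tomega),-) \simeq \underline{\func}(\Delta^1,-)^{\simeq}$; fibrewise this reduces to the universal property $\func^{\mathrm{ex}}(\func(\Delta^1,\spectra^{\omega}),\E) \simeq \func(\Delta^1,\E)$, which follows by Ind--extension from the fact that $\func(\Delta^1,\spectra) \simeq \spectra \otimes \Delta^1$ is the free stably presentable category on $\Delta^1$. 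Filtered--colimit preservation is then automatic since $\Delta^1$ is a finite (hence compact) $\infty$--category.

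For joint conservativity, a morphism $F$ in each fibre that is inverted by both mapping--space functors  induces equivalences on cores (the space of objects) and on arrow--cores (the space of morphisms); via the pullback description $\map(X,Y) = \fib\bigl(\func(\Delta^1,-)^{\simeq} \to (-)^{\simeq} \times (-)^{\simeq}\bigr)$ over pairs of objects, this forces $F$ to be fibrewise fully faithful and essentially surjective, and hence an equivalence in the perfect--stable fibre. Assembling these steps and applying \cref{parametrisedMakkaiPitts} then yields the desired compact generation, with the final passage from $\omega$ to arbitrary $\kappa$ being immediate. The principal technical difficulty I expect will lie in rigorously setting up the parametrised universal property identifying $\myuline{\map}(\underline{\func}(\Delta^1,\myuline{\spectra}\tomega),-)$ with $\underline{\func}(\Delta^1,-)^{\simeq}$ (keeping careful track of Ind--completions and compact--object preservation); the remaining steps are largely bookkeeping around the fibrewise structure of $\underline{\cat}\Tperfect_{\baseCat}$ established in the preceding subsection.
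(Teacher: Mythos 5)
Your proposal is correct and follows essentially the same route as the paper: both arguments show that $\myuline{\spectra}\tomega$ and $\underline{\func}(\Delta^1,\myuline{\spectra}\tomega)$ corepresent the core and arrow--core functors, note these preserve $\omega$--filtered colimits, deduce joint conservativity from the core/arrow--core detection of essential surjectivity and fully faithfulness, and conclude via \cref{parametrisedMakkaiPitts}. The only cosmetic difference is that the paper nails down the corepresentability of $\underline{\func}(\Delta^1,\myuline{\spectra}\tomega)$ by a chain of parametrised adjunction equivalences (\cref{TPresentableIdempotentCorrespondence}, \cref{HTT5.2.6.2}, the cotensor property \cref{parametrisedCotensors}, and \cref{colimitGenerationSpectra}) rather than by appealing to $\func(\Delta^1,\spectra)$ being free on $\Delta^1$ and Ind--extending, but this is the same underlying computation.
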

\begin{proof}
Since compactness and joint conservativity are checked fibrewise, we show that $\big\{\myuline{\spectra}\tomega, \:\underline{\func}(\Delta^1, \myuline{\spectra}\tomega)\big\}$ are $\omega$--compact and jointly conservative on $\cat_{\baseCat_{/V}}\Tperfect$ for an arbitrary $V \in {\baseCat}$. We claim that $\myuline{\spectra}\tomega$ and $\underline{\func}(\Delta^1, \myuline{\spectra}\tomega)$ corepresent the functors  $\underline{\sC} \mapsto \underline{\sC}\Tcore$ and $\underline{\sC} \mapsto \underline{\func}(\Delta^1,\underline{\sC})\Tcore$, respectively. We only show this for the second one since the first is easier:
\begin{equation*}
    \begin{split}
        \myuline{\map}_{\cat_{\baseCat_{/V}}\Tperfect}\big(\underline{\func}(\Delta^1, \myuline{\spectra}\tomega), \underline{\sC}\big) &\simeq \myuline{\map}_{\presentable_{L,\mathrm{st}, \omega}}\big(\underline{\func}(\Delta^1, \myuline{\spectra}), \underline{\ind}_{\omega}\underline{\sC}\big)\\
        &\simeq \underline{\func}^{L, \underline{\omega}}(\underline{\func}(\Delta^1, \myuline{\spectra}), \underline{\ind}_{\omega}\underline{\sC}\big)^{\simeq}\\
        &\simeq \underline{\func}^{R, \omega\operatorname{-filt}}\big(\underline{\ind}_{\omega}\underline{\sC}, \underline{\func}(\Delta^1, \myuline{\spectra}) \big)^{\simeq}\\
        &\simeq \underline{\func}\Big(\Delta^1,\underline{\func}^{R, \omega\operatorname{-filt}}\big(\underline{\ind}_{\omega}\underline{\sC},  \myuline{\spectra} \big)\Big)^{\simeq}\\
        &\simeq \underline{\func}\Big(\Delta^1,\underline{\func}^{L, \underline{\omega}}\big(\myuline{\spectra}, \underline{\ind}_{\omega}\underline{\sC} \big)\Big)^{\simeq}\\
        &\simeq \underline{\func}(\Delta^1,\underline{\sC})^{\simeq}
    \end{split}
\end{equation*}
where the first equivalence is by \cref{TPresentableIdempotentCorrespondence}; the third and fifth are by \cref{HTT5.2.6.2} and \cref{parametrisedAdjointFunctorTheorem}; the fourth by \cref{parametrisedCotensors}; and the last is by \cref{colimitGenerationSpectra}. Since the two corepresented functors preserve $\omega$--filtered colimits, $\myuline{\spectra}\tomega$ and $\underline{\func}(\Delta^1, \myuline{\spectra}\tomega)$ are $\omega$--compact objects. For joint conservativity, suppose $\varphi : \underline{\sC} \rightarrow \underline{\D}$ is a functor such that $\varphi : \underline{\sC}\Tcore \xrightarrow{\simeq} \underline{\D}\Tcore$ and  $\varphi : \underline{\func}(\Delta^1,\underline{\sC})^{\simeq} \xrightarrow{\simeq} \underline{\func}(\Delta^1, \underline{\D})\Tcore$ are equivalences. In particular, the first equivalence implies that $\varphi$ is $\baseCat_{/V}$--essentially surjective. Now, the fibre over $[W \rightarrow V]$ of $\underline{\func}(\Delta^1,\underline{\sC})$ is $\func(\Delta^1, \sC_W)$ and so the second equivalence together with the the formula for unparametrised mapping spaces as pullbacks $\func(\Delta^1, \sC_W)\times_{\sC _W^{\times 2}}\{\ast\}$ gives us that $\varphi : \underline{\sC}\rightarrow \underline{\D}$ is $\baseCat_{/V}$--fully faithful. Therefore, $\varphi $ is an equivalence, and so by \cref{parametrisedMakkaiPitts}, we are done.
\end{proof}

We may now summarise the preceding results in the following package.

\begin{cor}\label{cor:catPerfPresentability}
    The $\baseCat$--category $\underline{\cat}\Tperfect$ is $\baseCat$--semiadditive--presentable. The parametrised limits in it are computed in $\underline{\cat}$ and the parametrised colimits may be computed by the formula $\underline{\colim}_{\underline{J}}\partial \simeq (\underline{\lim}_{\underline{J}\vop}\widetilde{\underline{\ind}\partial})\tomega$, where $\widetilde{\underline{\ind}\partial}\colon \underline{J}\vop\rightarrow \underline{\presentable}_{R,\kappa\text{-filt}}\subset\underline{\widehat{\cat}}_{\baseCat}$ is the diagram obtained by passing to right adjoints.
\end{cor}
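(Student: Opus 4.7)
The plan is to assemble this corollary from the accumulated results of the subsection. The $\baseCat$--semiadditivity is precisely \cref{SemiadditivityOfPresentableStables}, and the claim that parametrised limits in $\underline{\cat}\Tperfect$ are computed in $\underline{\cat}$ is the content of \cref{prop:limitClosureOfCatPerfInCategories}. What remains is thus to establish the colimit formula and $\baseCat$--presentability.

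For the colimit formula, I would transport the question across the equivalence $\underline{\ind}_\omega \colon \underline{\cat}\Tperfect \rightleftarrows \underline{\presentable}_{\baseCat,L,\mathrm{st},\omega} : (-)\tomega$ of \cref{TPresentableIdempotentCorrespondence}. Since $\baseCat$--colimits in $\underline{\presentable}_{\baseCat,L,\mathrm{st},\omega}$ correspond, via the anti-equivalence $\underline{\presentable}_L\simeq \underline{\presentable}_R\vop$ of \cref{prop:antiequivalencePrLPrR}, to opposite--shaped $\baseCat$--limits in $\underline{\presentable}_{\baseCat,R,\mathrm{st},\omega\text{-filt}}$, and since these in turn are computed in $\underline{\widehat{\cat}}$ by \cref{lem:limitClosureOfPr_R}, the desired formula falls out. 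Explicitly, for $\partial\colon\underline{J}\rightarrow\underline{\cat}\Tperfect$, one first forms $\underline{\ind}_\omega\partial\colon \underline{J}\rightarrow\underline{\presentable}_{\baseCat,L,\mathrm{st},\omega}$, passes to right adjoints via the anti-equivalence to obtain $\widetilde{\underline{\ind}\partial}\colon \underline{J}\vop \rightarrow \underline{\presentable}_{\baseCat,R,\mathrm{st},\omega\text{-filt}}\subset\underline{\widehat{\cat}}$, computes its limit in $\underline{\widehat{\cat}}$, and applies $(-)\tomega$ to return to $\underline{\cat}\Tperfect$. In particular, this simultaneously secures $\baseCat$--cocompleteness.

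For $\baseCat$--presentability, I would invoke the Lurie--Simpson--style characterisation of \cref{simpsonTheorem}. The left Beck--Chevalley condition is inherited from $\underline{\presentable}_{\baseCat,L,\mathrm{st},\omega}$ via the equivalence of \cref{TPresentableIdempotentCorrespondence}, where it holds essentially by construction. The fibrewise presentability required by that theorem is then precisely the statement of \cref{compactGenerationOfCatPerf}, which even produces an explicit set of $\omega$--compact generators through \cref{parametrisedMakkaiPitts}. Combined with the $\baseCat$--cocompleteness obtained above, this delivers $\baseCat$--presentability.

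The whole argument is primarily a bookkeeping exercise in weaving together the prior results. The only mildly delicate point will be to ensure that the passage to right adjoints in the colimit formula genuinely produces a $\baseCat$--functor valued in $\underline{\presentable}_R$ (and not merely a fibrewise right adjoint which one would then have to glue by hand); but this is precisely the content of the anti-equivalence $\underline{\presentable}_L \simeq \underline{\presentable}_R\vop$ from \cref{prop:antiequivalencePrLPrR}, so even this is handled for free.
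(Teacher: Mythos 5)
Your proposal is correct and follows essentially the same route as the paper: semiadditivity from \cref{SemiadditivityOfPresentableStables}, limits from \cref{prop:limitClosureOfCatPerfInCategories}, the colimit formula from \cref{lem:limitClosureOfPr_R} combined with the equivalences $\underline{\cat}\Tperfect\simeq\underline{\presentable}_{L,\mathrm{st},\omega}\simeq\underline{\presentable}_{R,\mathrm{st},\omega\text{-filt}}\vop$ of \cref{TPresentableIdempotentCorrespondence,prop:antiequivalencePrLPrR}, and presentability from cocompleteness plus \cref{compactGenerationOfCatPerf}. The only (immaterial) divergence is which clause of the omnibus presentability theorem of \cite{kaifPresentable} you invoke — the Beck--Chevalley/fibrewise characterisation of \cref{simpsonTheorem} rather than the compact-generation criterion the paper cites directly — both of which close the argument equally well.
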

\begin{proof}
    $\baseCat$--semiadditivity is by \cref{SemiadditivityOfPresentableStables}. To see $\baseCat$--presentability, by \cref{lem:limitClosureOfPr_R}, we know that $\underline{\cat}\Tperfect\simeq \underline{\presentable}_{L,\mathrm{st},\omega}$ is $\baseCat$--cocomplete. This, together with \cref{compactGenerationOfCatPerf} and \cite[Thm. 6.1.2 (6)]{kaifPresentable}, then implies that $\underline{\cat}\Tperfect$ is $\baseCat$--presentable. That parametrised limits in $\underline{\cat}\Tperfect$ are created in $\underline{\cat}$ was shown in \cref{prop:limitClosureOfCatPerfInCategories}. Finally, the formula for parametrised colimits is an immediate consequence of \cref{lem:limitClosureOfPr_R} and the equivalences $\ind \colon \underline{\cat}\Tperfect \simeq \underline{\presentable}_{L,\mathrm{st},\omega} : (-)\tomega$ and $\underline{\presentable}_{L,\mathrm{st},\omega} \simeq \underline{\presentable}_{R,\mathrm{st},\omega\text{-filt}}\vop$. 
\end{proof}

Our next goal is to articulate the relationship between $\cat\Tperfect_{\baseCat}$ and $\baseCat$--Mackey functors valued in $\cat\perfect$. The basic ingredient will be the following evaluation functors.

\begin{cons}[The evaluation functor]\label{cons:evaluationFunctor}
    Let $V\in\baseCat$. Writing $s\colon \ast\hookrightarrow \baseCat_{/V}$ for the inclusion of the final object, consider the solid part of the diagram
    \begin{equation}\label{eqn:evaluationLiftingSquare}
        \begin{tikzcd}
            \cat\perfect \ar[r, hook] & \cat\\
            \cat\Tperfect_{\baseCat_{/V}} \ar[u,"s^*",dashed]\rar[hook]& \cat_{\baseCat_{/V}}\uar["s^*"]
        \end{tikzcd}
    \end{equation}
    where the hooked arrows are faithful functors. The functor $s^*$ here implements the evaluation at $V\in \baseCat$. Since objects in $\cat\Tperfect_{\baseCat}$ are in particular fibrewise idempotent--complete and stable, and morphisms are in particular fibrewise exact, we obtain the dashed lift as shown. Observe that the top horizontal inclusion and the solid $s^*$ functors preserve all limits. Observe also that, on objects, $s_*\colon\cat\rightarrow \cat_{\baseCat}=\func(\baseCat\op,\cat)$ is concretely given by the functor which sends $\sC\in\cat\perfect\subset\cat$ to the object  $\underline{\sC}\coloneqq s_*\sC\in \func(\baseCat\op,\cat)$ given by $\sC$ at the final object in $\baseCat$ and $\ast$ everywhere else.

    Be warned that while the functor $s_* \colon \cat\rightarrow \cat_{\baseCat}$ also descends to a functor $s_* \colon \cat\perfect\rightarrow \cat\Tperfect_{\baseCat_{/V}}$, it is \textit{not} true that the adjunction $s^*\colon \cat_{\baseCat}\rightleftharpoons \cat : s_*$ descends to an adjunction $s^* \colon \cat\Tperfect_{\baseCat_{/V}}\rightleftharpoons \cat\perfect : s_*$ because the adjunction unit is not a morphism in $\cat\Tperfect_{\baseCat_{/V}}$. However, the functor $s_* \colon \cat\perfect\rightarrow\cat\Tperfect_{\baseCat}$ \textit{does} admit an interesting left adjoint, and we refer the reader to \cite[Thm. A]{PD1} for more details.
\end{cons}

\begin{lem}\label{lem:catPerfEvaluationFunctor}
    For every $V\in\baseCat$, the composite functor $\cat\Tperfect_{\baseCat}\xrightarrow{\res}\cat\Tperfect_{\baseCat_{/V}}\xrightarrow{s^*} \cat\perfect$, where $s^*$ is as constructed above, preserves  arbitrary limits and colimits.
\end{lem}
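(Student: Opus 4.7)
\textbf{Plan for \cref{lem:catPerfEvaluationFunctor}.} My first step is to observe that the composite $s^* \circ \res$ is nothing but the fibrewise evaluation functor $\eval_V \colon \underline{\cat}\Tperfect_{\baseCat} \to \cat\perfect$ sending $\underline{\sC}$ to $\sC_V$: indeed, $\res = v^*$ for the canonical $v\colon \baseCat_{/V} \to \baseCat$ sends $\underline{\sC}$ to the $\baseCat_{/V}$--diagram $[W \to V] \mapsto \sC_W$, and $s^*$ picks out the value at the terminal object $[V = V] \in \baseCat_{/V}$. Thus it suffices to show that $\eval_V$ preserves arbitrary small limits and colimits. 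I would then handle the two cases separately.

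For limits, I would argue that unparametrised limits (being parametrised limits along the constant diagram $p^* J$, where $p\colon \baseCat\op \to \ast$) are computed in $\underline{\cat}_{\baseCat}$ by \cref{prop:limitClosureOfCatPerfInCategories}. Since limits in $\cat_{\baseCat} = \func(\baseCat\op, \cat)$ are computed pointwise, and since fibrewise limits of perfect--stable categories along exact functors again lie in $\cat\perfect$ (as invoked in the proof of \cref{prop:limitClosureOfCatPerfInCategories}), we obtain the equivalence $(\lim_J \underline{\sC})_V \simeq \lim_J (\sC_V)$, witnessing that $\eval_V$ preserves limits.

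For colimits, I would invoke the explicit formula from \cref{cor:catPerfPresentability}, namely
\[\underline{\colim}_{J} \partial \simeq \big(\underline{\lim}_{J\op} \widetilde{\underline{\ind}\, \partial}\,\big)^{\underline{\omega}},\]
and check that each of the three operations on the right commutes with evaluation at $V$. First, $\underline{\ind}$ is by construction the fibrewise Ind--completion, so $(\underline{\ind}\,\partial)_V \simeq \ind(\partial_V)$. Next, the passage $\widetilde{(-)}$ to right adjoints, effected by the anti--equivalence $\underline{\presentable}_{\baseCat, L, \omega} \simeq \underline{\presentable}_{\baseCat, R, \omega\text{-filt}}\vop$ of \cref{prop:antiequivalencePrLPrR}, is compatible with evaluation at $V$ thanks to the fibrewise criterion \cref{criteriaForTAdjunctions}: at each morphism $\alpha$ in the diagram, the right adjoint is formed pointwise, so its value at $V$ is precisely the right adjoint of $\alpha_V$. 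Then the $J\op$--shaped limit in $\underline{\presentable}_{\baseCat, R, \omega\text{-filt}}$ is, by \cref{lem:limitClosureOfPr_R}, computed inside $\underline{\widehat{\cat}}_{\baseCat} = \func(\baseCat\op, \widehat{\cat})$ and therefore pointwise, so $(\underline{\lim}_{J\op} \widetilde{\underline{\ind}\,\partial})_V \simeq \lim_{J\op} \widetilde{\ind(\partial_V)}$. Finally, $(-)^{\underline{\omega}}$ is by definition the fibrewise selection of compact objects. Combining these and applying the unparametrised instance of the same formula to the category $\cat\perfect$ yields $(\underline{\colim}_J \partial)_V \simeq \colim_J (\partial_V)$.

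The only nontrivial step in the above is the compatibility of the right--adjoint operation with evaluation, which is the main obstacle I would expect; once framed via \cref{criteriaForTAdjunctions}, however, it is routine, and the rest of the argument is a straightforward pointwise unwinding of the colimit formula of \cref{cor:catPerfPresentability}.
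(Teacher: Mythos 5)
Your proposal is correct, and the limit half is essentially the paper's argument (reduce to the faithful, limit-closed inclusions via \cref{prop:limitClosureOfCatPerfInCategories} and pointwise limits in $\func(\baseCat\op,\cat)$). For colimits, however, you take a genuinely different route. The paper does not unwind any formula: it shows that the evaluation $s^*$ admits a right adjoint by checking that the ``skyscraper'' right Kan extension $s_*$ of \cref{cons:evaluationFunctor} (value $\sC$ at the final object, $\ast$ elsewhere) actually lands in $\cat\Tperfect_{\baseCat}$ --- the point being that, by atomicity via \cref{obs:topFixedPointsOfIndexedCoproducts}, all proper restrictions of $s_*\sC$ are zero, so the indexed (co)product conditions are vacuous and $s_*\sC$ is fibrewise perfect-stable and $\baseCat$--semiadditive. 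Preservation of all colimits is then formal from $s^*\dashv s_*$, and one gets the restricted adjunction itself as a by-product. Your argument instead evaluates the colimit formula of \cref{cor:catPerfPresentability} fibrewise, using that $\underline{\ind}$, passage to right adjoints (via \cref{criteriaForTAdjunctions}), limits in $\underline{\presentable}_{R,\omega\text{-filt}}$ (via \cref{lem:limitClosureOfPr_R}), and $(-)^{\underline{\omega}}$ are all pointwise operations, and then compares with the unparametrised instance of the same formula in $\cat\perfect\simeq\presentable_{L,\mathrm{st},\omega}$. This is sound and there is no circularity (all the inputs you cite precede the lemma), but it is more computational, and you should make explicit the routine identification of ordinary colimits in the global-sections category $\cat\Tperfect_{\baseCat}$ with the fibrewise parametrised colimits of the associated constant-shape diagrams (restrictions preserve these, cf.\ the use of \cite[Cor.~5.9]{shahThesis} elsewhere in the paper); the paper's adjoint-functor argument sidesteps this bookkeeping and also produces the adjunction $s^*\colon\cat\Tperfect_{\baseCat}\rightleftharpoons\cat\perfect:s_*$, which is useful in its own right.
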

\begin{proof}
    Since the restriction functor is given by the global section of the $\baseCat$--functor $\underline{\cat}\Tperfect_{\baseCat}\xrightarrow{p^*}\underline{\func}(\underline{V},\underline{\cat}\Tperfect_{\baseCat})$ where $p\colon \underline{V}\rightarrow\underline{\ast}$ is the unique $\baseCat$--functor to the final $\baseCat$--category, it clearly preserves all (co)limits. Hence, we are left to studying $s^*$ for a fixed $V\in\baseCat$. Thus without loss of generality, instead of writing $\baseCat_{/V}$ everywhere, we just assume that $\baseCat$ has a final object.
    
    For limits, by \cref{prop:limitClosureOfCatPerfInCategories} the faithful inclusion $\cat\Tperfect_{\baseCat}\subset \cat_{\baseCat}$ is closed under limits.  Thus, all the solid arrows in \cref{eqn:evaluationLiftingSquare} preserves limits, and so the dashed  $s^*$ also preserves limits.  For colimits,  since $s^*\colon \cat\Tperfect_{\baseCat}\rightarrow \cat\perfect$ preserves arbitrary products and since both sides are semiadditive, $s^*$ also preserves arbitrary coproducts. For pushouts, we use the identifications from \cref{nota:CatTPerfectStable} and just argue that the map $s^*\colon \cat\Tperfect_{\baseCat}\simeq \presentable_{\baseCat,L,\mathrm{st},\omega}\rightarrow \presentable_{L,\mathrm{st},\omega}\simeq \cat\perfect$ preserves pushouts. By  \cref{prop:antiequivalencePrLPrR}, this is equivalent to  $s^*\colon \presentable_{\baseCat,R,\mathrm{st},\omega\mathrm{-filt}}\rightarrow \presentable_{R,\mathrm{st},\omega-\mathrm{filt}}$ preserving pullbacks. And this is true  by \cref{lem:limitClosureOfPr_R}.
\end{proof}

Given these, we are now ready to prove the embedding result:

\begin{thm} \label{mainMackeyInclusion}\label{MackeyInclusionCreatesFibreCofibres}
We have a conservative $\baseCat$--faithful inclusion $\underline{\cat}\Tperfect_{\baseCat} \subset\underline{\cmonoid}_{\baseCat}(\cat\perfect)$. Moreover, this inclusion preserves and reflects parametrised (co)limits.
\end{thm}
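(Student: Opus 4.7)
The plan is to exploit the $\baseCat$-semiadditivity of $\underline{\cat}\Tperfect$ from \cref{SemiadditivityOfPresentableStables} together with the functoriality of $\underline{\cmonoid}_{\baseCat}(-)$ on $\baseCat$-categories-with-products recalled in \cref{cons:forgetfulFunctor}. The faithful inclusion $\iota_0 \colon \underline{\cat}\Tperfect \hookrightarrow \cofree(\cat\perfect)$ noted in \cref{nota:CatTPerfectStable} preserves finite $\baseCat$-products (since by \cref{SemiadditivityOfPresentableStables} these are created in $\underline{\cat}$), so applying $\underline{\cmonoid}_{\baseCat}(-)$ and using the equivalence of \cref{semiadditivisation} produces a $\baseCat$-functor
\[
\iota \colon \underline{\cat}\Tperfect \xleftarrow{\simeq} \underline{\cmonoid}_{\baseCat}(\underline{\cat}\Tperfect) \xrightarrow{\underline{\cmonoid}_{\baseCat}(\iota_0)} \underline{\cmonoid}_{\baseCat}(\cofree(\cat\perfect)) = \underline{\cmonoid}_{\baseCat}(\cat\perfect).
\]
By naturality of the forget transformation $\forget \colon \underline{\cmonoid}_{\baseCat}(-) \Rightarrow \id$ from \cref{cons:forgetfulFunctor}, the composite of $\iota$ with the forgetful functor $\underline{\cmonoid}_{\baseCat}(\cat\perfect) \to \cofree(\cat\perfect)$ recovers $\iota_0$; hence both $\baseCat$-faithfulness and conservativity of $\iota$ are immediate consequences of the corresponding properties of $\iota_0$ via 2-out-of-3.

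For preservation of parametrised (co)limits, my plan is to use the decomposition into indexed (co)products and fibrewise (co)limits recalled in \cref{recollect:indexedCoproductsAndFibrewiseColimits}. Indexed products in $\underline{\cat}\Tperfect$ are created in $\underline{\cat}$ by \cref{SemiadditivityOfPresentableStables}, and those in $\underline{\cmonoid}_{\baseCat}(\cat\perfect)$ are computed pointwise in the underlying category by the standard fact that limits in semiadditive functor categories are computed in the target; since both sources are $\baseCat$-semiadditive, the analogous statement then transfers to indexed coproducts. Fibrewise limits are handled in the same spirit using the closure statement of \cref{prop:limitClosureOfCatPerfInCategories}.

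The main obstacle is fibrewise colimit preservation, since colimits in $\underline{\cat}\Tperfect$ are computed by the Ind--pass--to--compacts formula of \cref{cor:catPerfPresentability} whereas colimits in $\underline{\cmonoid}_{\baseCat}(\cat\perfect)$ must instead be accessed via the abstract $\baseCat$-presentability of the Mackey target. I would attack this by reducing, via the conservative evaluation-type functors of \cref{lem:catPerfEvaluationFunctor}, to showing at each $V \in \baseCat$ that the unparametrised inclusion $\cat\Tperfect_{\baseCat_{/V}} \hookrightarrow \mackey_{\baseCat_{/V}}(\cat\perfect)$ preserves the requisite colimits, and then invoking compact generation of both sides (via \cref{compactGenerationOfCatPerf}) to further reduce colimit preservation to a check on compact generators such as $\myuline{\spectra}\tomega$ and $\underline{\func}(\Delta^1,\myuline{\spectra}\tomega)$. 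Finally, reflection of parametrised (co)limits follows from preservation together with conservativity by \cref{lem:conservativePreservationImpliesReflection}.
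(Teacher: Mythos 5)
Your construction of the inclusion, and your deduction of $\baseCat$--faithfulness and conservativity by cancelling against the forgetful functor (using $\forget\circ\iota\simeq\iota_0$ together with faithfulness and conservativity of $\iota_0$), is correct and close in spirit to the paper, which instead obtains faithfulness by applying $\underline{\cmonoid}_{\baseCat}(-)$ to a faithful product--preserving functor and conservativity from the joint conservativity of the evaluations $\prod_{V\in\baseCat}\eval_V$. Your treatment of indexed (co)products and of fibrewise limits (via \cref{SemiadditivityOfPresentableStables}, \cref{prop:limitClosureOfCatPerfInCategories} and pointwise limits in the Mackey category) is also fine, as is the final appeal to \cref{lem:conservativePreservationImpliesReflection} for reflection.

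The gap is precisely in the fibrewise colimit step, which you correctly single out as the main obstacle but then do not close. After reducing to a fixed fibre, you propose to ``invoke compact generation of both sides to further reduce colimit preservation to a check on compact generators such as $\myuline{\spectra}\tomega$ and $\underline{\func}(\Delta^1,\myuline{\spectra}\tomega)$.'' There is no such reduction: preservation of colimits by a functor is not a condition that can be tested on a set of compact generators of the source, and testing whether the comparison map $\colim\iota(\underline{\sC}_a)\to\iota(\colim\underline{\sC}_a)$ is an equivalence by mapping out of compact generators of the target only interacts well with filtered colimits, so at best this could address filtered colimits, not arbitrary ones; nor do you have an a priori colimit--preserving functor against which to compare $\iota$ on generators. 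What actually makes the statement true, and what the paper uses, are two inputs your sketch never brings in: (i) colimits in $\mackey_{\baseCat}(\cat\perfect)\simeq\cmonoid_{\baseCat}(\cat\perfect)$ are computed pointwise because $\cat\perfect$ is semiadditive (\cite[Cor. 6.7.1]{barwick1}); and (ii) the evaluation functors $\cat\Tperfect_{\baseCat}\to\cat\perfect$ preserve arbitrary colimits, which is the content of \cref{lem:catPerfEvaluationFunctor} and is proved there by exhibiting the right adjoint $s_*$ (zero extension) to evaluation --- you cite that lemma only as supplying ``conservative evaluation-type functors'' for the reduction to fibres, which is neither what it provides nor where it is needed. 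With (i) and (ii) one checks the image under $\iota$ of a colimit cone by applying the jointly conservative, colimit--preserving evaluations of the Mackey category; without them your plan has no mechanism for comparing colimits formed in $\underline{\cat}\Tperfect_{\baseCat}$ (via the $\underline{\ind}$/compact--objects formula of \cref{cor:catPerfPresentability}) with colimits in $\mackey_{\baseCat}(\cat\perfect)$, so the step as written would not go through.
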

\begin{proof}
We first construct the said $\baseCat$--faithful functor. Note first that we have non-full  inclusions $\underline{\cat}\Tperfect_{\baseCat}\subset \underline{{\cat}}_{\baseCat} = \cofree_{\baseCat}(\cat)$ and $\cofree_{\baseCat}(\cat\perfect)\subset \cofree_{\baseCat}(\cat)$ which  preserve finite $\baseCat$--products: the former by \cref{SemiadditivityOfPresentableStables} and the latter since $\cat\perfect \subset {\cat}$ preserves limits. By \cref{nota:CatTPerfectStable}, the first inclusion in fact factors as $\underline{\cat}\Tperfect_{\baseCat}\subset \cofree_{\baseCat}(\cat\perfect)\subset \underline{{\cat}}_{\baseCat}$. Hence, by the preceding points, the inclusion $\underline{\cat}\Tperfect_{\baseCat}\subset \cofree_{\baseCat}(\cat\perfect)$  must also  preserve finite $\baseCat$--products. Now by definition  $\underline{\cmonoid}_{\baseCat}(-) \coloneqq \underline{\func}^{\times}_{\baseCat}(\myuline{\effBurn}(\underline{\finite}), -) \subseteq \underline{\func}_{\baseCat}(\myuline{\effBurn}(\underline{\finite}), -)$ and so applying $\underline{\cmonoid}_{\baseCat}(-)$ and invoking \cite[Cor. 3.4.6]{kaifPresentable} we get a $\baseCat$--faithful inclusion 
$\underline{\cat}\Tperfect_{\baseCat} \subset  \underline{\cmonoid}_{\baseCat}(\cat\perfect)$
where we can dispense with the $\baseCat$--semiadditivisation of the source by virtue of \cref{semiadditivisation} and \cref{SemiadditivityOfPresentableStables}.\

Now to see that it is conservative,  note that we have the commuting triangle of categories
\begin{equation}\label{eqn:evaluationTriangle}
    \begin{tikzcd}
        \cat\Tperfect_{\baseCat} \dar[hook]\ar[drr,"\prod_{V\in\baseCat}\eval_{V}"]\\
        \cmonoid_{\baseCat}(\cat\perfect)\simeq \mackey_{\baseCat}(\cat\perfect) \ar[rr, "\prod_{V\in\baseCat}\eval_{V}"'] && \prod_{V\in\baseCat} \cat\perfect
    \end{tikzcd}
\end{equation}
where the diagonal map is conservative, and so the vertical map must be conservative as well.

For the final statement, first note that by construction, the inclusion $\underline{\cat}\Tperfect_{\baseCat} \subset\underline{\cmonoid}_{\baseCat}(\cat\perfect)$ preserves finite indexed products, and so also finite indexed (co)products by $\baseCat$--semiadditivity of both source and target. Hence, by \cref{lem:conservativePreservationImpliesReflection} the inclusion also reflects these. To deal with the fibrewise (co)limits, since all the restriction functors in sight preserve (co)limits, it suffices by \cite[Cor. 5.9]{shahThesis} to argue that $\cat\Tperfect_{\baseCat}\rightarrow \cmonoid_{\baseCat}(\cat\perfect)\simeq \mackey_{\baseCat}(\cat\perfect)$ preserves and reflects arbitrary limits and colimits. Since (co)limits in $\mackey_{\baseCat}(\cat\perfect)$ are computed pointwise by \cite[Cor. 6.7.1]{barwick1}, the bottom horizontal evaluation map in \cref{eqn:evaluationTriangle} is conservative and preserves (co)limits. On the other hand, the diagonal functor in \cref{eqn:evaluationTriangle} preserves  arbitrary (co)limits  by \cref{lem:catPerfEvaluationFunctor}. Combining these, we get that the vertical functor in \cref{eqn:evaluationTriangle} preserves arbitrary (co)limits in addition to being conservative from the previous paragraph, and hence by \cref{lem:conservativePreservationImpliesReflection} it also reflects (co)limits, as wanted.
\end{proof}

\begin{rmk}
We now give an intuitive description of objects in    $\mackey_{\baseCat}(\cat\perfect)\simeq \cmonoid_{\baseCat}(\cat\perfect)$ which lie in $\cat_{\baseCat}\Tperfect$. Let $\underline{\sC}\in\mackey_{\baseCat}(\cat\perfect)$ and let $f\colon U \rightarrow V$ be a map in $\baseCat$. Write $f_{\#}\colon \sC_U \rightarrow\sC_V$ for the associated transfer map. From the pullback \cref{eqn:complementationSquare} we obtain by the datum of a Mackey functor a decomposition $f^*f_{\#}\simeq \id\oplus \overline{c}_{\#}c^*$. Hence, by inclusion and projection, we obtain the  transformations $u\colon \id \Rightarrow f^*f_{\#}$ and $c\colon f^*f_{\#}\Rightarrow \id$.
By unwinding the definitions in the proof of the theorem,  $\underline{\sC}$ lies in the non--full subcategory $\cat\Tperfect_{\baseCat}$ if $u$ exhibits $f_{\#}\dashv f^*$ and $c$ exhibits $f^*\dashv f_{\#}$. These conditions \textit{can} however be enforced provide we work with the $(\infty,2)$\textit{--category} of spans. As such, we expect that an $(\infty,2)$-categorical version of spans and of Mackey functors should precisely yield ${\cat}\Tperfect_{\baseCat}$.
\end{rmk}

For the purposes of our motivic analyses later, we will also record here the following:

\begin{prop}\label{prop:leftKanExtensionFunctorialityPresentables}
    Fix $\underline{\sC}\in \presentable_{\baseCat,L,\kappa}$. The functor $\underline{\func}(-,\underline{\sC})\colon \underline{\cat}_{\baseCat}\rightarrow\underline{\presentable}_{\baseCat,L,\kappa}$ taken along left Kan extensions preserve finite parametrised colimits.
\end{prop}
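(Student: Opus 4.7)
The plan is to pass through the anti-equivalence of \cref{prop:antiequivalencePrLPrR} and reduce the claim to the standard fact that the internal hom $\underline{\func}(-,\underline{\sC})$ converts parametrised colimits in its first variable to parametrised limits. Concretely, under the anti-equivalence $\underline{\presentable}_{\baseCat, L, \kappa}\simeq \underline{\presentable}_{\baseCat, R, \kappa\text{-filt}}\vop$, the covariant functor $\underline{\func}(-,\underline{\sC})$ with structure maps the left Kan extensions $f_!$ corresponds to the functor sending $\underline{A}\mapsto \underline{\func}(\underline{A},\underline{\sC})$ but with structure maps the restrictions $f^*$ (the right adjoints to $f_!$), viewed as a functor $\underline{\cat}_{\baseCat}\op\to \underline{\presentable}_{\baseCat, R, \kappa\text{-filt}}$. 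Hence preserving finite $\baseCat$-colimits into $\underline{\presentable}_L$ is equivalent to converting finite $\baseCat$-colimits in $\underline{\cat}_{\baseCat}$ to finite $\baseCat$-limits in $\underline{\presentable}_R$, where the limit cone is given by restrictions along the colimit cocone.

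By \cref{lem:limitClosureOfPr_R}, the inclusion $\underline{\presentable}_{\baseCat, R, \kappa\text{-filt}}\subset \underline{\widehat{\cat}}_{\baseCat}$ is closed under arbitrary parametrised limits, so it suffices to check the limit statement in $\underline{\widehat{\cat}}_{\baseCat}$. That is, for a finite $\baseCat$-colimit $\underline{A} = \underline{\colim}_{\underline{I}}\underline{A}_\bullet$ in $\underline{\cat}_{\baseCat}$, we need to show that $\underline{\func}(\underline{A},\underline{\sC})\simeq \underline{\lim}_{\underline{I}\vop}\underline{\func}(\underline{A}_\bullet,\underline{\sC})$ with structure maps the $f^*$. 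This is immediate from the internal hom adjunction of \cref{parametrisedCotensors}: for any $\underline{D}\in \underline{\cat}_{\baseCat}$ we have $\myuline{\map}(\underline{D},\underline{\func}(\underline{A},\underline{\sC}))\simeq \myuline{\map}(\underline{D}\times\underline{A},\underline{\sC})$, and since $\underline{D}\times(-)$ preserves $\baseCat$-colimits (it is a left adjoint to $\underline{\func}(\underline{D},-)$), testing against all $\underline{D}$ via Yoneda produces the desired identification of $\underline{\func}(\underline{A},\underline{\sC})$ as the claimed parametrised limit, with structure maps visibly given by precomposition, i.e. the restrictions.

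The main bookkeeping obstacle is keeping track of the variance and adjoint structure under the anti-equivalence in the parametrised setting: since $\vop$ is fibrewise opposite, one should carefully verify that a colimit cocone in $\underline{\presentable}_R\vop$ unfolds to a genuine limit cone in $\underline{\presentable}_R$ with the restriction maps as structure maps. One should also verify, along the way, that $\underline{\func}(-,\underline{\sC})$ really does take values in $\underline{\presentable}_{\baseCat, L, \kappa}$ as a $\baseCat$-functor, i.e. that each $f_!$ preserves $\kappa$-compact objects; this follows because the right adjoint $f^*$ is a restriction functor along $f\colon\underline{A}\to\underline{B}$ and therefore preserves all parametrised colimits, so is $\kappa$-accessible. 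Modulo these formal checks, the proof is essentially a translation exercise between the $\baseCat$-presentable formalism and the elementary tensor-hom adjunction.
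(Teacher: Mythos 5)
Your proof is correct, and for the pushout part it is essentially the paper's own argument: the paper also realises $\underline{\func}(\underline{P},\underline{\sC})$ as the pullback $\underline{\func}(\underline{J},\underline{\sC})\times_{\underline{\func}(\underline{I},\underline{\sC})}\underline{\func}(\underline{K},\underline{\sC})$ in $\underline{\presentable}_{\baseCat,R,\kappa\text{-filt}}$ via \cref{lem:limitClosureOfPr_R} and then passes to left adjoints through the anti-equivalence of \cref{prop:antiequivalencePrLPrR}. Where you genuinely differ is in treating \emph{all} finite parametrised colimits uniformly by this route: the paper instead splits the problem, handling finite indexed coproducts separately by the cotensor identity $\underline{\func}(f_!\underline{I},\underline{\sC})\simeq f_*f^*\underline{\func}(\underline{I},\underline{\sC})$ together with the $\baseCat$--semiadditivity of $\underline{\presentable}_{\baseCat,L,\kappa}$ from \cref{SemiadditivityOfPresentableStables}, and only invoking the anti-equivalence for pushouts. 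Your uniform argument buys a cleaner statement (internal hom turns parametrised colimits into parametrised limits, checked in $\underline{\widehat{\cat}}_{\baseCat}$ by the tensor--hom adjunction and parametrised Yoneda, then imported into $\underline{\presentable}_R$ by closure under limits), but note that the indexed-coproduct case is not really avoided: the closure of $\underline{\presentable}_R$ under finite indexed products asserted in \cref{lem:limitClosureOfPr_R} is itself proved from the same semiadditivity result, so the two proofs rest on identical inputs, just packaged differently. The bookkeeping points you flag are the right ones to nail down: that the cone maps (restrictions $f^*$) are genuinely morphisms of $\underline{\presentable}_{\baseCat,R,\kappa\text{-filt}}$ (they are, having both adjoints and preserving all fibrewise colimits), and that parametrised limits in $\underline{\presentable}_R$ unwind under $(-)\vop$ to parametrised colimits with the left Kan extensions as structure maps (fibrewise opposites swap indexed products and coproducts, so this is fine). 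One cosmetic caveat, which applies equally to the paper's own proof: \cref{lem:limitClosureOfPr_R} is stated with the stability subscript, so strictly one should observe that its argument goes through verbatim without stability before applying it here.
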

\begin{proof}
    Since every finite parametrised colimit can be decomposed as the finite indexed coproducts and finite fibrewise colimits, we will split up the proof into these two cases. For the indexed coproducts, we just note that for a fixed $f\colon U\rightarrow V$ in $\underline{\finite}_{/V}$, we have $\underline{\func}(f_!\underline{I},\underline{\sC})\simeq f_*f^*\underline{\func}(\underline{I},\underline{\sC})$, and so since \cref{SemiadditivityOfPresentableStables} gives that $f_*f^*\underline{\func}(\underline{I},\underline{\sC})$ is also the indexed coproduct in $\underline{\presentable}_{\baseCat,L,\kappa}$, we are done in this case. Since we have argued for an arbitrary $U\in\underline{\finite}_{\baseCat}$, this also covers the case of ordinary finite coproducts by setting $U=\coprod_{i=1}^n V$.

    Hence, we are left with showing the case of pushouts. Suppose we have a pushout diagram
    \begin{center}
        \begin{tikzcd}
            \underline{I}\rar["i"]\dar["k"']\ar[dr,phantom,very near end, "\ulcorner"] & \underline{J}\dar["p"]\\
            \underline{K}\rar["q"'] & \underline{P}.
        \end{tikzcd}
    \end{center}
    By \cref{lem:limitClosureOfPr_R}, we have an equivalence $\underline{\func}(\underline{P},\underline{\sC})\simeq \underline{\func}(\underline{J},\underline{\sC})\times_{\underline{\func}(\underline{I},\underline{\sC})}\underline{\func}(\underline{K},\underline{\sC})$ in $\underline{\presentable}_{\baseCat, R,\kappa\text{-filt}}$ using the restriction maps $p^*,q^*,i^*,k^*$, and so upon passing to left adjoints under $\underline{\presentable}_{\baseCat,L,\kappa}\simeq \underline{\presentable}_{\baseCat, R,\kappa\text{-filt}}\vop$, we obtain the desired result.
\end{proof}

\section{The theory of parametrised cubes}\label{section:theoryOfParametrisedCubes}
In this section, we lay down the theory of parametrised cubes associated to atomic orbital base categories $\baseCat$. As we shall see in \cref{cons:singletonInclusion}, this hypothesis on $\baseCat$ will be exploited in an essential way to identify the ``singletons'' in a parametrised cube, which will in turn play an important role in our treatment of $C_2$--pushouts in the setting of equivariant K--theory in \cref{subsection:PointwiseEqualsNormed2-Groups}. The key insight is that the inclusions of the initial and final objects in $\Delta^1$ will allow us to encode the idea of a ``subset'' in a purely abstract and categorical manner (where $1$ means being in a ``subset'' and $0$ means the converse). The advantage of this point of view is that it not only lets one speak of cubes in a very general setting but also allows many proofs to be carried out using concise adjunction manipulations. As far as we are aware, this use case of atomic orbitality of base categories is new and might represent a third expression -- alongside that of $\baseCat$--semiadditivity and $\baseCat$--symmetric monoidality for which it was first designed by \cite{parametrisedIntroduction} -- of the algebraic richness of the atomic orbitality hypothesis. 

We now outline the contents of this section: in \cref{subsection:basicNotionsCubes}, we  introduce the basic definitions and constructions of cubes; we  then provide a (co)limit decomposition result  \cref{prop:pullbackFormulaForConeDiagrams} and record the interaction of parametrised tensor powers with cofibres as \cref{normsOfCofibres} in \cref{subsection:cubicalDecompositions}; lastly, we specialise the general theory to the equivariant setting where we look at $G/H$--cubes when $|G/H|=2$ in preparation for our K--theoretic applications in \cref{subsection:PointwiseEqualsNormed2-Groups}.

\subsection{Basic notions}\label{subsection:basicNotionsCubes}

\begin{defn}
    Let $w\colon W\rightarrow T$ be a map in $\underline{\finite}_{\baseCat}$ and let $\underline{\sC}$ be a $\baseCat$--category. We write $\underline{\Delta}^1\in \cat_{T}=\func(\baseCat_{/T}\op,\cat)$ for the constant $\baseCat_{/T}$--category with value $\Delta^1$. By \textit{the parametrised $w$--cube} we will mean the $\baseCat_{/T}$--category $w_*w^*\underline{\Delta}^1$ and by  \textit{a parametrised $w$--cube in $\underline{\sC}$} we will mean a $\baseCat$--functor $Q\colon w_*w^*\underline{\Delta}^1\rightarrow \underline{\sC}$. 
\end{defn}

\begin{prop}\label{prop:parametrisedCubesAreFibrewiseCubes}
    The parametrised cubes $w_*w^*\underline{\Delta}^1$ are all parametrised posets, i.e. they belong to the full subcategory $\func(\baseCat\op,\mathrm{Poset})\subseteq \func(\baseCat\op,\cat)$. In fact, these are fibrewise given by cubes of various dimensions.
\end{prop}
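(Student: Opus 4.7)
My plan is to compute each fibre of $w_*w^*\underline{\Delta}^1$ explicitly via the adjunction $w^*\dashv w_*$ and the atomic orbital Beck--Chevalley decomposition of finite $\baseCat$--sets, and to check that all restriction functors between fibres are poset morphisms induced by identities of $\Delta^1$. Concretely, the endgame will be the identification $(w_*w^*\underline{\Delta}^1)_V \simeq (\Delta^1)^{n_V}$, where $n_V$ is the number of orbits in the pullback decomposition $V\times_T W$.

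The key step is establishing the fibre formula. Fix $[V\to T]\in \baseCat_{/T}$ and write $v\colon V\to T$ with $v_!\terminalTCat\in \cat_{\baseCat_{/T}}$ the associated representable. By the $\baseCat_{/T}$--Yoneda lemma and the $w^*\dashv w_*$ adjunction,
\[(w_*\underline{X})_V \simeq \func_{\baseCat_{/T}}\bigl(v_!\terminalTCat,w_*\underline{X}\bigr)\simeq \func_{\baseCat_{/W}}\bigl(w^*v_!\terminalTCat,\underline{X}\bigr).\]
Now by atomic orbitality of $\baseCat$ applied as in \cref{beckChevalleyMeaning}, the pullback  $V\times_T W$ admits an orbital decomposition $\coprod_{a} R_a$ and the left Beck--Chevalley identification gives $w^*v_!\terminalTCat \simeq \coprod_a \bar{g}_{a!}\terminalTCat$, where $\bar{g}_a\colon R_a\to W$ is the canonical map. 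Plugging this in and using that $\func_{\baseCat_{/W}}$ turns coproducts in the first slot into products, we obtain the key formula
\[(w_*\underline{X})_V\simeq \prod_{a}\underline{X}_{R_a}.\]
Specialising to $\underline{X}= w^*\underline{\Delta}^1$, which is fibrewise constant on $\Delta^1$, each factor $\underline{X}_{R_a}$ is $\Delta^1$ and hence $(w_*w^*\underline{\Delta}^1)_V\simeq (\Delta^1)^{n_V}$ as claimed. This is a finite poset, and in fact a cube of dimension $n_V$.

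It remains to verify that the restriction functors between fibres respect the poset structure, i.e.\ that they are honest morphisms in $\mathrm{Poset}\subset\cat$. For a morphism $f\colon [V'\to T]\to [V\to T]$ in $\baseCat_{/T}$, basechanging the pullback square along $f$ and using naturality of the Beck--Chevalley decomposition produces a morphism of finite coproducts $V'\times_TW\to V\times_T W$ of the orbital decompositions; one checks that the induced map $(\Delta^1)^{n_V}\to(\Delta^1)^{n_{V'}}$ is simply the one projecting and pairing coordinates according to which orbit of $V\times_TW$ each orbit of $V'\times_TW$ maps into (with the identity of $\Delta^1$ on each factor, since the diagram $\underline{\Delta}^1$ is constant). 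Such functors are manifestly maps of posets, completing the proof. The only real subtlety lies in the correct invocation of the Beck--Chevalley equivalence $w^*v_!\simeq \coprod_a\bar{g}_{a!}\bar{f}_a^*$ on representables; once this is in hand, all else follows from bookkeeping of the orbital decomposition.
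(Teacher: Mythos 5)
Your proof is correct, but it takes a genuinely different route from the paper. The paper argues purely formally: the inclusion $\mathrm{Poset}\subseteq\cat$ preserves all limits, in particular products, so for any category with finite products the cofree parametrisation inherits indexed products computed by those products; hence $\cofree(\mathrm{Poset})\subseteq\cofree(\cat)$ is closed under indexed products, and since $\underline{\Delta}^1$ is a parametrised poset, so is $w_*w^*\underline{\Delta}^1$, with the fibrewise-cube statement falling out because the indexed products are computed as products of copies of $\Delta^1$ in $\mathrm{Poset}$. You instead compute the fibres explicitly: parametrised Yoneda (using that $\baseCat_{/V}$ has a terminal object), the adjunction $w^*\dashv w_*$, and the base-change identification $w^*v_!\terminalTCat\simeq\coprod_a\bar g_{a!}\terminalTCat$ coming from the orbital decomposition of $V\times_T W$ (only orbitality is needed here, not atomicity), giving $(w_*w^*\underline{\Delta}^1)_V\simeq(\Delta^1)^{n_V}$. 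What your approach buys is an explicit dimension count $n_V=\#\{\text{orbits of }V\times_TW\}$ and an explicit description of the restriction functors, which is useful later (compare \cref{example:C_2Pushouts}); what the paper's approach buys is brevity and no Beck--Chevalley bookkeeping. One small remark: since $\mathrm{Poset}\subseteq\cat$ is a \emph{full} subcategory, membership in $\func(\baseCat\op,\mathrm{Poset})$ only requires each fibre to be a poset -- any functor between posets is automatically monotone -- so your final paragraph verifying that the restriction functors are poset maps is not actually needed, though it is harmless and records useful explicit information.
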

\begin{proof}
    This is just because the inclusion $\mathrm{Poset}\subseteq \cat_{\infty}$ preserves all limits (with left adjoint $\tau_{\leq-1}$) and hence, in particular, products. Now, we know that for any $\sC\in\cat_{\infty}$ admitting finite products, $\cofree(\sC)$ also have all indexed products coming from the products on $\sC$. Therefore, $\cofree(\mathrm{Poset})\subseteq \cofree(\cat_{\infty})$ admits (and is closed under) indexed products. Since $\underline{\Delta}^1$ is a parametrised poset, so is $w_*w^*\underline{\Delta}^1$ as required. The last statement is simply because these indexed products are computed as various products in $\mathrm{Poset}$ of $\Delta^1$, which are cubes.
\end{proof}

\begin{cons}\label{cons:parametrisedConing}
    Upon applying $\func(\baseCat\op,-)$, the join--slice adjunction (cf. for instance \cite[Cor. 1.4.17]{markusInfinityBook}) induces the parametrised join--slice adjunction \[(-)\tcone\colon \cat_{\baseCat} \rightleftharpoons \cat_{\baseCat,\underline{\ast}/} : (-)_{p/}\]
where $(-)_{p/}$ is the slice construction on a category equipped with a choice of object, i.e. $(\underline{\D},d)_{/p}\coloneqq \underline{\ast}\times_{\underline{\D}}\underline{\D}^{\Delta^1}$ where the map $\underline{\D}^{{\Delta}^1}\rightarrow \underline{\D}$ is the source map. In particular, if $\underline{\D}$ has an initial object $\ast$, then any functor $ \underline{\sC}\rightarrow\underline{\D}$ will induce an extension $\underline{\sC}\tcone\rightarrow\underline{\D}$.
\end{cons}

We thank Sil Linskens for pointing out the need for the strict initial object assumption in the following result.

\begin{prop}\label{prop:universalConing}
    Let $\underline{\D}$ be a $\baseCat$--category with a strict initial object (i.e. any morphism to the initial object in $\underline{\D}$ must be an equivalence) and $F\colon \underline{\sC}\rightarrow \underline{\D}$ be an arbitrary fully faithful $\baseCat$--functor whose essential image does not contain the initial object of $\underline{\D}$. Then the extension $F\tcone\colon \underline{\sC}\tcone\rightarrow\underline{\D}$ is also fully faithful.
\end{prop}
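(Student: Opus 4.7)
The plan is to reduce $\baseCat$-fully faithfulness of $F\tcone$ to a fibrewise check. By the terminology for $\baseCat$-fully faithful functors recorded just after \cref{cofreeParametrisations}, this property is tested at each $V \in \baseCat$, so it suffices to show that $(F\tcone)_V\colon (\underline{\sC}\tcone)_V \to \D_V$ is fully faithful. Since the parametrised cone construction of \cref{cons:parametrisedConing} is defined by applying $\func(\baseCat\op, -)$ to the unparametrised join, we identify $(\underline{\sC}\tcone)_V$ with $(\sC_V)^{\triangleleft}$, and $(F\tcone)_V$ with the extension $(F_V)^{\triangleleft}$ that sends the adjoined cone point to the fibre $\emptyset_V$ at $V$ of the parametrised initial object $\underline{\emptyset}$ of $\underline{\D}$.

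Next, I would verify that the fibrewise avatars of the hypotheses are in place. For (i), a morphism in $\underline{\D}$ to $\underline{\emptyset}$ is precisely a morphism in some $\D_V$ to $\emptyset_V$, so the strictness of $\underline{\emptyset}$ is equivalent to fibrewise strictness; hence $\emptyset_V$ is a strict initial object of $\D_V$. For (ii), the hypothesis that the essential image of $F$ misses $\underline{\emptyset}$ unpacks fibrewise to the statement that $F_V(c) \not\simeq \emptyset_V$ for every $c \in \sC_V$, since a fibrewise equivalence $F_V(c) \simeq \emptyset_V$ would exhibit $\underline{\emptyset}$ locally as an object of the essential image of $F$.

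Finally, I would verify fully faithfulness of $(F_V)^{\triangleleft}$ by a straightforward case analysis on the mapping spaces, writing $\infty$ for the cone point of $(\sC_V)^{\triangleleft}$. For source and target both in $\sC_V$, fully faithfulness is inherited from $F_V$. For $\map(\infty, c)$ and $\map(\infty, \infty)$, the mapping spaces on both sides are contractible by initiality and matched by the canonical comparison. The crucial case is $\map(c, \infty)$: on the source this is empty by the join construction, and on the target $\map_{\D_V}(F_V(c), \emptyset_V)$ is empty precisely because strict initiality would otherwise force such a morphism to be an equivalence, contradicting $F_V(c) \not\simeq \emptyset_V$. The main obstacle is ensuring that the essential-image hypothesis truly descends fibrewise and that one is not caught out by some pathology where a local equivalence to $\emptyset_V$ fails to be detected parametrised; once this is in hand, the verification is routine.
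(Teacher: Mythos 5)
Your proof is correct and follows essentially the same route as the paper: reduce to the fibrewise, unparametrised statement and then run the case analysis on mapping spaces involving the cone point, with strict initiality together with the essential-image hypothesis forcing $\map_{\D_V}(F_V(c),\emptyset_V)$ to be empty so that it matches the empty mapping space out of $c$ into the cone point. If anything, you spell out that crucial case slightly more explicitly than the paper does, but the underlying argument is the same.
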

\begin{proof}
    Since fully faithfulness is a fibrewise notion, we can just prove this fibrewise and hence reduce to proving it in the unparametrised case. Since we already know that the composite $\sC\rightarrow \sC^{\triangleleft}\rightarrow \D$ is fully faithful and since, on the space of objects, it is standard that $(\sC^{\triangleleft})^{\simeq}\simeq \sC^{\simeq}\sqcup\{\emptyset_{\sC}\}$ (see for example  \cite[Prop. A.4]{CMMN2}), we are left to argue that the induced map of mapping spaces is an equivalence when one of the objects is $\emptyset_{\sC}$ the initial object in $\sC^{\triangleleft}$. To this end, first note that $\emptyset_{\sC}$ does not admit any map from any $X\in\sC$: this is because we have a functor $\sC^{\triangleleft}\rightarrow \ast^{\triangleleft}\simeq \Delta^1$, and so any such map would give a map from 1 to 0 in $\Delta^1$, which does not exist. Thus the object $\emptyset_{\sC}\in \underline{\sC}\tcone$ is also a strict initial object. Hence, we are left to argue that $\map_{\sC^{\triangleleft}}(\emptyset_{\sC},X) \rightarrow \map_{\D}(F^{\triangleleft}(\emptyset_{\sC}),F^{\triangleleft}(X))$ is an equivalence. But this is true because by the universal property, we must have that $F^{\triangleleft}(\emptyset_{\sC})\simeq \emptyset_{\D}$, and so both sides are contractible.
\end{proof}

The following is the key construction in the theory of parametrised cubes.

\begin{cons}[Singleton inclusion]\label{cons:singletonInclusion}
    Let $w\colon W\rightarrow T$ be a map in $\underline{\finite}_{\baseCat}$. We would like to construct a map
    \[\psi_w\colon w_!w^*\terminalTCat \longrightarrow w_*w^*\underline{\Delta}^1\] which generalises the inclusion of the singletons in Goodwillie's definition of strong cocartesianness. First note by atomic orbitality that we have the pullback in $\finite_{\baseCat}$
    \begin{equation}\label{eqn:singletonInclusionPullback}
        \begin{tikzcd}
            W\coprod C \ar[dr, phantom , very near start , "\lrcorner"]\rar["\id\coprod c"]\dar["\id\coprod \overline{c}"'] & W\dar["w"]\\
            W\rar["w"'] & T
        \end{tikzcd}
    \end{equation}
    where $C$ is some object in $\finite_\baseCat$. In particular, we have the decomposition
    \[w^*w_*w^*\underline{\Delta}^1\simeq \id_*\id^*w^*\underline{\Delta}^1\times \overline{c}_*c^*w^*\underline{\Delta}^1\simeq w^*\underline{\Delta}^1\times \overline{c}_*c^*w^*\underline{\Delta}^1 \]
    Now by adjunction, to construct $\psi_w$, it would suffice to construct its adjoint $\overline{\psi}_w\colon w^*\terminalTCat \rightarrow w^*w_*w^*\underline{\Delta}^1$. By the decomposition above, this is equivalent to constructing maps 
    \[w^*\terminalTCat \rightarrow w^*\underline{\Delta}^1\quad\quad\quad \big(w^*\terminalTCat\rightarrow \overline{{c}}_*c^*w^*\underline{\Delta}^1\big) \Leftrightarrow\big(\overline{c}^*w^*\terminalTCat\simeq c^*w^*\terminalTCat \rightarrow c^*w^*\underline{\Delta}^1\big)\] To this end, we declare the first map to be the inclusion of the target and the second map to be the inclusion of the source. This yields the map $\psi_w$ which one should think of the inclusion of the singletons in a cube. We will see that this is always fully faithful in \cref{cor:singletonInclusionFullyFaithful}.
    Therefore, by \cref{cons:parametrisedConing}, the map $\psi_w$ constructed above induces a map    \[\varphi_w \colon (w_!w^*\terminalTCat)\tcone\longrightarrow w_*w^*\underline{\Delta}^1\] This map $\varphi_w$ should be thought of generalising the subsets of size at most 1 in a cube, as the following important example will illustrate.
\end{cons}

\begin{example}\label{example:singletonInclusionForOrdinaryCubes}
    In the special case when $T\in\baseCat$ is the final object, $W = \coprod^n_{j=1}T$, and $w\colon W \rightarrow T$ is the fold map, we will see how the above construction yields the usual inclusion of singletons in the $n$--cube $\prod^n_{j=1}\Delta^1=\poset([n])$. To wit, since $T\times T\simeq T$ by finality, we have a decomposition 
    \[(\coprod^n_{j=1}T)\times (\coprod^n_{j=1}T) \simeq \coprod_{\substack{(a,b)\in \mathbb{Z}/n\times\mathbb{Z}/n,\\ a-b\equiv_n0} }T \sqcup \coprod_{j=1}^{n-1}\coprod_{\substack{(a,b)\in \mathbb{Z}/n\times\mathbb{Z}/n,\\ a-b\equiv_nj} }T\]
    where  the term $\coprod_{\substack{(a,b)\in \mathbb{Z}/n\times\mathbb{Z}/n,\\ a-b\equiv_n0} }T$ should be thought of as the diagonal tuples in $(\coprod^nT)\times(\coprod^nT)$.
    Hence, the pullback \cref{eqn:singletonInclusionPullback} in this case looks like
    \begin{center}
        \begin{tikzcd}
            \displaystyle{\coprod_{\substack{(a,b)\in \mathbb{Z}/n\times\mathbb{Z}/n,\\ a-b\equiv_n0} }T \sqcup \coprod_{j=1}^{n-1}\coprod_{\substack{(a,b)\in \mathbb{Z}/n\times\mathbb{Z}/n,\\ a-b\equiv_nj} }T} \rar["\id\sqcup c"] \dar["\id\sqcup \overline{c}"']& \coprod^n_{j=1}T\dar["w"]\\
            \coprod^n_{j=1}T\rar["w"'] &T 
        \end{tikzcd}
    \end{center}
    In this case, the map $\psi\colon \coprod_{j=1}^n\underline{\ast}\simeq w_!\underline{\ast}\rightarrow \prod_{j=1}^n\underline{\Delta}^1\simeq w_*\underline{\Delta}^1$ constructed in \cref{cons:singletonInclusion} comes from specifying the map $w^*\underline{\ast}\rightarrow w^*w_*\underline{\Delta}^1\simeq \underline{\Delta}^1\times\overline{c}_*c^*\underline{\Delta}^1$ describable as \[(\ast,\ldots,\ast) \longrightarrow (\Delta^1\times\prod_{1\leq j\leq n, j\neq 1}\Delta^1, \Delta^1\times\prod_{1\leq j\leq n, j\neq 2}\Delta^1,\ldots, \Delta^1\times\prod_{1\leq j\leq n, j\neq n}\Delta^1)\]
    choosing $1$ in the first copy of $\Delta^1$ and $0$'s in $\prod_{1\leq j\leq n, j\neq k}\Delta^1$ for all $k$. Equivalently, the $k$--th summand in $w_!\underline{\ast}\simeq \coprod_{j=1}^n\ast$ picks out $1$ in the $k$--th copy of $\Delta^1$ in $\prod_{1\leq j\leq n}\Delta^1$ and $0$'s in the other copies of $\Delta^1$. Hence, the $k$--th summand in $w_!\underline{\ast}$ sits  as the $k$--th singleton in $\prod_{j=1}^n\Delta^1\simeq \poset([n])$.
    \end{example}

We now record the following fundamental observation which will serve both as a basic principle for our proofs as well as as an indication that our notion of singletons is ``correct'' from the parametrised point of view, in that it is a notion that is stable under basechange along arbitrary morphisms in $\baseCat$.

\begin{rmk}[Stability of singleton inclusions under basechange]\label{rmk:singletonInclusionBasechange}
    Let $b\colon B \rightarrow T$ be a map in $\baseCat$. Recall first that we have an adjunction $b_! \colon \finite_B \rightleftharpoons \finite_T : b^*$ from \cref{recollect:finiteTSets}   
    where $b_!$ is given by postcomposing with $b\colon B \rightarrow T$ and $b^*$ is pullback along $b$. In particular, applying $b^*$ to the pullback \cref{eqn:singletonInclusionPullback} gives us a pullback
    \begin{equation*}
        \begin{tikzcd}
            b^*W\coprod b^*C \ar[dr, phantom , very near start , "\lrcorner"]\rar["\id\coprod {a}"]\dar["\id\coprod \overline{a}"'] & b^*W\dar["\overline{w}"]\\
            b^*W\rar["\overline{w}"'] & B
        \end{tikzcd}
    \end{equation*}
    We  show now that $b^*\psi_w \simeq \psi_{\overline{w}}$, so that singleton inclusions are stable under basechange.
    
    By pasting pullback squares, we also see that we have the pullback
    \begin{equation}\label{eqn:CWPullback}
        \begin{tikzcd}
            b^*C \ar[dr, phantom , very near start , "\lrcorner"]\rar["\overline{z}"]\dar[" \overline{a}"'] & C\dar["c"]\\
            b^*W\rar["z"'] & W
        \end{tikzcd}
    \end{equation}
    Now recall that the adjoint of $\psi_w$ was a map defined as the composite
    \[w^*\terminalTCat \hookrightarrow w^*\terminalTCat \amalg \overline{c}_!c^*w^*\terminalTCat\xlongrightarrow{w^*\psi_w\simeq\big((1,0)\:(0,0)\big)} w^*\underline{\Delta}^1 \times \overline{c}_*c^*w^*\underline{\Delta}^1\]
    Here, the map $\big((1,0) \: (0,0)\big) $ is matrix notation for the map induced by the four maps
    \[w^*\terminalTCat\xrightarrow{1}w^*\underline{\Delta}^1\quad\quad w^*\terminalTCat\xrightarrow{0}\overline{c}_*c^*w^*\underline{\Delta}^1\quad \quad\overline{c}_!c^*w^*\terminalTCat\xrightarrow{0}w^*\underline{\Delta}^1\quad \quad\overline{c}_!c^*w^*\terminalTCat\xrightarrow{0}\overline{c}_*c^*w^*\underline{\Delta}^1\]
    We claim now that $b^*\psi_w\colon \overline{w}_!\overline{w}\terminalTCat \rightarrow \overline{w}_*\overline{w}^*\underline{\Delta}^1$ is defined similarly, so that $b^*\psi_w\simeq \psi_{\overline{w}}$ as wanted. Adjoining over $b^*\psi_w$ gives the map
    \begin{equation}\label{eqn:adjointingOver}
        \overline{w}^*\terminalTCat \hookrightarrow \overline{w}^*\terminalTCat\amalg \overline{a}_!a^* \terminalTCat \xlongrightarrow{\overline{w}^*b^*\psi_w} \overline{w}^*\underline{\Delta}^1\times\overline{a}_*a^*\underline{\Delta}^1
    \end{equation}
    But then $\overline{w}^*b^* \simeq z^*w^*$, and so together with the pullback \cref{eqn:CWPullback}, the map $\overline{w}^*b^*\psi_w$ from \cref{eqn:adjointingOver} can be analysed as
    $z^*w^*\terminalTCat \amalg\overline{a}_!a^*\terminalTCat\xlongrightarrow{z^*w^*\psi_w\simeq((1,0)\:(0,0))} z^*w^*\underline{\Delta}^1 \times \overline{a}_*a^*\overline{w}^*\underline{\Delta}^1$
    which is the desired form of map.
\end{rmk}

\begin{cor}\label{cor:singletonInclusionFullyFaithful}
    The map $\varphi_w\colon (w_!w^*\underline{\ast})\tcone \rightarrow w_*w^*\underline{\Delta}^1$ is fully faithful.
\end{cor}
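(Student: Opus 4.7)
The plan is to apply \cref{prop:universalConing} to the singleton inclusion $\psi_w\colon w_!w^*\terminalTCat \to w_*w^*\underline{\Delta}^1$. Doing so reduces the statement to verifying three conditions: (a) $\psi_w$ is itself $\baseCat$-fully faithful; (b) $w_*w^*\underline{\Delta}^1$ has a strict $\baseCat$-initial object; and (c) this initial object does not lie in the essential image of $\psi_w$.

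For (a), $\baseCat$-fully faithfulness is a fibrewise condition. The basechange stability \cref{rmk:singletonInclusionBasechange} tells us that $b^*\psi_w \simeq \psi_{\overline{w}}$ for any $b\colon B\to T$, so it suffices to evaluate at the top of $\baseCat_{/V}$ after basechanging along an arbitrary $V\to T$. By \cref{prop:parametrisedCubesAreFibrewiseCubes}, the target $w_*w^*\underline{\Delta}^1$ is then a parametrised poset which fibrewise becomes a cube (a product of copies of $\Delta^1$ indexed by the orbital decomposition of the basechanged $W$), while $w_!w^*\terminalTCat$ is fibrewise a discrete set indexed by the same orbits. Unwinding the definition of $\overline{\psi}_w$ in \cref{cons:singletonInclusion} (which chooses $1$ in the ``target'' coordinate and $0$ in all the others), we see that $\psi_w$ fibrewise is the inclusion of the singleton vectors into the cube poset, generalizing \cref{example:singletonInclusionForOrdinaryCubes}. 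This is visibly the inclusion of a full subposet, hence fully faithful.

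For (b), the desired $\baseCat$-initial object is given by the $\baseCat$-functor $0\colon \terminalTCat \to w_*w^*\underline{\Delta}^1$ adjoint to the source inclusion $0\colon w^*\terminalTCat \to w^*\underline{\Delta}^1$; fibrewise this is just the minimum element of the cube poset. Strictness is automatic, since in a poset the only morphisms into a minimal element are identities. For (c), the fibrewise description in (a) exhibits the image of $\psi_w$ as the collection of vectors with exactly one $1$-coordinate, which are all strictly above the minimum.

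The main obstacle is carrying out (a) rigorously: while the overall picture is clear, one must carefully unwind the construction of $\psi_w$ and match it fibrewise against the singleton-into-cube inclusion. The two key ingredients driving this are the basechange stability of \cref{rmk:singletonInclusionBasechange}, which allows the reduction to a fibrewise statement at the top of $\baseCat_{/V}$, and the explicit description of the adjoint $\overline{\psi}_w$ in \cref{cons:singletonInclusion}, which ensures that the fibrewise picture really does match \cref{example:singletonInclusionForOrdinaryCubes} after accounting for the orbital decomposition.
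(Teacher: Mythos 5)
Your overall route is the paper's own: reduce via \cref{prop:universalConing} to full faithfulness of $\psi_w$, use basechange stability (\cref{rmk:singletonInclusionBasechange}) to reduce to top fibres, and then identify those fibres. Your explicit verification of the strict-initial-object and image hypotheses of \cref{prop:universalConing} (your (b) and (c)) is correct and is a point the paper leaves implicit.

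The problem is in step (a), at exactly the place where atomic orbitality has to enter. After basechanging along $V\to T$, the value at the top of $\baseCat_{/V}$ of the target is indeed $\prod_i\Delta^1$, indexed by the orbit summands $W_i$ of the basechanged $W$; but the value at the top of $w_!w^*\terminalTCat$ is \emph{not} a discrete set indexed by all of these orbits. By \cref{obs:topFixedPointsOfIndexedCoproducts} -- this is where atomicity is used -- any summand $W_i\to V$ which is not an equivalence contributes the empty category, so the top fibre of the source is the (possibly empty) set of sections of the basechanged map. For the basic case $w\colon G/H\to G/G$ with $H$ proper this top fibre is empty; the paper's proof runs precisely on this dichotomy ($w$ an equivalence or not), the non-equivalence case being vacuously fully faithful. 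Your description instead asserts a nonempty discrete source there; it is really the description of the underlying fibre (over $G/e$, where every orbit of the restricted $G/H$ splits), not of a general top fibre, and taken at face value it would contradict the fixed-point computation in the proof of \cref{prop:specialIndex2Identification}. The conclusion of the corollary survives, since the inclusion of only those singletons corresponding to split summands (or the empty functor) is still fully faithful, but as written the fibrewise identification is false and the one nontrivial input, \cref{obs:topFixedPointsOfIndexedCoproducts}, is never invoked. To repair the argument, replace ``indexed by the same orbits'' by ``indexed by the summands mapping isomorphically onto $V$'', citing that observation for the vanishing of the remaining summands.
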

\begin{proof}
    By \cref{prop:universalConing}, it suffices to show that the map $\psi_w\colon w_!w^*\underline{\ast}\rightarrow w_*w^*\underline{\Delta}^1$ is fully faithful. Now, since fully faithfulness is a fibrewise notion, it suffices to check the statement in each fibre, and since these maps are stable under basechange by \cref{rmk:singletonInclusionBasechange}, we may without loss of generality argue in the fibre over $T\in\baseCat$. In case $w\colon W \rightarrow T$ is not an equivalence, then we know by \cref{obs:topFixedPointsOfIndexedCoproducts} that $\eval_{T}(w_!\underline{\ast})\simeq \emptyset$, and so the map $\psi_w$ is vacuously fully faithful in the fibre over $T\in\baseCat$. On the other hand, if $w\colon W\rightarrow T$ were an equivalence, then the map $\psi_w \colon w_!\underline{\ast}\rightarrow w_*\underline{\Delta}^1$ is just the inclusion of $0$ in $\Delta^1$, which is also fully faithful. Hence, $\psi$ is fully faithful in either case, as was to be shown. 
\end{proof}

\begin{example}[$G$--cubes and $C_2$--pushouts]\label{example:C_2Pushouts}
    Let $G$ be a finite group, $H\leq G$ a subgroup, and consider the case of $\baseCat=\orbit_G$. The $G$--category $w_*\underline{\Delta}^1= \prod_{G/H}\underline{\Delta}^1$ should then be thought of as the $|G/H|$--dimensional cube equipped with the $G$--action dictated by the one on $G/H$. For example, one would expect such a diagram to have $G$--fixed points only the initial and the final object, and the underlying diagram to be a $|G/H|$--cube. And indeed, we do have that $\eval_{G/G}(w_*w^*\underline{\Delta}^1)\simeq \Delta^1$ and $\eval_{G/e}(w_*w^*\underline{\Delta}^1)\simeq \prod_{|G/H|}\Delta^1$, as expected. In the rest of the article, we will be especially interested in the case when $|G/H|=2$. In this case, a $G$--diagram indexed by $(w_!w^*\underline{\ast})\tcone\subseteq w_*w^*\underline{\Delta}^1$ may be schematically represented as the datum
    \begin{center}
        \begin{tikzcd}
            A \rar["f"] \dar["f"'] & B \\
            B\ar[ur, dash, dashed]
        \end{tikzcd}
    \end{center}
    where $A$ is a $G$--object, $B$ is a $H$--object, the maps $f$ are maps of $H$--objects $\res^G_HA\xrightarrow{f} B$, and the $G$--action on $G/H$ swaps the two copies of $f$'s. We will call the $G$--colimits of such diagrams \textit{$C_2$--pushouts}. In the next section, we will give a formula to compute such $G$--colimits in terms of ordinary pushouts and indexed coproducts. 
\end{example}

\subsection{Cubical decompositions}\label{subsection:cubicalDecompositions}

We would like now to apply the general nonsense \cref{cor:(co)limitDecompositionAlongPushouts} to the cubical setting, the highlights of which are the decomposition  \cref{prop:pullbackFormulaForConeDiagrams,normsOfCofibres}.

\begin{obs}\label{obs:joinsAsAPushout}
Let $\underline{I}\in\cat_{\baseCat}$. Then there is a pushout in $\cat_{\baseCat} = \func(\baseCat\op,\cat)$
\begin{center}
    \begin{tikzcd}
        \underline{I}\rar["c"]\dar["t"']\ar[dr, phantom , very near end, "\ulcorner"] & \terminalTCat\dar["\ell"]\\
        \underline{I}\times\underline{\Delta}^1\rar["\pi"] & \underline{I}\tcocone
    \end{tikzcd}
\end{center}
To see this, since all constructions and operations in sight in the square above are done fibrewise, we may reduce to showing the statement in the unparametrised case in $\cat$, which is standard.
\end{obs}

Using the notations from the pushout in the observation above, we may now extract the following important decomposition result for parametrised (co)limits.

\begin{prop}\label{prop:pullbackFormulaForConeDiagrams}
    Let $i\colon \underline{I}\hookrightarrow \underline{I}\tcocone$ be the inclusion and let the morphism $\canonical \colon i^*\rightarrow c^*\ell^*$ in $\underline{\func}(\underline{I},\underline{\sC})$ be the canonical structure map. The (co)limit functors $\underline{\lim}_{\underline{I}\tcocone}\colon \underline{\func}(\underline{I}\tcocone, \underline{\sC})\rightarrow \underline{\sC}$ and $\underline{\colim}_{\underline{I}\tcone}\colon \underline{\func}(\underline{I}\tcocone, \underline{\sC})\rightarrow \underline{\sC}$ can be computed as follows: for any $\partial\in\underline{\func}(\underline{I}\tcocone,\underline{\sC})$ and $F\in \underline{\func}(\underline{I}\tcone,\underline{\sC})$, we have a pullback and pushout in $\underline{\sC}$
    \begin{center}
    \begin{tikzcd}
        \underline{\lim}_{\underline{I}\tcocone} \partial\rar\dar\ar[dr, phantom , very near start, "\lrcorner"] & \ell^*\partial\dar["\eta"]\\
        c_*i^*\partial\rar["c_*\canonical"] & c_*c^*\ell^*\partial
    \end{tikzcd}
    \hspace{4mm}
    \begin{tikzcd}
        c_!c^*\ell^*F\rar["\varepsilon"]\dar["c_!\canonical"']\ar[dr, phantom , very near end, "\ulcorner"] & \ell^*F\dar\\
        c_!i^*F\rar & \underline{\colim}_{\underline{I}\tcone} F.
    \end{tikzcd}
\end{center}
\end{prop}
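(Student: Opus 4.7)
My plan is to combine the indexing--category pushout $\underline{I}\tcocone \simeq (\underline{I}\times\underline{\Delta}^1)\cup_{\underline{I}}\terminalTCat$ from \cref{obs:joinsAsAPushout} with the (co)limit decomposition result \cref{cor:(co)limitDecompositionAlongPushouts}, and then identify each of the resulting terms and maps. I will treat the limit case first; the colimit (cone) case will then follow by a dual argument after passing to opposite categories.

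Applying the limit version of \cref{cor:(co)limitDecompositionAlongPushouts} to the pushout above immediately yields a pullback square in $\underline{\sC}$ of the form
\begin{center}
    \begin{tikzcd}
        \underline{\lim}_{\underline{I}\tcocone}\partial \rar\dar\ar[dr,phantom, very near start, "\lrcorner"] & \underline{\lim}_{\terminalTCat}\ell^*\partial\dar\\
        \underline{\lim}_{\underline{I}\times\underline{\Delta}^1}\pi^*\partial \rar & \underline{\lim}_{\underline{I}}\, t^*\pi^*\partial
    \end{tikzcd}
\end{center}
Three of the four corners are easily identified: $\underline{\lim}_{\terminalTCat}\ell^*\partial \simeq \ell^*\partial$, and using the relation $\pi\circ t \simeq \ell\circ c$ coming from the pushout, I would get $\underline{\lim}_{\underline{I}}\, t^*\pi^*\partial \simeq \underline{\lim}_{\underline{I}}\, c^*\ell^*\partial = c_*c^*\ell^*\partial$, and tracing through the universal property identifies the right vertical map with the unit $\eta$ of $c^*\dashv c_*$.

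The key step is to identify $\underline{\lim}_{\underline{I}\times\underline{\Delta}^1}\pi^*\partial$ with $c_*i^*\partial$. The diagram $\pi^*\partial$ unfurls as the parametrised natural transformation $i^*\partial\xrightarrow{\canonical} c^*\ell^*\partial$: at $(i,0)$ it takes the value $i^*\partial(i)$, at $(i,1)$ it takes the constant value $\ell^*\partial$, and the $\Delta^1$--structure map is exactly the canonical map from the diagram value into the cocone--point value. Since $\underline{\Delta}^1$ is a constant parametrised category in which $0$ is the initial object, I would compute this limit iteratively (Fubini--style): fibrewise over $\underline{I}$, the $\Delta^1$--limit picks out the source, yielding $i^*\partial$, and then the parametrised $\underline{I}$--limit gives $c_*i^*\partial$. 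Under these identifications, the bottom horizontal map is seen to be $c_*$ applied to the canonical transformation, as required.

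Finally, the pushout statement for $\partial\in\underline{\func}(\underline{I}\tcone,\underline{\sC})$ follows by the dual argument: passing to opposites converts $\underline{I}\tcone$ into $(\underline{I}\vop)\tcocone$, reducing to the limit case via the colimit version of \cref{cor:(co)limitDecompositionAlongPushouts}. I do not expect any genuine obstacle in the argument; the main point to verify carefully is the Fubini step and the compatibility of the canonical transformation with $\pi^*$, both of which should fall out cleanly from unwinding the adjunction units/counits.
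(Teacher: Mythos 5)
Your proposal is correct and is essentially the paper's own argument: the paper also feeds the pushout $\underline{I}\tcocone\simeq(\underline{I}\times\underline{\Delta}^1)\cup_{\underline{I}}\terminalTCat$ of \cref{obs:joinsAsAPushout} into the adjoint-decomposition machinery (it invokes \cref{lem:limitOfAdjoints} on the pullback of functor categories directly, of which \cref{cor:(co)limitDecompositionAlongPushouts} is just the packaged form), and your Fubini step is exactly the paper's identifications $s^*\simeq f_*$ and $v_*\simeq c_*s^*$, so that $\underline{\lim}_{\underline{I}\times\underline{\Delta}^1}\pi^*\partial\simeq c_*i^*\partial$. The only caveat is that to pin down the two comparison maps (the unit $\eta$ and $c_*\canonical$) you ultimately have to unwind the lemma's explicit formula rather than the corollary's statement, which is precisely the bookkeeping the paper carries out with its adjunction identities $s^*t_*\simeq\id$, $s^*\pi^*\simeq i^*$.
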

\begin{proof}
    We  first  collect a few elementary formal observations: since the inclusion of the source $s\colon \underline{I}\hookrightarrow\underline{I}\times\underline{\Delta}^1$  has a right adjoint $f$ (because $0\in\Delta^1$ is an initial object), we get that $f^*\dashv s^*$ and so $s^*\simeq f_*$.  Hence, if we write $v\colon \underline{I}\times\underline{\Delta}^1\rightarrow\underline{\ast}$ for the unique map, then since $v\simeq cf$, we get $v_*\simeq c_*f_*\simeq c_*s^*$. Moreover, since $ft\simeq \id$, we have $s^*t_*\simeq f_*t_*\simeq \id$.  Next, since $\pi s\simeq i$, we get $s^*\pi^*\simeq i^*$.  Under these identifications, the canonical map in the statement of the result can be obtained by applying $s^*$ to $\eta^t_{\pi^*}\colon \pi^*\rightarrow t_*t^*\pi^*$ since $s^*\pi^*\simeq i^*$ and $s^*t_*t^*\pi^*\simeq \id\circ c^*\ell^*\simeq c^*\ell^*$. It is  straightforward to check  that this is the canonical structure map.
    
    By \cref{obs:joinsAsAPushout}, we have a pullback description of $\underline{\func}(\underline{I}\tcocone,\underline{\sC})$ as in the left square in
    \[
        \begin{tikzcd}
            \underline{\func}(\underline{I}\tcocone, \underline{\sC})\rar["\ell^*"]\dar["\pi^*"']\ar[dr, phantom , very near start, "\lrcorner"] & \underline{\sC}\dar["c^*"]\\
            \underline{\func}(\underline{I}, \underline{\sC})^{\underline{\Delta}^1}\rar["t^*"] & \underline{\func}(\underline{I},\underline{\sC})
        \end{tikzcd}
        \hspace{3mm}
        \begin{tikzcd}
            \underline{\lim}_{\underline{I}\tcocone} \partial\rar\dar\ar[dr, phantom , very near start, "\lrcorner"] & \ell^*\partial\dar["\eta^c_{\ell^*\partial}"]\\
            v_*\pi^*\partial\rar["v_*\eta^t_{\pi^*\partial}"] & c_*t^*\pi^*\partial\simeq c_*c^*\ell^*\partial.
        \end{tikzcd}
    \]
    Hence, by \cref{lem:limitOfAdjoints}, we then get the pullback diagram on the right.     The identifications above now allow us to conclude that this pullback is of the form in the statement of the result. The  case of colimits can  be deduced by passing to opposite categories.
\end{proof}

\begin{cons}\label{cons:subcategoriesOfHornPowers}
    We view the inclusion $\Lambda^2_0\subseteq (\Delta^1)^{\times2}$ as 
    \begin{center}
        \begin{tikzcd}
            00 \rar\dar & 10 \ar[drr, phantom, "\hookrightarrow"]&& 00 \rar \dar& 10\dar \\
            01 & && 01 \rar & 11
        \end{tikzcd}
    \end{center}
    In this way, for any $n\geq 1$, we may describe the objects in the full subcategory $\prod^n_{i=1}\Lambda^2_0\subseteq \prod^n_{i=1}(\Delta^1)^{\times2}$ as those tuples $(a_i)_{1\leq i\leq n}$ where $a_i\in\{00,10,01\}$. We want to decompose $\prod^n_{i=1}\Lambda^2_0$ in terms of subcategories that we will now specify:
    \begin{itemize}
        \item Let $M_n\subseteq \prod^n_{i=1}\Lambda^2_0$ be the full subcategory $(\prod^n_{i=1}\Delta^1)\times\{0\}$. Concretely in terms of the tuples description above, this is the full subcategory consisting of those tuples such that $a_i\in\{00,10\}$.
        \item Let $B_n\subseteq \prod^n_{i=1}\Lambda^2_0$ be the full subcategory $\prod^n_{i=1}\Lambda^2_0\backslash \{(10,\ldots,10)\}$, i.e. of those tuples where some of the $a_i$'s must either be $00$ or $01$.
        \item Let $F_n\subseteq B_n$ be the full subcategory of those tuples where $a_i\in\{01,10\}$, that is, where none of the $a_i$'s are $00$ and at least one of them is $01$.
        \item Let $J_n\coloneqq M_n\cap B_n$.
    \end{itemize}
    Similarly, since for any map $f\colon U\rightarrow V$ in $\baseCat$,  $f_*\underline{\Lambda}^2_0$ is fibrewise just products of multiple copies of $\Lambda^2_0$, we may similarly define in a fibrewise fashion the $\baseCat$--full subcategories $\underline{J}_f, \underline{F}_f, \underline{B}_f,\underline{M}_f\subseteq f_*\underline{\Lambda}^2_0$. Here, $\underline{J}_f,\underline{B}_f, \underline{M}_f$ are clearly $\baseCat$--subcategories, and to see that $\underline{F}_f$ is too, we need to argue that it is closed under restrictions. More precisely, we need to show that if $A\xrightarrow{a} W\xrightarrow{w} V$ are maps in $\baseCat$, then the map $(\underline{F}_f)_W\subseteq (f_*\underline{\Lambda}^2_0)_W\xrightarrow{a^*} (f_*\underline{\Lambda}^2_0)_A$ factors through $(\underline{F}_f)_A$. For this, let us set up some notations and consider the pullback diagram in $\finite_{\baseCat}$
    \begin{center}
        \begin{tikzcd}
            \coprod_{i,j}Y_{ij} \ar[dr, phantom ,very near start, "\lrcorner"]\dar["\sqcup p_{ij}"']\rar["\sqcup q_{ij}"] & \coprod_i Z_i\ar[dr, phantom ,very near start, "\lrcorner"]\dar["\sqcup u_i"'] \rar["\sqcup v_i"] & U\dar["f"]\\
            A \rar["a"] & W \rar["w"] & V
        \end{tikzcd}
    \end{center}
    We then get $(f_*\underline{\Lambda}^2_0)_W\simeq (w^*f_*\underline{\Lambda}^2_0)_W\simeq (\prod_iu_{i*}\underline{\Lambda}^2_0)_W\simeq \prod_i\Lambda^2_0$ and  $(a^*w^*f_*\underline{\Lambda}^2_0)_A\simeq (\prod_{i}\prod_jp_{ij*}\underline{\Lambda}^2_0)_A\simeq \prod_i\prod_j\Lambda^2_0$. Under these identifications, the map $(f_*\underline{\Lambda}^2_0)_W\xrightarrow{a^*} (f_*\underline{\Lambda}^2_0)_A$ is then simply the product  $\prod_i\Delta\colon \prod_i\Lambda^2_0\rightarrow\prod_i\prod_j\Lambda^2_0$ of the diagonal maps. From the tuples definition of $F_n$ above, it is then clear that $(\underline{F}_f)_W\subseteq \prod_i\Lambda^2_0$ is sent to $(\underline{F}_f)_A\subseteq \prod_i\prod_j\Lambda^2_0$ as was to be argued.
\end{cons}

\begin{obs}\label{obs:pushoutDecompositionOfHornPowers}
    The reason we will be interested in the subcategories above is that we have a union of posets $\prod^n_{i=1}\Lambda^2_0 = M_n\cup_{J_n}B_n$ and so we have the left  pushout in 
    \begin{center}
        \begin{tikzcd}
            J_n \rar[hook]\dar[hook]\ar[dr,phantom,very near end, "\ulcorner"] & M_n\dar[hook]\\
            B_n\rar[hook] & \prod_{i=1}^n\Lambda^2_0
        \end{tikzcd}
        \hspace{10mm}
        \begin{tikzcd}
            \underline{J}_f \rar[hook]\dar[hook]\ar[dr,phantom,very near end, "\ulcorner"] & \underline{M}_f\dar[hook]\\
            \underline{B}_f\rar[hook] & f_*\underline{\Lambda}^2_0
        \end{tikzcd}
    \end{center}
    in $\cat$.
    One way to see this is that fully faithful inclusions of posets induce monomorphisms of their associated simplicial sets under the nerve functor, and so such inclusions are in particular cofibrations in the Joyal model structure (cf. \cite[Thm. 2.2.5.1]{lurieHTT}), whence the strict pushout being a homotopy pushout in $\cat$.
    Hence, since pushouts in $\cat_{\baseCat}=\func(\baseCat\op,\cat)$ are computed pointwise, we also have the right pushout above in $\cat_{\baseCat}$.
\end{obs}
\begin{lem}\label{lem:F_nB_nAdjunction}
    The inclusion $F_n\subseteq B_n$ is cofinal.
\end{lem}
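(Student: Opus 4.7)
The plan is to invoke Joyal/Quillen's Theorem A, so that the inclusion $F_n \subseteq B_n$ being cofinal reduces to showing that for every $b \in B_n$, the slice category $F_n \times_{B_n} (B_n)_{b/}$ has weakly contractible nerve. Since $F_n$ and $B_n$ are posets (as full subposets of the poset $\prod_{i=1}^n \Lambda^2_0$), this slice can be identified explicitly with the full subposet $\{f \in F_n : f \geq b\}$.

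The second step is to stratify by the shape of $b$. Writing $S = \{i : b_i = 00\}$, $T = \{i : b_i = 10\}$, and $U = \{i : b_i = 01\}$, the condition $f \geq b$ forces $f_i = 10$ for $i \in T$ and $f_i = 01$ for $i \in U$, while leaving $f_i$ free in $\{00, 10, 01\}$ for $i \in S$ (subject to the $F_n$ membership constraint, i.e., the existence of at least one $j$ with $f_j = 01$; if $U = \emptyset$, this must be witnessed in $S$). When $b \in F_n$ itself, the slice has $b$ as initial object and is immediately contractible, so the interesting case is $b \in B_n \setminus F_n$, where $S \neq \emptyset$.

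The third step is to construct an explicit deformation retraction. Define $s_b \colon F_n \times_{B_n} (B_n)_{b/} \to F_n \times_{B_n} (B_n)_{b/}$ by
\[
s_b(f)_i = \begin{cases} f_i & \text{if } i \notin S \text{ or } f_i \neq 10, \\ 00 & \text{if } i \in S \text{ and } f_i = 10. \end{cases}
\]
A routine case check shows that $s_b$ is a poset map (using that on $\Lambda^2_0$, comparing two elements both equal to $10$ or one equal to $00$ is compatible with replacing a $10$ by $00$), that $s_b(f) \geq b$ (since for $i \in S$ we only drop within $\{00, 10, 01\}_{\geq 00}$), and that the membership in $F_n$ is preserved (because the $01$-witness is never created in $S$-coordinates where the value is $10$, only possibly already present). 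Because $00 \leq 10$ in $\Lambda^2_0$, we have $s_b(f) \leq f$ pointwise, which gives a natural transformation $s_b \Rightarrow \mathrm{id}$; hence the inclusion of the image of $s_b$ into the slice is a deformation retract.

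The final step is to recognize that the image of $s_b$ is contractible. This image consists of those $f$ in the slice with $f_i \in \{00, 01\}$ for all $i \in S$; as a poset it is $\{00, 01\}^S$ if $U \neq \emptyset$ (an $|S|$-cube, contractible via initial object $(00, \ldots, 00)$) and $\{00, 01\}^S \setminus \{(00,\ldots,00)\}$ if $U = \emptyset$, which is the face poset of the simplex $\Delta^{|S|-1}$ and hence has contractible nerve (its order complex is the barycentric subdivision of $\Delta^{|S|-1}$). The main obstacle is purely bookkeeping: verifying case-by-case that the retraction $s_b$ preserves the slice and the $F_n$-condition across the partition $(S,T,U)$; once this is in place, Quillen's Theorem A concludes the argument.
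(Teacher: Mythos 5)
Your argument is sound, but as written it does not prove the lemma with the paper's literal definitions: in your second step you describe the slice over $b$ as leaving the coordinates $f_i$, $i\in S$, free in $\{00,10,01\}$ subject only to the existence of some $j$ with $f_j=01$. That is the condition cutting out the subposet of tuples \emph{containing at least one $01$}, not the $F_n$ of \cref{cons:subcategoriesOfHornPowers}, which demands that \emph{every} coordinate lie in $\{01,10\}$. With that literal $F_n$ the $S$-coordinates in the slice are constrained to $\{01,10\}$, so your collapse $s_b$ does not even land in the slice --- and in fact no proof could succeed, because the statement is then false: for $n=2$ and $b=(00,00)$ the slice $\{f\in F_2 : f\geq b\}$ is all of $F_2=\{(01,01),(01,10),(10,01)\}$, three pairwise incomparable elements, whose nerve is not weakly contractible; correspondingly the constant functor at a point has colimit $\ast$ over $B_2$ (which has an initial object) but a three-point space over $F_2$. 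The same example exposes the gap in the paper's own proof: the tuple $\widetilde{\underline{a}}$ is not initial in the slice, since the restriction map $\map_{\Lambda^2_0}(01,f_i)\to\map_{\Lambda^2_0}(00,f_i)$ invoked there is an equivalence only when $f_i=01$, and is $\emptyset\to\ast$ when $a_i=00$ and $f_i=10$.

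So your silent enlargement of $F_n$ to the poset of tuples with at least one $01$ entry is the correct repair rather than a mistake on your part: that subposet still consists of tuples containing a $01$, which is the only property used downstream (in \cref{normsOfCofibres} the restricted diagram takes zero values and hence has zero colimit), and for it cofinality genuinely holds, exactly by your argument. Your verification is correct: the coordinatewise collapse $10\mapsto 00$ on the $S$-coordinates is monotone and idempotent, is bounded above by the identity (giving the deformation retraction on nerves), preserves the inequality $\geq b$ since $b_i=00$ on $S$, and preserves the $01$-witness, which when $U=\emptyset$ necessarily sits in $S$ and is untouched; the image is $\{00,01\}^S$ or $\{00,01\}^S\setminus\{(00,\dots,00)\}$, and the latter is the poset of nonempty subsets of $S$, which is contractible (it even has a terminal object, so the barycentric-subdivision remark is unnecessary). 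Note also that your route is essentially forced: even for the corrected $F_n$ the slice over $(00,\dots,00)$ has several minimal elements once $n\geq 2$, so the paper's strategy of exhibiting an initial object cannot be salvaged, whereas your retraction (or, alternatively, covering the slice by the contractible subposets $\{f:f_i=01\}$, $i\in S$, all of whose intersections are nonempty and contractible) genuinely does the job.
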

\begin{proof}
    To apply Quillen's Theorem A (e.g. \cite[Thm. 4.4.20]{markusInfinityBook}), we need to show that for every tuple $\underline{a}\in B_n$,  the category $(F_n)_{\underline{a}/}$ is weakly contractible. We will show this by showing that $(F_n)_{\underline{a}/}$ has an initial object.
    To this end, consider the tuple $\widetilde{\underline{a}}$ defined as follows:
    \begin{equation*}
        \widetilde{a}_i = \begin{cases}
            01 & \text{ if } a_i\in\{00,01\}\\
            10 & \text{ if } a_i=10
        \end{cases}
    \end{equation*}
    The map $00\rightarrow01$ then supplies us with a map $\eta\colon \underline{a}\rightarrow \widetilde{\underline{a}}$. Since $\underline{a}\in B_n$, some of the $a_i$'s are either $00$ or $01$, and so $\widetilde{\underline{a}}$ indeed lies in $F_n$. Hence, we have  an object $(\eta\colon \underline{a}\rightarrow\widetilde{\underline{a}})$ in $(F_n)_{\underline{a}/}$.

    Now since we are dealing with a poset, to show that $\widetilde{\underline{a}}$ is initial in $(F_n)_{\underline{a}/}$ and since $B_n\subseteq \prod_n\Lambda^2_0$ is fully faithful, it suffices to show that  $\map_{\prod_n\Lambda^2_0}(\widetilde{\underline{a}}, \underline{f})\simeq\ast$ for all $(\underline{a}\rightarrow \underline{f})\in (F_n)_{\underline{a}/}$. For this, consider the map $\map_{\prod_n\Lambda^2_0}\big(\widetilde{\underline{a}},\underline{f}\big)\xlongrightarrow{\eta^*}\map_{\prod_n\Lambda^2_0}\big(\underline{a}, \underline{f}\big)\simeq \ast$
    where the last term is contractible by definition of $(\underline{a}\rightarrow\underline{f})\in(F_n)_{\underline{a}/}$.  Now noting that $\map_{\prod_n\Lambda^2_0}(\underline{x},\underline{z})\simeq \prod_i\map_{\Lambda^2_0}(x_i,z_i)$, the map  is easily seen to be an equivalence since $00\rightarrow 01$ induces an equivalence $\map_{\Lambda^2_0}(01,01)\rightarrow \map_{\Lambda^2_0}(00,01)$.
\end{proof}

\begin{cor}\label{cor:F_fB_fAdjunction}
    The inclusion of $\baseCat$--categories $\underline{F}_f\subseteq \underline{B}_f$ is $\baseCat$--cofinal, and so restriction along this inclusion induces an equivalence $\underline{\colim}_{\underline{B}_f}\simeq \underline{\colim}_{\underline{F}_f}$.
\end{cor}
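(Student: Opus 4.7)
The plan is to lift the fibrewise cofinality exhibited in the proof of \cref{lem:F_nB_nAdjunction} to genuine $\baseCat$-cofinality by constructing a $\baseCat$-left adjoint to the inclusion $i\colon \underline{F}_f \hookrightarrow \underline{B}_f$, and then invoking the parametrised analogue of the standard fact that a fully faithful inclusion admitting a left adjoint is cofinal.

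Concretely, I would propose the functor $L\colon \underline{B}_f \to \underline{F}_f$ defined fibrewise by the assignment $\underline{a} \mapsto \widetilde{\underline{a}}$ already exhibited in the proof of \cref{lem:F_nB_nAdjunction}. The task is then to verify that these fibrewise left adjoints assemble into an honest $\baseCat$-left adjoint. For this I would invoke the dual of \cref{criteriaForTAdjunctions}, which reduces the problem to checking a Beck--Chevalley square for every morphism $w\colon W \to V$ in $\baseCat$. Using the explicit description carried out in \cref{cons:subcategoriesOfHornPowers}, the restriction $w^*$ acts as a product of diagonal embeddings on the tuple description of $f_*\underline{\Lambda}^2_0$, whereas $L$ acts coordinate-wise---replacing each $00$ by $01$ and leaving each $10$ alone. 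The commutation of the two composites is then transparent: coordinate-wise substitution commutes with duplication of coordinates. This Beck--Chevalley check is the main (though routine) technical step.

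With the $\baseCat$-adjunction $L \dashv i$ in hand---and with $i$ automatically $\baseCat$-fully faithful since its fibres are fully faithful inclusions of posets---I would then conclude $\baseCat$-cofinality by mirroring the unparametrised argument of \cref{lem:F_nB_nAdjunction}. Namely, for each $V \in \baseCat$ and each $\underline{a} \in (\underline{B}_f)_V$, the unit map $\underline{a} \to iL(\underline{a})$ exhibits $L(\underline{a})$ as a $\baseCat_{/V}$-initial object of the parametrised slice $\underline{F}_f \times_{\underline{B}_f} (\underline{B}_f)^{\underline{a}/}$, which is therefore $\baseCat_{/V}$-weakly contractible. Applying the parametrised Quillen's Theorem A from \cite{shahThesis} gives the $\baseCat$-cofinality of $i$, whence the asserted equivalence $\underline{\colim}_{\underline{B}_f} \simeq \underline{\colim}_{\underline{F}_f}$ of $\baseCat$-colimits follows directly from the definition of $\baseCat$-cofinality.
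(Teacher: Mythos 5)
Your proposal is correct, but it takes a noticeably heavier route than the paper. The paper's proof is a one-liner: by \cite[Thm.\ 6.7]{shahThesis}, $\baseCat$--cofinality can be checked fibrewise, and the fibres of $\underline{F}_f\subseteq\underline{B}_f$ are exactly inclusions of the form $F_n\subseteq B_n$, which are cofinal by \cref{lem:F_nB_nAdjunction}; nothing more is needed. You instead assemble the fibrewise left adjoints $\underline{a}\mapsto\widetilde{\underline{a}}$ into a genuine $\baseCat$--adjunction via the dual of \cref{criteriaForTAdjunctions}, using the description of the restriction functors as products of diagonals from \cref{cons:subcategoriesOfHornPowers} to verify Beck--Chevalley, and then pass through parametrised slices and parametrised Theorem~A. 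This is sound: the Beck--Chevalley check does hold (coordinatewise substitution $00\mapsto 01$, $01\mapsto 01$, $10\mapsto 10$ commutes with duplication of coordinates, and in a poset agreement on objects forces the transformation to be an equivalence), and a parametrised right adjoint, or equivalently your contractible-slice argument, is indeed $\baseCat$--cofinal. The trade-off is that your argument proves more than is needed --- it produces an actual $\baseCat$--left adjoint to the inclusion, which could be independently useful --- but the adjunction-assembly step is logically redundant for the corollary: the very result you invoke at the end (Shah's Theorem~A/cofinality theorem) already reduces $\baseCat$--cofinality to the fibrewise statement, at which point the fibrewise initial objects of \cref{lem:F_nB_nAdjunction} finish the proof without any Beck--Chevalley verification.
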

\begin{proof}
    We may deduce this statement from the unparametrised \cref{lem:F_nB_nAdjunction} immediately since $\baseCat$--cofinality may be checked fibrewise by \cite[Thm. 6.7]{shahThesis}.
\end{proof}

\begin{nota}
    Let $\underline{\sC}\totimes$ be a $\baseCat$--symmetric monoidal category, $f\colon U\rightarrow V$ a map in $\baseCat$, and $(A\rightarrow B) = \varphi \colon \underline{\Delta}^1\rightarrow f^*\underline{\sC}$ a morphism in $f^*\underline{\sC}$. It will be convenient to use the notation $\underline{\colim}_{\underline{J}_f}f_{\otimes}(A\rightarrow B)$ for the $\underline{J}_f$--shaped colimit of the functor $\underline{J}_f\subseteq f_*\underline{\Delta}^1 \xrightarrow{f_*\varphi} f_*f^*\underline{\sC}\xrightarrow{f_{\otimes}} \underline{\sC}$.
\end{nota}

We now come to the main proposition of this subsection:

\begin{prop}\label{normsOfCofibres}
    Let $\underline{\sC}$ be a $\baseCat$--pointed category with all $\baseCat$--colimits equipped with a $\baseCat$--distributive symmetric monoidal structure $\underline{\sC}\totimes$. Let $f\colon U\rightarrow V$ be a map in $\baseCat$. If we have a cofibre sequence $A \rightarrow B\rightarrow C$ in $f^*\underline{\sC}$, then applying $f_{\otimes}$ yields a cofibre sequence in $\underline{\sC}$
    \[\underline{\colim}_{\underline{J}_f}f_{\otimes}(A\rightarrow B)\longrightarrow f_{\otimes}B\longrightarrow f_{\otimes}C.\]
\end{prop}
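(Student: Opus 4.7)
The strategy is to encode the cofibre $C$ as a parametrised pushout and use $\baseCat$-distributivity to translate $f_{\otimes} C$ into a colimit over the parametrised cube $f_*\underline{\Lambda}^2_0$, then decompose this cube to extract the desired cofibre sequence. Concretely, I would write $C \simeq \underline{\colim}_{\underline{\Lambda}^2_0}\partial$ where $\partial\colon \underline{\Lambda}^2_0 \to f^*\underline{\sC}$ is the diagram sending $00 \mapsto A$, $10 \mapsto B$, and $01 \mapsto 0$ (using the labelling of \cref{cons:subcategoriesOfHornPowers}); then $\baseCat$-distributivity of $\underline{\sC}\totimes$ applied to this colimit diagram yields $f_{\otimes} C \simeq \underline{\colim}_{f_*\underline{\Lambda}^2_0}(f_{\otimes} \circ f_*\partial)$.

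Next, I would invoke the pushout decomposition $f_*\underline{\Lambda}^2_0 \simeq \underline{M}_f \cup_{\underline{J}_f} \underline{B}_f$ from \cref{obs:pushoutDecompositionOfHornPowers} and apply \cref{cor:(co)limitDecompositionAlongPushouts} to rewrite $f_{\otimes} C$ as the pushout of $\underline{\colim}_{\underline{M}_f}(f_{\otimes}\circ f_*\partial)$ and $\underline{\colim}_{\underline{B}_f}(f_{\otimes}\circ f_*\partial)$ over $\underline{\colim}_{\underline{J}_f}(f_{\otimes}\circ f_*\partial)$. Two of these corners are straightforward: identifying $\underline{M}_f \simeq f_*\underline{\Delta}^1$ (the coordinates ranging over $\{00,10\}$), the restriction of $f_{\otimes} \circ f_*\partial$ to $\underline{M}_f$ is precisely the diagram $f_{\otimes}(A\to B)$, so $\baseCat$-distributivity again gives $\underline{\colim}_{\underline{M}_f}(f_{\otimes}\circ f_*\partial) \simeq f_{\otimes}(\underline{\colim}_{\underline{\Delta}^1}(A\to B)) \simeq f_{\otimes} B$; and restricting this same diagram along $\underline{J}_f \hookrightarrow \underline{M}_f$ manifestly recovers the diagram $f_{\otimes}(A\to B)|_{\underline{J}_f}$ appearing in the statement.

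The main obstacle is showing that $\underline{\colim}_{\underline{B}_f}(f_{\otimes}\circ f_*\partial) \simeq 0$; once this vanishing is established, the resulting pushout square exhibits $\underline{\colim}_{\underline{J}_f} f_{\otimes}(A\to B) \to f_{\otimes} B \to f_{\otimes} C$ as the desired cofibre sequence. For the vanishing, I would pass to the $\baseCat$-cofinal subcategory $\underline{F}_f \hookrightarrow \underline{B}_f$ from \cref{cor:F_fB_fAdjunction}. Every object of $\underline{F}_f$ is fibrewise a tuple of entries in $\{10, 01\}$ with at least one entry equal to $01$; applying $f_*\partial$ followed by $f_{\otimes}$ therefore yields a tensor product having the zero object as a factor, which vanishes by the fibrewise distributivity implicit in $\baseCat$-distributivity (since $0\in f^*\underline{\sC}$ is the empty $\baseCat$-colimit and the tensor preserves colimits in each variable). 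Hence $f_{\otimes}\circ f_*\partial$ is pointwise zero on $\underline{F}_f$, and its parametrised colimit is $0$ because $\underline{\sC}$ is $\baseCat$-pointed, completing the argument.
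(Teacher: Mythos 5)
Your proposal is correct and follows essentially the same route as the paper: encode the cofibre as a $\underline{\Lambda}^2_0$-colimit, apply $\baseCat$-distributivity, decompose $f_*\underline{\Lambda}^2_0$ as $\underline{M}_f\cup_{\underline{J}_f}\underline{B}_f$ via \cref{obs:pushoutDecompositionOfHornPowers} and \cref{cor:(co)limitDecompositionAlongPushouts}, and kill the $\underline{B}_f$-corner by passing to the cofinal $\underline{F}_f$ where the diagram is constantly zero. The only cosmetic difference is that the paper identifies $\underline{\colim}_{\underline{M}_f}\delta\simeq f_{\otimes}B$ by noting $\underline{M}_f$ has a final object, whereas you invoke distributivity a second time — both are fine.
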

\begin{proof}
    We write $\partial \colon \underline{\Lambda}^2_0 \rightarrow f^*\underline{\sC}$ for the diagram $(0\leftarrow A \rightarrow B)$ for the diagram defining the cofibre $C$. Since $f_{\otimes}$ was assumed to be distributive, we have a colimit diagram
    \[\delta\tcocone\colon (f_*\underline{\Lambda}^2_0)\tcocone \longrightarrow f_*\big((\underline{\Lambda}^2_0)\tcocone\big)\xlongrightarrow{f_*(\partial\tcocone)} f_*f^*\underline{\sC} \xlongrightarrow{f_{\otimes}} \underline{\sC} \]
    and so we obtain that $f_{\otimes}C$ is the colimit of the diagram $\delta\colon f_*\underline{\Lambda}^2_0\rightarrow \underline{\sC}$. Now applying the colimit decomposition \cref{cor:(co)limitDecompositionAlongPushouts} on the pushout from \cref{obs:pushoutDecompositionOfHornPowers}, we obtain the pushout diagram in $\underline{\sC}$ (where we have suppressed the restriction functors for readability)
    \begin{center}
        \begin{tikzcd}
            \underline{\colim}_{\underline{J}_f}\delta \rar\dar\ar[dr, phantom, very near end, "\ulcorner"] & \underline{\colim}_{\underline{M}_f}\delta\dar\\
            \underline{\colim}_{\underline{B}_f}\delta \rar & f_{\otimes}C.
        \end{tikzcd}
    \end{center}
    But then $\underline{M}_f$ has a final object and the evaluation at the final object of the functor $\underline{M}_f\subseteq f_*\underline{\Lambda}^2_0\xrightarrow{\delta} \underline{\sC}$ is $f_{\otimes}B$, hence we get $\underline{\colim}_{\underline{M}_f}\delta\simeq f_{\otimes}B$. Next, by distributivity, tensoring with the zero object yields the zero object, and so since by definition $\underline{F}_f$ contains at least one $01$ in the tuples description from \cref{cons:subcategoriesOfHornPowers}, we see that the restricted functor $\underline{F}_f\subseteq f_*\underline{\Lambda}^2_0\xrightarrow{\delta}\underline{\sC}$ has constant value the zero object. Therefore, all in all,  by \cref{cor:F_fB_fAdjunction}, we have that $\underline{\colim}_{\underline{B}_f}\delta\simeq \underline{\colim}_{\underline{F}_f}\delta\simeq 0$. Finally, by definition of our notation, $\underline{\colim}_{\underline{J}_f}f_{\otimes}(A\rightarrow B)=\underline{\colim}_{\underline{J}_f}\delta$. Combining all of these gives the claimed cofibre sequence.
\end{proof}

\subsection{Special case of index 2 quotients in the equivariant setting}\label{subsection:specialCaseIndex2Cubes}

In preparation for our application to the algebraic K--theory of 2--groups in the next section, we will be considering the special case of $\baseCat = \orbit_G$ and where all parametrised colimits in sight come from an index 2 subgroup $H\leq G$. As we shall see, the special facts in the following observation conspire to make this situation particularly simple.  

\begin{obs}\label{obs:Index2GroupTheory}
    Let us collect all the elementary group theory we will need here. Let $H\leq G$ be an index 2 subgroup and $K\leq G$ be another subgroup.
    \begin{enumerate}
        \item If the inclusion $H\cap K\leq K$ is proper, then it is also an index 2 inclusion. This is because, fixing any $x\in K\backslash H$, we know by $|G/H|=2$ that for any other $k\in K$, $k=xh$ for some $h\in H$. But then $h = x^{-1}k\in K$ and so $h\in H\cap K$, whence $|K/H\cap K|=2$.  
        \item By the usual double coset decomposition, we know that $G/K\times G/H\cong \coprod_{g\in K\backslash G/H}G/H^g\cap K=\coprod_{g\in K\backslash G/H}G/H\cap K$ where we have used that $H\leq G$ was a normal subgroup since $|G/H|=2$. But then, again by $|G/H|=2$, there are only two possibilities for what the set $K\backslash G/H$ can be, whence
        \begin{equation*}
            G/K\times G/H = \begin{cases}
                (G/K)^{\coprod2} & \text{ if } K\leq H\\
                G/K\cap H & \text{ if } K\not\leq H
            \end{cases}
        \end{equation*}
    \end{enumerate}
\end{obs}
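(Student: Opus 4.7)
The observation is a self-contained piece of elementary finite group theory, and the author has already indicated the argument in the statement. I will lay out how I would organise a clean proof.

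For part (1), the plan is to directly exhibit a coset decomposition of $K$ with respect to $H \cap K$. Since $[G:H]=2$, the subgroup $H$ is normal in $G$ and $G = H \sqcup xH$ for any $x \in G\setminus H$. Under the hypothesis that the inclusion $H\cap K \leq K$ is proper, I would choose a fixed $x \in K \setminus H$ as witness. Then for any $k \in K$, either $k \in H$, in which case $k \in H\cap K$, or $k \in xH$, in which case $k = xh$ for some $h \in H$; but $h = x^{-1}k$ lies in $K$ since both $x$ and $k$ do, so $h \in H \cap K$ and $k \in x(H \cap K)$. This exhibits $K = (H \cap K) \sqcup x(H \cap K)$, proving $[K : H \cap K] = 2$.

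For part (2), the plan is to invoke the standard double coset decomposition of $G$-sets, namely $G/K \times G/H \cong \coprod_{g \in K\backslash G/H} G/(H^g \cap K)$, and then simplify. Normality of $H$ (again from $[G:H]=2$) gives $H^g = H$ for all $g$, so every summand collapses to $G/(H \cap K)$. It then remains to compute $|K\backslash G/H|$, which I would do by splitting into the two cases. If $K \leq H$, then $H \cap K = K$ and the double cosets $KH = H$ and $KxH = xH$ for any $x\notin H$ are distinct since $|G/H|=2$, so $|K\backslash G/H|=2$ and we recover $(G/K)^{\sqcup 2}$. If $K \not\leq H$, picking $x \in K \setminus H$ shows $xH \subseteq KH$, so $KH \supseteq H \sqcup xH = G$; hence there is a unique double coset and the product is $G/(H\cap K)$.

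There is no real obstacle here; the only place one has to be mildly careful is in citing the double coset decomposition and noting that conjugation issues disappear by normality of $H$. Once part (1) is in hand, the formula in part (2) for the case $K\not\leq H$ can be further rewritten using $[K:H\cap K]=2$ to identify $G/(H\cap K)$ with a $G$-set of the expected order $|G/K|\cdot|G/H|$, which is a useful consistency check but not logically needed for the statement.
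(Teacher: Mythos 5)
Your proposal is correct and follows essentially the same route as the paper: part (1) by exhibiting $K=(H\cap K)\sqcup x(H\cap K)$ for a chosen $x\in K\setminus H$, and part (2) by the double coset decomposition, using normality of $H$ (from index 2) to collapse the conjugates and then counting $|K\backslash G/H|$ in the two cases $K\leq H$ and $K\not\leq H$. Your explicit verification that $KH=G$ when $K\not\leq H$ just makes precise the paper's remark that there are only two possibilities for $K\backslash G/H$.
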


\begin{prop}\label{prop:specialIndex2Identification}
    Let $H\leq G$ have index 2 and let $w\colon G/H\rightarrow G/G$ be the unique map. In this case, the map $\varphi \colon (w_!w^*\underline{\ast})\tcone \hookrightarrow w_*w^*\underline{\Delta}^1$ from \cref{cons:singletonInclusion} induces via \cref{prop:universalConing} an equivalence $\overline{\varphi}\colon ((w_!w^*\underline{\ast})\tcone)\tcocone \rightarrow w_*w^*\underline{\Delta}^1$. Equivalently, in the notation of \cref{cons:subcategoriesOfHornPowers}, the map $\varphi$ induces an equivalence $\varphi \colon (w_!w^*\underline{\ast})\tcone \xrightarrow{\simeq} \underline{J}_w$.
\end{prop}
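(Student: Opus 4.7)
The two statements are equivalent: by construction, $\underline{J}_w \subset \underline{M}_w \simeq w_*w^*\underline{\Delta}^1$ is the parametrised subposet obtained by removing the fibrewise terminal vertex (the ``top of the cube''), which is a parametrised terminal object by \cref{prop:parametrisedCubesAreFibrewiseCubes}; hence the inclusion $(\underline{J}_w)^{\underline{\triangleright}} \hookrightarrow w_*w^*\underline{\Delta}^1$ is automatically an equivalence, and so $\overline{\varphi}$ is an equivalence if and only if the factored map $\varphi \colon (w_!w^*\underline{\ast})^{\underline{\triangleleft}} \to \underline{J}_w$ is. I will prove the latter.

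Both sides are fibrewise posets, so $\varphi$ is an equivalence iff it is fully faithful and fibrewise bijective on objects. Fully faithfulness of $\varphi_w$ into $w_*w^*\underline{\Delta}^1$ is \cref{cor:singletonInclusionFullyFaithful}, and since $\underline{J}_w \subset w_*w^*\underline{\Delta}^1$ is a full subposet, the factored map $\varphi$ is fully faithful too. That this factorisation exists requires checking that the image of $\varphi_w$ avoids the top vertex of the cube: unwinding \cref{cons:singletonInclusion}, the map $\psi_w$ sends each singleton to a vertex with exactly one coordinate equal to $1$ (under the product decomposition $w^*w_*w^*\underline{\Delta}^1 \simeq w^*\underline{\Delta}^1 \times \overline{c}_*c^*w^*\underline{\Delta}^1$, by picking $1$ in the ``diagonal'' factor and $0$'s off-diagonally), while the cone apex of $\varphi_w = \psi_w^{\underline{\triangleleft}}$ lands on the all-$0$ initial vertex; the all-$1$ top vertex therefore never appears in the image.

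For fibrewise bijectivity on objects, fix $G/K \in \orbit_G$; by the basechange stability of \cref{rmk:singletonInclusionBasechange}, it suffices to evaluate at the top of $(\orbit_G)_{/(G/K)}$. Using the double-coset decomposition recorded in \cref{obs:Index2GroupTheory}(2), the pullback $\overline{w}$ of $w$ along $G/K \to G/G$ splits into two cases. If $K \leq H$, then $\overline{w}$ is the fold map $(G/K)^{\sqcup 2} \to G/K$, and by \cref{example:singletonInclusionForOrdinaryCubes} the fiber of $\psi_{\overline{w}}$ at $G/K$ is the classical inclusion of two singletons into $\Delta^1 \times \Delta^1$ hitting $(1,0)$ and $(0,1)$, with cone apex on $(0,0)$, whose image is exactly $J_2 = \underline{J}_w|_{G/K}$. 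If $K \not\leq H$, then $\overline{w}$ is the proper index-$2$ inclusion $G/(K \cap H) \to G/K$, which is not an equivalence, so by atomicity and \cref{obs:topFixedPointsOfIndexedCoproducts}, $(\overline{w}_!\underline{\ast})_{G/K} = \emptyset$; the fiber of $\varphi_{\overline{w}}$ is then $\Delta^0 \to \Delta^1$ sending the apex to $0$, whose image is exactly $J_1 = \{0\} = \underline{J}_w|_{G/K}$.

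Combining fully faithfulness with fibrewise bijectivity on objects yields that $\varphi$ is a fibrewise equivalence of parametrised posets, hence an equivalence of $G$--categories. The only genuine subtlety is the two-case analysis stemming from index-$2$ normality; the case $K \not\leq H$ is exactly where atomicity kicks in via \cref{obs:topFixedPointsOfIndexedCoproducts} to force both source and target to collapse to a single point at the top of the slice, which is essential for the match.
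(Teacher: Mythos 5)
Your proof is correct, and its ingredients are exactly those of the paper's argument (basechange stability of the singleton inclusion from \cref{rmk:singletonInclusionBasechange}, the index--2 double--coset dichotomy of \cref{obs:Index2GroupTheory}, the fold--map identification of \cref{example:singletonInclusionForOrdinaryCubes}, and the vanishing of top fixed points from \cref{obs:topFixedPointsOfIndexedCoproducts}), but you organise them differently. The paper proves the statement by induction on $|G|$: for $K\leq H$ it identifies the restricted map with the ordinary $2$--cube singleton inclusion, for $K\not\leq H$ with $K\lneq G$ it invokes the inductive hypothesis for the index--$2$ map $K/(K\cap H)\to K/K$, and it treats $K=G$ by the direct computation $(\emptyset^{\triangleleft})^{\triangleright}\to\Delta^1$. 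You instead note that checking an equivalence of $G$--categories only ever requires the fibre at each $G/K$, which is the top fibre of the base--changed map; hence the $K\not\leq H$ case reduces immediately to the same direct computation the paper performs at $K=G$ (empty source by \cref{obs:topFixedPointsOfIndexedCoproducts}, target $\Delta^1$, apex $\mapsto 0$), and no induction is needed. You also verify the $\underline{J}_w$--formulation directly, combining \cref{cor:singletonInclusionFullyFaithful} with a fibrewise bijection on objects, rather than identifying each fibre of $\overline{\varphi}$ with a known double--cone equivalence as the paper does; for posets this is a legitimate and clean criterion. The only loose spot is your heuristic remark that singletons land on vertices with ``exactly one coordinate equal to $1$'' and therefore miss the top vertex --- in the fibres where the cube is one--dimensional this would be false as stated, but there the singleton part is empty anyway, and your subsequent fibrewise case analysis covers it, so nothing is actually missing. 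Net effect: your route is slightly more economical, while the paper's induction proves the stronger statement that the base--changed map is an equivalence of slice--parametrised categories at every stage.
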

\begin{proof}
    Equivalences can be checked fibrewise, and so fixing a subgroup $K\leq G$ and writing $r\colon G/K\rightarrow G/G$ for the unique map, we will prove by induction on the order of $G$ that 
    \begin{equation}\label{eqn:inductiveProofMap}
        r^*\overline{\varphi}\colon r^*((w_!w^*\underline{\ast})\tcone)\tcocone\simeq ((r^*w_!w^*\underline{\ast})\tcone)\tcocone \longrightarrow r^*w_*w^*\underline{\Delta}^1
    \end{equation}
    is an equivalence. As the base case, the statement is vacuously true when $|G|=1$. Now, first suppose that $K\lneq G$. By the dichotomy in \cref{obs:Index2GroupTheory} (2), we have the two pullbacks 
    \begin{center}
        \begin{tikzcd}
            G/K\coprod G/K\ar[dr, phantom ,very near start, "\lrcorner"]\rar["i\sqcup gi"]\dar["\id\sqcup \id"'] & G/H\dar["w"] && G/H\cap K\ar[dr, phantom ,very near start, "\lrcorner"]\rar["j"]\dar["\ell"'] & G/H\dar["w"]\\
            G/K\rar["r"'] & G/G && G/K\rar["r"'] & G/G
        \end{tikzcd}
    \end{center}
    according as $K\leq H$ or $K\not\leq H$. In the case $K\leq H$, we see that $r^*\psi\colon r^*w_!w^*\underline{\ast}\rightarrow r^*w_*w^*\underline{\Delta}^1$ is identified with the ordinary singleton inclusion
    \[(i^*\sqcup i^*g^*)w^*\underline{\ast} \simeq r^*\underline{\ast}\sqcup r^*\underline{\ast}\longrightarrow r^*\underline{\Delta}^1\times r^*\underline{\Delta}^1\simeq (i^*\times i^*g^*)w^*\underline{\Delta}\]
    since the map $\psi$ was stable under basechange by \cref{rmk:singletonInclusionBasechange}, and the map $r^*\underline{\ast}\sqcup r^*\underline{\ast}\rightarrow r^*\underline{\Delta}^1\times r^*\underline{\Delta}^1$ is the ordinary singleton inclusion by \cref{example:singletonInclusionForOrdinaryCubes}. Therefore, in this case we indeed have that \cref{eqn:inductiveProofMap} is an equivalence. As for the case $K\not\leq H$, writing $z\colon G/H\cap K\rightarrow G/G$ for the unique map, we have the identification of $r^*\psi$ with
    $\psi\colon \ell_!z^*\underline{\ast}\longrightarrow \ell_*z^*\underline{\Delta}^1$ again by stability of singleton inclusions with basechange \cref{rmk:singletonInclusionBasechange}. But then by \cref{obs:Index2GroupTheory} (1) we know that $H\cap K\leq K$ was an index 2 inclusion and $|K|<|G|$, and so by the inductive hypothesis, \cref{eqn:inductiveProofMap} is also an equivalence, finishing the proof for this case.

    Finally, for the case when $K= G$, we write $i\colon \{G/G\}\hookrightarrow \orbit_G\op$ for the inclusion. We would like to argue that applying $i^*$ (which is the evaluation at $G/G$) to $\overline{\varphi}$ gives an equivalence in $\cat$. Now, we know that $i^*w_*w^*\underline{\Delta}^1\simeq \Delta^1$. On the other hand, $i^*w_!\underline{\ast}\simeq \emptyset$ by \cref{obs:topFixedPointsOfIndexedCoproducts}. Therefore, we see now that $i^*\overline{\varphi}$ is identified with the map $(\emptyset^{\triangleleft})^{\triangleright}\longrightarrow\Delta^1$
    which is clearly an equivalence. This completes the inductive step and hence the proof of the proposition.
\end{proof}

\section{Noncommutative motives and equivariant algebraic K--theory}\label{section:twoVariantsK-theory}
In the final section, we put together all the general theory developed above to treat the parametrised version of algebraic K--theory.  Building upon  \cref{sec4:perfectStables}, we shall  introduce  split Karoubi  sequences in the parametrised context in \cref{subsection:splitKaroubiPushouts} and deduce their properties from the unparametrised context via \cref{mainMackeyInclusion}. Using these sequences, we construct two variants of parametrised noncommutative motives $\underline{\nmot}_{\baseCat}\pointwise$ and $\underline{\nmot}_{\baseCat}^{\mathrm{nm}}$, called \textit{pointwise} and \textit{normed} motives, respectively. The former will be the one with a desirable universal property and which corepresents K--theory, whereas the latter will be the one that may be  refined multiplicatively for formal reasons (cf. \cref{univPropStableMotives,MotivicCorepresentability} resp. \cref{monoidalityOfTMotives}). Next, we shall employ the theory of parametrised cubes laid down in \cref{section:theoryOfParametrisedCubes} to show that these two types of motives coincide in the equivariant case when $G$ is a 2--group, i.e. $|G|=2^n$ for some $n$. Finally, we  combine this with \cref{monoidalBorelificationPrinciple} to treat the case of Swan K--theory.

\subsection{Split Karoubi sequences and additive functors} \label{subsection:splitKaroubiSequences} \label{subsection:splitKaroubiPushouts}
In our setting, the notion of (split) Karoubi sequences, so central in giving a universal characterisation of algebraic K--theory, will simply be a direct adaptation of those of \cite{nineAuthorsII} in light of \cref{mainMackeyInclusion}. 

\begin{defn}\label{definitionKaroubi}
A sequence $\underline{\sC} \xrightarrow{i} \underline{\D} \xrightarrow{p} \underline{\E}$ in $\underline{\cat}\Tperfect_{\baseCat}$ with vanishing composite is called a \textit{Karoubi sequence} if it is both a fibre and cofibre sequence. It is moreover said to be a \textit{split} Karoubi sequence if it can be completed to ${\baseCat}$-adjunctions 
 \begin{equation}\label{eqn:splitKaroubiSequence}
    \begin{tikzcd}
    {\underline{\sC}}  \rar["i" description, hook] & {\underline{\D}} \rar["p" description, two heads] \lar[shift right = 3, "q"', two heads]\lar[shift left = 3, "r", two heads]& {\underline{\E}} \lar[shift right = 3, "\ell"', hook]\lar[shift left = 3, "j", hook]
    \end{tikzcd}
    \end{equation}
where an arrow stacked above another denotes being a left adjoint. If we only have the left adjoints $q$ and $\ell$ (resp. only right adjoints $r$ and $j$), then we say that the Karoubi sequence is left--split (resp. right--split).
\end{defn}

\begin{rmk}\label{(co)fibreCreatedPointwiseInMackey}
Since $\cat\perfect$ is semiadditive, (co)limits in $\mackey_{\baseCat}(\cat\perfect)$ are computed pointwise  by \cite[Cor. 6.7.1]{barwick1}. On the other hand, \cite[$\S A.1, A.2$]{nineAuthorsII} give us very good control of the fibre and cofibre sequences in $\cat\perfect$ in terms of (split) Karoubi sequences. Hence, in conjunction with  \cref{MackeyInclusionCreatesFibreCofibres}, we will have a good control of the parametrised (split) Karoubi sequences as defined above. 
\end{rmk}

\begin{prop}\label{workhorseAdjunctionYoga}
Suppose we have a sequence $\underline{\sC} \xrightarrow{i} \underline{\D} \xrightarrow{p} \underline{\E}$ in $\underline{\cat}\Tperfect_{\baseCat}$ with vanishing composite. Then the following conditions are equivalent for the given sequence: 
\begin{enumerate}
    \item it is a fibre sequence, and $p$ admits a fully faithful left (resp. right) adjoint $\ell$,
    \item it is a cofibre sequence, and $i$ is fully faithful and admits a left (resp. right) adjoint $q$. 
\end{enumerate}
Furthermore, if (1) and (2) hold, then both the original sequence and the left (resp. right) sequence $\underline{\E}\xrightarrow{\ell}\underline{\D}\xrightarrow{q}\underline{\sC}$ are Karoubi sequences.
\end{prop}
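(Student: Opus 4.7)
The strategy is to bootstrap the unparametrised analogue \cite[Lem. A.2.5]{nineAuthorsII} to the parametrised setting via the conservative $\baseCat$--faithful inclusion $\underline{\cat}\Tperfect_{\baseCat} \subset\underline{\cmonoid}_{\baseCat}(\cat\perfect) \simeq \underline{\mackey}_{\baseCat}(\cat\perfect)$ of \cref{MackeyInclusionCreatesFibreCofibres}, which furthermore preserves and reflects parametrised (co)limits. Combined with the fact that (co)limits in $\mackey_{\baseCat_{/V}}(\cat\perfect)$ are computed pointwise by \cite[Cor. 6.7.1]{barwick1}, and that parametrised (co)limits decompose into fibrewise (co)limits and indexed (co)products, the first observation will be that the sequence $\underline{\sC}\xrightarrow{i}\underline{\D}\xrightarrow{p}\underline{\E}$ is a parametrised fibre (resp. cofibre) sequence in $\underline{\cat}\Tperfect_{\baseCat}$ if and only if it is fibrewise a fibre (resp. cofibre) sequence of perfect--stable categories in $\cat\perfect$.

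Granting this, it suffices to establish the implication (1) $\Rightarrow$ (2) in the ``left'' case; the converse is the same argument run backwards, and the ``right'' version is entirely analogous. Under hypothesis (1), fibrewise full faithfulness of $\ell$ and the fibre sequence condition both descend to every $\sC_V \to \D_V \to \E_V$ in $\cat\perfect$, so applying \cite[Lem. A.2.5]{nineAuthorsII} fibrewise, each such sequence will also be a cofibre sequence, $i_V$ will be fully faithful, and $i_V$ will admit a left adjoint $q_V$. By the first paragraph the cofibre sequence condition upgrades to the parametrised level, and fibrewise full faithfulness of $i$ is precisely parametrised full faithfulness.

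The crucial step is then to assemble the fibrewise left adjoints $q_V$ into a parametrised left adjoint $q\colon\underline{\D}\rightarrow\underline{\sC}$. The plan is to construct $q$ directly in the parametrised setting: the parametrised counit $\varepsilon\colon \ell p \Rightarrow \id_{\underline{\D}}$ coming from the adjunction $\ell\dashv p$ is a morphism in $\underline{\func}_{\baseCat}(\underline{\D},\underline{\D})$, and its parametrised fibre (computed pointwise using \cite[Prop. 3.1.12]{kaifPresentable}) will yield a parametrised functor $F\colon\underline{\D}\to\underline{\D}$ whose fibrewise value agrees with $i_V q_V$ by the unparametrised \cite[Lem. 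A.2.5]{nineAuthorsII}. Since the parametrised fully faithful inclusion $i\colon \underline{\sC}\hookrightarrow\underline{\D}$ identifies $\underline{\sC}$ with the parametrised full subcategory of objects killed by $p$ (which the fibrewise values of $F$ satisfy, being fibres of a map whose $p$--image is an equivalence by the counit triangle identity), one will obtain a unique factorisation $F \simeq i\circ q$ for some parametrised functor $q\colon\underline{\D}\to\underline{\sC}$; fibrewise this $q$ agrees with $q_V$, and a direct check of the counit/unit identities in the parametrised setting will show $q\dashv i$.

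The main obstacle I anticipate is verifying the factorisation $F\simeq i\circ q$ and the adjunction $q\dashv i$ genuinely at the parametrised level rather than merely fibrewise; a viable alternative would be to invoke \cref{criteriaForTAdjunctions} directly and verify the Beck--Chevalley condition $q_W f^* \simeq f^* q_V$ for each $f\colon W\to V$ in $\baseCat$ using uniqueness of adjoints together with the parametrisation of the counit $\varepsilon$. The closing Verdier sequence claim then follows from (1)+(2) directly for the original sequence, and for the left sequence $\underline{\E}\xrightarrow{\ell}\underline{\D}\xrightarrow{q}\underline{\sC}$ by applying the already--proven equivalence (1)$\Leftrightarrow$(2) in its ``right'' incarnation (where $q$ has the fully faithful right adjoint $i$), making it both a fibre and a cofibre sequence.
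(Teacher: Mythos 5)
Your proposal follows essentially the same route as the paper: the paper's entire proof is the observation that the inclusion of \cref{MackeyInclusionCreatesFibreCofibres} preserves and reflects (co)fibre sequences, that these are computed pointwise in $\mackey_{\baseCat}(\cat\perfect)$, and hence that the whole statement can be checked fibrewise and reduced to \cite[Lem. A.2.5]{nineAuthorsII}. Where you differ is only in degree of detail: you spell out the assembly of the fibrewise adjoints $q_V$ into a genuine $\baseCat$--adjoint, a point the paper treats as immediate; your fallback via \cref{criteriaForTAdjunctions} is the right way to make this explicit, since the Beck--Chevalley condition for the $q_V$ holds because $i_Vq_V$ is built functorially from the $\baseCat$--natural counit of $\ell\dashv p$ by a construction that the (exact) restriction functors preserve. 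One small slip in your sketch: in the left--adjoint case one has $i q \simeq \cofib(\varepsilon\colon \ell p \Rightarrow \id_{\underline{\D}})$, not the fibre (the fibre of the unit is what appears in the right--adjoint case); this does not affect the structure of your argument, and your handling of the final Verdier claim, like the paper's, ultimately just invokes the ``furthermore'' clause of \cite[Lem. A.2.5]{nineAuthorsII} fibrewise.
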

\begin{proof}
Since the inclusion $\underline{\cat}\Tperfect_{\baseCat} \subset \underline{\cmonoid}_{\baseCat}(\cat\perfect)$ preserves and reflects fibres and cofibres by \cref{MackeyInclusionCreatesFibreCofibres}, and since these are pointwise in $\mackey_{\baseCat}(\cat\perfect)$ by the remark above, we can check the Karoubiness of these sequences by checking fibrewise. This immediately reduces the result to \cite[Lem. A.2.5]{nineAuthorsII}.
\end{proof}

\begin{cor}\label{cor:KaroubiFibreDescription}
    Suppose we have a cofibre sequence  $\underline{\sC}\xrightarrow{i}\underline{\D}\rightarrow\underline{\E}$ in $\underline{\cat}\Tperfect_{\baseCat}$ such that $i$ was fully faithful. Then it is already a Karoubi sequence.
\end{cor}
\begin{proof}
    Again, by appealing to \cref{mainMackeyInclusion}, we may prove this statement in $\mackey_{\baseCat}(\cat\perfect)$, where it is known by \cite[Cor. A.1.10]{nineAuthorsII}.
\end{proof}


\begin{lem}\label{indexedProductsPreserveAdjunctions} Let $f \colon W \rightarrow V$ be in $\baseCat$. Then $f_*$ sends a split Karoubi sequence 
in $\big(\underline{\cat}\Tperfect_{\baseCat}\big)_W$ to one in $\big(\underline{\cat}\Tperfect_{\baseCat}\big)_V$.
\end{lem}
\begin{proof}
We saw in \cref{SemiadditivityOfPresentableStables} that $\underline{\cat}_{\baseCat}\Tperfect$ is ${\baseCat}$-semiadditive, and so $f_! \simeq f_*$. Hence $f_*$ preserves (co)fibre sequences and we have bifibre sequences in the three directions above. Furthermore, \cite[Lem. 4.3.2]{kaifPresentable} says that the desired three layers of sequences are all adjoints of each other, and hence they form a split Karoubi sequence.
\end{proof}

\begin{fact}[Split Karoubi classification]\label{fact:splitKaroubiClassification}
    Suppose we have a split Karoubi sequence as in \cref{eqn:splitKaroubiSequence}. This can then be recovered as the pullback 
    \begin{center}
        \begin{tikzcd}
            \underline{\D} \rar["q\rightarrow qjp"]\dar["p"']\ar[dr, phantom, very near start, "\lrcorner"] & \underline{\sC}^{\Delta^1}\dar["\target"]\\
            \underline{\E} \rar["qj"] & \underline{\sC}
        \end{tikzcd}
    \end{center}
    whose vertical fibres are then $\underline{\sC}$. This can be deduced from the analogous nonparametrised result in $\cat\perfect$, recorded for instance in \cite[Prop. A.2.12]{nineAuthorsII}, since the inclusion $\cat\Tperfect_{\baseCat}\subset \mackey_{\baseCat}(\cat\perfect)$ preserves and reflects (co)limits by \cref{mainMackeyInclusion}, and since (co)limits in $\mackey_{\baseCat}(\cat\perfect)$ are computed pointwise by \cref{(co)fibreCreatedPointwiseInMackey}.
\end{fact}

We learnt of the following observation from the forthcoming  \cite[Lem. 1.1.4]{nineAuthorsIV}, which was written in the more structured setting of Poincar\'{e} categories. We include the argument for $\cat\perfect$ here for the reader's convenience.

\begin{lem}\label{lem:CDHAdjunctionViaClassification}
    There is an adjunction $L : \func(\Delta^1, \cat\perfect) \rightleftarrows \cat\perfect : R$ where $L(\sC \xrightarrow{f} \D) \simeq \sC\times_{\D}\D^{\Delta^1}$ and $R({\E}) \simeq (\E^{\Delta^1}\rightarrow  {\E})$, where both $\D^{\Delta^1}\rightarrow \D$ and $\E^{\Delta^1}\rightarrow\E$ are the target maps. Moreover, the right adjoint $R$ preserves all colimits.
\end{lem}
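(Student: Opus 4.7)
The plan is to verify the adjunction by direct construction and then establish colimit-preservation of $R$ via the explicit formula for colimits in $\cat\perfect$.

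First, both $L$ and $R$ are well-defined. For $R$, the arrow category $\E^{\Delta^1}$ of a perfect-stable category is again perfect-stable and the target evaluation is exact, so $R(\E) = (\E^{\Delta^1} \to \E)$ genuinely lies in $\func(\Delta^1, \cat\perfect)$. For $L$, the pullback $\sC\times_\D\D^{\Delta^1}$ exists in $\cat\perfect$ by (the $\baseCat = \mathrm{pt}$ case of) \cref{prop:limitClosureOfCatPerfInCategories}, and its functoriality in the arrow $(\sC\xrightarrow{f}\D)$ is automatic from the functoriality of pullbacks.

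To establish the adjunction $L\dashv R$, I would verify the universal property at the level of mapping spaces. Since $\E^{\Delta^1}$ represents the cotensor with $\Delta^1$, the mapping space
\[\map_{\func(\Delta^1,\cat\perfect)}\big((\sC\xrightarrow{f}\D), R(\E)\big) \simeq \map(\sC,\E)^{\Delta^1}\times_{\map(\sC,\E)}\map(\D,\E)\]
unwinds to the space of triples $(h\colon\sC\to\E,\ g\colon\D\to\E,\ \alpha\colon h\to g\circ f)$. I would then show this is naturally equivalent to $\map_{\cat\perfect}(L(f),\E)$ via the correspondence sending a triple to the exact functor
\[F\colon L(f)\to\E,\qquad (c,\beta\colon d\to f(c))\mapsto h(c)\times_{g(f(c))}g(d),\]
where the pullback is taken using $\alpha_c$ and $g(\beta)$. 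Conversely, an exact $F\colon L(f)\to\E$ decomposes via the split Verdier structure $\D\to L(f)\to\sC$, obtained by pulling back the split Verdier sequence $\D\to\D^{\Delta^1}\to\D$ along $f$; the classification in \cref{fact:splitVerdierClassification} then recovers the triple $(F|_\sC, F|_\D, \alpha_F)$ via $F|_\sC(c) = F(c,\id_{f(c)})$, $F|_\D(d) = F(0, d\to 0)$, with $\alpha_F$ induced by the canonical morphism $(c,\id_{f(c)})\to (0, f(c)\to 0)$ in $L(f)$.

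Finally, to show that $R$ preserves all colimits, note that colimits in $\func(\Delta^1, \cat\perfect)$ are detected pointwise by source and target evaluations. Target evaluation of $R$ is the identity and trivially preserves colimits, so it suffices to show that $\E\mapsto \E^{\Delta^1}$ preserves colimits in $\cat\perfect$. For this I would use the colimit formula (the $\baseCat = \mathrm{pt}$ case of \cref{cor:catPerfPresentability}): $\colim_i \E_i \simeq (\lim_i \widetilde{\ind}\,\E_i)^\omega$ with $\widetilde{\ind}$ obtained by passing to right adjoints. Combining the facts that $(-)^{\Delta^1}$ preserves limits (as a cotensor in $\widehat\cat$), commutes with $\widetilde{\ind}$ (since for a compactly-generated presentable-stable $\sC$ one has $\ind(\sC^{\Delta^1})\simeq \ind(\sC)^{\Delta^1}$), and commutes with passage to $\omega$-compact objects (since the $\omega$-compacts of $\sC^{\Delta^1}$ for such $\sC$ are exactly $(\sC^\omega)^{\Delta^1}$), we obtain
\[(\colim_i\E_i)^{\Delta^1} \simeq \big((\lim_i\widetilde{\ind}\,\E_i)^\omega\big)^{\Delta^1} \simeq \big(\lim_i\widetilde{\ind}(\E_i^{\Delta^1})\big)^\omega \simeq \colim_i\E_i^{\Delta^1}\]
as required.

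The main obstacle, I expect, lies in paragraph two: matching the exact functor $F\colon L(f)\to\E$ with the triple $(h,g,\alpha)$ at the level of higher coherences requires some care. The colimit-preservation of $R$, while being the principal content of the lemma, reduces cleanly to the behaviour of arrow categories under Ind-completion and restriction to compact objects.
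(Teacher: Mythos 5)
Your treatment of the second half is fine and is essentially the paper's own argument: both reduce colimit-preservation of $R$ to the statement that $(-)^{\Delta^1}$ commutes with colimits in $\cat\perfect$, proved by passing to $\ind$, rewriting the colimit as a limit along right adjoints computed in $\widehat{\cat}$ (where the cotensor trivially commutes with limits), and returning via $\ind(\E^{\Delta^1})\simeq (\ind\E)^{\Delta^1}$ and compatibility with compact objects. The adjunction half is where the gap sits. Your objectwise formulas are correct: with $i(d)=(0,d\to 0)$, $j(c)=(c,\id_{f(c)})$ and $q(c,\beta\colon d\to f(c))=d$, every object $(c,\beta)$ of $L(f)$ is canonically the pullback of $j(c)\to i(f(c))\leftarrow i(d)$, so an exact $F$ satisfies $F(c,\beta)\simeq F(j(c))\times_{F(i(f(c)))}F(i(d))$, and your round trips do recover the data object by object. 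But the lemma asserts an adjunction of $\infty$-categories, i.e.\ an equivalence $\map_{\func(\Delta^1,\cat\perfect)}\big((\sC\to\D),R(\E)\big)\simeq\map_{\cat\perfect}(L(f),\E)$ natural in all variables (equivalently, unit/counit with triangle identities), and the passage from an objectwise correspondence to such a natural equivalence is exactly the step you set aside as "requires some care" — that step is the actual content here, so as written the proof is incomplete. Note also that your appeal to \cref{fact:splitVerdierClassification} "to recover the triple" is not what that fact provides: it reconstructs the category $L(f)$ from its classifying functor $f$; it does not by itself decompose exact functors out of $L(f)$. (Minor point: the mapping space you write as $\map(\sC,\E)^{\Delta^1}\times_{\map(\sC,\E)}\map(\D,\E)$ should be $\map(\sC,\E^{\Delta^1})\times_{\map(\sC,\E)}\map(\D,\E)$, i.e.\ the core of the arrow category of $\func\exact(\sC,\E)$, not an arrow space of the core.)

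The paper closes precisely this coherence gap by using the classification a second time, at the level of functor categories: starting from the split Verdier sequence $\D\hookrightarrow L(f)\twoheadrightarrow\sC$ classified by $f$, it applies $\func\exact(-,\E)$ (which sends split Verdier sequences to split Verdier sequences) to get a split Verdier sequence $\func\exact(\sC,\E)\hookrightarrow\func\exact(L(f),\E)\twoheadrightarrow\func\exact(\D,\E)$, and then applies \cref{fact:splitVerdierClassification} to \emph{that} sequence to obtain the identification $\func\exact(L(f),\E)\simeq\func\exact(\D,\E)\times_{\func\exact(\sC,\E)}\func\exact(\sC,\E)^{\Delta^1}$, naturally in the inputs; taking cores gives the desired equivalence of mapping spaces with no objectwise coherence-chasing. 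If you want to salvage your more explicit route, the repair is to promote your formula to the level of functors: exhibit the natural cospan of endofunctors $jp\Rightarrow ifp\Leftarrow iq$ of $L(f)$ whose pullback is $\id_{L(f)}$, so that $F\simeq Fjp\times_{Fifp}Fiq$ naturally in $F$, and then verify that restriction along $(i,j)$ and this pullback construction are mutually inverse on spaces of exact functors, naturally in $f$ and $\E$ — which is exactly the naturality that the paper's double use of the classification delivers for free.
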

\begin{proof}
    To see that we have such an adjunction, we would like to construct an equivalence 
    \begin{equation}\label{eqn:CDHAdjunction}
        \map_{\func(\Delta^1,\cat\perfect)}(\sC\xrightarrow{f}\D, \E^{\Delta^1}\xrightarrow{\target}\E) \simeq \map_{\cat\perfect}(\sC\times_{\D}\D^{\Delta^1},\E)
    \end{equation}
    natural in all the variables. For this, consider the split Karoubi sequence
    $\D \hookrightarrow \sC\times_{\D}\D^{\Delta^1} \twoheadrightarrow \sC$ classified by $\sC\xrightarrow{f}\D$ via \cref{fact:splitKaroubiClassification}. Since  $\func\exact(-,\E)$ preserves split Karoubi sequences, we obtain another split Karoubi sequence
    $\func\exact(\sC,\E) \hookrightarrow \func\exact(\sC\times_{\D}\D^{\Delta^1},\E) \twoheadrightarrow \func\exact(\D,\E)$.
    By the classification \cref{fact:splitKaroubiClassification} yet again, we then obtain an equivalence
    $\func\exact(\sC\times_{\D}\D^{\Delta^1},\E)\simeq \func\exact(\D,\E)\times_{\func\exact(\sC,\E)}\func\exact(\sC,\E)^{\Delta^1}$.
    Applying core groupoids $(-)^{\simeq}$ to this yields the equivalence \cref{eqn:CDHAdjunction}, which is clearly natural in all the inputs, as wanted. Finally, to see that the right adjoint preserves colimits, we just need to argue that the functor $(-)^{\Delta^1}$ commutes with colimits. Using that $\cat\perfect\simeq \presentable_{L,\mathrm{st},\omega}\simeq \presentable_{R,\mathrm{st},\omega\text{-filt}}\op$ and that the faithful inclusion $\presentable_{R,\mathrm{st},\omega\text{-filt}}\subset \widehat{\cat}$ creates limits, we obtain
    \[\ind(\colim_{j\in J}\E_j)^{\Delta^1}\simeq (\colim^{\presentable_L}_{j\in J}\ind\E_j)^{\Delta^1}\simeq (\lim^{\presentable_R}_{j\in J\op}\ind\E_j)^{\Delta^1}\simeq \lim^{\presentable_R}_{j\in J\op}(\ind\E_j)^{\Delta^1}\simeq \colim_{j\in J}^{\presentable_L}\ind(\E_j)^{\Delta^1}\]
    where the outer equivalences are by \cite[Prop. 5.3.5.15]{lurieHTT}, whence the desired conclusion.
\end{proof}

\begin{cons}\label{pullbackOfCompactCategories}
Since $\func(\Delta^1, \cat\perfect)$ and $\cat\perfect$ are semiadditive, the left adjoint $L$ from \cref{lem:CDHAdjunctionViaClassification} preserves finite products. Hence we can apply $\underline{\cmonoid}_{\baseCat}$ to obtain a ${\baseCat}$-adjunction $\underline{L}_{\baseCat} : \underline{\func}(\Delta^1, \underline{\cmonoid}_{\baseCat}(\cat\perfect)) \rightleftarrows \underline{\cmonoid}_{\baseCat}(\cat\perfect) : \underline{R}_{\baseCat}$ where the ${\baseCat}$-right adjoint preserves all fibrewise colimits. Furthermore, both adjoints clearly restrict to $\underline{\cat}\Tperfect_{\baseCat}\subset \underline{\cmonoid}_{\baseCat}(\cat\perfect)$ so that we get the $\baseCat$--adjunction
$\underline{L}_{\baseCat} : \underline{\func}(\Delta^1, \underline{\cat}\Tperfect_{\baseCat}) \rightleftarrows \underline{\cat}\Tperfect_{\baseCat} : \underline{R}_{\baseCat}$.
By the preservation and reflection of fibrewise (co)limits from \cref{mainMackeyInclusion}, the right adjoint here also preserves all fibrewise colimits, and so in particular, $\kappa$--filtered colimits. Thus, $\underline{L}_{\baseCat}$ preserves $\kappa$-compact objects for all regular cardinals $\kappa$. Thus, if $(\underline{\sC} \xrightarrow{f} \underline{\D})$ is a ${\baseCat}$-exact functor between $\kappa$-compact ${\baseCat}$-perfect stable categories, then  $\underline{\sC}{\times}_{\underline{\D}}\underline{\D}^{\Delta^1}$ is $\kappa$-compact too. 
\end{cons}

\begin{cor}\label{splitKaroubiSequencesAsFilteredColimits}
For any regular cardinal $\kappa$ there is a small set $S_{\kappa}$ of split Karoubi sequences on $\kappa$-compact ${\baseCat}$-perfect-stable categories such that any split Karoubi sequence in $\underline{\cat}_{\baseCat}\Tperfect$ can be written as a fibrewise $\kappa$-filtered colimit of sequences in $S_{\kappa}$.
\end{cor}
\begin{proof}
    First of all, for any regular cardinal $\kappa$, since $\cat\Tperfect_{\baseCat}$ is $\kappa$--compactly generated by \cref{compactGenerationOfCatPerf}, we have by \cite[Prop. 5.3.5.15]{lurieHTT} that $\func(\Delta^1, \cat\Tperfect_{\baseCat})\simeq \ind_{\kappa}\func(\Delta^1,(\cat\Tperfect_{\baseCat})^{\kappa})$.  On the other hand, by $\kappa$--compact generation, we also have that finite limits in $\cat\Tperfect_{\baseCat}$ commute with $\kappa$--filtered colimits: this is because equivalences can be checked by mapping out of a set of $\kappa$--compact objects by virtue of $\kappa$--compact generation, and mapping out of these commutes with all $\kappa$--filtered colimits and all limits;  hence, we can reduce this commutation statement to the category of spaces where it is true. 

    Combining the   previous paragraph with the fact that the functor  $(\underline{\E}\xrightarrow{f}\underline{\sC}) \mapsto \underline{\E}\times_{\underline{\sC}}\underline{\sC}^{\Delta^1}$ preserves $\kappa$--compactness by \cref{pullbackOfCompactCategories}, we obtain that if we are given a split Karoubi sequence in $\cat\Tperfect_{\baseCat}$ as in \cref{definitionKaroubi}, then we may write $\underline{\E}\xrightarrow{qj}\underline{\sC}$ as a $\kappa$--filtered colimit $\colim_{a\in I}(\underline{\E}_{a}\xrightarrow{(qj)_a} \underline{\sC}_a)$ where $\underline{\sC}_a, \underline{\E}_a\in(\cat\Tperfect_{\baseCat})^{\kappa}$. Thus, the map of pullback squares 
    \begin{center}
        \begin{tikzcd}
            \underline{\D}_a \rar\dar["p_a"']\ar[dr, phantom, very near start, "\lrcorner"] & \underline{\sC}_a^{\Delta^1}\dar["\target"]
            \ar[drr, phantom, "\longrightarrow"]& & \underline{\D} \rar["q\rightarrow qjp"]\dar["p"']\ar[dr, phantom, very near start, "\lrcorner"] & \underline{\sC}^{\Delta^1}\dar["\target"]\\
            \underline{\E}_a \rar["(qj)_a"] & \underline{\sC}_a && \underline{\E} \rar["qj"] & \underline{\sC}
        \end{tikzcd}
    \end{center}
    is an equivalence upon applying $\colim_{a\in I}$, where $\underline{\D}_a$ is also in $(\cat\Tperfect_{\baseCat})^{\kappa}$. In this way, we have written the given split Karoubi sequence as a $\kappa$--filtered colimit of split Karoubi sequences of $\kappa$--compact objects $\underline{\sC}_a\hookrightarrow \underline{\D}_a\twoheadrightarrow \underline{\E}_a$, as wanted.
\end{proof}

Using these, we can now define the additivity of a functor as follows:

\begin{defn}\label{defn:additiveFunctors}
Let $\underline{\A}$ be a ${\baseCat}$--stable category. A ${\baseCat}$-functor $\underline{\cat}\Tperfect_{\baseCat} \rightarrow \underline{\A}$ is said to be \textit{additive} if it sends split Karoubi sequences to fibre sequences and preserves the final objects. We write $\underline{\func}\add_{\baseCat}(\underline{\cat}\Tperfect_{\baseCat}, \underline{\A}) \subseteq \underline{\func}_{\baseCat}(\underline{\cat}\Tperfect_{\baseCat}, \underline{\A})$ for the ${\baseCat}$-full subcategory of such. 
\end{defn}

\begin{obs}[Waldhausen sequences]\label{obs:waldhausenSplitting}
    Write $s\colon \Delta^0\rightarrow \Delta^1$ and $t\colon \Delta^0\rightarrow \Delta^1$ for the inclusion of the source and the target, respectively. One of Waldhausen's many key original insights  is the following: for $\underline{\sC}\in\cat\Tperfect_{\baseCat}$, we have the split Karoubi sequence
    \begin{equation}\label{eqn:waldhausenSequence}
    \begin{tikzcd}
        \underline{\sC}\rar[hook, "s_*" description]& \underline{\sC}^{\Delta^1} \rar[two heads, "t^*" description] \lar[shift right = 3, "s^*"', two heads]  \lar[shift left = 3,two heads, "\fib"] & \underline{\sC}. \lar[shift right = 3, "t_!"', hook]  \lar[shift left = 3, hook, "t_*"]
    \end{tikzcd}
\end{equation}
    This means that for any additive $F \colon \underline{\cat}\Tperfect\rightarrow \myuline{\A}$, we have the split fibre sequence 
    \begin{equation*}
        \begin{tikzcd}
            F(\underline{\sC}) \rar["F(s_*)"] & F(\underline{\sC}^{\Delta^1})\rar["F(t^*)"]\lar[bend left = 40, "F(\fib)"] & F(\underline{\sC})\lar[bend left = 40, "F(t_*)"]
        \end{tikzcd}
    \end{equation*}
    which yields the equivalence  $F(\fib)\times F(t^*) \colon F(\underline{\sC}^{\Delta^1}) \simeq F(\underline{\sC})\times F(\underline{\sC}) : F(s_*)+F(t_*)$.
    This is of foundational importance as we will see in the next basic observation which may be seen as ``trickling down'' the additivity property through a level of decategorification. As in the nonparametrised situation, it is a straightforward matter to see using \cref{TTensorsOfPresentables} that all such sequences are obtained by applying $\underline{\sC}\otimes-$ to the version of \cref{eqn:waldhausenSequence} for $\myuline{\spectra}\tomega$.
\end{obs}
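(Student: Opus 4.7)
The plan has three parts, corresponding to the three substantive claims in the observation: (i) \cref{eqn:waldhausenSequence} is a split Verdier sequence in $\underline{\cat}\Tperfect_{\baseCat}$, (ii) applying an additive $F$ gives the displayed biproduct equivalence, and (iii) the whole sequence arises by tensoring with $\underline{\sC}$ from the analogous sequence for $\myuline{\spectra}\tomega$.

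For (i), I will use \cref{workhorseAdjunctionYoga}: it suffices to show the sequence has vanishing composite, is a fibre sequence, and that $t^*$ admits a fully faithful right adjoint; the remaining adjoints are then produced separately to upgrade Verdier to split Verdier. The vanishing composite is immediate since $t^*s_*X \simeq (X\to 0)_1 \simeq 0$. The functors $s^*, s_*, t^*, t_*, t_!, \fib$ are constructed fibrewise from the standard Kan--extension chain $s_!\dashv s^*\dashv s_*$ together with the stable right adjoint $\fib$ to $s_*$, and analogously for $t$; since the restriction maps $f^*$ of $\underline{\sC}\in \underline{\cat}\Tperfect_{\baseCat}$ are fibrewise exact, they commute with each of these pointwise constructions, so by \cref{criteriaForTAdjunctions} and its dual they assemble into $\baseCat$--adjunctions, and the adjoints are $\baseCat$--exact by \cref{characterisationStrongPreservations}, hence live in $\underline{\cat}\Tperfect_{\baseCat}$. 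To see that \cref{eqn:waldhausenSequence} is a fibre sequence there, I invoke \cref{MackeyInclusionCreatesFibreCofibres}: the faithful inclusion into $\underline{\mackey}_{\baseCat}(\cat\perfect)$ preserves and reflects fibre sequences and the latter are computed pointwise in $\mackey_{\baseCat}(\cat\perfect)$; pointwise, the fibre of $t^*\colon \sC_V^{\Delta^1}\to \sC_V$ over $0$ is tautologically the essential image of $s_*$, giving the claim.

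For (ii), any additive $F$ sends the split Verdier sequence to a fibre sequence $F(\underline{\sC}) \xrightarrow{F(s_*)} F(\underline{\sC}^{\Delta^1}) \xrightarrow{F(t^*)} F(\underline{\sC})$. The identities $t^*t_*\simeq\id$ and $\fib\circ s_*\simeq\id$ (the latter since $\fib(X\to 0)\simeq X$) pass through $F$ to yield a section $F(t_*)$ of $F(t^*)$ and a retraction $F(\fib)$ of $F(s_*)$. In the $\baseCat$--stable, hence $\baseCat$--semiadditive, category $\underline{\A}$, a fibre sequence equipped with such a section and retraction splits canonically as a biproduct; the resulting equivalence then reads as $(F(\fib),F(t^*))$ with inverse $F(s_*)+F(t_*)$ as asserted.

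Finally, (iii) follows by tensoring \cref{eqn:waldhausenSequence} for $\myuline{\spectra}\tomega$ with $\underline{\sC}$: since $\myuline{\spectra}\tomega$ is the $\underline{\otimes}$--unit in $\underline{\cat}\Tperfect_{\baseCat}$ (under $\underline{\cat}\Tperfect_{\baseCat}\simeq \underline{\presentable}_{\baseCat,L,\mathrm{st},\omega}$ and \cref{parametrisedSemiadditivisationSmashing}), the outer terms become $\underline{\sC}$, and the middle term is identified with $\underline{\sC}^{\Delta^1}$ via the standard compatibility between cotensoring with $\Delta^1$ and the presentable tensor product afforded by \cref{TTensorsOfPresentables}, using that $\underline{\func}(\Delta^1,-)$ is a right adjoint and so commutes with the formation of $\underline{\func}^R(\underline{\D}\vop,-)$ in its second slot. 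Preservation of the split Verdier structure is automatic because $\underline{\sC}\otimes -$ is a left adjoint and preserves fibre/cofibre sequences in $\underline{\cat}\Tperfect_{\baseCat}$. The main obstacle in the plan is the bookkeeping for (i): verifying that the fibrewise adjoint chains genuinely assemble to $\baseCat$--adjunctions whose data lies in the non--full subcategory $\underline{\cat}\Tperfect_{\baseCat}\subset \underline{\cat}_{\baseCat}$, as opposed to merely in the ambient $\underline{\cat}_{\baseCat}$.
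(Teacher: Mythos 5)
Your proposal is correct and proceeds exactly along the lines the paper intends: the split Verdier structure is checked fibrewise through the inclusion of \cref{mainMackeyInclusion} together with the adjunction yoga of \cref{workhorseAdjunctionYoga} and \cref{criteriaForTAdjunctions}, the splitting after applying $F$ is the standard retraction/section argument in a $\baseCat$--stable target, and the identification with $\underline{\sC}\otimes(\text{Waldhausen sequence for }\myuline{\spectra}\tomega)$ uses \cref{TTensorsOfPresentables} and the cotensor exchange just as the paper indicates. No gaps worth flagging.
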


\begin{lem}[Waldhausen's trick, {\cite[Rmk. 2.7.6 (ii)]{nineAuthorsII}}]\label{rmk:waldhausenTrick}
    Let $F\colon \underline{\cat}\Tperfect\rightarrow\underline{\A}$ be an additive invariant. If we have a cofibre sequence $\alpha \Rightarrow \beta \Rightarrow \gamma$ of maps $\underline{\sC} \rightarrow \underline{\D}$ in $\cat\Tperfect_{\baseCat}$, then we have an equivalence of morphisms $F\beta \simeq F\alpha\oplus F\gamma \colon F\underline{\sC}\rightarrow F\underline{\D}$. 
\end{lem}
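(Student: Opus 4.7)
The plan is to encode the cofibre sequence of natural transformations as a single $\baseCat$--exact functor to the arrow $\baseCat$--category and to use the Waldhausen splitting of \cref{obs:waldhausenSplitting} to convert this into the desired additive identity. Concretely, by the cotensor adjunction of \cref{parametrisedCotensors}, the natural transformation $\alpha \Rightarrow \beta$ corresponds to a single $\baseCat$--exact functor $\Phi\colon \underline{\sC} \to \underline{\D}^{\Delta^1}$ satisfying $s^*\Phi \simeq \alpha$, $t^*\Phi \simeq \beta$, and, using the cofibre-sequence hypothesis, $\cofib \Phi \simeq \gamma$. Applying $F$ then produces the three morphisms as $F\alpha \simeq F(s^*)F(\Phi)$, $F\beta \simeq F(t^*)F(\Phi)$, and $F\gamma \simeq F(\cofib)F(\Phi)$.

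The main task is therefore to establish the identity
\[
F(t^*) \simeq F(s^*) + F(\cofib) \colon F(\underline{\D}^{\Delta^1}) \longrightarrow F(\underline{\D})
\]
in $\myuline{\map}_{\underline{\A}}\big(F(\underline{\D}^{\Delta^1}), F(\underline{\D})\big)$, whereupon post-composition with $F(\Phi)$ would immediately yield the desired equivalence $F\beta \simeq F\alpha \oplus F\gamma$. For this I would invoke \cref{obs:waldhausenSplitting} applied to the Waldhausen split Verdier sequence on $\underline{\D}^{\Delta^1}$: the resulting biproduct structure displays $F(\underline{\D}^{\Delta^1})$ as $F(\underline{\D}) \oplus F(\underline{\D})$ with inclusions coming from $F(s_*)$ (sending $X$ to the identity arrow $X = X$) and $F(t_*)$ (sending $Y$ to $0 \to Y$). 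Two parallel maps out of a biproduct agree iff they agree after precomposition with both inclusions, so the identity above reduces to the checks $F(t^*)F(s_*) \simeq F(\id) \simeq \id$ and $F(t^*)F(t_*) \simeq F(\id) \simeq \id$, versus $(F(s^*) + F(\cofib))F(s_*) \simeq \id + F(0) \simeq \id$ and $(F(s^*) + F(\cofib))F(t_*) \simeq F(0) + \id \simeq \id$.

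The main point of care is the verification that $F$ sends the zero functor $\underline{\D} \to \underline{\D}$ to the zero morphism in $\underline{\A}$. This composite factors through the terminal $\baseCat$--perfect-stable category, and additivity of $F$ (specifically the preservation of the final object from \cref{defn:additiveFunctors}) ensures $F$ of it is null in $\underline{\A}$. Apart from this elementary sanity check and the bookkeeping of matching the adjoint layers of \cref{obs:waldhausenSplitting} to the canonical functors $s^*, t^*, s_*, t_*, \cofib$ in such a way that $F(t^*)$ becomes the fold map on the resulting biproduct, the argument requires no further genuinely new parametrised input; both the encoding $\Phi$ and the Waldhausen splitting are already in place.
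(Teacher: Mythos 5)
Your strategy is genuinely different from the paper's: you aim for the universal identity $F(t^*)\simeq F(s^*)+F(\cofib)$ on $F(\underline{\D}^{\Delta^1})$ and then precompose with $F(\Phi)$, whereas the paper compares the two functors $(\beta\Rightarrow\gamma)$ and $(\alpha\oplus\gamma\Rightarrow\gamma)$ into $\underline{\D}^{\Delta^1}$, observes that both have the same composites with $\fib$ and with $t^*$ (namely $\alpha$ and $\gamma$), and then applies the equivalence $F(\fib)\times F(t^*)$ from \cref{obs:waldhausenSplitting} followed by $F(s^*)$. Your route can be made to work, but the key verification as written contains a genuine error: you have misidentified the section functors of the Waldhausen sequence. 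With $s,t$ the source and target inclusions, the right Kan extensions are $s_*X\simeq (X\to 0)$ and $t_*Y\simeq (Y\xrightarrow{=}Y)$, not $(X=X)$ and $(0\to Y)$ (those are $s_!$ and $t_!$). With the correct formulas your check along $F(t_*)$ is fine, but along $F(s_*)$ the left-hand side is $F(t^*s_*)\simeq F(0)\simeq 0$, while the right-hand side is $F(s^*s_*)+F(\cofib\circ s_*)\simeq \id + F(\Sigma)$, because $\cofib(X\to 0)\simeq \Sigma X$. Equating these requires $F(\Sigma)\simeq -\id$, which is not available at this point; it is itself normally deduced from additivity (e.g.\ by applying the very lemma being proved to the cofibre sequence $\id\to 0\to\Sigma$), so as written the argument is circular, or at best incomplete.

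The functors you actually describe, $X\mapsto (X\xrightarrow{=}X)$ and $Y\mapsto (0\to Y)$, are $s_!$ and $t_!$, and they are the inclusions attached to a \emph{different} split Verdier sequence, namely $\underline{\D}\xrightarrow{s_!}\underline{\D}^{\Delta^1}\xrightarrow{\cofib}\underline{\D}$ (which is indeed split, with $t^*\dashv s_!\dashv s^*$ and $\cofib\dashv t_!$); for the biproduct decomposition coming from that sequence your four checks do all come out to the identity, exactly as you computed. But \cref{obs:waldhausenSplitting} as cited only provides the splitting for the sequence \cref{eqn:waldhausenSequence}, with inclusions $F(s_*)$ and $F(t_*)$; to run your argument you must either verify that the $(s_!,\cofib)$-sequence is split Verdier and rerun the splitting observation for it, or else supply $F(\Sigma)\simeq-\id$ by a separate (non-circular) argument. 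Either fix is a real additional step, not bookkeeping. The paper's comparison of $(\beta\Rightarrow\gamma)$ with $(\alpha\oplus\gamma\Rightarrow\gamma)$ sidesteps both the universal identity and the $F(\Sigma)$ issue entirely, which is what makes its proof shorter.
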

\begin{proof}
    The key is that both natural transformations $(\beta\Rightarrow \gamma), (\alpha\oplus \gamma \Rightarrow \gamma) \colon \underline{\sC} \longrightarrow \underline{\D}^{\Delta^1}$    have the same fibres, i.e. $\alpha$. Hence, applying $F$ and postcomposing further with the equivalence $F(\fib)\times F(t^*)\colon F(\underline{\D}^{\Delta^1})\xrightarrow{\simeq} F(\underline{\D})\times F(\underline{\D})$ from \cref{obs:waldhausenSplitting} yields that the two morphisms $F(\beta\Rightarrow \gamma), \: F(\alpha\oplus\gamma\Rightarrow \gamma) \colon F(\underline{\sC})\rightarrow F(\underline{\D}^{\Delta^1})$ are equivalent. Finally, postcomposing now these two equivalent morphisms with $F(s^*)\colon F(\underline{\D}^{\Delta^1}) \rightarrow F(\underline{\D})$ shows that we have an equivalence of morphisms
    $F(\beta) \simeq F(\alpha\oplus \gamma) \simeq F(\alpha)\oplus F(\gamma) \colon F(\underline{\sC}) \rightarrow F(\underline{\D})$
    as was to be shown.
\end{proof}

\begin{rmk}\label{rmk:waldhausenRightSplit}
    In fact, the proof of the second part of the lemma above shows that any additive functor already sends left--split or right--split Karoubi sequences to fibre sequences.
\end{rmk}

Next, we record how stable valued additive functors interact with certain pushout diagrams. This will be needed in our analysis of equivariant algebraic K--theory in \cref{subsection:PointwiseEqualsNormed2-Groups}.

\begin{defn}\label{splitKaroubiPushout}
A square in $\underline{\cat}\Tperfect$ 
\begin{center}
    \begin{tikzcd}
    \underline{\A} \rar \dar[hook] \ar[dr, phantom, very near end, "\scalebox{1.5}{$\ulcorner$}"]& \underline{\B} \dar[hook]\\
    \underline{\sC} \rar\uar[bend right = 30, dashed] & \underline{\proper}\uar[bend right = 30, dashed]
    \end{tikzcd}
\end{center}
is said to be a \textit{right-split Karoubi pushout} if it is a pushout diagram and the vertical arrows are right-split Karoubi inclusions.
\end{defn}

The following lemma gives the source of right-split Karoubi pushouts that will concern us.

\begin{lem}\label{sourceOfRightSplitKaroubiPushouts}
Suppose we have a span $\underline{\sC}\hookleftarrow \underline{\A}\rightarrow\underline{\B}$ in $\underline{\cat}\Tperfect$ where the left morphism is fully faithful, and both morphisms have right adjoints. Then the pushout in $\underline{\cat}\Tperfect$  is a right-split Karoubi pushout.
\end{lem}
\begin{proof}
We work in the presentable setting via the equivalence $\underline{\cat}\Tperfect \simeq \underline{\presentable}_{L, \mathrm{st}, \omega}$ from \cref{TPresentableIdempotentCorrespondence}. Recall that colimits in $\underline{\presentable}_{L, \mathrm{st},\omega}$ are computed as limits in $\underline{\presentable}_{R, \mathrm{st},\omega\text{-filt}}$ by \cref{prop:antiequivalencePrLPrR}. And so we get the solid pushout in $\underline{\presentable}_{L, \mathrm{st}, \omega}$ and dashed pullback  in $\underline{\presentable}_{R, \mathrm{st},\omega\text{-filt}}$
\begin{center}
    \begin{tikzcd}
    \underline{\ind}(\underline{\A}) \rar \dar[hook] \ar[dr, phantom, very near end, "\scalebox{1.5}{$\ulcorner$}"]& \underline{\ind}(\underline{\B}) \dar\lar[bend left = 30, dashed]\\
    \underline{\ind}(\underline{\sC}) \rar\uar[bend right = 30, dashed] & \underline{\ind}(\underline{\proper}). \uar[bend right = 30, dashed]\lar[bend left = 30, dashed]
    \end{tikzcd}
\end{center}
Since limits in both $\underline{\presentable}_{L,\mathrm{st},\omega}$ and $\underline{\presentable}_{R,\mathrm{st},\omega\text{-filt}}$ are computed underlying by \cref{lem:limitClosureOfPr_R,prop:limitClosureOfCatPerfInCategories}, and since the top and left dashed maps are themselves compact--preserving left adjoints by our hypothesis, the bottom and right dashed maps are also compact--preserving left adjoints (i.e. the dashed square is also a pullback in $\underline{\presentable}_{L,\mathrm{st},\omega}$).  Moreover, since sections pull back to sections and since $\underline{\ind}(\underline{\A}) \rightarrow \underline{\ind}(\underline{\sC})$ is a section of $\underline{\ind}(\underline{\sC}) \rightarrow \underline{\ind}(\underline{\A})$, we see that $\underline{\ind}(\underline{\B}) \rightarrow \underline{\ind}(\underline{\proper})$ is a section of $\underline{\ind}(\underline{\proper})\rightarrow \underline{\ind}(\underline{\B})$. This pair being adjoint to each other then  implies that $\underline{\ind}(\underline{\B})\rightarrow \underline{\ind}(\underline{\proper})$ is fully faithful. Since all maps in sight preserve compacts, we may apply $(-)\tomega$ to the solid diagram to get a right--split Karoubi pushout.
\end{proof}

The following result is where our stability hypothesis comes in.

\begin{lem}\label{motivicSplitKaroubiPushout}
Let $F\colon \underline{\cat}\Tperfect\rightarrow \underline{\M}$ be an additive $\baseCat$--functor where $\underline{\M}$ is $\baseCat$--stable. If we have a right-split Karoubi pushout as in \cref{splitKaroubiPushout}, then
\begin{center}
    \begin{tikzcd}
    F(\underline{\A}) \rar \dar & F(\underline{\B}) \dar\\
    F(\underline{\sC}) \rar & F(\underline{\proper}) 
    \end{tikzcd}
\end{center}
is a pushout in $\underline{\M}$.
\end{lem}
\begin{proof}

First we extend the diagram with $\underline{\E} := \cofib(\underline{\B} \hookrightarrow \underline{\proper})$ to obtain
\begin{center}
    \begin{tikzcd}
    \underline{\A} \rar \dar[hook] \ar[dr, phantom, very near end, "\scalebox{1.5}{$\ulcorner$}"]& \underline{\B} \dar[hook]\ar[dr, phantom, very near end, "\scalebox{1.5}{$\ulcorner$}"] \rar & 0 \dar\\
    \underline{\sC} \rar\uar[bend right = 30, dashed] & \underline{\proper}\uar[bend right = 30, dashed]\rar& \underline{\E}.
    \end{tikzcd}
\end{center}
Since taking cofibres of right-split Karoubi inclusions give right-split Karoubi sequences by \cref{workhorseAdjunctionYoga}, we get right-split Karoubi sequences $\underline{\A}\hookrightarrow \underline{\sC}\twoheadrightarrow \underline{\E}$ and $\underline{\B}\hookrightarrow \underline{\proper}\twoheadrightarrow \underline{\E}$.
Hence, by  \cref{rmk:waldhausenRightSplit}, the maps $F(\underline{\sC})/F(\underline{\A}) \rightarrow F(\underline{\E})$ and $F(\underline{\proper})/F(\underline{\B}) \rightarrow F(\underline{\E})$ are equivalences. Now consider the horizontal maps of vertical cofibre sequences
\begin{center}
    \begin{tikzcd}
    F(\underline{\A}) \rar \dar & F(\underline{\B}) \dar \rar & 0 \dar\\
    F(\underline{\sC}) \dar\rar& F(\underline{\proper}) \dar\rar& F(\underline{\E})\dar[equal]\\
    F(\underline{\sC})/F(\underline{\A}) \rar[dashed]\ar[rr, bend right = 30, "\simeq"] & F(\underline{\proper})/F(\underline{\B}) \rar["\simeq"] & F(\underline{\E}) 
    \end{tikzcd}
\end{center}
where the equivalences are by the previous sentence. Hence, the dashed map is an equivalence too. On the other hand, we have this map of cofibre sequences
\begin{center}
    \begin{tikzcd}
    F(\underline{\B}) \rar\dar[equal] & F(\underline{\B})\coprod_{F(\underline{\A})}F(\underline{\sC}) \rar\dar & F(\underline{\sC})/F(\underline{\A})\dar["\simeq"]\\
    F(\underline{\B}) \rar & F(\underline{\proper}) \rar & F(\underline{\proper})/F(\underline{\B})
    \end{tikzcd}
\end{center}
Since we are working stably, this implies that the middle vertical is an equivalence.
\end{proof}

\subsection{Two variants of noncommutative motives}\label{subsection:variant2}\label{subsection:noncommutativeMotives}
We now follow closely the methods of \cite{nineAuthorsIV} in constructing the noncommutative motives in our setting. Essentially all the proofs of this section are direct parametrised modifications of their arguments and we are grateful to them for sharing a draft of their upcoming work.

\begin{hypothesis}\label{hypothesis2:norm_finiteness}
    Note that, under \cref{hypothesis:normed_spectra} on $\baseCat$, the $\baseCat$--category $\myuline{\presentable}_{L,\mathrm{\baseCat}\mathrm{-st}}$  attains a $\baseCat$--symmetric monoidal structure. We say that $\baseCat$ satisfies the \textit{norm finiteness hypothesis} if, additionally, the nonfull subcategory  $\myuline{\presentable}_{L,\mathrm{\baseCat}\mathrm{-st},\omega}\subset\myuline{\presentable}_{L,\mathrm{\baseCat}\mathrm{-st}}$ is closed under the multiplicative norms and so inherits a $\baseCat$--symmetric monoidal structure from the ambient category. Under this assumption, the $\baseCat$--category $\underline{\cat}\Tperfect_{\baseCat}$ thus attains a $\baseCat$--symmetric monoidal structure under the equivalence $\underline{\cat}\Tperfect_{\baseCat}\simeq \myuline{\presentable}_{L,\mathrm{\baseCat}\mathrm{-st},\omega}$ with tensor unit being the $\baseCat$--category $\myuline{\spectra}^{\omega}$ of compact $\baseCat$--spectra. In forthcoming work \cite{brankoKaifNatalie}, we show that this hypothesis is satisfied for the case of $\baseCat$ being $\orbit_G$, and so the results in this subsection hold for our main applications below in the case of equivariant algebraic K--theory.
\end{hypothesis}

\begin{nota}
Let $\kappa$ be a regular cardinal. We write $\underline{\widetilde{\cat}}_{\baseCat}^{\myuline{\mathrm{perf}},  \kappa}$ for the smallest ${\baseCat}$-symmetric monoidal subcategory of $\underline{\cat}_{\baseCat}\Tperfect$ containing $(\underline{\cat}_{\baseCat}\Tperfect)\tkappa$. In particular, since $(\underline{\cat}_{\baseCat}\Tperfect)\tkappa$ is small by \cref{compactGenerationOfCatPerf}, $\underline{\widetilde{\cat}}_{\baseCat}^{\myuline{\mathrm{perf}},  \kappa}$ is also small. We need this slight enlargement for the technical reason that we do not know a priori that $(\underline{\cat}_{\baseCat}\Tperfect)\tkappa$ inherits the ${\baseCat}$-symmetric monoidal structure of $\underline{\cat}_{\baseCat}\Tperfect$ since it is not clear that the multiplicative norms preserve parametrised-$\kappa$-compact objects.
\end{nota}

\begin{defn}
Let $\kappa$ be a regular cardinal. Let $\R\botPointwiseKappa$ be the collection of diagrams in $\underline{\widetilde{\cat}}_{\baseCat}^{\myuline{\mathrm{perf}},  \kappa} \subseteq \underline{\presheaf}_{\baseCat}(\underline{\widetilde{\cat}}_{\baseCat}^{\myuline{\mathrm{perf}},  \kappa})$ consisting of:
\begin{itemize}
    \item the diagram $\tconstant_{\baseCat}(\emptyset)\tcocone = \terminalTCat \rightarrow \underline{\widetilde{\cat}}_{\baseCat}^{\myuline{\mathrm{perf}},  \kappa}$ picking the zero category (i.e. the initial object),
    \item all split Karoubi sequences in $(\underline{\cat}_{\baseCat}\Tperfect)\tkappa\subseteq \underline{\widetilde{\cat}}_{\baseCat}^{\myuline{\mathrm{perf}},  \kappa} $.
\end{itemize}
When $\baseCat$ satisfies \cref{hypothesis2:norm_finiteness} so that $\underline{\cat}\Tperfect$ attains a $\baseCat$--symmetric monoidal structure, let $\R\botNormedKappa$ be the closure of $\R\botPointwiseKappa$  under $f_{\otimes}$ for arbitrary maps $f : U \rightarrow V$ in $\underline{\finite}_{\baseCat}$.
\end{defn}

Now, using the construction and notation from \cref{parametrisedPresentableDwyerKanLocalisation}, we may define the following intermediate notions of noncommutative motives.

\begin{defn}
Let $\kappa$ be a regular cardinal. We define \textit{unstable pointwise $ \kappa$-motives} $\underline{\nmot}_{\baseCat}^{\mathrm{pw},\unstable,   \kappa}$ to be $L_{\R\botPointwiseKappa}\underline{\presheaf}_{\baseCat}(\underline{\widetilde{\cat}}_{\baseCat}^{\myuline{\mathrm{perf}},  \kappa})$. When $\baseCat$ satisfies \cref{hypothesis2:norm_finiteness}, we define \textit{unstable normed $ \kappa$-motives} $\underline{\nmot}_{\baseCat}^{\mathrm{nm},\unstable,   \kappa}$ to be $L_{\R\botNormedKappa}\underline{\presheaf}_{\baseCat}(\underline{\widetilde{\cat}}_{\baseCat}^{\myuline{\mathrm{perf}},  \kappa})$.
\end{defn}

\noindent Since $\R\botPointwiseKappa$, $\R\botNormedKappa$ are small as $\underline{\widetilde{\cat}}_{\baseCat}^{\myuline{\mathrm{perf}},  \kappa}$ is, $\underline{\nmot}^{\mathrm{pw}, \unstable,   \kappa}_{\baseCat}$ and $\underline{\nmot}^{\mathrm{nm},\unstable,   \kappa}_{\baseCat}$ are  ${\baseCat}$-presentable.

\begin{rmk}\label{rmk:specialFormOfR_pw}
    For the purposes of capturing the notion of additivity, we may without loss of generality let $\R\botPointwise$ be the tensor ideal generated by the map $\myuline{\spectra}^{\underline{\omega},\Delta^1}/\myuline{\spectra}\rightarrow \myuline{\spectra}$ associated to the Waldhausen sequence for $\myuline{\spectra}\tomega$. To see this, we argue now that a functor $F\colon \underline{\cat}\Tperfect\rightarrow \underline{\A}$ which sends sequences of the form \cref{eqn:waldhausenSequence} to fibre sequences already sends any split Karoubi sequence (see \cref{eqn:splitKaroubiSequence} for notation) to a fibre sequence. To wit, given such a split Karoubi sequence, it is easy to see that we have a cofibre sequence $iq\Rightarrow \id_{\underline{\D}}\Rightarrow jp$ in $\underline{\func}\Texact(\underline{\D},\underline{\D})$. Hence, by \cref{rmk:waldhausenTrick}, we get a splitting 
    \[\id_{F\underline{\D}}\simeq F(i)\circ F(q)\oplus F(j)\circ F(p)\colon F\underline{\D}\rightarrow F\underline{\D}\]
    which implies that the sequence $F\underline{\sC}\rightarrow F\underline{\D}\rightarrow F\underline{\E}$ is a fibre sequence in $\underline{\A}$, as wanted. Therefore, it is enough to require that $\R\botPointwise$ consists only of the Waldhausen sequences \cref{eqn:waldhausenSequence}, which by the last sentence in \cref{obs:waldhausenSplitting}, is a tensor ideal generated by the Waldhausen sequence for $\myuline{\spectra}\tomega$. This completes the proof of the lemma.
\end{rmk}

\begin{nota}
Write $j^{\kappa}_{\unstable}\colon \underline{\widetilde{\cat}}_{\baseCat}^{\myuline{\mathrm{perf}},  \kappa}\xhookrightarrow{y_{\unstable}^{\kappa}}\underline{\presheaf}_{\baseCat}(\underline{\widetilde{\cat}}_{\baseCat}^{\myuline{\mathrm{perf}},  \kappa})\rightarrow \underline{\nmot}_{\baseCat}^{\mathrm{pw},\unstable,   \kappa}$ for the canonical composition. Since split Karoubi sequences were already cofibre sequences in $\underline{\widetilde{\cat}}_{\baseCat}^{\myuline{\mathrm{perf}},  \kappa}$ by definition, we get from \cite[Thm. 6.4.2]{kaifPresentable} that this functor is ${\baseCat}$-fully faithful.
\end{nota}

Recall now the notion of additive $\baseCat$--functors from \cref{defn:additiveFunctors}.

\begin{prop}\label{universalPropertyUnstableKappaMotives}
For every ${\baseCat}$-cocomplete category $
\underline{\E}$,
$(j^{\kappa}_{\unstable})^* : \underline{\func}^L_{\baseCat}(\underline{\nmot}_{\baseCat}^{\mathrm{pw},\unstable,  \kappa}, \underline{\E}) \rightarrow \underline{\func}_{\baseCat}\add(\underline{\widetilde{\cat}}_{\baseCat}^{\myuline{\mathrm{perf}},  \kappa}, \underline{\E})$ is an equivalence.
\end{prop}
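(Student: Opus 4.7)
The plan is to chain together two universal properties and then identify the resulting local subcategory with the additive one. First, by the universal property of parametrised presheaves from \cite[Thm. 11.5]{shahThesis}, restriction along the $\baseCat$--Yoneda embedding $y^{\kappa}_{\unstable}\colon \underline{\widetilde{\cat}}_{\baseCat}^{\myuline{\mathrm{perf}},\kappa} \hookrightarrow \underline{\presheaf}_{\baseCat}(\underline{\widetilde{\cat}}_{\baseCat}^{\myuline{\mathrm{perf}},\kappa})$ induces an equivalence
\[
(y^{\kappa}_{\unstable})^*\colon \underline{\func}^L_{\baseCat}\big(\underline{\presheaf}_{\baseCat}(\underline{\widetilde{\cat}}_{\baseCat}^{\myuline{\mathrm{perf}},\kappa}), \underline{\E}\big) \xrightarrow{\simeq} \underline{\func}_{\baseCat}\big(\underline{\widetilde{\cat}}_{\baseCat}^{\myuline{\mathrm{perf}},\kappa}, \underline{\E}\big).
\]
Second, the universal property of the $\baseCat$--Bousfield localisation $L_{\R\botPointwiseKappa}$ recorded in \cref{parametrisedPresentableDwyerKanLocalisation}(1) supplies an equivalence
\[
L^*\colon \underline{\func}^L_{\baseCat}\big(\underline{\nmot}_{\baseCat}^{\mathrm{pw},\unstable,\kappa}, \underline{\E}\big) \xrightarrow{\simeq} \underline{\func}^{L,\R\botPointwiseKappa^{-1}}_{\baseCat}\big(\underline{\presheaf}_{\baseCat}(\underline{\widetilde{\cat}}_{\baseCat}^{\myuline{\mathrm{perf}},\kappa}), \underline{\E}\big)
\]
onto the $\baseCat$--full subcategory of colimit--preserving functors inverting $\R\botPointwiseKappa$. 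Chaining the two equivalences, $(j^{\kappa}_{\unstable})^*$ becomes an equivalence between $\underline{\func}^L_{\baseCat}(\underline{\nmot}_{\baseCat}^{\mathrm{pw},\unstable,\kappa}, \underline{\E})$ and the $\baseCat$--full subcategory of $\underline{\func}_{\baseCat}(\underline{\widetilde{\cat}}_{\baseCat}^{\myuline{\mathrm{perf}},\kappa}, \underline{\E})$ consisting of those $F$ whose unique colimit--preserving extension $\widetilde{F}$ sends each diagram in $\R\botPointwiseKappa$ to a colimit diagram in $\underline{\E}$.

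The remaining, and main, step is to identify this diagram--inverting condition with additivity. For the zero--category diagram $\terminalTCat \rightarrow \underline{\widetilde{\cat}}_{\baseCat}^{\myuline{\mathrm{perf}},\kappa}$, the condition reduces to $F$ sending the zero category to an initial object in $\underline{\E}$, which by the $\baseCat$--semiadditivity of $\underline{\cat}\Tperfect_{\baseCat}$ (\cref{SemiadditivityOfPresentableStables}) is the same as preserving the final object. For a split Verdier sequence $\underline{\sC} \xhookrightarrow{i} \underline{\D} \xtwoheadrightarrow{p} \underline{\A}$ in $(\underline{\cat}\Tperfect_{\baseCat})^{\underline{\kappa}}$, the associated comparison morphism in $\underline{\presheaf}_{\baseCat}$ is the canonical map $y(\underline{\D})/y(\underline{\sC}) \rightarrow y(\underline{\A})$ out of the cofibre of $y(i)$ formed in $\underline{\presheaf}_{\baseCat}$, and a colimit--preserving $\widetilde{F}$ inverts it precisely when the sequence $F(\underline{\sC}) \rightarrow F(\underline{\D}) \rightarrow F(\underline{\A})$ is a cofibre sequence in $\underline{\E}$. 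Since $\underline{\E}$ is $\baseCat$--stable (as built into the standing hypothesis of \cref{defn:additiveFunctors}), cofibre and fibre sequences coincide, matching the defining property of an additive functor.

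The delicate point, which constitutes the main technical content of the plan, is precisely this last identification: one must understand how each diagram in $\R\botPointwiseKappa$ gives rise to its associated comparison morphism inside $\underline{\presheaf}_{\baseCat}$, and then translate the ``colimit preservation'' of $\widetilde{F}$ along this morphism into a fibre--sequence statement on $F$--values. Once this dictionary is in place, the proposition follows immediately from the chained equivalences above. This strategy is the parametrised analogue of the motivic construction of \cite{BGT13, nineAuthorsIV}, with the atomic orbital base category $\baseCat$ entering only through the input categorical data and causing no additional difficulty at this unstable, pointwise stage.
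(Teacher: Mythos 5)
Your proposal is correct and takes essentially the same route as the paper, whose proof is exactly the chain you describe: the universal property of the $\baseCat$--Bousfield localisation from \cref{parametrisedPresentableDwyerKanLocalisation}(1) combined with the universal property of parametrised presheaves from Shah, with the translation of ``inverting the comparison morphisms attached to $\R\botPointwiseKappa$'' into additivity left implicit. One small caution: in this proposition $\underline{\E}$ is only assumed $\baseCat$--cocomplete, so your final appeal to stability of $\underline{\E}$ (via \cref{defn:additiveFunctors}) is not available as stated, and the local condition should simply be read as ``preserves the initial object and sends split Verdier sequences to cofibre sequences,'' which is exactly how the paper uses the statement in \cref{cons:bigUnstablePointwiseMotives}.
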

\begin{proof}
This is immediate by \cref{parametrisedPresentableDwyerKanLocalisation} and the universal property of presheaves.
\end{proof}

\begin{cons}\label{cons:bigUnstablePointwiseMotives}
Let $\kappa \leq \kappa'$ be two regular cardinals. Then the composition $\underline{\widetilde{\cat}}_{\baseCat}^{\myuline{\mathrm{perf}},  \kappa}\subseteq \underline{\widetilde{\cat}}_{\baseCat}^{\myuline{\mathrm{perf}},  \kappa'} \hookrightarrow \underline{\nmot}_{\baseCat}^{\mathrm{pw},\unstable,  \kappa'}$ preserves initial objects and sends split Karoubi sequences to cofibre sequences. Hence by \cref{universalPropertyUnstableKappaMotives} we obtain a  ${\baseCat}$-colimit-preserving functor $\underline{\nmot}_{\baseCat}^{\mathrm{pw},\unstable,  \kappa} \rightarrow \underline{\nmot}_{\baseCat}^{\mathrm{pw}, \unstable,  \kappa'}$. This is ${\baseCat}$-fully faithful since it sends compact-generators to compact objects and is ${\baseCat}$-fully faithful on these. Similar considerations also apply when we replace motives with presheaves. From these, and since $\underline{{\cat}}_{\baseCat}\Tperfect \simeq \colim_{\kappa}\underline{\widetilde{\cat}}_{\baseCat}^{\myuline{\mathrm{perf}},  \kappa}$, we then define 
\[\underline{\nmot}_{\baseCat}^{\mathrm{pw},\unstable} \coloneqq \colim_{\kappa}\underline{\nmot}_{\baseCat}^{\mathrm{pw},\unstable,  \kappa}\quad\quad \underline{\presheaf}_{\baseCat}(\underline{\cat}\Tperfect)\coloneqq \colim_{\kappa}\underline{\presheaf}_{\baseCat}(\underline{\widetilde{\cat}}_{\baseCat}^{\myuline{\mathrm{perf}},  \kappa})\]
Applying $\colim_{\kappa}$ on all functors in sight give us a ${\baseCat}$-fully faithful functor
\[j_{\unstable} \colon  \underline{{\cat}}_{\baseCat}\Tperfect \xhookrightarrow{y_{\unstable}} \underline{\presheaf}_{\baseCat}(\underline{\cat}\Tperfect) \longrightarrow \underline{\nmot}_{\baseCat}^{\mathrm{pw},  \unstable}\]
Since the poset of regular cardinals is a large category and each of $\underline{\nmot}_{\baseCat}^{\mathrm{pw},\unstable,  \kappa}$ is large, we deduce that $\underline{\nmot}_{\baseCat}^{\mathrm{pw}, \unstable}$ is a large ${\baseCat}$-presentable category since large unions of large sets is large. We refer to \cite{nineAuthorsIV} for a more thorough discussion of set-theoretic considerations.
\end{cons}

\begin{prop}\label{universalPropertyUnstableBigMotives}
For a ${\baseCat}$--(co)complete category $\underline{\E}$, $(j_{\unstable})^* : \underline{\func}^L_{\baseCat}(\underline{\nmot}_{\baseCat}^{\mathrm{pw}, \unstable}, \underline{\E}) \rightarrow \underline{\func}_{\baseCat}\add(\underline{\cat}\Tperfect_{\baseCat}, \underline{\E})$ is an equivalence.
\end{prop}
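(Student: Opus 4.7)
The plan is to bootstrap from the $\kappa$-bounded universal property in \cref{universalPropertyUnstableKappaMotives} by taking a limit over all regular cardinals $\kappa$. Concretely, the construction \cref{cons:bigUnstablePointwiseMotives} exhibits $\underline{\nmot}^{\mathrm{pw},\unstable}_{\baseCat}$ as the colimit $\colim_{\kappa}\underline{\nmot}^{\mathrm{pw},\unstable,\kappa}_{\baseCat}$ along $\baseCat$-fully faithful, strongly $\baseCat$-colimit-preserving transition functors. Since this colimit can be formed in (large) $\baseCat$-presentable categories with $\baseCat$-colimit-preserving functors, mapping into any $\baseCat$-(co)complete $\underline{\E}$ turns it into a limit, yielding
\[\underline{\func}^L_{\baseCat}(\underline{\nmot}^{\mathrm{pw},\unstable}_{\baseCat},\underline{\E})\simeq \lim_{\kappa}\underline{\func}^L_{\baseCat}(\underline{\nmot}^{\mathrm{pw},\unstable,\kappa}_{\baseCat},\underline{\E}).\]
Applying \cref{universalPropertyUnstableKappaMotives} termwise, and noting naturality in $\kappa$, the right-hand side is identified with $\lim_{\kappa}\underline{\func}^{\add}_{\baseCat}(\underline{\widetilde{\cat}}^{\myuline{\mathrm{perf}},\kappa}_{\baseCat},\underline{\E})$.

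The remaining task is to identify this limit with $\underline{\func}^{\add}_{\baseCat}(\underline{\cat}\Tperfect_{\baseCat},\underline{\E})$. Because $\underline{\cat}\Tperfect_{\baseCat}\simeq \colim_{\kappa}\underline{\widetilde{\cat}}^{\myuline{\mathrm{perf}},\kappa}_{\baseCat}$ in $\cat_{\baseCat}$ (it is a union of $\baseCat$-full subcategories), the cotensor adjunction gives
\[\underline{\func}_{\baseCat}(\underline{\cat}\Tperfect_{\baseCat},\underline{\E})\simeq \lim_{\kappa}\underline{\func}_{\baseCat}(\underline{\widetilde{\cat}}^{\myuline{\mathrm{perf}},\kappa}_{\baseCat},\underline{\E}).\]
It then suffices to show that, under this equivalence, the $\baseCat$-full subcategory $\underline{\func}^{\add}_{\baseCat}(\underline{\cat}\Tperfect_{\baseCat},\underline{\E})$ corresponds precisely to the $\baseCat$-full subcategory $\lim_{\kappa}\underline{\func}^{\add}_{\baseCat}(\underline{\widetilde{\cat}}^{\myuline{\mathrm{perf}},\kappa}_{\baseCat},\underline{\E})$ consisting of compatible families each of which is fibrewise additive. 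The forward direction is immediate, since restriction along a $\baseCat$-subcategory inclusion clearly preserves the property of sending split Verdier sequences to fibre sequences.

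The main point -- and the only real content -- is the converse. Given a compatible family $\{F_{\kappa}\}$ with each $F_{\kappa}$ additive, one must check that the assembled functor $F\colon \underline{\cat}\Tperfect_{\baseCat}\rightarrow\underline{\E}$ is additive, i.e.\ that $F$ sends \emph{every} split Verdier sequence $\underline{\sC}\hookrightarrow\underline{\D}\twoheadrightarrow\underline{\E}'$ in $\underline{\cat}\Tperfect_{\baseCat}$ to a fibre sequence and preserves the initial object. The initial object lies in $\underline{\widetilde{\cat}}^{\myuline{\mathrm{perf}},\omega}_{\baseCat}$, so it is handled by $F_{\omega}$. For the split Verdier condition, the key observation is that, by standard set-theoretic accessibility, any fixed small $\baseCat$-perfect-stable category is parametrised-$\kappa$-compact for some sufficiently large regular cardinal $\kappa$; in particular, the three categories $\underline{\sC},\underline{\D},\underline{\E}'$ together with all the adjoints witnessing the splitting all lie inside $\underline{\widetilde{\cat}}^{\myuline{\mathrm{perf}},\kappa}_{\baseCat}$ for some such $\kappa$. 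Since the given sequence is then a split Verdier sequence in $\underline{\widetilde{\cat}}^{\myuline{\mathrm{perf}},\kappa}_{\baseCat}$, additivity of $F_{\kappa}\simeq F|_{\underline{\widetilde{\cat}}^{\myuline{\mathrm{perf}},\kappa}_{\baseCat}}$ supplies the desired fibre sequence in $\underline{\E}$.

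The main obstacle I anticipate is ensuring that this last ``every split Verdier sequence is captured at some finite level $\kappa$'' argument is genuinely sound in the parametrised setting: one must verify that the $\baseCat$-symmetric monoidal enlargement defining $\underline{\widetilde{\cat}}^{\myuline{\mathrm{perf}},\kappa}_{\baseCat}$ does not cause issues (it is, in any case, a monotone filtration whose union is $\underline{\cat}\Tperfect_{\baseCat}$), and that the colimit over regular cardinals truly computes $\underline{\nmot}^{\mathrm{pw},\unstable}_{\baseCat}$ in the large $\baseCat$-presentable sense. Modulo these essentially set-theoretic bookkeeping points, which parallel the discussion in \cite[$\S1.2$]{nineAuthorsIV}, the proof reduces to the $\kappa$-level universal property and the identification of limits of $\baseCat$-functor categories.
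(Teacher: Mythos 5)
Your proof is correct and follows essentially the same route as the paper: both sides are written as limits over regular cardinals $\kappa$ (the tautological decompositions coming from $\underline{\cat}\Tperfect_{\baseCat}\simeq\colim_{\kappa}\underline{\widetilde{\cat}}_{\baseCat}^{\myuline{\mathrm{perf}},\kappa}$ and $\underline{\nmot}_{\baseCat}^{\mathrm{pw},\unstable}\simeq\colim_{\kappa}\underline{\nmot}_{\baseCat}^{\mathrm{pw},\unstable,\kappa}$), and the $\kappa$-level universal property of \cref{universalPropertyUnstableKappaMotives} is applied termwise. The only minor difference is in how the matching of the additive full subcategories is justified: you use that every split Verdier sequence has parametrised-$\kappa$-compact terms for some sufficiently large $\kappa$ (valid, by $\baseCat$-presentability of $\underline{\cat}\Tperfect_{\baseCat}$ and the fibrewise nature of parametrised compactness), whereas the paper invokes \cref{splitVerdierSequencesAsFilteredColimits} for this step.
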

\begin{proof}
By \cref{splitKaroubiSequencesAsFilteredColimits} we have $\underline{\func}\add_{\baseCat}(\underline{\cat}\Tperfect_{\baseCat}, \underline{\E}) \simeq {\lim}_{\kappa}\underline{\func}\add_{\baseCat}(\underline{{\cat}}_{\baseCat}^{\myuline{\mathrm{perf}},  \kappa}, \underline{\E})$ by restricting from the tautological equivalence $\underline{\func}_{\baseCat}(\underline{\cat}\Tperfect_{\baseCat}, \underline{\E}) \simeq {\lim}_{\kappa}\underline{\func}_{\baseCat}(\underline{{\cat}}_{\baseCat}^{\myuline{\mathrm{perf}},  \kappa}, \underline{\E})$.
But we also have the tautological equivalence $\underline{\func}^L_{\baseCat}(\underline{\nmot}_{\baseCat}^{\mathrm{pw},\unstable}, \underline{\E}) \simeq {\lim}_{\kappa} \underline{\func}^L_{\baseCat}(\underline{\nmot}_{\baseCat}^{\mathrm{pw},\unstable,  \kappa}, \underline{\E})$. Therefore we can apply \cref{universalPropertyUnstableKappaMotives} to conclude.
\end{proof}

\begin{cons}[Big stable motives]\label{cons:stableNoncommutativeMotives}
Define the ${\baseCat}$-presentable-stable category of \textit{parametrised noncommutative motives} to be
$\underline{\nmot}_{\baseCat}\pointwise \coloneqq \myuline{\spectra}_{\baseCat}(\underline{\nmot}^{ \mathrm{pw},\unstable}_{\baseCat})$. This yields
\[\Z_{\mathrm{pw}} \colon \underline{\cat}\Tperfect_{\baseCat} \xhookrightarrow{j_{\unstable}} \underline{\nmot}^{\mathrm{pw},\unstable}_{\baseCat} \xrightarrow{\canonical} \underline{\nmot}_{\baseCat}\pointwise.\]
Since $\baseCat$--stabilisation is a left adjoint in $\underline{\presentable}_{\baseCat, L}$, we also have
$\underline{\nmot}_{\baseCat}\pointwise \simeq \colim_{\kappa}\underline{\nmot}_{\baseCat}^{\mathrm{pw}, \kappa}$ where $\underline{\nmot}_{\baseCat}^{\mathrm{pw}, \kappa}\coloneqq \myuline{\spectra}_{\baseCat}(\underline{\nmot}_{\baseCat}^{\mathrm{pw},\unstable,  \kappa})$. We then obtain commuting composites
\begin{center}
    \begin{tikzcd}
        & \underline{\nmot}^{\mathrm{pw},\unstable, \kappa}_{\baseCat}\ar[dr, "\canonical"]\\
        \Z_{\mathrm{pw},\kappa} \colon\:\: \underline{\widetilde{\cat}}^{\myuline{\mathrm{perf}}, \kappa}_{\baseCat} \rar["\sU_{\kappa}"']\ar[ur, hook, "j_{\unstable}"] &\underline{\presheaf}^{\tstable}_{\baseCat}(\underline{\widetilde{\cat}}^{\myuline{\mathrm{perf}}, \kappa}_{\baseCat}) \ar[r, "\lambda_{\kappa}"']& \underline{\nmot}^{\mathrm{pw},\kappa}_{\baseCat}
    \end{tikzcd}
\end{center}
where $\sU_{\kappa}$ is the composite $$\underline{\widetilde{\cat}}^{\myuline{\mathrm{perf}}, \kappa}_{\baseCat} \hookrightarrow \underline{\presheaf}_{\baseCat}(\underline{\widetilde{\cat}}^{\myuline{\mathrm{perf}}, \kappa}_{\baseCat}) \xrightarrow{\canonical} \underline{\presheaf}^{\tstable}_{\baseCat}(\underline{\widetilde{\cat}}^{\myuline{\mathrm{perf}}, \kappa}_{\baseCat}) \coloneqq \myuline{\spectra}\otimes \underline{\presheaf}_{\baseCat}(\underline{\widetilde{\cat}}^{\myuline{\mathrm{perf}}, \kappa}_{\baseCat})$$
We will use this second description to handle monoidal matters later.
\end{cons}

\begin{thm}[Universal property of pointwise stable motives]\label{univPropStableMotives}
For every ${\baseCat}$-presentable-stable category $\underline{\E}$, 
$\Z^*_{\mathrm{pw}} \colon \underline{\func}^L_{\baseCat}(\underline{\nmot}_{\baseCat}^{\mathrm{pw}}, \underline{\E}) \rightarrow \underline{\func}_{\baseCat}\add(\underline{\cat}\Tperfect_{\baseCat}, \underline{\E})$ is an equivalence. 
\end{thm}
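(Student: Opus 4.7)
The strategy is to reduce the stable universal property to the unstable one already established in \cref{universalPropertyUnstableBigMotives} by invoking the smashing nature of parametrised stabilisation. The argument is essentially formal, factoring through the intermediate unstable motives.

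First I would recall that by \cref{parametrisedSemiadditivisationSmashing}, the functor $\myuline{\spectra}_{\baseCat} \otimes -\colon \underline{\presentable}_{\baseCat, L} \to \underline{\presentable}_{\baseCat, L}$ is a symmetric monoidal localisation onto the full subcategory $\underline{\presentable}_{\baseCat,\mathrm{st}, L}$ of $\baseCat$--presentable--stable categories. Since $\underline{\nmot}_{\baseCat}^{\mathrm{pw}} \coloneqq \myuline{\spectra}_{\baseCat}(\underline{\nmot}^{\mathrm{pw},\unstable}_{\baseCat})$ is by construction the stabilisation and $\underline{\E}$ is assumed $\baseCat$--presentable--stable, the canonical map $\underline{\nmot}^{\mathrm{pw},\unstable}_{\baseCat} \to \underline{\nmot}_{\baseCat}^{\mathrm{pw}}$ induces an equivalence
\[
\underline{\func}^L_{\baseCat}(\underline{\nmot}_{\baseCat}^{\mathrm{pw}}, \underline{\E}) \xlongrightarrow{\simeq} \underline{\func}^L_{\baseCat}(\underline{\nmot}^{\mathrm{pw},\unstable}_{\baseCat}, \underline{\E}).
\]
This step is pure formalism and should fit in a line once the smashing localisation is cited.

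Next, I would precompose this equivalence with the one from \cref{universalPropertyUnstableBigMotives}, which says that $(j_{\unstable})^*\colon \underline{\func}^L_{\baseCat}(\underline{\nmot}^{\mathrm{pw}, \unstable}_{\baseCat}, \underline{\E}) \to \underline{\func}_{\baseCat}^{\mathrm{add}}(\underline{\cat}\Tperfect_{\baseCat}, \underline{\E})$ is an equivalence; note that this result only requires $\underline{\E}$ to be $\baseCat$--cocomplete, which is implied by $\baseCat$--presentability. Composing and unwinding the definition of $\Z_{\mathrm{pw}} = \canonical \circ j_{\unstable}$ from \cref{cons:stableNoncommutativeMotives} identifies the resulting map with $\Z_{\mathrm{pw}}^*$, giving the desired equivalence.

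There is no real obstacle here: the work has already been done in establishing that additivity is captured by the unstable motives and in showing that stabilisation is a smashing localisation. The only point worth verifying carefully in the write-up is the compatibility of the two identifications, i.e.\ that the composite $\Z_{\mathrm{pw}}^* = j_{\unstable}^* \circ \canonical^*$ agrees with the transfer of the unstable universal property across the stabilisation equivalence; this is immediate from the definition of $\Z_{\mathrm{pw}}$ as the composite $\underline{\cat}\Tperfect_{\baseCat} \xhookrightarrow{j_{\unstable}} \underline{\nmot}^{\mathrm{pw},\unstable}_{\baseCat} \xrightarrow{\canonical} \underline{\nmot}_{\baseCat}^{\mathrm{pw}}$.
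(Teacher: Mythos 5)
Your proposal is correct and is exactly the paper's argument: the paper deduces the theorem as an immediate consequence of \cref{universalPropertyUnstableBigMotives} together with the smashing localisation \cref{parametrisedSemiadditivisationSmashing}, which is precisely the two-step reduction you spell out. Your additional remarks (that $\baseCat$--presentability of $\underline{\E}$ gives the $\baseCat$--cocompleteness needed for the unstable universal property, and that $\Z_{\mathrm{pw}}^* = j_{\unstable}^*\circ\canonical^*$ by the definition of $\Z_{\mathrm{pw}}$) are the routine compatibilities the paper leaves implicit.
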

\begin{proof}
This is an immediate consequence of  \cref{universalPropertyUnstableBigMotives} and \cref{parametrisedSemiadditivisationSmashing}. 
\end{proof}

\begin{cons}[Pointwise connective algebraic $\KTheory$-theory] \label{parametrisedAlgebraicKTheory} \label{nota:pointwiseKTheory}
Recall from \cite{BGT13} that connective algebraic K-theory is given by the finite product-preserving functor
\[\K : \cat\perfect \xrightarrow{\Qcons_{\bullet}} \func(\mathbf{\Delta}\op, \cat\perfect) \xrightarrow{(-)^{\simeq}} \func(\mathbf{\Delta}\op, \spc) \xrightarrow{\colim} \spc \]
where $\underline{\Qcons}_{n}\underline{\sC} \simeq \underline{\func}_{\baseCat}(\twistedArrow(\Delta^n),\underline{\sC})$, Quillen's Q--construction. Since $\cmonoid(\spc) \rightarrow \spc$ preserves sifted colimits by \cite[{$\S3.2.3$}]{lurieHA}, it in particular preserves geometric realisations. Hence the geometric realisation used above to define $\K$ acquires a canonical commutative monoid structure because $(-)^{\simeq}\colon \cat\perfect\rightarrow \spc$ factors through $\forget\colon \cmonoid(\spc)\rightarrow \spc$.
Thus we can apply the ${\baseCat}$-cofree  \cref{cofreeParametrisations} and ${\baseCat}$-semiadditivise to get 
\[\underline{\K}_{\baseCat}\pointwise : \underline{\cmonoid}_{\baseCat}(\cat\perfect) \longrightarrow \underline{\cmonoid}_{\baseCat}(\spc)\] which we call the \textit{pointwise parametrised algebraic K-theory space}. Levelwise, this looks like \small
\begin{equation*}
    \begin{split}
        \mackey_{\baseCat}(\K) : \:\: \func^{\times}(\effBurn(\baseCat), \cat\perfect) & \xrightarrow{\underline{\Qcons}_{\bullet}} \func(\simplex\op, \func^{\times}(\effBurn(\baseCat), \cat\perfect))\\
        & \xrightarrow{(-)^{\simeq}} \func(\simplex\op, \func^{\times}(\effBurn(\baseCat), \cmonoid(\spc)))\\
        & \xrightarrow{\colim} \func^{\times}(\effBurn(\baseCat), \cmonoid(\spc))\\
    \end{split}
\end{equation*}\normalsize
We will have use of this description soon in analysing motivic suspensions. Note also that $\underline{\K}_{\baseCat}\pointwise$ is an additive theory by the usual unparametrised additivity theorem and since we define split Karoubi sequences in $\underline{\cmonoid}_{\baseCat}(\cat\perfect)$ as those that are pointwise split Karoubi in the usual sense. Moreover, one can deloop the algebraic K-theory space $\K$ to get an \textit{algebraic K-theory spectrum } $\KTheory : \cat\perfect \rightarrow \spectra$ which is the spectrum associated to the prespectrum whose $n$-th term is $\colim_{\bullet \in (\mathbf{\Delta}\op)^n}(\mathrm{Q}_{\bullet}\sC)^{\simeq}$ (cf. \cite[{$\S7.2$}]{BGT13} or Waldhausen's original treatment \cite{waldhausen} for more details using the equivalent $\Sdot_{\bullet}$--construction), and we write $\underline{\KTheory}_{\baseCat}\pointwise$ for the analogous pointwise K--theory spectrum. By construction, we have a factorisation $\underline{\KTheory}_{\baseCat}\pointwise\colon \underline{\cat}\Tperfect_{\baseCat} \subseteq \underline{\mackey}_{\baseCat}(\cat\perfect) \xrightarrow{\underline{\mackey}_{\baseCat}(\KTheory)}\myuline{\mackey}_{\baseCat}(\spectra)\simeq \myuline{\spectra}_{\baseCat}$.
\end{cons}

\begin{lem}\label{QFunCommutation}
Let $\underline{\sC}, \underline{\D}\in \underline{\cat}\Tperfect_{\baseCat}$. Then $\underline{\func}\Texact_{\baseCat}(\underline{\D}, \underline{\Qcons}_n\underline{\sC})\simeq \underline{\Qcons}_n\underline{\func}_{\baseCat}\Texact(\underline{\D}, \underline{\sC})$.
\end{lem}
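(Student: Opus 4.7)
The plan is to unwind both sides via the cotensor equivalence and then match the exactness conditions fibrewise over the indexing category $\twistedArrow(\Delta^n)$, which is a constant $\baseCat$--category. By \cref{parametrisedCotensors}, $\underline{\cat}_{\baseCat}$ is cotensored over $\cat$, so we have a canonical equivalence
\[
\underline{\func}_{\baseCat}\big(\underline{\D},\, \underline{\func}_{\baseCat}(\twistedArrow(\Delta^n),\underline{\sC})\big)\;\simeq\; \underline{\func}_{\baseCat}\big(\twistedArrow(\Delta^n),\, \underline{\func}_{\baseCat}(\underline{\D},\underline{\sC})\big).
\]
The proof will consist in showing that, under this equivalence, the $\baseCat$--full subcategory $\underline{\func}\Texact_{\baseCat}(\underline{\D}, \underline{\func}_{\baseCat}(\twistedArrow(\Delta^n),\underline{\sC}))$ on the left is matched with the $\baseCat$--full subcategory $\underline{\func}_{\baseCat}(\twistedArrow(\Delta^n), \underline{\func}_{\baseCat}\Texact(\underline{\D}, \underline{\sC}))$ on the right, which by unwinding definitions gives the claim.

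For this, the key input is that finite $\baseCat$--(co)limits in a parametrised functor category are computed pointwise by \cite[Prop. 3.1.12]{kaifPresentable}. Hence a $\baseCat$--functor
\[
F\colon \underline{\D}\longrightarrow \underline{\func}_{\baseCat}(\twistedArrow(\Delta^n),\underline{\sC})
\]
is $\baseCat$--exact if and only if, for every object $\partial\in\twistedArrow(\Delta^n)$, the composite $\eval_{\partial}\circ F\colon \underline{\D}\rightarrow \underline{\sC}$ preserves finite $\baseCat$--(co)limits, i.e. is $\baseCat$--exact (here we also invoke \cref{TRightExactsOnTStables} to collapse the left and right exact conditions in the $\baseCat$--stable setting).

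Now, the cotensor equivalence is implemented precisely so that for each $\partial\in\twistedArrow(\Delta^n)$, the composite $\eval_{\partial}\circ F$ is identified with $F'(\partial)$, where $F'\colon \twistedArrow(\Delta^n)\rightarrow \underline{\func}_{\baseCat}(\underline{\D},\underline{\sC})$ is the adjunct of $F$. Therefore $F$ is $\baseCat$--exact iff $F'(\partial)$ lies in $\underline{\func}_{\baseCat}\Texact(\underline{\D}, \underline{\sC})\subseteq \underline{\func}_{\baseCat}(\underline{\D}, \underline{\sC})$ for each $\partial$, iff $F'$ factors through this $\baseCat$--full subcategory. Since the inclusion $\underline{\func}_{\baseCat}\Texact(\underline{\D}, \underline{\sC})\subseteq \underline{\func}_{\baseCat}(\underline{\D}, \underline{\sC})$ is $\baseCat$--full, this factorisation condition identifies the corresponding $\baseCat$--full subcategory with $\underline{\func}_{\baseCat}(\twistedArrow(\Delta^n), \underline{\func}_{\baseCat}\Texact(\underline{\D}, \underline{\sC}))= \underline{\Qcons}_n\underline{\func}_{\baseCat}\Texact(\underline{\D}, \underline{\sC})$, yielding the desired equivalence. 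There is no real obstacle here beyond bookkeeping; the entire argument is formal from the pointwise nature of parametrised (co)limits in functor categories and the cotensor adjunction.
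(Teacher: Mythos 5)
Your proposal is correct and follows essentially the same route as the paper: both pass through the cotensor equivalence $\underline{\func}_{\baseCat}(\underline{\D},\underline{\func}(\twistedArrow(\Delta^n),\underline{\sC}))\simeq \underline{\func}(\twistedArrow(\Delta^n),\underline{\func}_{\baseCat}(\underline{\D},\underline{\sC}))$ of \cref{parametrisedCotensors} and then match the exactness conditions using that parametrised (co)limits in these functor categories are inherited pointwise from $\underline{\sC}$. Your write-up simply spells out the pointwise identification of the $\baseCat$--full subcategories that the paper dispatches with a brief citation.
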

\begin{proof}
Since $\underline{\Qcons}_n\underline{\sC}\simeq \underline{\func}_{\baseCat}(\twistedArrow(\Delta^n),\underline{\sC})$, we get $\underline{\func}_{\baseCat}(\underline{\D}, \underline{\Qcons}_n\underline{\sC}) \simeq  \underline{\Qcons}_n\underline{\func}_{\baseCat}(\underline{\D}, \underline{\sC})$ from \cref{parametrisedCotensors} (1). But then, both $\underline{\Qcons}_n\underline{\sC}$ and $\underline{\func}_{\baseCat}(\underline{\D}, \underline{\sC})$ inherit ${\baseCat}$-(co)limits from $\underline{\sC}$ (the former by \cref{parametrisedCotensors} (2)), and so clearly we obtain the statement required.
\end{proof}

\begin{lem}[``{\cite[{$\S7.3$}]{BGT13}}'']\label{motivicSuspensions}
Let $\underline{\sC}\in \underline{\cat}\Tperfect_{\baseCat}$. Then $\colim_{\bullet\in \mathbf{\Delta}\op} y_{\unstable} \underline{\Qcons}_{\bullet}(\underline{\sC})$ is already motivically local and $\colim_{\bullet\in \mathbf{\Delta}\op} j_{\unstable} \underline{\Qcons}_{\bullet}(\underline{\sC}) \simeq \Sigma j_{\unstable}(\underline{\sC})\in \underline{\nmot}_{\baseCat}^{\mathrm{pw},\unstable}$.
\end{lem}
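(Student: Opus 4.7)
The plan is to adapt the classical arguments of \cite[\S 7.3]{BGT13} and \cite[Prop. 1.2.9]{nineAuthorsIV} to the parametrised setting; the proof splits naturally into the locality claim and the suspension identification.

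\textbf{Locality.} Fix an object $\underline{\D} \in \underline{\widetilde{\cat}}^{\myuline{\mathrm{perf}},\kappa}_{\baseCat}$. Using parametrised $\kappa$--compactness of $\underline{\D}$---so that $\myuline{\map}(y_{\unstable} \underline{\D}, -)$ commutes with the geometric realisation---together with \cref{QFunCommutation}, one produces a natural equivalence $F(\underline{\D}) \simeq |(\underline{\Qcons}_\bullet \underline{\func}\Texact(\underline{\D}, \underline{\sC}))^{\simeq}|$, where $F \coloneqq \colim_{\bullet} y_{\unstable} \underline{\Qcons}_\bullet \underline{\sC}$. As a functor in $\underline{\D}$, this is the parametrised connective algebraic $\KTheory$--theory space of $\underline{\func}\Texact(\underline{\D}, \underline{\sC})$, which is again parametrised perfect--stable by \cref{internalHomObjectOfTPerfects}. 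Given a split Verdier sequence in $\underline{\D}$, \cref{workhorseAdjunctionYoga} applied to $\underline{\func}\Texact(-, \underline{\sC})$ produces another such sequence in the opposite direction; the parametrised additivity theorem for $|\underline{\Qcons}_\bullet|$, which reduces fibrewise to the classical additivity theorem via \cref{mainMackeyInclusion}, then supplies the required fibre sequence. This, together with the vanishing $\underline{\Qcons}_\bullet(0) = 0$, shows that $F$ inverts all generators of $\R_{\mathrm{pw},\kappa}$, hence is motivically local.

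\textbf{Suspension identification.} Since $\underline{\Qcons}_0 \underline{\sC} = 0$ and the zero category is a generator of $\R_{\mathrm{pw},\kappa}$, the simplicial object $j_{\unstable} \underline{\Qcons}_\bullet \underline{\sC}$ in $\underline{\nmot}_{\baseCat}^{\mathrm{pw}, \unstable}$ has trivial $0$-simplex. The key further input I would use is a Segal-type decomposition $j_{\unstable} \underline{\Qcons}_n \underline{\sC} \simeq (j_{\unstable}\underline{\sC})^n$ in the motives, arising from the fact that the spine inclusions for $\twistedArrow(\Delta^n)$ induce left-split Verdier sequences on $\underline{\func}\Texact(-, \underline{\sC})$ which become trivial after applying $j_{\unstable}$ by Part 1. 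Under this identification, $j_{\unstable} \underline{\Qcons}_\bullet \underline{\sC}$ becomes the standard bar construction on the pointed object $j_{\unstable} \underline{\sC}$, and its geometric realisation in the pointed $\baseCat$--cocomplete category $\underline{\nmot}_{\baseCat}^{\mathrm{pw}, \unstable}$ is precisely $\Sigma j_{\unstable} \underline{\sC}$ by the universal property of suspension.

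\textbf{Main obstacle.} The bulk of the work lies in verifying the Segal-type decomposition in motives: one must check carefully that the spine inclusions produce left-split Verdier sequences after applying $\underline{\func}\Texact(-, \underline{\sC})$, and then assemble the resulting simplicial structure into the bar construction in $\underline{\nmot}_{\baseCat}^{\mathrm{pw}, \unstable}$. Apart from this, the remainder of the argument reduces either to previously established results (\cref{QFunCommutation,workhorseAdjunctionYoga,internalHomObjectOfTPerfects,mainMackeyInclusion}) or to standard parametrised bookkeeping.
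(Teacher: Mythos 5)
Your locality argument is, modulo packaging, the same as the paper's: map out of $y_{\unstable}\underline{\D}$, commute the colimit past the mapping space (note that you do not even need $\kappa$--compactness here, since mapping out of a representable presheaf is evaluation and commutes with all colimits), identify the result with the parametrised K--theory space of $\underline{\func}\Texact(\underline{\D},\underline{\sC})$ via \cref{QFunCommutation}, and invoke additivity. So the first half is fine.

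The second half is where there is a genuine gap. First, a concrete error: with the model used in the paper, $\underline{\Qcons}_n\underline{\sC}\simeq \underline{\func}_{\baseCat}(\twistedArrow(\Delta^n),\underline{\sC})$ (\cref{parametrisedAlgebraicKTheory}), one has $\underline{\Qcons}_0\underline{\sC}\simeq \underline{\sC}$, not $0$, and the motivic decomposition of $\underline{\Qcons}_n\underline{\sC}$ obtained from iterated split Verdier sequences has $\binom{n+2}{2}$ summands (one for each object of $\twistedArrow(\Delta^n)$), not $n$. The identities $\underline{\Qcons}_0=0$ and $j_{\unstable}\underline{\Qcons}_n\underline{\sC}\simeq (j_{\unstable}\underline{\sC})^{\oplus n}$ are the ones valid for Waldhausen's $\Sdot_{\bullet}$--construction, so your simplicial object is not, levelwise, the bar construction you claim it is; to run your argument you would either have to change models (and prove the comparison) or identify the actual simplicial object $n\mapsto j_{\unstable}\underline{\func}(\twistedArrow(\Delta^n),\underline{\sC})$ with $j_{\unstable}\underline{\sC}$ tensored against an explicit simplicial anima and compute its realisation, which is no longer the ``standard bar construction equals suspension'' fact. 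Second, and independently of the model, the step you defer as the ``main obstacle'' -- checking that the face and degeneracy maps, after applying $j_{\unstable}$, assemble the levelwise splittings into a bar construction -- is precisely the mathematical content of the suspension identification; levelwise equivalences alone do not determine the realisation, so as written the proof is incomplete exactly where it matters.

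For contrast, the paper's argument avoids any identification of simplicial structure maps: it uses the simplicial split Verdier sequence $\underline{\sC}\rightarrow\underline{\decalage}_{\bullet}\underline{\sC}\rightarrow\underline{\Qcons}_{\bullet}\underline{\sC}$ with constant first term, applies $j_{\unstable}$ (which takes split Verdier sequences to cofibre sequences by construction of the localisation), realises, and observes that the middle term vanishes because the d\'{e}calage is augmented over $0$ with an extra degeneracy. All the simplicial bookkeeping is absorbed into the single statement that $\lvert j_{\unstable}\underline{\decalage}_{\bullet}\underline{\sC}\rvert\simeq 0$, which is why that route is substantially shorter than the Segal--type identification you propose. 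If you want to salvage your approach, the honest version is essentially the classical ``additivity implies $\Sdot_{\bullet}$ is a delooping'' argument, and you would need to supply both the model comparison and the simplicial identification in the parametrised setting.
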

\begin{proof}
To see the first part, let $\underline{\D} \in \underline{\cat}\Tperfect$. Then note that  \small
\begin{equation*}
    \begin{split}
        \myuline{\map}_{\underline{\presheaf}_{\baseCat}}(y_{\unstable}\underline{\D}, \colim_{\bullet\in \mathbf{\Delta}\op} y_{\unstable} \underline{\Qcons}_{\bullet}(\underline{\sC})) &\simeq \colim_{\bullet\in \mathbf{\Delta}\op} \myuline{\map}_{\underline{\presheaf}_{\baseCat}}(y_{\unstable}\underline{\D}, y_{\unstable} \underline{\Qcons}_{\bullet}(\underline{\sC}))\\
        &\simeq \colim_{\bullet\in \mathbf{\Delta}\op} \underline{\func}\Texact(\underline{\D},  \underline{\Qcons}_{\bullet}(\underline{\sC}))^{\simeq}\\
        &\simeq \colim_{\bullet\in \mathbf{\Delta}\op} \big(\underline{\Qcons}_{\bullet}\underline{\func}\Texact(\underline{\D}, \underline{\sC})\big)^{\simeq}  =: \myuline{\K}_{\baseCat}(\underline{\func}\Texact(\underline{\D}, \underline{\sC}))
    \end{split}
\end{equation*} \normalsize
and hence, since $\underline{\func}\Texact(-,\underline{\sC})$ preserves split Karoubi sequences and since $\myuline{\K}_{\baseCat}$ is additive, we obtain that indeed $\colim_{\bullet\in \mathbf{\Delta}\op} y_{\unstable} \underline{\Qcons}_{\bullet}(\underline{\sC})$ is motivically local as claimed.

For the second part, recall we have the simplicial split Karoubi sequence $\underline{\sC} \rightarrow \underline{\decalage}_{\bullet}\underline{\sC}\rightarrow \underline{\Qcons}_{\bullet}\underline{\sC}$    where we have adopted the terminology d\'{e}calage from \cite[{Lem. 2.4.7}]{nineAuthorsII}. The construction $\underline{\decalage}_{\bullet}\underline{\sC}$ is also called the simplicial path object in \cite[{Proof of Prop. 7.17}]{BGT13}. Now since $j_{\unstable} : \underline{\cat}\Tperfect_{\baseCat} \rightarrow \underline{\nmot}^{\mathrm{pw},\unstable}_{\baseCat}$ sends split Karoubi sequences to cofibre sequences by definition of unstable motives, and cofibre sequences are stable under colimits, we can apply $j_{\unstable}$ to the simplicial split Karoubi sequence and take geometric realisation in $\underline{\nmot}^{\mathrm{pw},\unstable}_{\baseCat}$ to get a cofibre sequence $j_{\unstable}(\underline{\sC}) \rightarrow \colim_{n\in\mathbf{\Delta}\op} j_{\unstable} \underline{\decalage}_{\bullet}\underline{\sC} \rightarrow \colim_{\bullet\in \mathbf{\Delta}\op}j_{\unstable}\underline{\Qcons}_{\bullet}\underline{\sC}$ in $\underline{\nmot}^{\mathrm{pw},\unstable}_{\baseCat}$.   
    But then we know that the middle term is always augmented over 0 and so is zero, hence the last term is a suspension of the first term, as required.
\end{proof}

\begin{thm}[Motivic corepresentability of pointwise $\KTheory$-theory]\label{MotivicCorepresentability}
Let $\underline{\sC}, \underline{\D} \in \underline{\cat}\Tperfect_{\baseCat}$. Then there is a natural equivalence 
\[\myuline{\mapsp}_{\underline{\nmot}_{\baseCat}^{\mathrm{pw}}}(\Z_{\mathrm{pw}}\underline{\sC},  \Z_{\mathrm{pw}}\underline{\D}) \simeq \underline{\KTheory}_{\baseCat}\pointwise(\underline{\func}\Texact(\underline{\sC}, \underline{\D})).\] In particular, $\underline{\KTheory}_{\baseCat}\pointwise$ is corepresented by $\Z_{\mathrm{pw}}\big(\myuline{\spectra}^ {\underline{\omega}}\big)$ by \cref{colimitGenerationSpectra}.
\end{thm}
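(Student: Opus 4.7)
The strategy mirrors that of \cite{BGT13, nineAuthorsIV}, with the key inputs being the $\baseCat$--fully faithfulness of $j_{\unstable}$ noted in \cref{cons:bigUnstablePointwiseMotives}, the motivic suspension identification \cref{motivicSuspensions}, and the commutation \cref{QFunCommutation} between exact functor categories and the Q--construction.

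First, I will unwind the mapping spectrum via the $\Sigma^{\infty}\dashv \Omega^{\infty}$ adjunction between $\underline{\nmot}_{\baseCat}^{\mathrm{pw},\unstable}$ and $\underline{\nmot}_{\baseCat}^{\mathrm{pw}}$, so that, writing $\loops^{n-\infty}\coloneqq \loops\Sigma^n$, the $n$--th space of $\myuline{\mapsp}_{\underline{\nmot}_{\baseCat}^{\mathrm{pw}}}(\Z_{\mathrm{pw}}\underline{\sC}, \Z_{\mathrm{pw}}\underline{\D})$ is naturally identified with $\myuline{\map}_{\underline{\nmot}_{\baseCat}^{\mathrm{pw},\unstable}}(j_{\unstable}\underline{\sC}, \loops^{n-\infty}\Z_{\mathrm{pw}}\underline{\D})$, since by \cref{cons:stableNoncommutativeMotives} the functor $\Z_{\mathrm{pw}}$ factors as $j_{\unstable}$ followed by the stabilisation unit.

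Next, I will iterate \cref{motivicSuspensions} to obtain an equivalence $\Sigma^n j_{\unstable}\underline{\D}\simeq \colim_{\bullet_1,\ldots,\bullet_n\in\simplex\op}j_{\unstable}\underline{\Qcons}_{\bullet_1}\cdots\underline{\Qcons}_{\bullet_n}(\underline{\D})$ in $\underline{\nmot}_{\baseCat}^{\mathrm{pw},\unstable}$, and -- crucially -- that each intermediate iterated geometric realisation is already motivically local. This second point follows by the same argument as in the proof of \cref{motivicSuspensions}: mapping out of the representables $y_{\unstable}\underline{\A}$ into such a realisation produces a $\baseCat$--space level K--theory spectrum of $\underline{\func}\Texact(\underline{\A}, \underline{\Qcons}_{\bullet_2}\cdots\underline{\Qcons}_{\bullet_n}\underline{\D})$ via \cref{QFunCommutation}, which is additive by the unparametrised additivity theorem, whence the realisations stay local inductively. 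Consequently, $\loops^{n-\infty}\Z_{\mathrm{pw}}\underline{\D}$ matches this iterated Q--realisation.

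Combining these steps with the Yoneda lemma, $\baseCat$--fully faithfulness of $j_{\unstable}$, and \cref{QFunCommutation} applied $n$ times in succession yields
\[
\myuline{\map}_{\underline{\nmot}_{\baseCat}^{\mathrm{pw},\unstable}}\!\big(j_{\unstable}\underline{\sC},\, \Sigma^n j_{\unstable}\underline{\D}\big) \;\simeq\; \colim_{\bullet_1,\ldots,\bullet_n}\big(\underline{\Qcons}_{\bullet_1}\cdots\underline{\Qcons}_{\bullet_n}\underline{\func}\Texact(\underline{\sC}, \underline{\D})\big)^{\simeq},
\]
which is precisely the $n$--th space of $\underline{\KTheory}_{\baseCat}\pointwise(\underline{\func}\Texact(\underline{\sC}, \underline{\D}))$ by the construction in \cref{nota:pointwiseKTheory}. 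Assembling these identifications coherently in $n$ -- a formal exercise using the functoriality of the iterated Q--construction and its interaction with suspension -- produces the claimed equivalence of spectra.

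The main obstacle is the locality check for all the iterated geometric realisations, ensuring that mapping out of $j_{\unstable}\underline{\sC}$ commutes with them in a manner compatible with motivic localisation; this is resolved by the inductive additivity argument described above. The final statement of the theorem is then immediate by specialising $\underline{\sC} = \myuline{\spectra}\tomega$ and invoking \cref{colimitGenerationSpectra} to identify $\underline{\func}\Texact(\myuline{\spectra}\tomega, \underline{\D})\simeq \underline{\D}$.
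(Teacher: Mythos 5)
Your proposal follows essentially the same route as the paper's proof: identify the levels of the mapping spectrum via iterated applications of \cref{motivicSuspensions}, verify motivic locality of the iterated Q--realisations by the additivity argument, and then use the $\baseCat$--fully faithfulness of $j_{\unstable}$, Yoneda, and \cref{QFunCommutation} to identify these levels with the iterated Q--construction of $\underline{\func}\Texact(\underline{\sC},\underline{\D})$, specialising at the end via \cref{colimitGenerationSpectra}. The only divergence is bookkeeping: you assert the levelwise identification $\loops\Sigma^n\Z_{\mathrm{pw}}\underline{\D}\simeq \Sigma^n j_{\unstable}\underline{\D}$ for $n\geq 1$ (a positive-degree $\Omega$--spectrum property that itself needs the same mapping-space computation), whereas the paper avoids this by computing the prespectrum whose $n$-th term is $\myuline{\map}(j_{\unstable}\underline{\sC},\Sigma^n j_{\unstable}\underline{\D})$ and only comparing the associated spectra at the very end.
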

\begin{proof}
Firstly, note that in $\underline{\nmot}_{\baseCat}^{\mathrm{pw},\unstable}$, $\Sigma^nj_{\unstable}\D \simeq \colim_{\bullet\in (\mathbf{\Delta}\op)^n}j_{\unstable} \underline{\Qcons}_{\bullet}\D$ since 
\begin{equation*}
    \begin{split}
        \Sigma^nj_{\unstable}\D \simeq \Sigma^{n-1}\big(\colim_{\bullet\in \mathbf{\Delta}\op}j_{\unstable} \underline{\Qcons}_{\bullet}\D\big) &\simeq  \colim_{\bullet\in \mathbf{\Delta}\op}\big(\Sigma^{n-1}j_{\unstable} \underline{\Qcons}_{\bullet}\D\big)\simeq \colim_{\bullet\in \mathbf{\Delta}\op}\big(\Sigma^{n-2}\big(\colim_{\bullet\in \mathbf{\Delta}\op}j_{\unstable} \underline{\Qcons}_{\bullet}\D\big)\big)
    \end{split}
\end{equation*}
and so on by \cref{motivicSuspensions}.  Now, the left hand parametrised spectrum in the theorem statement is the one associated to the prespectrum whose $n$-th term, for $n\geq 1$, is
\begin{equation*}
    \begin{split}
        \myuline{\map}_{\underline{\nmot}_{\baseCat}^{\mathrm{pw},\unstable}}(j_{\unstable}\underline{\sC},  \Sigma^nj_{\unstable}\underline{\D}) &\simeq \myuline{\map}_{\underline{\nmot}_{\baseCat}^{\mathrm{pw},\unstable}}(j_{\unstable}\underline{\sC},  \colim_{\bullet\in (\mathbf{\Delta}\op)^n}j_{\unstable} \underline{\Qcons}_{\bullet}\underline{\D}) \\
        &\simeq \myuline{\map}_{\underline{\presheaf}_{\baseCat}}(y_{\unstable}\underline{\sC},  \colim_{\bullet\in (\mathbf{\Delta}\op)^n}j_{\unstable} \underline{\Qcons}_{\bullet}\underline{\D}) \\
        &\simeq \colim_{\bullet\in (\mathbf{\Delta}\op)^n}\myuline{\map}_{\underline{\presheaf}_{\baseCat}}(y_{\unstable}\underline{\sC},  j_{\unstable} \underline{\Qcons}_{\bullet}\underline{\D})\\
        &\simeq \colim_{\bullet\in (\mathbf{\Delta}\op)^n}\myuline{\map}_{\underline{\cat}\Tperfect_{\baseCat}}(\underline{\sC},   \underline{\Qcons}_{\bullet}\underline{\D})\\
        &\simeq \colim_{\bullet\in (\mathbf{\Delta}\op)^n}\big(\underline{\Qcons}_{\bullet}\underline{\func}\Texact(\underline{\sC}, \underline{\D})\big)^{\simeq}\\
        &\simeq \loops\Sigma^n\underline{\KTheory}_{\baseCat}\pointwise\big(\underline{\func}\Texact(\underline{\sC}, \underline{\D})\big)
    \end{split}
\end{equation*}
where the second equivalence is since for $n\geq 1$, $\colim_{\bullet\in (\mathbf{\Delta}\op)^n}y_{\unstable} \underline{\Qcons}_{\bullet}\D$ is already in $\underline{\nmot}_{\baseCat}^{\mathrm{pw},\unstable}$ by \cref{motivicSuspensions}; the fourth since $j_{\unstable}$ is ${\baseCat}$-fully faithful; the fifth by \cref{QFunCommutation}; and the last by definition of $\underline{\KTheory}_{\baseCat}\pointwise$. Hence both parametrised spectra in the statement have equivalent associated spectra, giving the desired conclusion.
\end{proof}

As in \cref{cons:bigUnstablePointwiseMotives}, we can construct $\underline{\nmot}_{\baseCat}^{\mathrm{nm},\unstable}$, and we denote the canonical maps by $k_{\unstable}\colon \underline{\cat}\Tperfect\rightarrow \underline{\nmot}^{\mathrm{nm},\unstable}_{\baseCat}$ and $\Z_{\mathrm{nm}}\colon \underline{\cat}\Tperfect\rightarrow \underline{\nmot}^{\mathrm{nm}}_{\baseCat}$.  By definition of $\R_{\mathrm{nm}}$, the functors $k_{\unstable}$ and $\Z_{\mathrm{nm}}$ are in particular additive.

\begin{prop}\label{TSymmetricMonoidalityUnstableMotives}
Suppose $\baseCat$ satisfies \cref{hypothesis2:norm_finiteness}.  There is a ${\baseCat}$-symmetric monoidal structure on $\underline{\nmot}^{\mathrm{nm},\unstable}_{\baseCat}$ such that $k_{\unstable} : \underline{\cat}\Tperfect_{\baseCat}\longrightarrow\underline{\nmot}^{\mathrm{nm},\unstable}_{\baseCat}$ refines canonically to a ${\baseCat}$-symmetric monoidal functor.
\end{prop}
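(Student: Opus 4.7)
The strategy is to invoke \cref{parametrisedPresentableDwyerKanLocalisation}~(2) at each cardinal level $\kappa$ and then pass to the filtered colimit. First I would endow $\underline{\presheaf}_{\baseCat}(\underline{\widetilde{\cat}}_{\baseCat}^{\myuline{\mathrm{perf}}, \kappa})$ with a $\baseCat$-distributive symmetric monoidal structure (a parametrised Day convolution) for which the Yoneda embedding is canonically $\baseCat$-symmetric monoidal. Since $\underline{\widetilde{\cat}}_{\baseCat}^{\myuline{\mathrm{perf}}, \kappa}$ was defined precisely as the smallest $\baseCat$-symmetric monoidal subcategory of $\underline{\cat}\Tperfect_{\baseCat}$ containing the $\kappa$-compact objects, it  inherits a natural $\baseCat$-symmetric monoidal structure, and the existence of the desired Day convolution structure then follows from the $\baseCat$-symmetric monoidal refinement of the free $\baseCat$-cocompletion functor $\underline{\presheaf}_{\baseCat}\colon \underline{\cat}^{\underline{\otimes}}_{\baseCat} \rightarrow \underline{\presentable}^{\underline{\otimes}}_{\baseCat, L}$ (the parametrised analogue of \cite[Prop. 4.8.1.10]{lurieHA}, obtainable from the Nardin--Shah machinery of \cref{thm:nardinShahEnvelopes}). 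The $\baseCat$-distributivity of this Day convolution is automatic since Nardin's tensor product on $\underline{\presentable}_{\baseCat, L}$ is $\baseCat$-distributive by construction (cf. \cref{TTensorsOfPresentables}).

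Next, I would verify that $\R_{\mathrm{nm}, \kappa}$ is a $\baseCat$-collection of morphisms in $\underline{\presheaf}_{\baseCat}(\underline{\widetilde{\cat}}_{\baseCat}^{\myuline{\mathrm{perf}}, \kappa})$ that is $\underline{\otimes}$-multiplicatively closed. For the first property, the fact that restriction along a morphism $f\colon U \rightarrow V$ in $\baseCat$ preserves split Verdier sequences is an immediate consequence of \cref{mainMackeyInclusion} together with the pointwise formula for (co)limits in $\underline{\mackey}_{\baseCat}(\cat\perfect)$ from \cref{(co)fibreCreatedPointwiseInMackey}; the Yoneda embedding converts these into a $\baseCat$-collection. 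For the multiplicative closure, this is built into the very definition of $\R_{\mathrm{nm}, \kappa}$ as the closure of $\R_{\mathrm{pw}, \kappa}$ under all indexed tensor operations $p_{\otimes}$ for $p\colon U \rightarrow V$ in $\underline{\finite}_{\baseCat}$.

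Applying \cref{parametrisedPresentableDwyerKanLocalisation}~(2) then endows $\underline{\nmot}_{\baseCat}^{\mathrm{nm}, \unstable, \kappa}$ with a canonical $\baseCat$-symmetric monoidal structure uniquely characterised by the property that the localisation functor $\underline{\presheaf}_{\baseCat}(\underline{\widetilde{\cat}}_{\baseCat}^{\myuline{\mathrm{perf}}, \kappa}) \rightarrow \underline{\nmot}_{\baseCat}^{\mathrm{nm}, \unstable, \kappa}$ refines to a $\baseCat$-symmetric monoidal functor. The composite $k_{\unstable}^{\kappa} \colon \underline{\widetilde{\cat}}_{\baseCat}^{\myuline{\mathrm{perf}}, \kappa} \hookrightarrow \underline{\presheaf}_{\baseCat}(\underline{\widetilde{\cat}}_{\baseCat}^{\myuline{\mathrm{perf}}, \kappa}) \rightarrow \underline{\nmot}_{\baseCat}^{\mathrm{nm}, \unstable, \kappa}$ then immediately inherits the structure of a $\baseCat$-symmetric monoidal functor.

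To pass to the colimit, for $\kappa \leq \kappa'$ the inclusion $\underline{\widetilde{\cat}}_{\baseCat}^{\myuline{\mathrm{perf}}, \kappa} \hookrightarrow \underline{\widetilde{\cat}}_{\baseCat}^{\myuline{\mathrm{perf}}, \kappa'}$ is $\baseCat$-symmetric monoidal by construction, and the universal property of the two localisations gives a canonical $\baseCat$-symmetric monoidal refinement of the transition functor $\underline{\nmot}_{\baseCat}^{\mathrm{nm}, \unstable, \kappa} \rightarrow \underline{\nmot}_{\baseCat}^{\mathrm{nm}, \unstable, \kappa'}$. Taking the filtered colimit in $\underline{\cat}^{\underline{\otimes}}_{\baseCat} \simeq \underline{\cmonoid}_{\baseCat}(\underline{\cat})$ (via \cref{thm:nardinShahEnvelopes}~(1)) and using that the forgetful functor to $\underline{\cat}_{\baseCat}$ preserves filtered colimits yields the desired $\baseCat$-symmetric monoidal structure on $\underline{\nmot}_{\baseCat}^{\mathrm{nm}, \unstable}$, together with the refinement of $k_{\unstable}$ to a $\baseCat$-symmetric monoidal functor. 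The main bookkeeping obstacle here is the verification that $\R_{\mathrm{nm}, \kappa}$ is genuinely a $\baseCat$-collection (i.e.~stable under restrictions) after transferring through Yoneda and under localisation; but given the stability of split Verdier sequences and the definition of the closure, this should amount to a careful but unenlightening check.
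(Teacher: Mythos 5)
Your proposal is correct and follows essentially the same route as the paper: Day convolution on $\underline{\presheaf}_{\baseCat}(\underline{\widetilde{\cat}}_{\baseCat}^{\myuline{\mathrm{perf}},\kappa})$ with a $\baseCat$-symmetric monoidal Yoneda embedding, the observation that $\R_{\mathrm{nm},\kappa}$ is $\underline{\otimes}$-multiplicatively closed by construction, an application of \cref{parametrisedPresentableDwyanLocalisation} (2) at each $\kappa$, and passage to the filtered colimit using symmetric monoidal transition functors and the fact that such colimits are computed underlying. The only substantive difference is that the paper pins down $\baseCat$-distributivity of the Day convolution by citing the specific results of Nardin--Shah (distributivity of the cartesian structure on $\underline{\spc}_{\baseCat}$ plus their Day convolution theorem), where your appeal to distributivity of the tensor on $\underline{\presentable}_{\baseCat,L}$ is a slightly looser but correct-in-spirit justification.
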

\begin{proof}
We first argue for the case of small motives. From \cite[Prop. 6.0.12]{nardinShah}, the Yoneda embedding $\underline{\widetilde{\cat}}_{\baseCat}^{\myuline{\mathrm{perf}}, \kappa} \hookrightarrow\underline{\presheaf}_{\baseCat}(\underline{\widetilde{\cat}}_{\baseCat}^{\myuline{\mathrm{perf}}, \kappa})$  refines to a ${\baseCat}$-symmetric monoidal functor. Furthermore, since the $\baseCat$--cartesian symmetric monoidal on $\underline{\spc}_{\baseCat}$ is $\baseCat$--distributive by \cite[Prop. 3.2.5]{nardinShah}, we see by \cite[Thm. 3.2.6]{nardinShah} that the $\baseCat$--Day convolution symmetric monoidal structure on $\underline{\presheaf}_{\baseCat}(\underline{\widetilde{\cat}}_{\baseCat}^{\myuline{\mathrm{perf}},  \kappa})$ is $\baseCat$--distributive. Hence, by \cref{parametrisedPresentableDwyerKanLocalisation} and by construction of $\R_{\mathrm{nm}}$,  $\underline{\nmot}_{\baseCat}^{\mathrm{nm},\unstable,   \kappa}$ attains a canonical $\baseCat$--symmetric monoidal structure which affords a refinement of $k_{\unstable}$ to a $\baseCat$--symmetric monoidal functor. This completes the case of small motives.

Now for the case of the big motives, applying again \cite[Cor. 6.0.12]{nardinShah}, we get that the ${\baseCat}$-symmetric monoidal inclusion $\underline{\widetilde{\cat}}_{\baseCat}^{\myuline{\mathrm{perf}}, \kappa} \subseteq \underline{\widetilde{\cat}}_{\baseCat}^{\myuline{\mathrm{perf}}, \kappa'}$ induces a ${\baseCat}$-symmetric monoidal refinement of $\underline{\presheaf}_{\baseCat}(\underline{\widetilde{\cat}}_{\baseCat}^{\myuline{\mathrm{perf}}, \kappa}) \rightarrow \underline{\presheaf}_{\baseCat}(\underline{\widetilde{\cat}}_{\baseCat}^{\myuline{\mathrm{perf}}, \kappa'})$. On the other hand, \cref{parametrisedPresentableDwyerKanLocalisation} (2) implies that this induces a ${\baseCat}$-symmetric monoidal refinement of $\underline{\nmot}_{\baseCat}^{\mathrm{nm},\unstable,  \kappa} \rightarrow \underline{\nmot}_{\baseCat}^{\mathrm{nm},\unstable,  \kappa'}$. Thus since filtered colimits of ${\baseCat}$-symmetric monoidal categories are formed underlying by the straightforward parametrised analogue of \cite[{$\S3.2.3$}]{lurieHA},  we obtain a canonical ${\baseCat}$-symmetric monoidal structure on $\underline{\nmot}_{\baseCat}^{\mathrm{nm},{\unstable}}$ together with a ${\baseCat}$-symmetric monoidal refinement of $\underline{\cat}_{\baseCat}\Tperfect \rightarrow \underline{\nmot}_{\baseCat}^{\mathrm{nm},{\unstable}}$. 
\end{proof}

\begin{prop}[Monoidality of normed motives]\label{monoidalityOfTMotives}
Suppose $\baseCat$ satisfies \cref{hypothesis2:norm_finiteness}.  The ${\baseCat}$-functor $\Z_{\mathrm{nm}} : \underline{\cat}\Tperfect_{\baseCat} \rightarrow \underline{\nmot}_{\baseCat}^{\mathrm{nm}}$  refines to a ${\baseCat}$-symmetric monoidal functor.
\end{prop}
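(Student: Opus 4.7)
The plan is to realise $\Z_{\mathrm{nm}}$ as the composite of two $\baseCat$--symmetric monoidal functors, invoking \cref{TSymmetricMonoidalityUnstableMotives} for the unstable part and \cref{parametrisedSemiadditivisationSmashing} for the stabilisation step. Concretely, by \cref{TSymmetricMonoidalityUnstableMotives} we already have a $\baseCat$--symmetric monoidal enhancement
\[ k_{\unstable}^{\underline{\otimes}} \colon (\underline{\cat}\Tperfect_{\baseCat})\totimes \longrightarrow (\underline{\nmot}_{\baseCat}^{\mathrm{nm},\unstable})\totimes,\]
and by \cref{cons:stableNoncommutativeMotives} we have $\underline{\nmot}_{\baseCat}^{\mathrm{nm}} = \myuline{\spectra}_{\baseCat}\otimes \underline{\nmot}_{\baseCat}^{\mathrm{nm},\unstable}$, so $\Z_{\mathrm{nm}}$ factors as $k_{\unstable}$ followed by the stabilisation $\lambda \colon \underline{\nmot}_{\baseCat}^{\mathrm{nm},\unstable}\rightarrow \underline{\nmot}_{\baseCat}^{\mathrm{nm}}$. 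Hence it suffices to upgrade $\lambda$ to a $\baseCat$--symmetric monoidal functor between $\baseCat$--symmetric monoidal categories.

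First I would argue that $(\underline{\nmot}_{\baseCat}^{\mathrm{nm},\unstable})\totimes$ is itself an object of $\underline{\cmonoid}_{\baseCat}(\underline{\presentable}_{\baseCat,L})$, i.e. that its $\baseCat$--symmetric monoidal structure is $\baseCat$--distributive. This is immediate from the construction carried out in the proof of \cref{TSymmetricMonoidalityUnstableMotives}: the Day convolution on $\underline{\presheaf}_{\baseCat}(\underline{\widetilde{\cat}}_{\baseCat}^{\myuline{\mathrm{perf}},\kappa})$ is $\baseCat$--distributive by \cite[Thm.~3.2.6]{nardinShah}, and \cref{parametrisedPresentableDwyerKanLocalisation}(2) ensures that the Bousfield localisation inherits a $\baseCat$--symmetric monoidal structure whose underlying tensor product preserves parametrised colimits in each variable (since colimits in the localisation are computed by applying the left adjoint to the corresponding colimits in the presheaf category, and the Day tensor already commutes with these). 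Passing to the filtered colimit over $\kappa$ preserves $\baseCat$--distributivity, so $(\underline{\nmot}_{\baseCat}^{\mathrm{nm},\unstable})\totimes$ lies in $\underline{\cmonoid}_{\baseCat}(\underline{\presentable}_{\baseCat,L})$ as claimed.

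Next, I would invoke \cref{parametrisedSemiadditivisationSmashing}, which states that $\myuline{\spectra}_{\baseCat}\otimes -$ is a $\baseCat$--symmetric monoidal Bousfield localisation on $\underline{\presentable}_{\baseCat,L}$ with image the $\baseCat$--stable objects. Applying this to $(\underline{\nmot}_{\baseCat}^{\mathrm{nm},\unstable})\totimes$ therefore yields a canonical $\baseCat$--symmetric monoidal enhancement $(\underline{\nmot}_{\baseCat}^{\mathrm{nm}})\totimes$ of $\underline{\nmot}_{\baseCat}^{\mathrm{nm}}$ together with a $\baseCat$--symmetric monoidal functor
\[ \lambda^{\underline{\otimes}}\colon (\underline{\nmot}_{\baseCat}^{\mathrm{nm},\unstable})\totimes \longrightarrow (\underline{\nmot}_{\baseCat}^{\mathrm{nm}})\totimes.\]
Composing $\lambda^{\underline{\otimes}}\circ k_{\unstable}^{\underline{\otimes}}$ produces the desired $\baseCat$--symmetric monoidal refinement $\Z_{\mathrm{nm}}^{\underline{\otimes}}$.

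The only real friction is verifying $\baseCat$--distributivity of $(\underline{\nmot}_{\baseCat}^{\mathrm{nm},\unstable})\totimes$, since \cref{TSymmetricMonoidalityUnstableMotives} as stated only records $\baseCat$--symmetric monoidality. However, this follows formally because the localisation $L_{\R\botNormed}$ is by construction compatible with the $\baseCat$--symmetric monoidal structure (its local morphisms form a tensor ideal, indeed $\R\botNormed$ was defined as the closure of $\R\botPointwise$ under all indexed tensors), and $\baseCat$--distributivity is preserved under such compatible localisations by a direct application of the universal property in \cref{parametrisedPresentableDwyerKanLocalisation}(2). Everything else is a formal assembly of previously established pieces.
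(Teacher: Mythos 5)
Your proposal is correct and follows essentially the same route as the paper: factor $\Z_{\mathrm{nm}}$ through $k_{\unstable}$, use \cref{TSymmetricMonoidalityUnstableMotives} for the unstable part, and make the stabilisation $\underline{\nmot}^{\mathrm{nm},\unstable}_{\baseCat}\rightarrow\underline{\nmot}^{\mathrm{nm}}_{\baseCat}$ symmetric monoidal via the smashing localisation \cref{parametrisedSemiadditivisationSmashing}. The distributivity check you flag as the "only real friction" is exactly what the paper delegates to its citation of \cite[Lem.~4.2.3]{kaifPresentable} (the parametrised GGN-style uniqueness of the monoidal refinement), so your extra verification is consistent with, rather than divergent from, the paper's argument.
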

\begin{proof}
The functor $k_{\unstable} : \underline{\cat}\Tperfect_{\baseCat} \rightarrow \underline{\nmot}_{\baseCat}^{\mathrm{nm},\unstable}$ is canonically  ${\baseCat}$-symmetric monoidal  by \cref{TSymmetricMonoidalityUnstableMotives}. Moreover, by \cite[Lem. 4.2.3]{kaifPresentable} and \cref{parametrisedSemiadditivisationSmashing}, $\underline{\nmot}^{\mathrm{nm},{\unstable}}_{\baseCat} \rightarrow \underline{\nmot}_{\baseCat}^{\mathrm{nm}}$ also refines uniquely to a  ${\baseCat}$-symmetric monoidal functor.
\end{proof}

Unlike in the pointwise situation where algebraic K--theory is a construction and its corepresentability in motives is a result, we now \textit{define} the \textit{normed} parametrised algebraic K--theory to be that which is corepresented by the unit in normed motives.

\begin{defn}\label{defn:normedKTheory}
Suppose $\baseCat$ satisfies \cref{hypothesis2:norm_finiteness}. The \textit{normed parametrised algebraic K--theory spectrum} $\underline{\KTheory}_{\baseCat}^{\normed}$ is defined as $\underline{\KTheory}_{\baseCat}^{\normed} \colon \underline{\cat}\Tperfect_{\baseCat} \xrightarrow{\Z_{\normed}} \underline{\nmot}_{\baseCat}^{\normed} \xrightarrow{\myuline{\mapsp}(\unit{,} -)}\myuline{\spectra}_{\baseCat}$.
\end{defn}

\begin{obs}\label{pointwiseNormedComparison}
Let us now highlight some points based on all our considerations so far: (1) The ${\baseCat}$--functor $\underline{\KTheory}_{\baseCat}^{\normed} \colon \underline{\cat}\Tperfect_{\baseCat} \rightarrow \myuline{\spectra}_{\baseCat}$ canonically refines to a ${\baseCat}$--lax symmetric monoidal functor because $\myuline{\mapsp}(\unit, -)$ canonically refines to such. (2) Since $\Z_{\mathrm{nm}} \colon \underline{\cat}\Tperfect_{\baseCat} \rightarrow \underline{\nmot}_{\baseCat}^{\mathrm{nm}}$ is additive, by \cref{univPropStableMotives}, we obtain a canonical comparison map $\fanpsi \colon \underline{\nmot}_{\baseCat}\pointwise \longrightarrow \underline{\nmot}_{\baseCat}^{\mathrm{nm}}$ which also yields a transformation of additive functors
    $\big(\underline{\KTheory}_{\baseCat}\pointwise \Rightarrow \underline{\KTheory}_{\baseCat}^{\normed}\big)\colon \underline{\cat}\Tperfect_{\baseCat}\longrightarrow\myuline{\spectra}_{\baseCat}$. We do not know in general if these comparison maps are equivalences. In the rest of the article, we prove that this \textit{is} so in the equivariant case  for $G$ a 2-group.\footnote{In follow-up joint work \cite{hilmanRamzi} with Ramzi, we prove it for all finite groups $G$.} (3) It might be tempting to try to prove directly that the map $\underline{\KTheory}_{\baseCat}\pointwise \Rightarrow \underline{\KTheory}_{\baseCat}^{\normed}$ is an equivalence by proving \cref{MotivicCorepresentability} in the case of normed motives. When one traces through this strategy, the key hiccup is in proving \cref{motivicSuspensions} that the Q--construction is already motivically local where we have used crucially that the parametrised algebraic K--theory space functor $\underline{\K}_{\baseCat}$ is additive, i.e. that it interacts well with maps in $\R_{\mathrm{pw}}$. What is needed in the normed setting is that it interacts well with maps in $\R_{\mathrm{nm}}\supseteq \R_{\mathrm{pw}}$. Indeed, this is what we will show in the equivariant case for 2--groups in \cref{section:equivariantK-TheoryFor2-Groups}.
\end{obs}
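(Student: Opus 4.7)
The plan is to establish the three parts in order; each turns out to be an essentially formal consequence of the structural results already assembled, and there is no computational obstacle in the observation itself — the hard content is postponed to the normed-versus-pointwise comparison result mentioned at the end.

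For part (1), I would first invoke \cref{monoidalityOfTMotives}, which gives that $\Z_{\mathrm{nm}}$ canonically refines to a $\baseCat$--symmetric monoidal functor. It then suffices to upgrade $\myuline{\mapsp}_{\underline{\nmot}_{\baseCat}^{\mathrm{nm}}}(\unit,-)\colon \underline{\nmot}_{\baseCat}^{\mathrm{nm}}\rightarrow \myuline{\spectra}_{\baseCat}$ to a $\baseCat$--lax symmetric monoidal functor. This is a general fact about corepresentation by the unit in any $\baseCat$--presentable--symmetric--monoidal category: the tensor unit $\unit$ is canonically a $\baseCat$--commutative algebra, and the mapping $\baseCat$--spectrum out of a $\baseCat$--commutative algebra acquires $\baseCat$--lax symmetric monoidality via the natural pairing $\myuline{\mapsp}(\unit,X)\otimes \myuline{\mapsp}(\unit,Y)\rightarrow \myuline{\mapsp}(\unit\otimes \unit,X\otimes Y)\simeq \myuline{\mapsp}(\unit,X\otimes Y)$. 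Composing with $\Z_{\mathrm{nm}}$ yields the desired $\baseCat$--lax symmetric monoidal refinement of $\underline{\KTheory}_{\baseCat}^{\normed}$.

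For part (2), the key input is that $\Z_{\mathrm{nm}}$ is additive in the sense of \cref{defn:additiveFunctors}: by construction $\R_{\mathrm{nm}}$ is obtained from $\R_{\mathrm{pw}}$ by closing under indexed tensor functors $f_{\otimes}$, so it contains $\R_{\mathrm{pw}}$, and consequently $\Z_{\mathrm{nm}}$ kills every split Verdier sequence and sends the zero category to the zero object. Since $\Z_{\mathrm{nm}}$ also preserves $\baseCat$--colimits by construction of the normed localisation (and by applying the stabilisation functor, which is a left adjoint), the universal property of pointwise stable motives in \cref{univPropStableMotives} produces a unique strongly $\baseCat$--colimit--preserving functor $\fanpsi\colon \underline{\nmot}_{\baseCat}\pointwise\rightarrow \underline{\nmot}_{\baseCat}^{\mathrm{nm}}$ factoring $\Z_{\mathrm{nm}}$ through $\Z_{\mathrm{pw}}$. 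Postcomposing with the lax symmetric monoidal corepresentation by $\unit$ from part (1) and with the corepresentation by $\Z_{\mathrm{pw}}(\myuline{\spectra}\tomega)$ from \cref{MotivicCorepresentability} then yields the claimed natural transformation $\underline{\KTheory}_{\baseCat}\pointwise \Rightarrow \underline{\KTheory}_{\baseCat}^{\normed}$ of additive $\baseCat$--functors.

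Part (3) is pure commentary and requires no argument: it simply records that one cannot directly adapt the proof of motivic corepresentability \cref{MotivicCorepresentability} to the normed setting, because the argument there uses additivity of $\underline{\K}_{\baseCat}\pointwise$ against all maps in $\R_{\mathrm{pw}}$, whereas for the normed analogue one would need additivity against the strictly larger collection $\R_{\mathrm{nm}}$. Establishing such additivity in the equivariant case when $|G|=2^n$ — equivalently, showing that $\fanpsi$ is an equivalence in that setting — is precisely the content of the parametrised cubical descent analysis carried out in \cref{subsection:PointwiseEqualsNormed2-Groups}, which is where the real work lies and where the atomic orbital singleton inclusions of \cref{cons:singletonInclusion} together with \cref{prop:specialIndex2Identification} are brought to bear.
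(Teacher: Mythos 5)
Your argument is correct and is essentially the paper's own (very brief) justification: part (1) composes the $\baseCat$--symmetric monoidal refinement of $\Z_{\mathrm{nm}}$ from \cref{monoidalityOfTMotives} with the canonical lax symmetric monoidal structure on $\myuline{\mapsp}(\unit,-)$, and part (2) applies \cref{univPropStableMotives} to the additive functor $\Z_{\mathrm{nm}}$ (additivity coming from $\R_{\mathrm{nm}}\supseteq\R_{\mathrm{pw}}$) to produce $\fanpsi$ with $\fanpsi\circ\Z_{\mathrm{pw}}\simeq\Z_{\mathrm{nm}}$, whence the transformation of K--theories via \cref{MotivicCorepresentability} and functoriality of mapping spectra. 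The one blemish is your parenthetical claim that $\Z_{\mathrm{nm}}$ preserves $\baseCat$--colimits: this is false in general (the Yoneda embedding does not preserve colimits, and whether the motive functor preserves even the special pushouts at issue is exactly the cubical--descent problem driving \cref{section:equivariantK-TheoryFor2-Groups}), but it is also unnecessary, since \cref{univPropStableMotives} only requires that $\Z_{\mathrm{nm}}$ be additive and that $\underline{\nmot}_{\baseCat}^{\mathrm{nm}}$ be $\baseCat$--presentable--stable.
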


\subsection{Pointwise K--theory is normed for 2-groups} \label{subsection:PointwiseEqualsNormed2-Groups}\label{section:equivariantK-TheoryFor2-Groups}
In this subsection, we specialise the considerations of \cref{subsection:noncommutativeMotives} to the case of $\baseCat = \orbit_G$ where $G$ is a finite group, giving $G$--equivariant algebraic K--theory. As explained in \cref{hypothesis:normed_spectra,hypothesis2:norm_finiteness}, $\orbit_G$ satisfies the hypothesis therein and so the discussion about norm structures apply here. The end goal is to show \cref{mainTheorem:pointwiseEqualsNormedFor2-Groups} that $\underline{\KTheory}_G$ refines to the structure of a normed ring $G$--spectrum when $G$ is a 2--group. In other words, we will show that the comparison map 
$\Psi\colon \underline{\nmot}_G^{\mathrm{pw}}\rightarrow\underline{\nmot}_G^{\mathrm{nm}} $ and hence also $\underline{\KTheory}_G\pointwise\Longrightarrow\underline{\KTheory}_G^{\mathrm{{nm}}}$
from \cref{pointwiseNormedComparison} (2) are equivalences. First, recall the notations $\sU$ and $\lambda$ from \cref{cons:stableNoncommutativeMotives}. Let us also take stock of the theory developed in \cref{section:theoryOfParametrisedCubes}. While we have opted to use the more efficient stars and shrieks notation there, it would be beneficial now to switch to the more conventional notation in equivariant homotopy theory to keep track of the groups involved.

\begin{nota}\label{nota:specialNotationsForC_2}
    For a subgroup $H\leq G$ and $w\colon G/H\rightarrow G/G$ the unique map, we will from now on denote $w^*, w_!, w_*$ by $\res^G_H, \ind^G_H, \prod_{G/H}$ respectively. In the presence of $G$--symmetric monoidal structures, we will write $\norm^G_H$ for $w_{\otimes}$. When $|G/H|=2$, we will write the colimit of the $C_2$--pushouts discussed in \cref{example:C_2Pushouts} by $B\underline{\amalg}_AB$. In particular, by \cref{prop:pullbackFormulaForConeDiagrams} in the case when $\underline{I}=w_!\underline{\ast}$, we may express $B\underline{\amalg}_AB$ as the \textit{ordinary} pushout
    \begin{center}
        \begin{tikzcd}
            \ind^G_H\res^G_HA \rar["\varepsilon"]\dar["\ind^G_Hf"']\ar[dr, phantom, very near end, "\ulcorner"] & A\dar\\
            \ind^G_HB\rar & B\underline{\amalg}_AB
        \end{tikzcd}
    \end{center}
    Finally, given a cofibre sequence $A \rightarrow B\rightarrow C$ of $H$--objects in a pointed $G$--category with a $G$--distributive symmetric monoidal structure, a straightforward combination  of \cref{normsOfCofibres} and \cref{prop:specialIndex2Identification} gives us a cofibre sequence of $G$--objects
    \[A\otimes B\underline{\amalg}_{\norm^G_HA}B\otimes A\longrightarrow \norm^G_HB\longrightarrow \norm^G_HC\]
\end{nota}

We aim to prove that $\R_{\mathrm{pw}}$ is $\underline{\otimes}$--multiplicatively closed, i.e. if $H\leq G$ and we have a split Karoubi in $\cat\Tperfect_H$ as in \cref{eqn:splitKaroubiSequence}, then  $\norm^G_H(\sU(\underline{\D})/\sU(\underline{\sC})) \rightarrow \sU(\norm^G_H\underline{\E})$ induced by the $\localisation$-equivalence $\sU(\underline{\D})/\sU(\underline{\sC}) \rightarrow \sU(\underline{\E})$ is itself a $\localisation$-equivalence. This would show that the inclusion $\R\botPointwiseKappa \subseteq \R\botNormedKappa $ (cf. \cref{section:twoVariantsK-theory}) is an identification, and so the map $\fanpsi : \underline{\nmot}\pointwise_G \rightarrow \underline{\nmot}_G^{\mathrm{nm}}$ is an equivalence. Since size issues will not play a role in our discussions here, we will suppress any mention of $\kappa$.

\begin{cor}\label{pushoutProductExpansion}
Let $H\leq G$ with $|G/H|=2$. Suppose we have a pushout $Y=X\cup_AB$ in a $G$--distributive symmetric monoidal $G$-stable category $\underline{\sC}$.
Then we have the pushout
\begin{center}
    \begin{tikzcd}
    A\otimes B\underline{\amalg}_{\norm^G_HA}B\otimes A \rar \dar\ar[dr, phantom, very near end, "\scalebox{1.5}{$\ulcorner$}"]& \norm^G_HB\dar\\
    X\otimes Y\underline{\amalg}_{\norm^G_HX}Y\otimes X \rar & \norm^G_HY.
    \end{tikzcd}
\end{center}
\end{cor}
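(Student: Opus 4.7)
The plan is to reduce this corollary to the cofibre sequence formula recorded at the end of \cref{nota:specialNotationsForC_2}, leveraging the $G$-stability of $\underline{\sC}$ to translate between pushout squares and cofibre sequences.

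First I would set $C \coloneqq \cofib(A\to B) \in \res^G_H\underline{\sC}$. Since the given square is a pushout, the induced map $\cofib(A\to B)\to\cofib(X\to Y)$ is an equivalence, and this packages into a map of cofibre sequences
\[\partial\colon (A\to B\to C) \longrightarrow (X\to Y\to C)\]
in $\res^G_H\underline{\sC}$ whose third map is an equivalence. Applying the cofibre sequence formula at the end of \cref{nota:specialNotationsForC_2} (obtained from \cref{normsOfCofibres} together with \cref{prop:specialIndex2Identification} for $w\colon G/H\to G/G$) to each of these sequences produces cofibre sequences of $G$-objects
\begin{align*}
A\otimes B\underline{\amalg}_{\norm^G_HA}B\otimes A &\longrightarrow \norm^G_HB \longrightarrow \norm^G_HC,\\
X\otimes Y\underline{\amalg}_{\norm^G_HX}Y\otimes X &\longrightarrow \norm^G_HY \longrightarrow \norm^G_HC.
\end{align*}
Because both $\norm^G_H=w_\otimes$ and the $\baseCat$-colimit $\underline{\colim}_{\underline{J}_w}$ are $\baseCat$-functorial in their inputs, the map $\partial$ induces a map from the first cofibre sequence to the second whose right-hand map, being $\norm^G_H$ of an equivalence, is itself an equivalence.

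Finally, I would invoke a standard fact about stable $\infty$-categories applied fibrewise: in a map of cofibre sequences whose right-hand map is an equivalence, the square formed by the first two columns is bicartesian. This holds because the induced cofibre-of-cofibres sequence on the columns forces the total cofibre of the left square to vanish. Applied to our map of cofibre sequences, this yields exactly the pushout claimed in the statement. The main step requiring verification is the $\baseCat$-naturality of the cofibre formula from \cref{nota:specialNotationsForC_2} in the input cofibre sequence; this should be essentially automatic because every ingredient used in the formula's derivation — the indexed tensor $f_\otimes$, the $\baseCat$-colimit $\underline{\colim}_{\underline{J}_f}$, and the identification of \cref{prop:specialIndex2Identification} — is manifestly $\baseCat$-functorial, but checking this bookkeeping is the only place where any care is needed.
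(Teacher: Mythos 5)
Your proposal is correct and follows essentially the same route as the paper: identify $C\simeq\cofib(A\to B)\simeq\cofib(X\to Y)$, apply the cofibre formula of \cref{nota:specialNotationsForC_2} (i.e. \cref{normsOfCofibres} plus \cref{prop:specialIndex2Identification}) to get a map of cofibre sequences of $G$--objects whose right-hand map is an equivalence, and conclude by stability that the left square is a pushout. The naturality bookkeeping you flag is exactly the functoriality the paper uses implicitly, so there is no gap.
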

\begin{proof}
Writing $C$ for $\cofib(A\rightarrow B) \simeq \cofib(X\rightarrow Y)$, we get from the $G/H$-distributivity of $\norm^G_H$ together with \cref{nota:specialNotationsForC_2} that we have the map of cofibre sequences
\begin{center}
    \begin{tikzcd}
    A\otimes B\underline{\amalg}_{\norm^G_HA}B\otimes A \rar \dar& \norm^G_HB\dar\rar & \norm^G_HC \dar[equal] \\
    X\otimes Y\underline{\amalg}_{\norm^G_HX}Y\otimes X\rar & \norm^G_HY \rar & \norm^G_HC
    \end{tikzcd}
\end{center}
and so since $\underline{\sC}$ was stable, the left square is a fibrewise pushout.
\end{proof}

\begin{lem}\label{motivesCommuteWithC_2Pushouts}
Suppose $H\leq G$ with $|G/H| = 2$, and $\underline{\A} \xrightarrow{i} \underline{\B}$ is a split Karoubi inclusion in $\underline{\cat}\Tperfect_H$. Then the canonical map
$\Z(\underline{\A}\otimes\underline{\B})\underline{\amalg}_{\Z(\norm^G_H\underline{\A})}\Z(\underline{\B}\otimes\underline{\A}) \longrightarrow \Z(\underline{\A}\otimes\underline{\B}\underline{\amalg}_{\norm^G_H\underline{\A}}\underline{\B}\otimes\underline{\A})$
is an equivalence in $\underline{\nmot}_G$.
\end{lem}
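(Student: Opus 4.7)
The plan is to recast the $C_2$-pushout on the right-hand side as an ordinary pushout in $\cat\Tperfect_G$ to which the additivity of $\Z$ can be applied, and then exploit the corresponding re-expression in the $G$-stable category $\underline{\nmot}_G$. By \cref{prop:pullbackFormulaForConeDiagrams} applied to $\underline{I}=w_!w^*\terminalTCat$ for $w\colon G/H\to G/G$, together with \cref{prop:specialIndex2Identification} identifying the $C_2$-pushout shape with the cone $\underline{I}\tcone$, the $C_2$-pushout appearing on the right-hand side may be rewritten (as in \cref{nota:specialNotationsForC_2}) as an ordinary pushout in $\cat\Tperfect_G$:
\begin{center}
\begin{tikzcd}[column sep = large]
\ind^G_H(\underline{\A}\otimes\underline{\A})\simeq \ind^G_H\res^G_H(\norm^G_H\underline{\A}) \rar["\varepsilon"] \dar["\ind^G_H(\id\otimes i)"'] \ar[dr,phantom,"\ulcorner",very near end]& \norm^G_H\underline{\A}\dar\\
\ind^G_H(\underline{\A}\otimes\underline{\B}) \rar & \underline{\A}\otimes\underline{\B}\underline{\amalg}_{\norm^G_H\underline{\A}}\underline{\B}\otimes\underline{\A}
\end{tikzcd}
\end{center}
where $\varepsilon$ is the counit of $\ind^G_H\dashv\res^G_H$.

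The next step is to verify that this is a right-split Verdier pushout in the sense of \cref{splitVerdierPushout}. For the left vertical, I would use that $i$ is a split Verdier inclusion in $\cat\Tperfect_H$ and that the $H$-symmetric monoidal functor $\underline{\A}\otimes(-)$ preserves adjunctions (via the closed structure from \cref{TTensorsOfPresentables}), so $\id\otimes i$ is a split Verdier inclusion in $\cat\Tperfect_H$; applying $\ind^G_H$, which preserves split Verdier sequences by \cref{indexedProductsPreserveAdjunctions}, then produces a split Verdier inclusion in $\cat\Tperfect_G$. For the top horizontal $\varepsilon$: via the $G$-semiadditive identification $\ind^G_H\simeq\prod_{G/H}$, the counit becomes a ``fold-type'' morphism in $\cat\Tperfect_G$ which preserves all parametrised colimits and hence admits a right adjoint by \cref{parametrisedAdjointFunctorTheorem}. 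An application of \cref{sourceOfRightSplitVerdierPushouts} then yields the desired right-split Verdier pushout.

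Since $\Z$ is additive and $\underline{\nmot}_G$ is $G$-stable, \cref{motivicSplitVerdierPushout} then ensures that applying $\Z$ to the ordinary pushout above yields a pushout in $\underline{\nmot}_G$. To compare this with the left-hand side $\Z(\underline{\A}\otimes\underline{\B})\underline{\amalg}_{\Z(\norm^G_H\underline{\A})}\Z(\underline{\B}\otimes\underline{\A})$ of the claim, I would observe that the latter is itself a $C_2$-pushout in the $G$-stable category $\underline{\nmot}_G$ and so admits an analogous ordinary pushout re-expression with vertices $\ind^G_H\res^G_H\Z(\norm^G_H\underline{\A})$, $\Z(\norm^G_H\underline{\A})$, and $\ind^G_H\Z(\underline{\A}\otimes\underline{\B})$. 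Since $\Z$ is a $G$-functor and so commutes with $\res^G_H$, and since additive functors between $G$-semiadditive $G$-stable categories preserve the finite indexed coproduct $\ind^G_H$, these two ordinary pushouts have naturally equivalent vertices and structure maps. The canonical comparison map of the lemma is precisely the map induced between them and is therefore an equivalence. The main obstacle I anticipate is the careful verification of the hypotheses of \cref{sourceOfRightSplitVerdierPushouts} in this semiadditive setting---in particular the right-adjointability of the counit $\varepsilon$---as well as the precise alignment of the two ordinary-pushout presentations so that the canonical map is transparently induced by the universal property of a pushout between equivalent diagrams.
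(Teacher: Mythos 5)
Your proposal is correct and follows essentially the same route as the paper: rewrite the $C_2$--pushout as the ordinary pushout along the counit $\varepsilon$ as in \cref{nota:specialNotationsForC_2}, recognise it as a right-split Verdier pushout via \cref{sourceOfRightSplitVerdierPushouts}, apply \cref{motivicSplitVerdierPushout} to the additive functor $\Z$, and then reassemble the resulting pushout in $\underline{\nmot}_G$ as the $C_2$--pushout on the left-hand side. The only difference is cosmetic: where you invoke \cref{parametrisedAdjointFunctorTheorem} for the right adjoint of $\varepsilon$, the paper simply exhibits it directly (the diagonal/unit map for $\res^G_H\dashv\prod_{G/H}$), which avoids having to check separately that the adjoint preserves compact objects.
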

\begin{proof}
By \cref{nota:specialNotationsForC_2} we have the left pushout in
\[
    \begin{tikzcd}
    \ind^G_H\res^G_H\norm^G_H\underline{\A} \ar[dr, phantom, very near end, "\scalebox{1.5}{$\ulcorner$}"]\rar["\varepsilon"]\dar[hook] & \norm^G_H\underline{\A}\dar[hook]\\
    \ind^G_H(\underline{\A}\otimes\underline{\B}) \rar\uar[bend right = 30, dashed] & \underline{\A}\otimes\underline{\B}\underline{\amalg}_{\norm^G_HA}\underline{\B}\otimes\underline{\A} \uar[bend right = 30, dashed]
    \end{tikzcd}
    \hspace{2mm}
    \begin{tikzcd}
    \ind^G_H\res^G_H\Z(\norm^G_H\underline{\A}) \ar[dr, phantom, very near end, "\scalebox{1.5}{$\ulcorner$}"]\rar["\varepsilon"]\dar & \Z(\norm^G_H\underline{\A})\dar\\
    \ind^G_H\Z(\underline{\A}\otimes\underline{\B}) \rar & \Z(\underline{\A}\otimes\underline{\B}\underline{\amalg}_{\norm^G_HA}\underline{\B}\otimes\underline{\A})
    \end{tikzcd}
\]
which is moreover a right-split Karoubi pushout by \cref{sourceOfRightSplitKaroubiPushouts}. Hence by \cref{motivicSplitKaroubiPushout} we obtain the right pushout square  above from which, using \cref{nota:specialNotationsForC_2}, we may conclude.
\end{proof}

Next, recall the notion of saturation closure from \cite[Def. 6.3.5]{kaifPresentable}.

\begin{obs}\label{obs:saturationTensorIdeal}
    Let $\underline{\sC}$ be equipped with a $G$--distributive symmetric monoidal structure. Suppose that $S$ were a tensor ideal, i.e. if for any $Z\in\underline{\sC}$ and $f\colon A\rightarrow B$ in $S$, we have that $\id_Z\otimes f$ is also in $S$. Then by  a standard argument, we know that $\overline{S}$ is also a tensor ideal. To wit, consider the collection $U\subseteq \overline{S}$ of morphisms $f$ such that $\id_Z\otimes f$ is again in $\overline{S}$ for any $Z\in\underline{\sC}$. By hypothesis on $S$, we know that $S\subseteq U$. Moreover, since $Z\otimes-$ commutes with all colimits by $G$--distributivity of the symmetric monoidal structure, it is easy to see that the three axioms in \cite[Def. 6.3.5]{kaifPresentable} are satisfied so that $U$ is a $G$--strongly saturated collection containing $S$, whence $U=\overline{S}$ by minimality of $\overline{S}$.
\end{obs}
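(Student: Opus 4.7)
The plan is to employ the standard ``slicing trick'' to reduce the tensor ideal property of $\overline{S}$ to a closure property of strong saturations. For each fixed $Z\in\underline{\sC}$, I would define the subcollection $U_Z$ of morphisms $f$ in $\underline{\sC}$ such that $\id_Z\otimes f\in\overline{S}$. Showing $\overline{S}\subseteq U_Z$ for every $Z$ is precisely the tensor ideal condition on $\overline{S}$.

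To obtain this inclusion for each fixed $Z$, by the minimality of $\overline{S}$ as the $\baseCat$--strong saturation of $S$, it suffices to verify that $U_Z$ is itself a $\baseCat$--strongly saturated collection containing $S$. The containment $S\subseteq U_Z$ is immediate from the given tensor ideal property of $S$: for $f\in S$, we have $\id_Z\otimes f\in S\subseteq\overline{S}$.

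For the strong saturation of $U_Z$, the key point is that the endofunctor $\id_Z\otimes(-)\colon\underline{\sC}^{\Delta^1}\rightarrow\underline{\sC}^{\Delta^1}$ preserves all $\baseCat$--colimits -- this is exactly the $G$--distributivity hypothesis -- and preserves equivalences and compositions since it is a functor. Each of the three defining axioms for a $\baseCat$--strongly saturated collection in \cite[Def. 6.3.5]{kaifPresentable} (closure under equivalences, 2-out-of-3, and parametrised colimits of arrows) then transfers from $\overline{S}$ to $U_Z$ mechanically: a morphism $f$ lies in $U_Z$ precisely when $\id_Z\otimes f\in\overline{S}$, so each axiom follows by inspecting the image of the relevant diagram under $\id_Z\otimes(-)$. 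Minimality of $\overline{S}$ then yields $\overline{S}\subseteq U_Z$ as desired. There is no real obstacle: the argument is entirely formal once $G$--distributivity has been secured to make $\id_Z\otimes(-)$ cocontinuous, and the same trick is exactly how one proves the analogous unparametrised fact.
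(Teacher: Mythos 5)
Your proof is correct and takes essentially the same approach as the paper: one introduces the auxiliary collection of morphisms $f$ with $\id_Z\otimes f\in\overline{S}$, checks it is $G$--strongly saturated and contains $S$ using that $Z\otimes-$ is cocontinuous by distributivity, and concludes by minimality of the saturation $\overline{S}$. The only immaterial difference is that you fix $Z$ and argue with each $U_Z$ separately, whereas the paper's $U$ quantifies over all $Z$ at once.
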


\begin{lem}\label{saturationInduction}
Let $H\leq G$ with $|G/H|=2$ and $\underline{\sC}$ a $G$--distributive symmetric monoidal $G$-stable category. Suppose $S$ is a collection of morphisms in $\underline{\sC}$ which is a tensor ideal and $\overline{S}$ its $G$-strong saturation. If $\norm^G_H$ sends morphisms in $S$ to morphisms in $\overline{S}$, then $\norm^G_H$ also preserves all morphisms in the saturation $\overline{S}$.
\end{lem}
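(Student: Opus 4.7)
The plan is to define
\[T := \{f \in \overline{S} \cap \mathrm{Mor}_H(\underline{\sC}) : \norm^G_H(f) \in \overline{S}\}\]
and show it coincides with $\overline{S} \cap \mathrm{Mor}_H(\underline{\sC})$. By the hypothesis and the inclusion $S \subseteq \overline{S}$, we have $S \cap \mathrm{Mor}_H(\underline{\sC}) \subseteq T$. Hence, by the minimality of the strong saturation of $S$, it will suffice to show that $T$ is stable under the closure operations defining $\overline{S}$, restricted to $H$-morphisms. Containing equivalences is immediate, and 2-out-of-3 follows from the 2-out-of-3 property of $\overline{S}$ together with the functoriality $\norm^G_H(g\circ f) \simeq \norm^G_H(g) \circ \norm^G_H(f)$. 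Closure of $T$ under restrictions (making it an $H$-subcollection in the appropriate sense) will follow from the double coset decomposition of $\res^G_K \norm^G_H$, coupled with the fact that $\overline{S}$ is a tensor ideal by \cref{obs:saturationTensorIdeal} and is itself a $G$-collection.

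The crux is to show $T$ is closed under pushouts (which, together with closure under transfinite composition, yields all parametrised colimit closures by \cref{rmk:replaceCoproductClosureWithColimitClosure}). Given $f \colon A \to X$ in $T$ and a pushout $g \colon B \to Y$ of $f$ along some map $A \to B$, the fact that $g \in \overline{S}$ is immediate from pushout closure of $\overline{S}$. To show $\norm^G_H g \in \overline{S}$, I will invoke \cref{pushoutProductExpansion}: $\norm^G_H g$ is a fibrewise cobase change of the map
\[\ell \colon A \otimes B \underline{\amalg}_{\norm^G_H A} B \otimes A \longrightarrow X \otimes Y \underline{\amalg}_{\norm^G_H X} Y \otimes X,\]
so it suffices to show $\ell \in \overline{S}$. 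Unfolding both $C_2$-pushouts using \cref{nota:specialNotationsForC_2}, I will express the source and target of $\ell$ as ordinary pushouts whose corners are $\norm^G_H(-)$, $\ind^G_H\res^G_H\norm^G_H(-)$, and $\ind^G_H(-\otimes -)$. The map $\ell$ is then assembled from two simpler constituents: on the $\norm^G_H$-corner it is $\norm^G_H f$, which lies in $\overline{S}$ by the definition of $T$; on the $\ind^G_H(-\otimes -)$-corner it is an indexed coproduct of tensor maps built from $f$, which lies in $\overline{S}$ since $\overline{S}$ is a tensor ideal (by \cref{obs:saturationTensorIdeal}) closed under $\ind^G_H$ (as a $G$-collection). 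Applying pushout closure of $\overline{S}$ once more yields $\ell \in \overline{S}$, and hence $\norm^G_H g \in \overline{S}$.

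The hardest part will be verifying that the constituent maps really assemble into $\ell$ in the manner claimed, and that each constituent indeed belongs to $\overline{S}$; this relies crucially on the index-2 hypothesis, which is precisely what allows \cref{pushoutProductExpansion} and \cref{nota:specialNotationsForC_2} to rewrite the $C_2$-pushout as an ordinary pushout whose components are tractable combinations of $\norm^G_H f$ and tensor maps. Without $|G/H|=2$, the analogous decomposition would involve norms over intermediate subgroups of $H$, requiring the nested inductive structure that the solvability argument for $2$-groups will supply downstream.
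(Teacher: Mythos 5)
Your handling of the cobase-change step is essentially the paper's argument: both reduce $\norm^G_H g$ to a cobase change of the map $\ell$ between the two $C_2$--pushouts via \cref{pushoutProductExpansion}, rewrite those $C_2$--pushouts as ordinary pushouts via \cref{nota:specialNotationsForC_2}, and verify that the corner maps lie in $\overline{S}$ using $\norm^G_Hf\in\overline{S}$ together with the tensor-ideality of $\overline{S}$ from \cref{obs:saturationTensorIdeal}. The genuine gap is in how you propose to invoke minimality. $G$--strong saturation is defined by three axioms, one of which is closure under \emph{arbitrary} parametrised colimits in the arrow category $\underline{\func}(\Delta^1,\underline{\sC})$ --- not merely under cobase change and transfinite composition. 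Your parenthetical claim that pushout closure plus transfinite composition ``yields all parametrised colimit closures by \cref{rmk:replaceCoproductClosureWithColimitClosure}'' misreads that remark: it compares local objects for the indexed-coproduct closure of a collection with those for its full colimit closure; it does not say that pushouts and transfinite compositions generate indexed coproducts of arrows, coproducts, geometric realisations, or filtered colimits of arrows, and they do not. So your collection $T$ has not been shown to satisfy the colimit-closure axiom, and minimality of $\overline{S}$ cannot yet be applied.

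This missing axiom is exactly where the paper does separate work: given $\partial\colon\underline{J}\to\underline{\func}_H(\Delta^1,\underline{\sC})$ valued pointwise in the collection, one must show $\norm^G_H\underline{\colim}_{\underline{J}}\partial\in\overline{S}$, and since the norm does not commute with colimits this uses $G$--distributivity --- $\norm^G_H\underline{\colim}_{\underline{J}}\partial$ is the cone point of a $\prod_{G/H}\underline{J}$--shaped $G$--colimit of arrows landing in the $\overline{S}$--arrow category, which is closed under $G$--colimits. You need this (or an equivalent) argument. Two smaller but related slips: stability of $\overline{S}$ under $\ind^G_H$ is not a consequence of its being a ``$G$--collection'' (that only gives stability under restriction $f^*$); it is again an instance of the colimit-closure axiom, as is the final step producing $\ell\in\overline{S}$ from the corner maps, which is a pushout \emph{in the arrow category} rather than a cobase change. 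Finally, restricting $T$ to $H$--morphisms sits awkwardly with minimality of $\overline{S}$ as a collection over all fibres; working, as the paper does, with the subcollection of $\overline{S}$ consisting of morphisms whose norm lies in $\overline{S}$ and checking all three saturation axioms for it avoids this mismatch.
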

\begin{proof}
Write $U\subseteq \overline{S}$ for the collection of morphisms which get sent to a morphism in $\overline{S}$ by $\norm^G_H$. By hypothesis, $S\subseteq U$. We claim that $U$ is $G$--strongly saturated. The 2-out-of-3 property is clear, and so we only have to check the first two axioms. To see axiom (2), we need to show that if $\partial : \underline{J} \rightarrow \underline{\func}_H(\Delta^1,\underline{\sC})$ is a diagram that is pointwise in the full subcategory  $\underline{\func}^{U}_H(\Delta^1,\underline{\sC})$, then $\norm^G_H\underline{\colim}_{\underline{J}}\partial \in \func_G^{\overline{S}}(\Delta^1,\underline{\sC}).$ For this, recall by $G/H$-distributivity that $\norm^G_H\underline{\colim}_{\underline{J}}\partial$ is computed as the cone point of the $G$-colimit diagram
\[(\prod_{G/H}\underline{J})^{\underline{\triangleright}} \rightarrow \prod_{G/H}(\underline{J}^{\underline{\triangleright}}) \xrightarrow{\prod_{G/H}\partial} \prod_{G/H}\underline{\func}_H(\Delta^1,\underline{\sC}) \xrightarrow{\norm^G_H} \underline{\func}_G(\Delta^1,\underline{\sC}) \] Now the hypothesis on $\partial$ ensures that, when restricted to $\prod_{G/H}\underline{J}$, this composite lands in $\underline{\func}_G^{\overline{S}}(\Delta^1,\underline{\sC}) \subseteq \underline{\func}_G(\Delta^1,\underline{\sC})$ and since by definition $\underline{\func}_G^{\overline{S}}(\Delta^1,\underline{\sC})$ is closed under $G$-colimits, we obtain that the cone point $\norm^G_H\underline{\colim}_J\partial$ is indeed in $\underline{\func}_G^{\overline{S}}(\Delta^1,\underline{\sC})$ as required.

Finally, to see axiom (3), suppose we have a pushout of $H$--objects in $\underline{\sC}$ 
\begin{center}
    \begin{tikzcd}
    A \rar \dar\ar[dr, phantom, very near end, "\scalebox{1.5}{$\ulcorner$}"]& B\dar\\
    X \rar & Y
    \end{tikzcd}
\end{center}
where the left vertical is in $U$ (and so, by definition of saturation, the right vertical is in $\overline{S}$). Then by  \cref{pushoutProductExpansion} we obtain the pushout square
\[
    \begin{tikzcd}
    A\otimes B\underline{\amalg}_{\norm^G_HA}B\otimes A \rar \dar\ar[dr, phantom, very near end, "\scalebox{1.5}{$\ulcorner$}"]& \norm^G_HB\dar\\
    X\otimes Y\underline{\amalg}_{\norm^G_HX}Y\otimes X \rar & \norm^G_HY
    \end{tikzcd}
    \hspace{4mm}
    \begin{tikzcd}
     \ind^G_H\res^G_H\norm^G_HA\rar["\varepsilon"] \dar\ar[dr, phantom, very near end, "\scalebox{1.5}{$\ulcorner$}"]& \norm^G_HA\dar\\
    \ind^G_H(A\otimes B)\rar & A\otimes B\underline{\amalg}_{\norm^G_HA}A\otimes B
    \end{tikzcd}
\]
Hence if we can show that the left vertical map is in $\overline{S}$, then by definition, the right vertical map will be in $\overline{S}$ too. For this, by \cref{nota:specialNotationsForC_2} we have the right pushout above
and similarly for $X\otimes Y\underline{\amalg}_{\norm^G_HX}X\otimes Y$. Since the respective maps on the upper three terms between the ones for the pair $(A, B)$ and the ones for the pair $(X, Y)$ are all in $\overline{S}$ by hypothesis (the bottom left uses that $\overline{S}$ is a tensor ideal by \cref{obs:saturationTensorIdeal}), so is the induced map $A\otimes B\underline{\amalg}_{\norm^G_HA}A\otimes B \rightarrow X\otimes Y\underline{\amalg}_{\norm^G_HX}X\otimes Y$ by axiom (2) of \cite[Def. 6.3.5]{kaifPresentable}. Therefore, the left pushout above gives that $\norm^G_HB\rightarrow \norm^G_H Y$ is also in $\overline{S}$ as required.
\end{proof}

\begin{lem}\label{lem:saturationTensorIdealInduction}
    Let $H\leq G$ with $|G/H|=2$ and $\underline{\sC}$ a $G$--distributive symmetric monoidal $G$-stable category. Let $T$ be a collection of morphisms and $S\supseteq T$ the smallest tensor ideal containing $T$. If $\norm^G_H$ sends morphisms in $T$ to morphisms in $\overline{S}$, then $\norm^G_H$ also preserves all morphisms in $\overline{S}$. 
\end{lem}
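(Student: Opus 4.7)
The plan is to reduce directly to \cref{saturationInduction}. By that lemma, it suffices to verify that $\norm^G_H$ sends every morphism in $S$ (not merely those in $T$) to a morphism in $\overline{S}$. To this end, the natural move is to introduce the subcollection $R \subseteq S$ of morphisms $f$ such that $\norm^G_H(f)\in \overline{S}$. The hypothesis of the lemma is exactly that $T\subseteq R$, and since $S$ was defined as the smallest tensor ideal containing $T$, the equality $R = S$ will follow by minimality once one checks that $R$ is itself a tensor ideal.

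The key step is to show that $R$ is closed under tensoring with arbitrary identities. Given $f\colon A\to B$ in $R$ and $Z\in\underline{\sC}$, one appeals to the multiplicativity of $\norm^G_H$ as a symmetric monoidal functor from $\sC_{G/H}^{\otimes}$ to $\sC_{G/G}^{\otimes}$ -- this being the $G/H$-indexed tensor coming from the $G$--symmetric monoidal structure $\underline{\sC}\totimes$ -- to obtain a canonical identification
\[\norm^G_H(\id_Z\otimes f)\simeq \id_{\norm^G_H(Z)}\otimes \norm^G_H(f).\]
Since $f\in R$ gives $\norm^G_H(f)\in\overline{S}$, and since $\overline{S}$ is again a tensor ideal by \cref{obs:saturationTensorIdeal} (as $S$ is one by construction), the right-hand side and hence $\norm^G_H(\id_Z\otimes f)$ lies in $\overline{S}$, so $\id_Z\otimes f\in R$ as desired.

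I do not anticipate any serious obstacle: once the multiplicativity identity above is in hand, the argument is a formal minimality argument that runs parallel to, and indeed exploits, the conclusion of \cref{saturationInduction}. The only substantive input is that identity, which is a direct consequence of the fact that $\norm^G_H$ is a symmetric monoidal functor -- something built into the definition of $\underline{\sC}\totimes$ as a $G$--symmetric monoidal category (equivalently, by \cref{thm:nardinShahEnvelopes}, a Mackey functor valued in $\cat^{\otimes}$, which assigns compatible monoidal structures to the norm maps). The conceptual point of the lemma is thus that the property of being sent into $\overline{S}$ by $\norm^G_H$ is stable under tensoring with identities, which lets us pass freely from a generating collection $T$ to the tensor ideal it generates.
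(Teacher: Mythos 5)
Your argument is essentially the paper's own proof: reduce to \cref{saturationInduction}, consider the subcollection of morphisms of $S$ whose norm lands in $\overline{S}$ (your $R$, the paper's $U$), note it contains $T$ and is a tensor ideal because $\overline{S}$ is one by \cref{obs:saturationTensorIdeal}, and conclude by minimality. The only difference is that you make explicit the multiplicativity $\norm^G_H(\id_Z\otimes f)\simeq \id_{\norm^G_H Z}\otimes\norm^G_H f$ underlying the paper's ``easy to check'' step, which is a correct consequence of the $G$--symmetric monoidal structure.
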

\begin{proof}
    By \cref{saturationInduction}, it suffices to show that $\norm^G_H$ sends morphisms in $S$ to $\overline{S}$. So let $U\subseteq S$ be the collection of morphisms which get sent to $\overline{S}$ under $\norm^G_H$. By hypothesis, this contains $T$, and it is easy to check that it is also a tensor ideal because $\overline{S}$ is again a tensor ideal by \cref{obs:saturationTensorIdeal}. Hence by minimality we see that $U=S$ as required.
\end{proof}

To state the next result, observe that by \cref{prop:leftKanExtensionFunctorialityPresentables} we have the identification $\myuline{\spectra}_H^{\Delta^1}\underline{\amalg}^{s_!}_{\myuline{\spectra}_G}\myuline{\spectra}_H^{\Delta^1} \simeq \underline{\func}_G(\Delta^1\underline{\amalg}_{\Delta^0}\Delta^1, \myuline{\spectra}_G)$. Furthermore, by \cref{presheafCommutesWithTensor}, we also have the identification $\bigotimes_{G/H}\underline{\func}_H(\Delta^1, \myuline{\spectra}_H) \simeq \underline{\func}_G(\prod_{G/H}\Delta^1, \myuline{\spectra}_G)$.

\begin{lem}\label{technicalLeftKanExtensionThing}
Let $s : \Delta^0 \hookrightarrow \Delta^1$ be the source inclusion, $H\leq G$ with $|G/H|=2$, and $\varphi : \Delta^1\underline{\amalg}_{\Delta^0}\Delta^1 \hookrightarrow \prod_{G/H}\Delta^1$ the inclusion from \cref{prop:specialIndex2Identification}. Then the functor $$\underline{\func}_G(\Delta^1\underline{\amalg}_{\Delta^0}\Delta^1, \myuline{\spectra}_G) \rightarrow \underline{\func}_G(\prod_{G/H}\Delta^1, \myuline{\spectra}_G)$$ induced by $\bigotimes_{G/H}(\myuline{\spectra}_H \xrightarrow{s_!} \underline{\func}_H(\Delta^1, \myuline{\spectra}_H))$ is given by left Kan extension along the inclusion $\varphi$, and so in particular preserves $\omega$--compact objects and is $G$-fully faithful since $\varphi$ is $G$-fully faithful.
\end{lem}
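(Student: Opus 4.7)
The plan is to identify the displayed functor with left Kan extension along $\varphi$. The pivotal observation is that the covariant cotensor
\[
\underline{\func}_G(-, \myuline{\spectra}_G)\colon \cat_G \longrightarrow \underline{\presentable}_{G, L}
\]
(sending a $G$-functor $\alpha$ to the associated left Kan extension $\alpha_!$) is both $G$-symmetric monoidal---sending $w_*$ of $H$-categories to $\bigotimes_{G/H}$ of $G$-presentables by \cref{presheafCommutesWithTensor}---and preserves pushouts by \cref{prop:leftKanExtensionFunctorialityPresentables}.

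Under this cotensor functor, the morphism $s_!\colon \myuline{\spectra}_H \to \underline{\func}_H(\Delta^1, \myuline{\spectra}_H)$ is the image of the inclusion $s\colon \Delta^0 \hookrightarrow \Delta^1$ of $H$-categories. Applying $\bigotimes_{G/H}$ in the pushout-product sense---which by $G$-distributivity and the $C_2$-pushout formula of \cref{nota:specialNotationsForC_2} produces exactly the map
\[
\myuline{\spectra}_H^{\Delta^1} \underline{\amalg}^{s_!}_{\myuline{\spectra}_G} \myuline{\spectra}_H^{\Delta^1} \longrightarrow \bigotimes_{G/H} \myuline{\spectra}_H^{\Delta^1}
\]
as in the statement---therefore corresponds under the cotensor to the pushout-product of $s$ with itself in $\cat_G$. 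A direct computation using $\res^G_H\ind^G_H\underline{\ast} \simeq \underline{\ast} \sqcup \underline{\ast}$ (from the index-2 group theory of \cref{obs:Index2GroupTheory}) shows that this pushout-product of $G$-functors has target $\prod_{G/H}\Delta^1$ and source the $G$-category $\Delta^1\underline{\amalg}_{\Delta^0}\Delta^1$, i.e.\ the cone on two copies of $\underline{\ast}$ glued at their cone point. Matching up the combinatorics of the cube with \cref{cons:singletonInclusion} and \cref{example:singletonInclusionForOrdinaryCubes} then identifies this map with the singleton-cone inclusion $\varphi$ of \cref{prop:specialIndex2Identification}. Hence the displayed functor is $\varphi_!$.

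The two remaining claims then follow by standard arguments. Since $\varphi$ is $G$-fully faithful by \cref{cor:singletonInclusionFullyFaithful}, left Kan extension $\varphi_!$ is $G$-fully faithful. Moreover the right adjoint $\varphi^*$ is mere precomposition with $\varphi$, which preserves all fibrewise colimits and in particular fibrewise $\omega$-filtered ones (since colimits of $\myuline{\spectra}_G$-valued functors are computed pointwise); by \cite[Prop. 5.1.4]{kaifPresentable} this implies that $\varphi_!$ preserves parametrised $\omega$-compact objects. I expect the main technical obstacle to lie in making precise the parametrised ``pushout-product'' construction and verifying its compatibility with the cotensor functor, which requires careful bookkeeping with $G$-distributivity and the $C_2$-pushout formula from \cref{nota:specialNotationsForC_2}.
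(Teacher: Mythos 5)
Your proposal is correct and follows essentially the same route as the paper: both arguments transport the question along the covariant cotensor $\underline{\func}(-,\myuline{\spectra})$, using \cref{presheafCommutesWithTensor} to match indexed products of index categories with indexed tensors of presentables and \cref{prop:leftKanExtensionFunctorialityPresentables} to match the $C_2$-pushout with the pushout of index categories, so that the corner map is recognised as $\varphi$ from \cref{prop:specialIndex2Identification}, after which $G$-fully faithfulness and preservation of $\omega$-compacts follow formally (your deduction of compactness via the colimit-preserving right adjoint $\varphi^*$ is a harmless variant of the paper's remark that the relevant composites factor through $\underline{\presentable}_{G,L,\mathrm{st},\omega}$).
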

\begin{proof}
    Write $\partial \colon \underline{\Delta}^1\rightarrow \underline{\cat}_H$ for the map classifying $\underline{\Delta}^0 \xrightarrow{s} \underline{\Delta}^1$. By \cref{presheafCommutesWithTensor}, we obtain the commuting square in 
    \begin{center}
        \begin{tikzcd}
            \prod_{G/H}\underline{\Delta}^1\rar["\prod_{G/H}\partial"] & \prod_{G/H}\underline{\cat}_H \ar[rrr,"{\prod_{G/H}\underline{\func}(-, \myuline{\spectra})}"] \dar["\underline{\times}"']& &&\prod_{G/H}\underline{\presentable}_{H,L, \mathrm{st}}\dar["\underline{\otimes}"]\\
            & \underline{\cat}_G \ar[rrr,"{\underline{\func}(-, \myuline{\spectra})}"] & &&\underline{\presentable}_{G,L, \mathrm{st}}
        \end{tikzcd}
    \end{center}
    Note, importantly, that it is easy to see that the essential image of these compositions factor through $\underline{\presentable}_{G,L, \mathrm{st},\omega}\subset\underline{\presentable}_{G,L, \mathrm{st}}$. By the commutativity of this diagram and the identification $\myuline{\spectra}_H^{\Delta^1}\underline{\amalg}^{s_!}_{\myuline{\spectra}_G}\myuline{\spectra}_H^{\Delta^1} \simeq \underline{\func}_G(\Delta^1\underline{\amalg}_{\Delta^0}\Delta^1, \myuline{\spectra}_G)$ from \cref{prop:leftKanExtensionFunctorialityPresentables}, applying $\underline{\func}(-,\myuline{\spectra})$ to the diagram in $\underline{\cat}_G$
    \begin{center}
        \begin{tikzcd}
            \underline{\Delta}^0 \rar["s",hook]\dar["s"',hook]\ar[dr,phantom,very near end, "\underline{\ulcorner}"] & \underline{\Delta}^1\dar\ar[ddr, "s\times \id", bend left = 30,hook]\\
            \underline{\Delta}^1 \rar\ar[drr, "\id\times s"', bend right = 20,hook] & \underline{\Delta}^1\underline{\amalg}_{\underline{\Delta}^0}\underline{\Delta}^1\ar[dr, dashed, "\varphi", hook]\\
            && \prod_{G/H}\underline{\Delta}^1
        \end{tikzcd}
    \end{center}
    where the outer diagram is encoded by $\prod_{G/H}\underline{\Delta}^1$--shaped diagram $\prod_{G/H}\partial$ then yields the desired statement.  
\end{proof}

For the next result, recall the notation from \cref{obs:waldhausenSplitting} as well as \cite[Prop. 6.3.6]{kaifPresentable} which says that strong saturations are the same as motivic equivalences in the current setting.

\begin{prop}\label{arrowSpectraCase}
Let $H\leq G$ be a subgroup of index 2. Then $\norm^G_H$ sends the morphism $t^* : \sU((\myuline{\spectra}_H\tomega)^{\Delta^1})/\sU(\myuline{\spectra}_H\tomega) \rightarrow \sU(\myuline{\spectra}_H\tomega)$ in $\R\botPointwise$ to a morphism in $\overline{\R}\botPointwise$.
\end{prop}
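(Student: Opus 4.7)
Write $\mathbb{S}_K \coloneqq \myuline{\spectra}_K\tomega$ for brevity. By construction, $t^*$ is the cofibre-induced motivic equivalence arising from applying $\sU$ to the Waldhausen split Verdier sequence $\mathbb{S}_H \to \mathbb{S}_H^{\Delta^1} \to \mathbb{S}_H$ in $\cat_H^{\myuline{\mathrm{perf}}}$. The plan is to produce two cofibre sequences in presheaves whose first two terms agree motivically and whose connecting maps match, so that the induced identification of cofibres --- which will be $\norm^G_H(\sU(\mathbb{S}_H^{\Delta^1})/\sU(\mathbb{S}_H))$ on one side and $\sU(\mathbb{S}_G)$ on the other --- is precisely $\lambda(\norm^G_H t^*)$, giving the desired conclusion.

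First I would apply \cref{normsOfCofibres} inside $\underline{\presheaf}^{\tstable}$ with its $G$--distributive Day convolution structure (which is $G$--distributive by \cite[Thm. 3.2.6]{nardinShah}) to the presheaf cofibre sequence $\sU(\mathbb{S}_H)\to \sU(\mathbb{S}_H^{\Delta^1}) \to \sU(\mathbb{S}_H^{\Delta^1})/\sU(\mathbb{S}_H)$. Using that $\sU$ is $G$--symmetric monoidal (so $\norm^G_H\sU\simeq \sU\norm^G_H$), this produces the cofibre sequence
\[\sU(\mathbb{S}_H\otimes\mathbb{S}_H^{\Delta^1})\underline{\amalg}_{\sU(\mathbb{S}_G)}\sU(\mathbb{S}_H^{\Delta^1}\otimes\mathbb{S}_H)\longrightarrow\sU(\norm^G_H\mathbb{S}_H^{\Delta^1})\longrightarrow\norm^G_H(\sU(\mathbb{S}_H^{\Delta^1})/\sU(\mathbb{S}_H)).\]
After applying motivic localisation $\lambda$, \cref{motivesCommuteWithC_2Pushouts} identifies the first term with $\lambda\sU(Q)$, where $Q\coloneqq \mathbb{S}_H\otimes\mathbb{S}_H^{\Delta^1}\underline{\amalg}_{\mathbb{S}_G}\mathbb{S}_H^{\Delta^1}\otimes\mathbb{S}_H$ is the $C_2$--pushout computed directly in $\cat_G^{\myuline{\mathrm{perf}}}$.

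Next I would apply \cref{normsOfCofibres} to the Waldhausen cofibre sequence directly in $\cat_H^{\myuline{\mathrm{perf}}}$, yielding a cofibre sequence $Q \to \norm^G_H\mathbb{S}_H^{\Delta^1} \to \norm^G_H\mathbb{S}_H \simeq \mathbb{S}_G$ in $\cat_G^{\myuline{\mathrm{perf}}}$. The crux is to invoke \cref{technicalLeftKanExtensionThing} to identify the first map with the left Kan extension $\varphi_!\colon \mathbb{S}_G^{\Delta^1\underline{\amalg}_{\Delta^0}\Delta^1,\underline{\omega}} \to \mathbb{S}_G^{\prod_{G/H}\Delta^1,\underline{\omega}}$ along the $G$--fully faithful inclusion $\varphi$ of \cref{prop:specialIndex2Identification}. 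Since $\varphi_!$ is itself $G$--fully faithful with exact right adjoint $\varphi^*$, by \cref{workhorseAdjunctionYoga} this upgrades to a right--split Verdier sequence; applying $\lambda\sU$ then produces a motivic cofibre sequence $\lambda\sU(Q)\to \lambda\sU(\norm^G_H\mathbb{S}_H^{\Delta^1}) \to \lambda\sU(\mathbb{S}_G)$.

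The two motivic cofibre sequences now share the same first two terms. The main subtlety, which I would verify by chasing the naturality of $\sU\norm^G_H \simeq \norm^G_H\sU$ together with the construction in \cref{normsOfCofibres}, is that the two connecting maps $\lambda\sU(Q)\to \lambda\sU(\norm^G_H\mathbb{S}_H^{\Delta^1})$ agree --- both are meant to be induced by $\sU$ applied to the norm of the Waldhausen inclusion, but keeping track of the two different factorisations through the $C_2$--pushout requires care. Granting this compatibility, the cofibres are canonically identified and the induced map is precisely $\lambda(\norm^G_H t^*)$, so $\norm^G_H t^*$ is a motivic equivalence and thus lies in $\overline{\R}_{\mathrm{pw}}$.
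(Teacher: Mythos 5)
Your overall strategy---expanding $\norm^G_H$ of the quotient via \cref{normsOfCofibres}, comparing the presheaf-level $C_2$--pushout with the categorical one via \cref{motivesCommuteWithC_2Pushouts}, and recognising the normed Waldhausen inclusion as a Verdier inclusion with cofibre $\myuline{\spectra}_G$---is the same as the paper's, but the step you yourself call the crux has a genuine gap. You invoke \cref{technicalLeftKanExtensionThing} to identify the first map of the normed Waldhausen sequence with the left Kan extension $\varphi_!$. That lemma concerns the map induced by $\bigotimes_{G/H}$ of $s_!\colon \myuline{\spectra}_H\rightarrow\myuline{\spectra}_H^{\Delta^1}$ (left Kan extension along the source inclusion, $X\mapsto (X\xrightarrow{\id}X)$): its proof runs through \cref{presheafCommutesWithTensor} and \cref{prop:leftKanExtensionFunctorialityPresentables}, which express everything as functor categories $\underline{\func}_G(-,\myuline{\spectra}_G)$ and left Kan extensions along maps of index diagrams. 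The Waldhausen inclusion in \cref{eqn:waldhausenSequence} is $s_*$ ($X\mapsto (X\rightarrow 0)$), a \emph{right} Kan extension, and the lemma simply does not apply to the $s_*$--built $C_2$--pushout as you state. The paper bridges exactly this point with the commuting square (credited to Achim Krause) in which the rotation equivalence $\cofib\colon \myuline{\spectra}_H^{\Delta^1}\xrightarrow{\simeq}\myuline{\spectra}_H^{\Delta^1}$ intertwines $s_!$ with $s_*$ over the identity of $\myuline{\spectra}_H$; applying $\norm^G_H$ to the whole square shows the $s_*$--version of the comparison map is equivalent, as an arrow, to $\varphi_!$, hence $G$--fully faithful and $\omega$--compact--preserving. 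Without this intertwining, the identification---and with it the (right--)split Verdier structure your argument rests on---is unjustified; supplying it is the missing idea rather than a routine check.

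A second, smaller issue is that you defer the verification that the two connecting maps into $\lambda\sU(\norm^G_H\myuline{\spectra}_H^{\underline{\omega},\Delta^1})$ agree ("granting this compatibility"). The paper organises the argument so that no abstract matching of two cofibre sequences is needed: it factors the single map $\sU(\norm^G_H\myuline{\spectra}_H^{\underline{\omega},\Delta^1})\rightarrow\sU(\myuline{\spectra}_G\tomega)$ through the two quotients (by the presheaf-level $s_*$--pushout and by $\sU$ of the categorical $s_*$--pushout), obtaining a commuting triangle whose diagonal is literally a generator of $\R\botPointwise$---the underlying sequence is shown to be honestly split, not just right--split, via the extra adjoints---while the comparison of the two quotients lies in $\overline{\R}\botPointwise$ by \cref{motivesCommuteWithC_2Pushouts} applied to an explicit map of cofibre sequences with the identity in the middle; two--out--of--three then concludes. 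Your route can be completed along the same lines (your right--split claim would suffice by \cref{rmk:waldhausenRightSplit}), but both the Krause square and this compatibility need to be made explicit.
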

\begin{proof}
Note that we have the factorisation $s_*\colon \myuline{\spectra}_H \xrightarrow{s_!}\myuline{\spectra}_H^{\Delta^1}\xrightarrow[\simeq]{\cofib} \myuline{\spectra}_H$, which we learnt from Achim Krause. Hence applying $\norm^G_H$ to the whole square and applying the identification as in \cref{pushoutProductExpansion}, we get in turn the diagram
\begin{center}\adjustbox{scale=0.9}{
    \begin{tikzcd}
    \myuline{\spectra}_H^{\Delta^1}\underline{\amalg}^{s_!}_{\myuline{\spectra}_G}\myuline{\spectra}_H^{\Delta^1} \simeq \underline{\func}_G(\Delta^1\underline{\amalg}_{\Delta^0}\Delta^1, \myuline{\spectra}_G)\dar["\simeq"']\rar["\varphi_!", hook] & \underline{\func}_G(\prod_{G/H}\Delta^1, \myuline{\spectra}_G)\simeq \norm^G_H(\myuline{\spectra}_H^{\Delta^1}) \dar["\simeq" ]\\
    \myuline{\spectra}_H^{\Delta^1}\underline{\amalg}^{s_*}_{\myuline{\spectra}_G}\myuline{\spectra}_H^{\Delta^1} \rar & \norm^G_H(\myuline{\spectra}_H^{\Delta^1})
    \end{tikzcd}
    }
\end{center}
where the $G/H$-pushout on the top left is with respect to the $s_!$ diagram and the bottom left is with respect to the $s_*$ diagram. Since, by \cref{technicalLeftKanExtensionThing}, the top arrow is $\varphi_!$ which preserves $\omega$--compact objects and is $G$-fully faithful, so is the bottom arrow. Therefore, together with the $G/H$-distributivity of $\norm^G_H$, we obtain from \cref{nota:specialNotationsForC_2} the following solid cofibre sequence in $\presentable_{G,L,\mathrm{st},\omega}\simeq \cat_G\Tperfect$
\begin{center}
    \begin{tikzcd}
    \myuline{\spectra}_H^{\Delta^1}\underline{\amalg}^{s_*}_{\myuline{\spectra}_G}\myuline{\spectra}_H^{\Delta^1} \rar[hook] & \norm^G_H(\myuline{\spectra}_H^{\Delta^1}) \rar[two heads, "(t\times t)^*"]  & \norm^G_H\myuline{\spectra}_H \simeq \myuline{\spectra}_G \lar[bend right = 40, "(t\times t)_!"', hook, dashed] \lar[bend left = 40, hook,dashed]
    \end{tikzcd}
\end{center}
which is moreover Karoubi by \cref{cor:KaroubiFibreDescription}. Note that this is then automatically split by \cref{workhorseAdjunctionYoga} since the right hand Karoubi projection admits the dashed adjoints by \cite[Lem. 4.4.3]{kaifPresentable}, where everything in sight preserves $\omega$--compact objects. Hence, upon applying $(-)\tomega$ and by definition of the motivic localisation, the diagonal map in 
\begin{center}
    \begin{tikzcd}
    \frac{\sU(\norm^G_H(\myuline{\spectra}_H^{\underline{\omega},\Delta^1}))}{\sU(\myuline{\spectra}_H^{\underline{\omega},\Delta^1})\underline{\amalg}^{s_*}_{\sU(\myuline{\spectra}_G\tomega)}\sU(\myuline{\spectra}_H^{\underline{\omega},\Delta^1})} \dar\rar& \sU\big(\norm^G_H\myuline{\spectra}_H\tomega\big)\\
    \frac{\sU(\norm^G_H(\myuline{\spectra}_H^{\underline{\omega},\Delta^1}))}{\sU(\myuline{\spectra}_H^{\underline{\omega},\Delta^1}\underline{\amalg}^{s_*}_{\myuline{\spectra}_G\tomega}\myuline{\spectra}_H^{\underline{\omega},\Delta^1})} \ar[ur] 
    \end{tikzcd}
\end{center}
is a morphism in $\R\botPointwise$. So to show that the top horizontal map is in $\overline{\R}\botPointwise$, it will suffice to show that the left vertical map is in $\overline{\R}\botPointwise$: this is merely the observation that we have, by definition, a map of cofibre sequences in $\underline{\presheaf}^{\tstable}(\underline{\cat}\Tperfect)$
\begin{center}
    \begin{tikzcd}
    \sU(\myuline{\spectra}_H^{\underline{\omega},\Delta^1})\underline{\amalg}^{s_*}_{\sU(\myuline{\spectra}_G\tomega)}\sU(\myuline{\spectra}_H^{\underline{\omega},\Delta^1}) \rar\dar  &  \sU(\norm^G_H(\myuline{\spectra}_H^{\underline{\omega},\Delta^1}))  \rar\dar[equal]  &     \frac{\sU(\norm^G_H(\myuline{\spectra}_H^{\underline{\omega},\Delta^1}))}{\sU(\myuline{\spectra}_H^{\underline{\omega},\Delta^1})\underline{\amalg}^{s_*}_{\sU(\myuline{\spectra}_G\tomega)}\sU(\myuline{\spectra}_H^{\underline{\omega},\Delta^1})}\dar\\
    \sU(\myuline{\spectra}_H^{\underline{\omega},\Delta^1}\underline{\amalg}^{s_*}_{\myuline{\spectra}_G\tomega}\myuline{\spectra}_H^{\underline{\omega},\Delta^1})  \rar   &     \sU(\norm^G_H(\myuline{\spectra}_H^{\underline{\omega},\Delta^1}))   \rar   &   \frac{\sU(\norm^G_H(\myuline{\spectra}_H^{\underline{\omega},\Delta^1}))}{\sU(\myuline{\spectra}_H^{\underline{\omega},\Delta^1}\underline{\amalg}^{s_*}_{\myuline{\spectra}_G\tomega}\myuline{\spectra}_H^{\underline{\omega},\Delta^1})}
    \end{tikzcd}
\end{center}
and the left vertical is in $\overline{\R}\botPointwise$ by \cref{motivesCommuteWithC_2Pushouts}, and hence the right vertical is in $\overline{\R}\botPointwise$ too.
\end{proof}

\begin{lem}\label{subgroupIndex2PreserveSaturations}
Let $H\leq G$ with $|G/H|=2$. Then $\norm^G_H$ preserves morphisms in $\overline{\R}\botPointwise$.
\end{lem}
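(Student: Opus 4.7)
The strategy is to deduce this as a direct consequence of the combinatorial \cref{lem:saturationTensorIdealInduction}, whose hypotheses have effectively been prepared by the prior two results. First, I would appeal to \cref{rmk:specialFormOfR_pw}, which permits us without loss of generality to take $\R\botPointwise$ to be the tensor ideal generated by the single map
\[ t^*\colon \sU(\myuline{\spectra}_H^{\underline{\omega},\Delta^1})/\sU(\myuline{\spectra}_H\tomega)\longrightarrow \sU(\myuline{\spectra}_H\tomega)\]
associated to the Waldhausen sequence of $\myuline{\spectra}_H\tomega$. In particular, this puts us precisely in the position to apply \cref{lem:saturationTensorIdealInduction} with the singleton $T = \{t^*\}$ and $S = \R\botPointwise$.

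Before doing so, I would verify that the ambient category in which $\R\botPointwise$ lives, namely $\underline{\presheaf}^{\underline{\mathrm{st}}}_G(\widetilde{\underline{\cat}}_G^{\underline{\mathrm{perf}}})$, carries a $G$--distributive symmetric monoidal structure and is $G$--stable, so that the hypotheses of \cref{lem:saturationTensorIdealInduction} are met. This is exactly the input that went into the proof of \cref{TSymmetricMonoidalityUnstableMotives}: the $G$--Day convolution is $G$--distributive by \cite[Thm. 3.2.6]{nardinShah} (since the $G$--cartesian structure on $\underline{\spc}_G$ is), and stability is preserved by the smashing localisation $\myuline{\spectra}\otimes-$ of \cref{parametrisedSemiadditivisationSmashing}. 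Under these hypotheses, and given our index 2 assumption $|G/H|=2$, the norm $\norm^G_H$ fits into the setup of \cref{lem:saturationTensorIdealInduction}.

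Finally, I would invoke \cref{arrowSpectraCase}, which is precisely the statement that $\norm^G_H$ sends the generating morphism $t^*\in T$ into $\overline{\R}\botPointwise$. Thus \cref{lem:saturationTensorIdealInduction} applies and yields that $\norm^G_H$ preserves all morphisms in $\overline{\R}\botPointwise$, as required. The hard work has already been done: the genuine content sits in \cref{arrowSpectraCase}, where Achim Krause's commuting square was used to rewrite the $s_!$--pushout as an $s_*$--pushout and thereby pin down the $G$--norm of the Waldhausen arrow in $\cat_G\Tperfect$, and in \cref{saturationInduction,lem:saturationTensorIdealInduction}, where the axioms of strong saturation were combined with the $C_2$--pushout decomposition from \cref{nota:specialNotationsForC_2} to inductively propagate the property along colimits and tensor products.
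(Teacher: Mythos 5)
Your proof is correct and follows essentially the same route as the paper: reduce via \cref{rmk:specialFormOfR_pw} to the single Waldhausen generator, apply \cref{lem:saturationTensorIdealInduction}, and feed in \cref{arrowSpectraCase}. The extra check that the ambient stable presheaf category is $G$--distributive is a reasonable bit of due diligence the paper leaves implicit, but it does not change the argument.
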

\begin{proof}
By \cref{rmk:specialFormOfR_pw}, we could have replaced $\R\botPointwise$ with the tensor ideal generated by the map $t^*\colon \sU(\myuline{\spectra}^{\underline{\omega},\Delta^1})/\sU(\myuline{\spectra}\tomega)\rightarrow \sU(\myuline{\spectra}\tomega)$ coming from the Waldhausen split Karoubi sequence.  By \cref{lem:saturationTensorIdealInduction}, it suffices to show that $\norm^G_H$ sends this morphism to one $\overline{\R}\botPointwise$. This input is in turn supplied by \cref{arrowSpectraCase}.
\end{proof}

The final ingredient to the main theorem is the following standard fact in group theory. 

\begin{fact}\label{groupTheoryNormalSeries}
Let $p$ be a prime, $G$ be a p-group, and $H \leq G$ a subgroup. There is a normal series $H = N_0 \triangleleft N_1 \triangleleft \cdots \triangleleft N_k = G$ such that the quotients $N_m/N_{m-1}\cong C_p$ for all $m$.
\end{fact}

\begin{thm}\label{mainTheorem:pointwiseEqualsNormedFor2-Groups}
Let $G$ be a 2-group. The inclusion $\R\botPointwiseKappa\subseteq \R\botNormedKappa$ is an identification, and hence the comparison $\fanpsi : \underline{\nmot}_G\pointwise \rightarrow \underline{\nmot}_G^{\mathrm{nm}}$ from \cref{pointwiseNormedComparison} is an equivalence. 
\end{thm}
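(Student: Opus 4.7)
The strategy is a d\'{e}vissage using the solvability of $2$-groups, reducing the general case to the index-$2$ statement \cref{subgroupIndex2PreserveSaturations}. Since both $\underline{\nmot}_G\pointwise$ and $\underline{\nmot}_G^{\mathrm{nm}}$ are constructed by first performing a parametrised Bousfield localisation of $\underline{\presheaf}_G^{\tstable}(\underline{\cat}\Tperfect_G)$ and then stabilising, the equivalence $\fanpsi$ will be automatic from the equality $\overline{\R}_{\mathrm{pw},\kappa}=\overline{\R}_{\mathrm{nm},\kappa}$ of strongly saturated classes. Since $\R_{\mathrm{pw},\kappa}\subseteq \R_{\mathrm{nm},\kappa}$ is tautological, I only need to verify that every generator of $\R_{\mathrm{nm},\kappa}$ already lies in $\overline{\R}_{\mathrm{pw},\kappa}$. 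By definition, such a generator has the form $f_{\otimes}\varphi$ for some $\varphi\in \R_{\mathrm{pw},\kappa}$ and some $f\colon U\to V$ in $\underline{\finite}_G$; decomposing $f$ into its orbits, this reduces to showing that for every pair of subgroups $H\leq K\leq G$, the indexed tensor $\norm^K_H$ carries $\overline{\R}_{\mathrm{pw}}$ into itself.

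I would then apply \cref{groupTheoryNormalSeries} with $p=2$ to the $2$-group $K$, obtaining a normal series
\[H = N_0 \triangleleft N_1 \triangleleft \cdots \triangleleft N_l = K\]
with each $|N_m/N_{m-1}|=2$. Transitivity of indexed tensors -- a formal consequence of the functoriality of $(-)_{\otimes}$ under composition in $\underline{\finite}_G$, which is part of the axioms of $\baseCat$-symmetric monoidal categories -- provides the factorisation
\[\norm^K_H \simeq \norm^K_{N_{l-1}}\circ \norm^{N_{l-1}}_{N_{l-2}}\circ \cdots \circ \norm^{N_1}_H,\]
so matters are reduced to showing that each index-$2$ step $\norm^{N_m}_{N_{m-1}}$ preserves $\overline{\R}_{\mathrm{pw}}$. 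The arguments running from \cref{motivesCommuteWithC_2Pushouts} through \cref{subgroupIndex2PreserveSaturations} only invoke the hypothesis that the subgroup has index $2$, never that the ambient group is itself a $2$-group; so they apply verbatim in the $N_m$-parametrised setting and give the required preservation for each $m$.

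The main obstacle is the compatibility of motivic localisations under base change between the $G$-, $K$-, and $N_m$-parametrised contexts: one must identify $\overline{\R}_{\mathrm{pw}}$ formed inside $\underline{\presheaf}_{N_m}^{\tstable}(\underline{\cat}\Tperfect_{N_m})$ with the appropriate slice of $\overline{\R}_{\mathrm{pw}}$ formed inside $\underline{\presheaf}_G^{\tstable}(\underline{\cat}\Tperfect_G)$. This compatibility should follow formally from two ingredients: (i) the restriction functor $\res^G_{N_m}\colon \underline{\cat}\Tperfect_G\to \underline{\cat}\Tperfect_{N_m}$ preserves split Verdier sequences and indexed tensors along morphisms in $\underline{\finite}_{N_m}$, so it carries one class of generators to the other; and (ii) the base-changed $\baseCat_{/G/N_m}$-symmetric monoidal structure agrees with the intrinsic $N_m$-symmetric monoidal structure under the canonical identification $\baseCat_{/G/N_m}\simeq \orbit_{N_m}$. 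With these compatibilities in hand, composing the preservation statements across $m=1,\ldots,l$ closes the induction, establishing that $\norm^K_H$ preserves $\overline{\R}_{\mathrm{pw}}$ and hence that $\overline{\R}_{\mathrm{pw},\kappa}=\overline{\R}_{\mathrm{nm},\kappa}$, from which the theorem follows.
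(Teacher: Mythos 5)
Your proposal is correct and follows essentially the same route as the paper: reduce to showing that the relevant norms preserve the strong saturation $\overline{\R}\botPointwise$, factor an arbitrary norm through a $C_2$-normal series supplied by \cref{groupTheoryNormalSeries} using transitivity of indexed tensors, and invoke the index-2 preservation statement \cref{subgroupIndex2PreserveSaturations} at each stage. The only real difference is one of explicitness: the base-change bookkeeping you flag as the ``main obstacle'' (identifying the saturation and the generating class over an intermediate subgroup with the restriction of the $G$-parametrised ones) is precisely what the paper suppresses via its standing convention of implicitly basechanging to $\baseCat_{/V}$, so your treatment is consistent with, rather than divergent from, the paper's argument.
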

\begin{proof}
Let $H \leq G$ be a subgroup. We need to show that $\norm^G_H$ preserves $\localisation$-equivalences. Equivalently, by \cite[Prop. 6.3.6]{kaifPresentable}, we need to show $\norm^G_H$ preserves  morphisms in $\overline{\R}\botPointwise$, the $G$-strong saturation of $\R\botPointwise = \langle\underline{\sC} \xrightarrow{s_*}\underline{\sC}^{\Delta^1} \xrightarrow{t^*} \underline{\sC}\rangle$. By \cref{groupTheoryNormalSeries}, let $H = N_0 \triangleleft N_1 \triangleleft \cdots \triangleleft N_k = G$ be a $C_2$-normal series. Since $\norm^G_H \simeq \norm^{N_k}_{N_{k-1}} \circ \cdots \circ \norm^{N_1}_{N_0}$, it would suffice to show that $\norm^{N_m}_{N_{m-1}}$ preserves morphisms in $\overline{\R}\botPointwise$. But then $N_{m-1}\triangleleft N_{m}$ is a normal inclusion of index 2, and so this assertion is true by \cref{subgroupIndex2PreserveSaturations}.
\end{proof}

In view of \cref{pointwiseNormedComparison}, the following is now an immediate consequence of the theorem.

\begin{cor}\label{mainCorollary:K-theoryOf2-Groups}
Let $G$ be a 2--group. Then $\underline{\KTheory}\pointwise_G \Rightarrow \underline{\KTheory}_G^{\mathrm{nm}} : \underline{\cat}\Tperfect_G \longrightarrow \myuline{\spectra}_G$  is an equivalence. In particular,  $\underline{\KTheory}\pointwise_G$ refines to the a $G$--lax symmetric monoidal structure and induces 
$\underline{\KTheory}_G : \calg_G(\underline{\cat}\Tperfect_G) \longrightarrow \calg_G(\myuline{\spectra}_G)$.
\end{cor}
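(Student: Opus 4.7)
The plan is to deduce this corollary directly from the preceding \cref{mainTheorem:pointwiseEqualsNormedFor2-Groups} together with the structural observations in \cref{pointwiseNormedComparison}, since the heavy lifting has already been accomplished there. I would first establish the equivalence of the two K--theory functors, then obtain the lax $G$--symmetric monoidal refinement by transport of structure, and finally apply $\calg_G(-)$.

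For the first part, by \cref{mainTheorem:pointwiseEqualsNormedFor2-Groups} the comparison map $\fanpsi : \underline{\nmot}_G\pointwise \to \underline{\nmot}_G^{\mathrm{nm}}$ is an equivalence, and by construction (\cref{pointwiseNormedComparison} (2)) we have the identification $\fanpsi \circ \Z_{\mathrm{pw}} \simeq \Z_{\mathrm{nm}}$. Since $\Z_{\mathrm{nm}}$ is $G$--symmetric monoidal by \cref{monoidalityOfTMotives} and $\myuline{\spectra}\tomega$ is the tensor unit in $\underline{\cat}\Tperfect_G$, we obtain $\fanpsi\Z_{\mathrm{pw}}(\myuline{\spectra}\tomega) \simeq \Z_{\mathrm{nm}}(\myuline{\spectra}\tomega) \simeq \unit$ in $\underline{\nmot}_G^{\mathrm{nm}}$. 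Combining these, postcomposition with $\fanpsi$ yields the chain of equivalences
\[ \underline{\KTheory}_G\pointwise(-) = \myuline{\mapsp}\big(\Z_{\mathrm{pw}}(\myuline{\spectra}\tomega),\, \Z_{\mathrm{pw}}(-)\big) \xrightarrow{\fanpsi_*} \myuline{\mapsp}\big(\fanpsi\Z_{\mathrm{pw}}(\myuline{\spectra}\tomega),\, \Z_{\mathrm{nm}}(-)\big) \simeq \myuline{\mapsp}\big(\unit,\, \Z_{\mathrm{nm}}(-)\big) = \underline{\KTheory}_G^{\mathrm{nm}}(-), \]
where the first equivalence uses \cref{MotivicCorepresentability} and the last is \cref{defn:normedKTheory}. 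A brief diagram chase, exploiting the universal property of \cref{univPropStableMotives} which uniquely characterises both $\fanpsi$ and the comparison $\underline{\KTheory}_G\pointwise \Rightarrow \underline{\KTheory}_G^{\mathrm{nm}}$, identifies this composite with the natural transformation constructed in \cref{pointwiseNormedComparison} (2).

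For the second part, $\underline{\KTheory}_G^{\mathrm{nm}}$ is already canonically $G$--lax symmetric monoidal by \cref{pointwiseNormedComparison} (1), since it is corepresented by the tensor unit of the $G$--symmetric monoidal category $\underline{\nmot}_G^{\mathrm{nm}}$. Transporting this structure along the equivalence established above thus endows $\underline{\KTheory}_G\pointwise$ with a canonical $G$--lax symmetric monoidal refinement. Finally, by \cref{thm:nardinShahEnvelopes} the category $\calg_G(-)$ is functorial on $G$--lax symmetric monoidal functors between $G$--symmetric monoidal categories, and applying this functoriality yields the desired map $\underline{\KTheory}_G : \calg_G(\underline{\cat}\Tperfect_G) \to \calg_G(\myuline{\spectra}_G)$.

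There is no real obstacle here, as this is a corollary whose entire mathematical content rests on the previous theorem. The only minor bookkeeping point is verifying that the comparison transformation of \cref{pointwiseNormedComparison} (2) truly matches the composite displayed above, but this is immediate from the uniqueness clause in \cref{univPropStableMotives} applied to the additive functor $\Z_{\mathrm{nm}}$.
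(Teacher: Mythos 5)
Your proof is correct and follows essentially the same route as the paper, which simply declares the corollary an immediate consequence of \cref{mainTheorem:pointwiseEqualsNormedFor2-Groups} in view of \cref{pointwiseNormedComparison}; you have merely spelled out that immediacy (equivalence of motives plus corepresentability gives the equivalence of K--theories, transport of the lax structure from \cref{pointwiseNormedComparison}~(1), then apply $\calg_G$). The only cosmetic quibble is that the functoriality of $\calg_G$ on $G$--lax symmetric monoidal functors is a standard feature of the $\baseCat$--operad framework rather than literally the content of \cref{thm:nardinShahEnvelopes}, but this does not affect the argument.
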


Having performed a general analysis of normed equivariant algebraic K--theory, we record here a large source of examples via \cref{monoidalBorelificationPrinciple} coming from categories with $G$--actions. 

\begin{prop}\label{borelificationPrincipleForCatPerf}
Let $G$ be a finite group. The functor $\eval_{G/e} : \underline{\cat}\Tperfect_G \rightarrow \underline{\borel}(\cat\perfect)$ canonically refines to a $G$--symmetric monoidal functor $\eval_{G/e} : (\underline{\cat}\Tperfect_G)\totimes \rightarrow \underline{\borel}((\cat\perfect)^{\otimes})$.
Moreover, it admits a $G$--fully faithful right adjoint $\underline{\borel}(\cat\perfect) \hookrightarrow \underline{\cat}\Tperfect_G$.
\end{prop}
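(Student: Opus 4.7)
The plan is to derive the statement as an application of the monoidal Borelification principle \cref{monoidalBorelificationPrinciple} after identifying the underlying Borel category, and then exhibit the right adjoint directly using the $b^*\dashv b_*$ adjunction.

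First, I would identify $b^*(\underline{\cat}\Tperfect_G)\totimes \in \func(BG, \cmonoid(\cat))$ with the trivially $G$--equivariant object $(\cat\perfect)^{\otimes}$. The value at the unique object of $BG$ coincides with $\cat\Tperfect_e \simeq \cat\perfect$, so the content is to check that the induced $G$--action from $\aut_{\orbit_G}(G/e) \cong G$ is trivial up to coherent equivalence. Using the inclusion $\underline{\cat}\Tperfect_G \subset \cofree(\cat\perfect)$ from \cref{nota:CatTPerfectStable} together with the formula $\cofree(\cat\perfect)_V = \func((\orbit_{G,/V})\op, \cat\perfect)$, the action of $g \in G$ on the fibre is induced by postcomposition $g_*\colon \orbit_{G,/G/e} \rightarrow \orbit_{G,/G/e}$. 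Since $\orbit_{G,/G/e}$ is equivalent to a point (all objects of the slice are uniquely isomorphic) and any $G$--action on a contractible category is canonically trivial, we deduce that $b^*\cofree(\cat\perfect)$ is the constant $BG$--diagram with value $\cat\perfect$; the same reasoning applied to the $G$--commutative monoid structure (Nardin's construction) gives the identification with the trivially equivariant $(\cat\perfect)^{\otimes}$.

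Second, I would apply part (2) of \cref{monoidalBorelificationPrinciple} to $(\underline{\cat}\Tperfect_G)\totimes$. This provides a canonical $G$--symmetric monoidal refinement of the adjunction unit $\underline{\cat}\Tperfect_G \rightarrow \underline{\borel}(b^*\underline{\cat}\Tperfect_G)$, which under the identification of the previous paragraph becomes the desired $G$--symmetric monoidal functor $\underline{\cat}\Tperfect_G \rightarrow \underline{\borel}((\cat\perfect)^{\otimes})$. Unwinding the definitions, its fibrewise value at $G/H$ is the functor $\cat\Tperfect_H \rightarrow (\cat\perfect)^{BH}$ that forgets the genuine $H$--parametrised structure down to a perfect--stable category with $H$--action, which is precisely $\eval_{G/e}$.

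Third, for the right adjoint, I would show that the $G$--adjunction $b^* \dashv b_*$ from \cref{prop:masterBorelificationLocalisation} restricts to one between $\underline{\cat}\Tperfect_G$ and $\underline{\borel}(\cat\perfect)$. For $\D \in (\cat\perfect)^{BG}$, the right Kan extension formula gives $(b_*\D)_{G/H} \simeq \D^{hH}$. Since $\cat\perfect \subset \cat$ is closed under arbitrary limits by \cref{prop:limitClosureOfCatPerfInCategories}, each $\D^{hH}$ lies in $\cat\perfect$, and \cref{prop:omnibusBorelBasics}(4) ensures $b_*\D$ is $G$--semiadditive, so $b_*\D \in \underline{\cat}\Tperfect_G$. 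Since $b_*$ is already $G$--fully faithful on $\underline{\cat}_G$, its restriction to $\underline{\borel}(\cat\perfect) \hookrightarrow \underline{\cat}\Tperfect_G$ remains so, yielding the desired $G$--fully faithful right adjoint. The main potential obstacle is verifying the coherence in the first paragraph: one must check that the canonical trivialisation of the $G$--action on the slice lifts compatibly through Nardin's $G$--symmetric monoidal construction, which I expect can be handled by comparing universal properties at the level of $\underline{\presentable}_{G, L, \mathrm{st}, \omega}$.
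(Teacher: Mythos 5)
Your proposal is correct and follows essentially the same route as the paper: apply \cref{monoidalBorelificationPrinciple}(2) to $(\underline{\cat}\Tperfect_G)\totimes$, identify the adjunction unit of the Borelification with $\eval_{G/e}$ (the paper handles the trivialisation of the fibre action implicitly via \cref{obs:borelificationAsUniqueLift} rather than your slice-contractibility argument), and obtain the $G$--fully faithful right adjoint by restricting $b_*$, which the paper justifies by citing the Bousfield localisation of \cite{barwick2} together with \cref{mainMackeyInclusion} while you verify it directly via closure under limits and \cref{prop:omnibusBorelBasics}. The remaining routine checks you leave open (coherence of the trivialisation through Nardin's construction, and that the adjunction restricts along the non-full inclusion $\underline{\cat}\Tperfect_G\subset\underline{\cat}_G$) are at the same level of detail as the paper's own terse treatment.
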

\begin{proof}
By \cref{monoidalBorelificationPrinciple} (2), we are left to show that $\eval_{G/e}$ is the unit of the Bousfield localisation in the $\widehat{\cat}$ version of \cref{prop:masterBorelificationLocalisation}. As noted there, this is fibrewise induced by taking homotopy fixed points in the target of the $H$--equivariant map $\res : \cat\Tperfect_H \rightarrow \cat\perfect$ to yield $\eval : \cat\Tperfect_H \rightarrow (\cat\perfect)^{hH}\simeq \func(BH, \cat\perfect)$ as desired. We now immediately obtain that the $G$--right adjoint is as claimed because fibrewise the adjunction is given by the dashed lift
\begin{center}
    \begin{tikzcd}
    \cat\Tperfect_H \rar["\eval_{H/e}", shift left = 1 ] \dar[hook]& \func(BH, \cat\perfect) \lar[shift left = 1, hook, dashed] \ar[dl, shift left = 4, xshift = 2, hook] \\
    \mackey_H(\cat\perfect) \ar[ur, shift right = 2, xshift=2,  "\eval_{H/e}"]
    \end{tikzcd}
\end{center}
for which the diagonal Bousfield localisation is given for instance by \cite[$\S8$]{barwick2} (compare also with \cref{monoidalBorelificationInTheLarge}), and the vertical inclusion is by \cref{mainMackeyInclusion}. 
\end{proof}

\begin{cor}\label{penultimateGLaxSymmetricMonoidal}
Let $G$ be a 2--group. Then the $G$--functor
\begin{equation}\label{eqn:equivariantKTheoryComposite}
    \begin{tikzcd}
        \underline{\KTheory}_G : \underline{\borel}_G(\cat\perfect) \rar[hook] &  \underline{\cat}\Tperfect_G \subset \underline{\mackey}_G(\cat\perfect) \ar[rr,"\underline{\mackey}_G(\KTheory)"] && \myuline{\spectra}_G
    \end{tikzcd}
\end{equation}
canonically refines to a $G$--lax symmetric monoidal functor
$\underline{\KTheory}_G \colon \underline{\borel}((\cat\perfect)^{\otimes}) \longrightarrow \myuline{\spectra}_G^{\underline{\otimes}}$. In particular, we obtain a functor ${\KTheory}_G \colon \func\big(BG, \calg(\cat\perfect)\big)\Tcore \longrightarrow \calg_G\big(\myuline{\spectra}_G^{\underline{\otimes}}\big)\Tcore$.
\end{cor}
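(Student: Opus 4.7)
The proof is essentially a composition argument assembling three prior results: Corollary~\ref{mainCorollary:K-theoryOf2-Groups}, Proposition~\ref{borelificationPrincipleForCatPerf}, and Theorem~\ref{monoidalBorelificationPrinciple}. The plan is to enhance each factor of the composite \cref{eqn:equivariantKTheoryComposite} to a $G$--lax symmetric monoidal functor and then combine them. For the first factor $\underline{\borel}((\cat\perfect)^{\otimes}) \hookrightarrow (\underline{\cat}\Tperfect_G)^{\underline{\otimes}}$, Proposition~\ref{borelificationPrincipleForCatPerf} tells us that the evaluation functor $\eval_{G/e}\colon (\underline{\cat}\Tperfect_G)^{\underline{\otimes}} \to \underline{\borel}((\cat\perfect)^{\otimes})$ is a $G$--symmetric monoidal functor and that the Borel inclusion is its $G$--fully faithful right adjoint. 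As in the unparametrised setting, this $G$--right adjoint canonically inherits a $G$--lax symmetric monoidal structure (one can either extract this directly as the doctrinal adjunction in the $\infty$--operadic framework of \cite{nardinShah}, or obtain it by applying $\underline{\calg}_G$ arguments to both sides). For the second factor, Corollary~\ref{mainCorollary:K-theoryOf2-Groups} (using the hypothesis $|G|=2^n$) gives a canonical $G$--lax symmetric monoidal refinement of $\underline{\KTheory}_G\colon (\underline{\cat}\Tperfect_G)^{\underline{\otimes}} \to \myuline{\spectra}_G^{\underline{\otimes}}$. Since the composition of two $G$--lax symmetric monoidal functors is $G$--lax symmetric monoidal, this produces the desired refinement of the composite~\cref{eqn:equivariantKTheoryComposite}.

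For the second assertion, I would apply $\calg_G(-)$ to this $G$--lax symmetric monoidal functor, which is functorial on $G$--lax symmetric monoidal functors and so yields
\[
\calg_G\bigl(\underline{\borel}((\cat\perfect)^{\otimes})\bigr) \longrightarrow \calg_G\bigl(\myuline{\spectra}_G^{\underline{\otimes}}\bigr).
\]
By Theorem~\ref{monoidalBorelificationPrinciple}~(3), the source is canonically equivalent to $\calg((\cat\perfect)^{\otimes})^{hG} \simeq \func(BG,\calg(\cat\perfect))$. Passing to cores on both sides then delivers the functor
\[
\KTheory_G \colon \func(BG,\calg(\cat\perfect))^{\simeq} \longrightarrow \calg_G(\myuline{\spectra}_G^{\underline{\otimes}})^{\simeq}
\]
as claimed.

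The only mildly nontrivial step is the invocation of the fact that a $G$--right adjoint to a $G$--symmetric monoidal functor inherits a canonical $G$--lax symmetric monoidal structure. This is the $G$--parametrised analogue of a well--known fact, but in the present formalism of \cite{nardinShah} one must verify it by a doctrinal adjunction argument applied fibrewise (using \cref{criteriaForTAdjunctions} together with \cref{fact:adjointOfEquivariantAdjunctions} to assemble the fibrewise lax structures into a coherent $G$--lax one). Everything else is bookkeeping, so this is where I would place the main technical care of the proof.
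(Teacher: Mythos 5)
Your proposal matches the paper's own proof: both cite \cref{mainCorollary:K-theoryOf2-Groups} for the $G$--lax symmetric monoidal refinement of $\underline{\KTheory}_G$ on $\underline{\cat}\Tperfect_G$, obtain the lax structure on the Borel inclusion from its being right adjoint to the $G$--symmetric monoidal $\eval_{G/e}$ of \cref{borelificationPrincipleForCatPerf}, and deduce the final statement by applying $\calg_G$ together with \cref{monoidalBorelificationPrinciple}~(3). The paper is even terser than you are about the doctrinal-adjunction step (it simply asserts the right adjoint is $G$--lax symmetric monoidal), so your flagging of that point as the place requiring care is reasonable but not a divergence.
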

\begin{proof}
That the functor $\underline{\cat}\Tperfect_G\rightarrow \myuline{\spectra}_G$ refines to a $G$--lax symmetric monoidal functor is by \cref{mainCorollary:K-theoryOf2-Groups}. That the inclusion $\underline{\borel}_G(\cat\perfect) \hookrightarrow  \underline{\cat}\Tperfect_G$ refines to a $G$--lax symmetric monoidal functor is because it is right adjoint to $\eval_{G/e}$, which in turn enhances to a $G$--symmetric monoidal functor by \cref{borelificationPrincipleForCatPerf}. Finally, applying $\calg_G$ to this composite and \cref{asafCalgMonad} (3) gives the last statement. 
\end{proof}



\printbibliography[heading=bibintoc]
\end{document}